\documentclass[a4paper,11pt,reqno]{amsart}
\usepackage{epsfig,url,paralist}
\usepackage{amssymb}
\usepackage[normalem]{ulem}
\usepackage{fullpage,dynkin-diagrams}
\usepackage{amsmath}
\usepackage{tikz}
\usepackage{fullpage}
\usepackage{hyperref,verbatim}
\usepackage[capitalize]{cleveref}
\usepackage{graphicx}
\usepackage{setspace,multirow}
\usepackage{enumitem,lineno}
\setlist{nolistsep}
\usepackage{lscape}
\usepackage{calrsfs}       
\usepackage{booktabs}     

\newtheorem{theo}{Main Result}

\numberwithin{equation}{section}

\newtheorem{theorem}{Theorem}[section]

\newtheorem{theorem*}{Main Result}
\newtheorem{lem}[theorem]{Lemma}
\newtheorem{fact}[theorem]{Fact}
\newtheorem{coro}[theorem]{Corollary}
\newtheorem{cor*}[theorem*]{Corollary}

\newtheorem{prop}[theorem]{Proposition}
\theoremstyle{definition}
\newtheorem{defn}[theorem]{Definition}

\newtheorem{remark}[theorem]{Remark}

\numberwithin{theorem}{section}
\usepackage{fancyhdr}
\def\<{\langle}
\def\>{\rangle}
\newcommand{\Res}{\mathsf{Res}}

\newcommand{\cL}{\mathcal{L}}
\newcommand{\pperp}{\perp\hspace{-0.15cm}\perp}

\newcommand{\proj}{\mathsf{proj}}

\newcommand{\SL}{\mathsf{SL}}

\newcommand{\norm}{\mathsf{n}}

\renewcommand{\gcd}{\mathsf{gcd}}
\newcommand{\PG}{\mathsf{PG}}

\newcommand{\id}{\mathsf{id}}
\newcommand{\K}{\mathbb{K}}

\newcommand{\A}{\mathbb{A}}
\newcommand{\J}{\mathbb{J}}

\newcommand{\cM}{\mathcal{M}}

\begin{document}
\title{Lines and opposition in Lie incidence geometries of exceptional type}
\author{Sira Busch}
\curraddr{Mathematisches Institut\\
Universit\"at M\"unster\\
Orl\'eans-Ring 10\\
D--48149 M\"unster\\
GERMANY}
\email{s\_busc16@uni-muenster.de}
\address{ORCID: 0009-0009-0939-6543}
\thanks{The first author is funded by the Claussen-Simon-Stiftung and by the Deutsche Forschungsgemeinschaft (DFG, German Research Foundation) under Germany's Excellence Strategy EXC 2044 --390685587, Mathematics M\"unster: Dynamics--Geometry--Structure. This work is part of the PhD project of the first author.}
 \author{Hendrik Van Maldeghem}
 \curraddr{Department of Mathematics, Computer Science and Statistics\\
Ghent University\\
Krijgslaan 299-S9\\
B--9000 Ghent\\
BELGIUM}
\email{Hendrik.VanMaldeghem@UGent.be}
\address{ORCID: 0000-0002-8022-0040} 
\maketitle
\begin{abstract}
We characterise sets of points of exceptional Lie incidence geometries, that is, the natural geometries arising from spherical buildings of exceptional types $\mathsf{F_4}$, $\mathsf{E_6}$, $\mathsf{E_7}$, $\mathsf{E_8}$ and $\mathsf{G_2}$, that form a line using the opposition relation. 
With that, we obtain a classification of so-called ``geometric lines'' in many of these geometries. 
Furthermore, our results lead to a characterisation of geometric lines in finite exceptional Lie incidence geometries as minimal blocking sets,
that is, point sets of the size of a line admitting no object opposite to all of their members, in most cases, and we classify all exceptions. 
As a further consequence, we obtain a characterisation of automorphisms of exceptional spherical buildings as certain opposition preserving maps.
\end{abstract}
\setcounter{tocdepth}{3}
\tableofcontents

\section{Introduction}
The theory of Tits-buildings plays an important role in group theory, in particular in the theory of (semisimple) algebraic groups, in finite group theory, in the theory of topological (simple) groups, and so on. 
The intricate structure of --- especially the exceptional --- spherical buildings leads to the introduction of seemingly less complex, but certainly more accessible, point-line geometries describing essentially the same object. 
These geometries are usually called \emph{Lie incidence geometries}. 
The procedure to construct such a geometry is nowadays standard: 
for a spherical building $\Delta$, say of type $\mathsf{X}_n$, pick a type $i$, consider all vertices of $\Delta$ of type $i$ and call them \emph{points}. 
Pick a chamber $C$ and delete the vertex of type $i$ to obtain a panel of cotype $i$. 
Call the set of vertices of type $i$ completing this panel again to a chamber a \emph{line}. 
Vary $C$ over all chambers. 
Then this system of points and lines is a \emph{Lie incidence geometry of type $\mathsf{X}_{n,i}$}. 
It now turns out (as follows from \cite[\S 3]{Coh:86}) that the full automorphism group of this point-line geometry coincides with the full automorphism group of $\Delta$ that preserves the type $i$. 
In \cite{Kas-Mal:13}, Kasikova and the second author investigate the interaction of (this definition of) lines and the opposition relation in $\Delta$. 
The opposition relation is something typical for spherical buildings, and it owes its existence to the strong relation with finite Coxeter groups, 
in which there is a so-called ``longest word''. 
The main result of \cite{Kas-Mal:13} characterises the lines of many Lie incidence geometries in terms of this opposition relation. 
This led to the introduction of the notion of a ``geometric line'', which is a set of points with the property that an arbitrary object of the underlying spherical building is either opposite none of its elements, or not opposite exactly one of its members. Lines are the standard examples of geometric lines. 
We observe that, in particular, a geometric line does not admit an object opposite all of its members. 
A set of points with the latter property can be viewed as a \emph{blocking set (of points)}. 
Blocking sets are a popular subject in finite geometry, both because they have many applications and because they have a great auxiliary value. 
A line is always a blocking set, and in \cite{Bus-Mal:24} the authors proved that in odd characteristic, geometric lines in finite classical Lie incidence geometries are the only minimal blocking sets. 
In characteristic~$2$ they found counterexamples. 
Their proof, however, makes very little use of the definition of a geometric line; 
the equivalence in odd characteristic just came out from the classification of minimal blocking sets, 
which uses a variety of tools from classical finite geometry. 

In the present paper, the primary aim is to classify minimal blocking sets in many finite Lie incidence geometries of exceptional type. 
However, unlike in \cite{Bus-Mal:24}, in many cases it will turn out to be beneficial to do this by showing directly the equivalence to geometric lines; 
the exceptions in characteristic~$2$ become also apparent in this approach. 
We then either appeal to the classification of geometric lines as provided in \cite{Kas-Mal:13}, 
or we classify them ourselves (if not available in \cite{Kas-Mal:13}). 
This way, we lay the foundations to study more intensively blocking sets in finite exceptional geometries.

More exactly, with the notation introduced in \cref{prelim}, we will show the following theorems.

\begin{theo}\label{A}
Let $\mathsf{X}_n\in\{\mathsf{E_6,E_7,E_8,F_4,G_2}\}$ and $1\leq i\leq n$, with $i\notin \{2,4,5\}$ if $n=7$ and $i\in \{7,8\}$ if $n=8$. 

If in a thick finite Moufang spherical building $\Delta$ of type $\mathsf{X}_n$, $n\geq 2$, the panels of cotype $\{i\}$ are $s$-thick (that is, every panel of cotype $\{i\}$ is contained in precisely $s+1$ chambers), then every set of $s+1$ vertices of type $i$ of $\Delta$ admits a common opposite vertex except precisely in the following five cases.
\begin{compactenum}[$(1)$]
\item The $s+1$ vertices form a line in the corresponding Lie incidence geometry of type $\mathsf{X}_{n,i}$.
\item $\Delta$ is a split building of type $\mathsf{F_4}$, the type $i$ corresponds to a short root in the underlying root system (so $i\in\{3,4\}$), and the $s+1$ vertices correspond to a hyperbolic line in a (thick) symp of the corresponding Lie incidence geometry of type $\mathsf{F}_{4,i}$ isomorphic to a symplectic polar space. 
\item $\Delta$ has type $\mathsf{F_4}$, has residues isomorphic to Hermitian generalised quadrangles of order $(s,\sqrt{s})$, and the $s+1$ vertices form an ovoid in a (symplectic) subquadrangle of order $(\sqrt{s},\sqrt{s})$. Here, $s$ is even and $i=2$.
\item $\Delta$ is a split building of type 
$\mathsf{G_2}$ and the $s+1$ vertices form a hyperbolic (or ideal) line in the corresponding Lie incidence geometry of type $\mathsf{G_{2,2}}$, which is a split Cayley hexagon.
\item $\Delta$ is a building of type 
$\mathsf{G_2}$ in characteristic $2$ and the $s+1$ vertices form a distance-$3$ trace in the corresponding Lie incidence geometry of type 
$\mathsf{G_{2,2}}$, which is either a split Cayley hexagon, or a twisted triality hexagon of order $(s,\sqrt[3]{s})$.
\end{compactenum}
\end{theo}

\cref{A} will follow
\begin{compactenum}[$-$] 
	\item from \cref{E61points} for types $\mathsf{E_{6,1}}$ and $\mathsf{E_{6,6}}$; 
	\item from \cref{E61lines} for types $\mathsf{E_{6,3}}$ and $\mathsf{E_{6,5}}$; 
	\item from \cref{E77points} for type $\mathsf{E_{7,7}}$; 
	\item from \cref{E77lines} for type $\mathsf{E_{7,6}}$; 
	\item from \cref{longrootlemma}, \cref{kasmal} and \cref{geomlinesexc} for types $\mathsf{E_{6,2},E_{7,1},E_{8,8},F_{4,1}}$ and $\mathsf{F_{4,4}}$; 
	\item from \cref{proplinesF4}, \cref{F44exception} and \cref{geomlinegrass} for types $\mathsf{E_{6,4},E_{7,3},E_{8,7},F_{4,2}}$ and~$\mathsf{F_{4,3}}$; 
	\item from \cref{BShex} for types $\mathsf{G_{2,1}}$ and $\mathsf{G_{2,2}}$.
\end{compactenum}

\begin{theo}\label{B}
Let $\mathsf{X}_n\in\{\mathsf{E_6,E_7,E_8,F_4,G_2}\}$ and $1\leq i\leq n$, with $i\notin \{2,4,5\}$ if $n=7$ and $i\in \{7,8\}$ if $n=8$. 

Then a geometric line of the Lie incidence geometry of type $\mathsf{X}_{n,i}$ associated to a thick Moufang spherical building $\Delta$ of type $\mathsf{X}_n$, $n\geq 2$, is one of the following.
\begin{compactenum}[$(1)$]
\item A line of the Lie incidence geometry.
\item A hyperbolic line in a symp of the Lie incidence geometry whenever this symp is a symplectic polar space (and this happens (only) in the split case for types $\mathsf{F_{4,3}}$ and $\mathsf{F_{4,4}}$). 
\item A hyperbolic (or ideal) line of a split Cayley hexagon.
\item A distance-$3$ trace of a split Cayley hexagon over a perfect field of characteristic~$2$.
\end{compactenum}
\end{theo}

Naturally, the Moufang condition in the previous theorems is only necessary for type $\mathsf{G}_2$.

\cref{B} follows from \cite[Corollary~5.6]{Kas-Mal:13} for the types $\mathsf{E_{6,1},E_{6,6}}$ and $\mathsf{E_{7,7}}$.
It will follow \begin{compactenum}[$-$] \item from \cref{GLE63} for types $\mathsf{E_{6,3}}$ and $\mathsf{E_{6,5}}$; \item from \cref{GLE76} for type $\mathsf{E_{7,6}}$; \item from \cref{kasmal} and \cref{geomlinesexc} for types $\mathsf{E_{6,2},E_{7,1},E_{8,8},F_{4,1}}$ and $\mathsf{F_{4,4}}$; \item from \cref{geomlinegrass} for types $\mathsf{E_{6,4},E_{7,3},E_{8,7},F_{4,2}}$ and $\mathsf{F_{4,3}}$; \item from \cref{geomlineshex} for types $\mathsf{G_{2,1}}$ and $\mathsf{G_{2,2}}$.
\end{compactenum}

As an application, we deduce that certain opposition preserving maps and transformations of a Lie incidence geometry are actually (bijective) collineations.
This characterises collineations using opposition.
We refer to \cref{anappl} for the exact statements. 

As a further motivation, we note that some of the results obtained in the present paper are used in \cite{Bus-Sch-Mal:24} and \cite{Bus-Mal:25} to determine projectivity groups.

We now comment on the relation with opposition of \emph{chambers} in buildings and provide some more background. Buildings are combinatorial structures introduced by Jacques Tits \cite{Tits:74} as a geometric interpretation of simple groups of Lie type. To each building is associated a \emph{Weyl group $W$}, which abstractly is a Coxeter group, furnished with a set of (standard) generators, and which describes a $W$-valued distance between chambers. When that Coxeter group is finite, then we are dealing with a  \emph{spherical building}. The existence of a \emph{longest word} in a finite Coxeter group translates to the existence of pairs of so-called \emph{opposite chambers} in a spherical building---such chambers are at maximal distance apart from each other, as opposed to 
\emph{adjacent chambers}, which are at minimal non-trivial distance from each other (their distance is a standard generator of $W$). Obviously, opposition is determined by adjacency, and adjacency determines the whole building. Remarkably, Abramenko and the second author showed in \cite{Abr-Mal:00} that opposition of chambers also determines the adjacency. In fact, they provided a very simple criterion for two chambers of a thick spherical building to be adjacent in terms of opposition: chambers $C$ and $C'$ are adjacent if, and only if, there exists a third chamber $C''$ (which is then also adjacent to both $C$ and $C'$ if the condition is satisfied) such that no chamber is opposite exactly one of $C,C',C''$. The interest in such a result came from the theory of twin buildings. In such buildings, which are not necessarily spherical, the notion of opposition between chambers of different halves of the twinnings is ingrained in the definition---and the above result showed that the opposition between chambers of the halves of a twin building determines the structure of each half. Naturally, the question whether opposition of vertices of spherical buildings also determines adjacency of vertices in such buildings arose. One might have hoped that the results for chambers would have implied similar results for vertices (of fixed type), but Kasikova and the second author showed in \cite{Kas-Mal:13} that this is not true since they found counterexamples. Not only to the analogue to the above criterion involving three chambers, but also to the analogue to the more global characterisation of panels as sets of chambers with the property that each chamber is either opposite none of that set, or not opposite exactly one of that set. Applied to vertices of certain type $i$, the hope was to characterise the lines of the so-called $i$-Grassmannian Lie incidence geometry---all the more since these lines are determined unambiguously by panels---but as mentioned there were counter examples. However, as mentioned above, this gave rise to the notion of a \emph{geometric line} and these were subsequently classified for a number of spherical buildings and types $i$---but not for all. Hence part of this paper can be seen as a complement to \cite{Kas-Mal:13}. Worth noting is that the above criterion involving the three chambers $C,C',C''$ is the basis for the notion of \emph{round-up triple}, first introduced in \cite{Kas-Mal:13}, and again heavily used in the current paper; we introduce this notion in \cref{oppprojsec}.

\textbf{The paper is structured as follows.} 
We provide preliminary information in \cref{prelim} about the geometries we will work with, and about their well-known properties. We also derive many new properties that are not available in the literature. 
Towards the end of that section, we state a corollary to a result of Tits that enables one to construct blocking sets in buildings from blocking sets in residues. Finally, we prove some general properties of round-up triples of vertices in general spherical buildings, which we will use to classify such triples in several cases in this paper.
We then start the proofs of our main theorems. 
We chose to prove \cref{A} and \cref{B} type-by-type in the same section, so that the properties of the geometry in question are fresh in the memory.
\Cref{minuscule} treats the cases $\mathsf{E_{6,1},E_{6,3},E_{7,7}}$ and $\mathsf{E_{7,6}}$ using the so-called exceptional \emph{minuscule} geometries.
These are the Lie incidence geometries of type $\mathsf{E_{6,1}}$ and $\mathsf{E_{7,7}}$. 
Then, in \Cref{hexagonic}, we prove our main theorems for $\mathsf{E_{6,2},E_{6,4},E_{7,1},E_{7,2},E_{8,8},E_{8,7}}$ and $\mathsf{F}_{4,i}$, $i\in\{1,2,3,4\}$. 
The relatively easy cases $\mathsf{E_{6,2},E_{7,1},E_{8,8}}$ and $\mathsf{F}_{4,i}$, $i\in\{1,4\}$ are proved in \cref{hexpoints}, 
whereas the other cases are treated in \cref{hexlines}. 
This is by far the longest section of the paper. 
Amongst other things, it classifies all possible mutual positions of two lines in a given hexagonic Lie incidence geometry of exceptional type. 
This certainly has other potential applications. 
Doing this enables us to reduce \cref{A} for lines of exceptional hexagonic Lie incidence geometries to the classification of geometric lines (\cref{B}), 
except in the case of geometries isomorphic to $\mathsf{F_{4,4}}(q,q^2)$. 
We treat this case separately in \cref{F44}. 
Then in \cref{geomlineslines} we prove \cref{B} uniformly for $\mathsf{E_{6,4},E_{7,2},E_{8,7}}$ and $\mathsf{F}_{4,i}$, $i\in\{2,3\}$. 
Finally, in \cref{hexagons}, we prove our main results for type $\mathsf{G_2}$, that is, for generalised hexagons. 
\cref{anappl} contains the application to certain opposition preserving maps in buildings of exceptional type alluded to above. We also mention other potential applications.


\section{Preliminaries}\label{prelim} 

\subsection{Buildings and Lie incidence geometries}

We are going to work with the buildings of exceptional type via their well-established point-line geometries, which fit perfectly in the framework of Cooperstein's theory of \emph{parapolar spaces}.
We will also adopt the corresponding terminology. 
As a result, we will not spend too much space on pure building-theoretic theory, but instead refer the interested reader to the literature, in particular to \cite{Abr-Bro:08} and \cite{Tits:74}.
We content ourselves with the following generalities.

We view buildings as thick numbered simplicial chamber complexes.
The buildings we are interested in are \emph{spherical buildings} and, as such, there is the notion of \emph{opposition} of simplices (in particular vertices and chambers), expressing that the two given simplices are at maximal distance apart.
There is also the notion of \emph{convexity} for subcomplexes, and the convex closure of two opposite chambers is a thin finite chamber complex called an apartment and isomorphic to a Coxeter complex.
These apartments play a crucial role in building theory.
By the definition of a building, every pair of simplices is contained in an apartment and so the mutual position between these two objects can be seen in this finite complex, which is also a triangulation of a sphere.
Opposite simplices in an apartment are then just antipodal on the sphere.
Also, recall that, since $\Delta$ is numbered, the vertices have \emph{types}, and a chamber consists of a set of vertices, one of each possible type.
The number of types is the \emph{rank} of the building.
A \emph{panel} of cotype $i$ is a simplex containing vertices of each type except for the type $i$.
Opposition induces a permutation on the types which is an automorphism of the Coxeter diagram.
Two simplices are \emph{joinable} if their union is again a simplex.

We now quickly outline how point-line geometries arise from (spherical) buildings, a \emph{point-line geometry} being a pair $(X,\cL)$ consisting of a set $X$ of \emph{points}, and $\cL$ a set of subsets of $X$, each member of which is called a \emph{line}.
Let $\Delta$ be a spherical building, which we will always assume to be thick and irreducible.
We consider the set $X$ of vertices of a given type, say $i$.
The set $\cL$ then consists of the subsets of $X$ whose elements each complete a given panel of cotype $i$ to a chamber (hence each panel defines a unique line of $(X,\cL)$, but different panels may define the same line).
If the type of $\Delta$ is $\mathsf{X}_n$, where $n$ is the rank and $\mathsf{X}$ one of the Coxeter types $\mathsf{A,B,C,D,E,F,G}$, then we say that $(X,\cL)$ is a \emph{Lie incidence geometry of type $\mathsf{X}_{n,i}$}.
Usually, we take as type set $\{1,2,\ldots,n\}$, where the types can be read off the corresponding Coxeter or Dynkin diagram using Bourbaki labelling \cite{Bou:68}.
If the diagram is simply laced, then $\Delta$ is completely determined by the underlying skew field $\K$ and we denote the Lie incidence geometry of type $\mathsf{X}_{n,i}$ in this case as $\mathsf{X}_{n,i}(\K)$.
In such geometries, vertices of the building have interpretations as certain subspaces (see below), and we will always call such subspaces \emph{opposite} when the vertices are opposite in the building.
We adopt the notation $v\equiv v'$ for opposite vertices (and later, subspaces); the negation is $v\not\equiv v'$, and the set of vertices (not) opposite $v$ is denoted as $v^\equiv$ ($v^{\not\equiv}$).

When $\mathsf{X}_n$ is one of $\mathsf{E_6,E_7,E_8,F_4}$, then a Lie incidence geometry of type $\mathsf{X}_{n,i}$ is always a parapolar space.
We provide a brief introduction.

First, we introduce some terminology concerning point-line geometries.
Let $\Gamma=(X,\cL)$ be a point-line geometry.
Two points $x,y\in X$ contained in a common line are called \emph{collinear} and denoted $x\perp y$.
The set of points collinear to $x$ is denoted $x^\perp$ and includes $x$ if $x$ is on some line.
Sets of points are called collinear if each point of either is collinear to each point of the other.
If each line contains exactly two (at least three) points, then $\Gamma$ is called \emph{thin} (\emph{thick}, respectively).
A \emph{subspace} of $\Gamma$ is a set of points with the property that, if two distinct collinear points belong to it, then all points of each line containing both $x$ and $y$ belong to it.
We often view a subspace as a point-line subgeometry in the obvious way.
A subspace is a \emph{(geometric) hyperplane} if it intersects each line non-trivially; it is called \emph{proper} if it does not coincide with $X$ itself, which is a trivial geometric hyperplane.
The point-line geometry $\Gamma$ is called a \emph{partial linear space} if every pair of collinear points is contained in exactly one line.
In a partial linear space, we denote the unique line containing two distinct collinear points $x$ and $y$ by $xy$, or sometimes by $\langle x,y\rangle$, for clarity.
A subspace of a point-line geometry is called \emph{singular} if every pair of its points is collinear.
Trivial examples are the empty set, each singleton, and each line.
If there exists a natural number $r$ such that every finite nested sequence of (distinct) singular subspaces (including the empty space) has size at most $r+1$, and there exists such a sequence of size $r+1$, then we say that $\Gamma$ has singular rank $r$.
A \emph{maximal} singular subspace is a singular subspace that is not properly contained in another one.

The \emph{point graph} of a point-line geometry $\Gamma$ is the graph with vertices the points of $\Gamma$, adjacent when (distinct and) collinear.
A set of points is called \emph{convex} if for each pair $\{x,y\}$ of points contained in it, all points of each shortest path between $x$ and $y$ in the point graph are also contained in it.

We assume the reader is familiar with projective spaces, which are the Lie incidence geometries $\mathsf{A}_{n,1}(\K)$, for skew fields $\K$, also sometimes denoted as $\PG(n,\K)$, or as $\PG(V)$, where $V$ is an $(n+1)$-dimensional vector space over $\K$.
One checks that $\PG(n,\K)$ has singular rank $n+1$.
As a building, it has rank $n$, which is also its projective dimension.
We extend the definition of $\PG(V)$ to infinite-dimensional vector spaces in the obvious way: the points of $\PG(V)$ are the $1$-spaces of $V$, the lines are the sets of $1$-spaces contained in given $2$-spaces.

The Lie incidence geometries of types $\mathsf{B}_{n,1}$, $n\geq 2$ and $\mathsf{D}_{n,1}$, $n\geq 3$, are \emph{polar spaces (of rank $n$)}, that is, thick point-line geometries $(X,\cL)$ of singular rank $n$ such that for each point $x\in X$, the set $x^\perp$ is a proper geometric hyperplane.
It follows that polar spaces are partial linear spaces (see \cite{Bue-Shu:74}).
Also, the non-trivial singular subspaces of any polar space of rank at least $3$ are projective spaces.
If in a polar space of rank $r$, every $(r-2)$-dimensional singular subspace is contained in exactly two (at least three) maximal singular subspaces, then we call the polar space \emph{top-thin} (\emph{thick}, respectively).
Buildings of type $\mathsf{B}_n$ correspond precisely to the thick polar spaces of rank $n$, while buildings of type $\mathsf{D}_n$, $n\geq 2$ (where we identify the type $\mathsf{D_2}$ with the reducible type $\mathsf{A_1\times A_1}$) yield top-thin polar spaces.
The singular subspaces are also the only subspaces of a polar space that are convex.
In fact, every polar space of rank at least 2 is a Lie incidence geometry of type $\mathsf{B}_{n,1}$ or $\mathsf{D}_{n,1}$, $n\geq 2$. Every polar space of rank $r\geq 3$ admits an \emph{order $(s,t)$}, that is, there is a constant $s$ such that each line contains precisely $s+1$ points and there is a constant $t$ such that each $(r-2)$-dimensional singular subspace is contained in exactly $t+1$ maximal singular subspaces. For instance, top-thin polar spaces of rank at least $3$ have order $(s,1)$. 

Polar spaces of type $\mathsf{D}_n$ have a peculiar property: their maximal singular subspaces are divided into two \emph{oriflamme classes}, where two maximal singular subspaces belong to different classes if, and only if, the parity of the dimension of their intersection coincides with the parity of $n$.
Each polar space $\mathsf{D}_{n,1}(\K)$, for $\K$ a (commutative) field and $n\geq 3$, is isomorphic to the point-line geometry naturally associated to a hyperbolic quadric in $\PG(2n-1,\K)$, that is, the null set of a quadratic form $x_{-n}x_n+x_{-n+1}x_{n-1}+\cdots + x_{-1}x_1$ in the coordinates of a point.
An oriflamme class of lines in a hyperbolic quadric in $\PG(3,\K)$ (the case $n=2$ left out in our previous sentence) will be called a \emph{regulus}. Top-thin polar spaces are also referred to as \emph{hyperbolic} polar spaces. 

Another special class of polar spaces are the \emph{symplectic polar spaces}, obtained from a symplectic polarity, or an alternating form. They have the peculiar property that they can be represented in a projective space such that all points of the projective space are the points of the symplectic polar space.  We refer to \cite{Mal:24} for more background on polar spaces. 

The Lie incidence geometries of type $\mathsf{B}_{n,n}$ are usually referred to as \emph{dual polar spaces}.

We have defined Lie incidence geometries only for vertices of (spherical) buildings; a straightforward generalisation to simplices is possible, and we will use such generalisation, but only for simplices of buildings of type $\mathsf{A}_n$, where the simplices in question are point-hyperplane pairs of the corresponding projective space.
Hence we can define the geometry $\mathsf{A}_{n,\{1,n\}}(\K)$, $\K$ any skew field, as the point-line geometry with point set the set of incident point-hyperplane pairs of $\PG(n,\K)$, where the lines are of two types: one type of lines consists of the sets of point-hyperplane pairs with common hyperplane $H$ and point ranging over a given line contained in $H$; the other type is the dual.

Let us also remark that sometimes a Lie incidence geometry $\Delta_i$ of type $\mathsf{X}_{n,i}$ can be defined using the one, say $\Delta_j$, of type $\mathsf{X}_{n,j}$ by considering the subspaces of $\Delta_j$ conforming to the vertices of type $i$ of the associated building, as points, and interpreting then the lines of $\Delta_i$ in $\Delta_j$ to define them correctly.
We give an example.
Let $\Delta$ be a building of type $\mathsf{D}_n$, $n\geq 4$, and let $\Delta_1=(X_1,\cL_1)$ be the associated polar space (a Lie incidence geometry of type $\mathsf{D}_{n,1}$).
Define $X_2$ as the set of lines of $\Delta_1$.
Now let $\Delta_2$ be the point-line geometry with point set $X_2$, and let the set $\cL_2$ of lines of $\Delta_2$ be the set of planar line pencils.
Then one checks that $(X_2,\cL_2)$ is the Lie incidence geometry of type $\mathsf{D}_{n,2}$ corresponding to $\Delta$.
We say that $\Delta_2$ is the \emph{line-Grassmannian} of $\Delta_1$ (and we use this expression also in other situations where a point-line geometry has planes, and hence planar line pencils).

All Lie incidence geometries, as we defined them, are parapolar spaces, except for the projective and polar spaces mentioned above, and for the Lie incidence geometries of buildings of rank $2$.
Let's define these objects.
Unlike polar spaces, it is not known whether all parapolar spaces of sufficiently high symplectic rank are Lie incidence geometries.

A \emph{parapolar space}, introduced by Cooperstein \cite{Coo:76,Coo:77}, is a point-line geometry $\Gamma=(X,\cL)$ satisfying the following axioms:
\begin{compactenum}[$(i)$]\item Each pair of points at distance $2$ in the point graph either is collinear to a unique point, or is contained in a convex subspace isomorphic to a polar space (such subspaces are called \emph{symplecta}, or \emph{symps} for short).
\item There exist at least two distinct symps, and each line is contained in a symp.\end{compactenum}
It follows easily that parapolar spaces are partial linear spaces.
Now we introduce some specific terminology and notation concerning parapolar spaces.
First, a pair of points ${x,y}$ at distance $2$ in the point graph, collinear to a unique point $z$, will be called \emph{special}, and we denote $z=:[x,y]$.
We also say that $x$ \emph{is special to} $y$, or that $x$ and $y$ are special, in symbols $x\Join y$.
The set of points special to a given point $x$ will be denoted as $x^{\Join}$.
A parapolar space without special pairs is called \emph{strong}.
The symp containing two given points $x,y$ at distance $2$ in the point graph and which are not special is denoted by $\xi(x,y)$.
The pair ${x,y}$ is called \emph{symplectic}, and we also say that \emph{$x$ is symplectic to $y$}, or that $x$ and $y$ are symplectic, in symbols $x\pperp y$.
The set of points symplectic to a given point $x$ will be denoted as $x^{\pperp}$.
The \emph{diameter} of $\Gamma$ is the diameter of its point graph.
We say that $\Gamma$ has \emph{symplectic rank at least $r$} if every symp has rank at least $r$.
If every symp has exactly rank $r$, then we say that $\Gamma$ has \emph{uniform rank $r$}.

Note that the line-Grassmannian of a polar space contains special pairs.
If such a polar space $\xi$ is a symp in a parapolar space, then we speak about \emph{$\xi$-special lines}, with the obvious meaning; that is, disjoint lines containing points at distance $2$ such that some point of either is collinear to all points of the other.

We can interpret residues (or links) of vertices from the theory of buildings in the corresponding Lie incidence geometries as \emph{point residuals}.
Let $\Delta=(X,\cL)$ be a parapolar space with the property that all singular subspaces are projective spaces and through each point we have at least one plane.
Then, at each point $x\in X$, we can define the point-residual $\Res_\Delta(x)$, or $\Res(x)$ if no confusion can arise, as the point-line geometry with point set the set of lines of $\Delta$ through $x$ and with as set of lines the planar line pencils of $\Delta$ at $x$ (each line of the line pencil contains $x$).
As usual, the type of the point residuals can be read off the Coxeter diagram by deleting the node corresponding to the points.

Parapolar spaces are so-called \emph{gamma spaces}, that is, point-line geometries in which each point is collinear to zero, one, or all points of a given line.
We will frequently use this property, often without reference.

We will be working with specific Lie incidence geometries, mainly of exceptional type.
For the classical types, the properties can be derived without much effort from the corresponding projective or polar space.
We now review the basic properties of the exceptional Lie incidence geometries we will be working with.
Along the way, we also prove some additional properties that we will need.

The basic properties reflect the possible mutual positions of certain elements of the geometry, usually points, singular subspaces, and symps.
In the statements of facts, we will occasionally introduce terminology and underline the introduced notions.
Subspaces or symps through a common point $x$ will occasionally be called \emph{locally opposite (at $x$)} if they correspond to opposite objects in $\Res(x)$.
We also extend this terminology to all vertices $x$.
In particular, if $\xi$ is a symp, then singular subspaces in $\xi$ that are opposite in $\xi$ as a polar space are called \emph{locally opposite at $\xi$}, or briefly \emph{$\xi$-opposite}.
This extends the terminology for lines of $\xi$ being $\xi$-special, introduced before.

\subsection{Lie incidence geometries of type $\mathsf{E_{6,1}}$} 

For each field $\K$ there exists a unique Lie incidence geometry isomorphic to $\mathsf{E_{6,1}}(\K)$.
It is a strong parapolar space of diameter $2$ and also called a \emph{minuscule geometry}.
The following properties can either be found in \cite{Tits:57}, or can be easily derived from an apartment of the corresponding building (in this case the $1$-skeleton of such an apartment, where vertices are points of $\mathsf{E_{6,1}}(\K)$ and edges are lines, is the Schl\"afli graph, see  \cite[\S10.3.4]{Bro-Coh-Neu:89}).
One can also use the chain calculus introduced in \cite{Tits:56}, also explained in \cite[\S4.5.4]{Bro-Mal:22}.

\begin{fact}\label{pointsympE61}
Let $p$ be a point and $\xi$ a symp of $\mathsf{E_{6,1}}(\K)$ with $x\notin\xi$.
Then $p^\perp\cap \xi$ is either empty (and we say that $x$ and $\xi$ are \underline{far}; they are also opposite in the corresponding building) or a maximal singular subspace of $\mathsf{E_{6,1}}(\K)$, which we call a \underline{$4'$-space} (and we say that $x$ and $\xi$ are \underline{close}).
Also, $\Res(p)$ is the Lie incidence geometry $\mathsf{D_{5,5}}(\K)$.
\end{fact}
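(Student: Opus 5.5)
The Fact has three parts, and I would prove them in this order: first the structure of $\Res(p)$, then the dichotomy for $p^\perp\cap\xi$, and finally the identification of "far" with opposition. Throughout I read the hypothesis $x\notin\xi$ as $p\notin\xi$.

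\textbf{The residue.} Deleting node $1$ from the $\mathsf{E_6}$ diagram (Bourbaki labelling) leaves a $\mathsf{D_5}$ diagram. Node $6$ is an end node of it, attached to node $5$, and node $5$ is in turn attached to node $4$, the branch node. So node $6$ corresponds to a spinor node of $\mathsf{D_5}$. In the residue of a point, the points are the lines through $p$, which are the vertices of type $3$. That is the node adjacent to $1$, and it becomes the other spinor node of the $\mathsf{D_5}$. Hence $\Res(p)$ is a half-spin geometry, which is $\mathsf{D_{5,5}}(\K)$ after renumbering.

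\textbf{The dichotomy.} Both $p$ and $\xi$ lie in a common apartment, so the possible positions of $p$ relative to $\xi$ can be read off in the Schläfli graph. In that graph each vertex has $16$ neighbours and $10$ non-neighbours. A symp corresponds to a $\mathsf{D_5}$-subapartment, that is, a cross-polytope on $10$ vertices, and there are $27$ such symps. A vertex outside a given symp is adjacent to either $5$ or $0$ of its vertices, and the $5$ form a maximal singular subspace of the symp.

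To pass from the apartment to the building, I use convexity of $\xi$ together with the fact that $\mathsf{E_{6,1}}(\K)$ is strong of diameter $2$. Let $y\in p^\perp\cap\xi$. Then every point of $\xi$ is at distance at most $2$ from $p$. A point $z$ of $\xi$ collinear to $y$ lies in the symp $\xi(p,z)$ whenever $p\perp\!\!\!\perp z$, so the locus $p^\perp\cap\xi$ is a singular subspace of $\xi$. To get its dimension, look in $\Res(y)$, which is $\mathsf{D_{5,5}}$. There $\xi$ becomes a $4$-space (a $\mathsf{D_4}$-point-residual) and the line $py$ becomes a point. In a half-spin geometry $\mathsf{D_{5,5}}$, a point and a maximal singular subspace of type $4$ meet in one of only two ways. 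Either the point lies in that subspace, which would force $p\in\xi$ and is excluded, or the point is collinear with a plane of it. That plane, translated back, gives the maximal singular subspace $p^\perp\cap\xi$ of dimension $4$ in $\xi$, which is exactly the $4'$-space claimed.

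\textbf{Far means opposite.} The opposition involution of $\mathsf{E_6}$ swaps types $1$ and $6$, and symps have type $6$. In the apartment, the vertex opposite a point $p$ is the symp consisting of the $10$ non-neighbours of $p$, so it is disjoint from $p^\perp$. Conversely, suppose $p$ and $\xi$ are far. Choose an apartment containing both. Inside that apartment the vertex $p$ has exactly one symp not adjacent to it, namely the opposite one, so $\xi$ is opposite $p$. Equivalently, one can check that the Weyl distance between $p$ and $\xi$ is determined by $p^\perp\cap\xi$ and that emptiness corresponds to the longest word.

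\textbf{Main obstacle.} The hard part is making rigorous the residue step that rules out intermediate intersections. I would do this either through the explicit position analysis of point-versus-maximal-subspace in $\mathsf{D_{5,5}}$, or by citing the chain calculus of \cite{Tits:56}.
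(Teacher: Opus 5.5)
Your conclusions are correct, but the central step, the dichotomy for $p^\perp\cap\xi$ read off in $\Res(y)$, does not go through as written. The cause is a misreading of the diagram that carries forward into the argument.

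After deleting node $1$, the branch node $4$ has arms $\{2\}$, $\{3\}$ and $\{5,6\}$. So the spinor nodes of the residual $\mathsf{D_5}$ are $2$ and $3$. Node $6$ lies at distance $2$ from the branch node, as your own description shows, so it is the polar node. Hence in $\Res(y)\cong\mathsf{D_{5,5}}(\K)$ the symp $\xi$ is not a maximal singular subspace at all. It is a symp of $\Res(y)$, namely $\Res_\xi(y)\cong\mathsf{D_{4,1}}(\K)$; the maximal $4$-spaces of $\Res(y)$ come from the $5$-spaces (type $2$) through $y$.

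So the point-versus-maximal-subspace dichotomy you invoke concerns the wrong object. It is also incomplete for that object, since a point of $\mathsf{D_{5,5}}$ can be collinear to no point of a maximal $4$-space. The dimensions do not match either: a plane of $\Res(y)$ is a $3$-space through $y$ in $\mathsf{E_{6,1}}(\K)$. Your argument would therefore only produce a $3$-space inside $p^\perp\cap\xi$, not the claimed $4'$-space. Separately, the sentence meant to show that $p^\perp\cap\xi$ is singular is vacuous, because $z\in\xi(p,z)$ holds by definition.

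The repair is short.
\begin{itemize}
\item In the half-spin model, the symp of $\Res(y)$ corresponding to $\xi$ is the set of generators of one family through a point $x$ of $Q^+(9,\K)$. The point $py$ is a generator $G$ of that family with $x\notin G$, because $p\notin\xi$.
\item The generators of that family through $x$ meeting $G$ in a plane correspond bijectively to the planes of the $3$-space $x^\perp\cap G$. So $py$ is collinear in $\Res(y)$ to a maximal, $3$-dimensional, singular subspace of $\Res_\xi(y)$.
\item Translated back, this gives a $4$-space $U\ni y$ of $\xi$ with $U\subseteq p^\perp$.
\item Singularity of $p^\perp\cap\xi$ follows from convexity. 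Two non-collinear points $z_1,z_2\in p^\perp\cap\xi$ would put $p$ on a shortest path between them, forcing $p\in\xi(z_1,z_2)=\xi$.
\item Hence $p^\perp\cap\xi=U$ by maximality of $U$ in $\xi$, and $U\subseteq\langle p,U\rangle$ shows it is a $4'$-space.
\end{itemize}

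In the last part, your argument that ``far'' implies opposite is fine. For the converse, disjointness from $p^\perp$ inside a single apartment is not enough. If $y\in p^\perp\cap\xi$, choose an apartment containing $p$ and the simplex $\{y,\xi\}$. In that apartment $\xi$ contains a neighbour of $p$, so $\xi$ cannot be opposite $p$.

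The paper itself gives no argument for this Fact; it refers to Tits and to reading it off in an apartment.
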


\begin{fact}\label{sympsympE61}
Two symps intersect either in a point, or in a maximal singular subspace of either, in which case we call the subspace a \underline{$4$-space} and the symps \underline{adjacent}.
The $4$-spaces of a given symp $\xi$ constitute an oriflamme class of the symp, which is a polar space of type $\mathsf{D_{5,1}}$; the $4'$-spaces contained in $\xi$ form the other oriflamme class.
 
\end{fact}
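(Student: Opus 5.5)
The plan is to verify both assertions of \cref{sympsympE61} inside a single apartment, using the Schl\"afli graph model, and then to transfer the result to the whole geometry via the building axioms. Any two symps $\xi,\xi'$ correspond to two vertices of type $6$ of the building of type $\mathsf{E_6}$. These lie in a common apartment $\Sigma$. The intersection $\xi\cap\xi'$ is a convex subspace, since symps are convex. It is therefore determined by the intersection of their traces in $\Sigma$.

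In $\Sigma$, the points form the $27$ vertices of the Schl\"afli graph, and the symps form $27$ ten-point subsets, each a cross-polytope (the apartment of a $\mathsf{D_5}$ polar space). Concretely, I would take the $27$ lines on a cubic surface, or the standard model $\{a_i,b_i,c_{ij}\}$. A $\mathsf{D_5}$-symp trace is the set of points non-collinear to a fixed point $p$, together with nothing else. Equivalently, the symp traces correspond to the $27$ points via $\xi_p=\{x: x\not\perp p,\ x\neq p\}$, which is the $10$-point non-neighbourhood. Two such $10$-sets meet in $1$ or $5$ points, according as $p\perp p'$ or not. Checking this is a short count with the strongly regular parameters $(27,16,10,8)$ of the complement.

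\begin{compactenum}[$(a)$]
\item A $5$-point intersection is a maximal singular subspace of each cross-polytope, that is, an apartment of a $4$-space.
\item A $1$-point intersection is a single point.
\end{compactenum}

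Since every element of the intersection $\xi\cap\xi'$ lies in an apartment containing both symps and a given point, the intersection is exactly a point or a $4$-dimensional singular subspace. The one real step here is convexity and retraction: a point of $\xi\cap\xi'$ together with $\xi,\xi'$ lies in a common apartment. That holds because a point incident with $\xi$ and $\xi'$ forms simplices with each, and one applies the standard fact on convex hulls in spherical buildings.

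For the second claim, note that a symp $\xi$ is a $\mathsf{D_{5,1}}$ polar space whose maximal singular subspaces split into two oriflamme classes. I would show that the $4$-spaces of type "symp intersection" all lie in one class, and that the $4'$-spaces (the $p^\perp\cap\xi$ for close points $p$ from \cref{pointsympE61}) form the other. This is residue-theoretic. A maximal singular subspace of $\xi$ is a vertex of type $4$ or $5$ in $\Res(\xi)$, which is of type $\mathsf{D_5}$. Vertices of type $5$ in $\Res(\xi)$ correspond to symps adjacent to $\xi$. Vertices of type $4$ correspond to $4'$-spaces, i.e.\ maximal singular subspaces of $\mathsf{E_{6,1}}$ not contained in a second symp. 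Every generator of each class occurs, since a vertex of type $4$ or $5$ incident with $\xi$ is completed by a chamber. A quick apartment check confirms the class assignment: two $4$-spaces in $\xi$ meet in even codimension parity.

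The expected main obstacle is this identification of types. One must make sure that the subspace $\xi\cap\xi'$ coming from an adjacent symp is exactly the type-$5$ vertex of the flag, and that the $p^\perp\cap\xi$ are the type-$4$ vertices. I would settle it by the Bourbaki labelling, checking against the Schl\"afli apartment computation above.
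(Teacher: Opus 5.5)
Your route is the one the paper points to. The paper gives no proof beyond citing Tits and suggesting a check in an apartment whose collinearity graph is the Schl\"afli graph. However, your passage from the apartment $\Sigma$ back to the building is asserted rather than proved, and as written it does not go through.

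\textbf{The transfer from $\Sigma$ to the building.} The sentence ``it is therefore determined by the intersection of their traces in $\Sigma$'' is false. One apartment sees a single point, or a $5$-point frame of what is a whole projective $4$-space. Knowing that each point of $\xi\cap\xi'$ lies in \emph{some} apartment through $\xi,\xi'$ only exhibits $\xi\cap\xi'$ as a union of such traces, not as one point or one $4$-space. The missing step is to actually use the convexity you mention.
\begin{itemize}
\item If $x,y\in\xi\cap\xi'$ were non-collinear, then $\xi=\xi(x,y)=\xi'$. So for $\xi\neq\xi'$ the intersection is a singular subspace $U$ of $\xi$.
\item The trace size ($1$ or $5$) depends only on the relative position of $\xi$ and $\xi'$, so it is the same in every apartment through both.
\item If it is $5$, the five pairwise collinear trace points span a generator of $\xi$ inside $U$. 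Then $U$ cannot be larger, since a point of $\xi$ off a generator is non-collinear to some point of it.
\item If it is $1$ and $U$ contained a line $L\ni x$, take an apartment through the simplex $\{x,L,\xi\}$ and the vertex $\xi'$. Apartments are full subcomplexes (apply a retraction), so $\{L,\xi'\}$ is an edge of it. Since $\Res(L)$ splits along the types $\{1\}$ and $\{2,4,5,6\}$, the second point of $L$ in that apartment is joinable to both $\xi$ and $\xi'$. This gives a trace of size $2$, a contradiction.
\end{itemize}

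\textbf{The apartment count.} The collinearity graph of $\Sigma$ is the Schl\"afli graph $(27,16,10,8)$, so common non-neighbours are counted in its complement $(27,10,1,5)$. This gives $|\xi_p\cap\xi_{p'}|=\mu=5$ if $p\perp p'$ and $\lambda=1$ if $p\not\perp p'$. That is the reverse of your assignment, as it must be, since the opposition involution of $\Sigma$ sends collinear points to adjacent symps.

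\textbf{The second assertion.} With Bourbaki labelling, the two generator classes of $\Res(\xi)$ have types $2$ and $5$; type $4$ is the branch node and gives planes. Also, a $4'$-space is \emph{not} a maximal singular subspace of $\mathsf{E_{6,1}}(\K)$: it is the shadow of a $\{2,6\}$-simplex, i.e.\ a hyperplane of a $5$-space.

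The identification you single out as the obstacle is immediate in $\Sigma$. For $p\perp p'$, the vertex of $\Sigma$ opposite the line $pp'$ has type $5$ (opposition swaps types $3$ and $5$). It is incident with $\xi_p$ and $\xi_{p'}$, and its shadow in $\Sigma$ is exactly $\xi_p\cap\xi_{p'}$. Hence every apartment through adjacent $\xi,\xi'$ contains a type-$5$ vertex incident with both. Its shadow is a projective $4$-space (its residue is $\mathsf{A_4}\times\mathsf{A_1}$), lies in $\xi\cap\xi'$, and so equals it.

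By contrast, for $p$ close to $\xi$, the space $p^\perp\cap\xi$ is a hyperplane of the singular $5$-space $\langle p,p^\perp\cap\xi\rangle$. So it is not the shadow of a type-$5$ vertex, since those shadows are maximal singular subspaces by \cref{restE61}. The oriflamme statement then follows because the two end nodes of the fork of $\mathsf{D_5}$ index the two generator classes; no separate parity check is needed.
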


The following lemmas can be read off the diagram, checked in an apartment, and is contained in \cite[\S3.2]{Tits:57}, but can also be proved using the above lemmas.

\begin{fact}\label{345E61}
Each $3$-space of $\mathsf{E_{6,1}}(\K)$ is the intersection of a unique $4$-space and a unique $5$-space. 
\end{fact}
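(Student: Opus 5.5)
The plan is to pass to the residue of a point of $U$, settle the statement there by a direct polar-space argument in $\mathsf{D_{5,5}}(\K)$, and then lift back. Let $U$ be a $3$-space of $\mathsf{E_{6,1}}(\K)$ and pick a point $p\in U$. By \cref{pointsympE61}, $\Res(p)$ is the half-spin geometry $\mathsf{D_{5,5}}(\K)$. The singular subspaces of $\mathsf{E_{6,1}}(\K)$ through $p$ correspond to singular subspaces of $\Res(p)$ of one dimension less. Under this correspondence:
\begin{compactenum}[$(a)$]
\item $U$ becomes a plane $\pi$ of $\Res(p)$;
\item the $4$-spaces through $p$ become $3$-spaces of $\Res(p)$;
\item the $5$-spaces through $p$ become $4$-spaces of $\Res(p)$.
\end{compactenum}
Since every singular subspace containing $U$ contains $p$, it suffices to prove the analogous statement in $\mathsf{D_{5,5}}(\K)$ and to match the correct classes of subspaces.

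To set this up, I would model $\mathsf{D_{5,5}}(\K)$ on the hyperbolic polar space $Q$ of rank $5$. Its points are the maximal singular subspaces $M$ of one oriflamme class of $Q$, and two such points are collinear exactly when they meet in a plane of $Q$. The first step is to list the maximal singular subspaces of $\mathsf{D_{5,5}}(\K)$; I expect exactly two families.
\begin{compactenum}[$(i)$]
\item For a line $L$ of $Q$, the set of all $M\supseteq L$ is a projective $3$-space (the residue of $L$ in $Q$ is of type $\mathsf{D_3}=\mathsf{A_3}$).
\item For a maximal singular subspace $M'$ of the other class, the set of all $M$ meeting $M'$ in a $3$-space of $Q$ is a projective $4$-space, namely the dual of $M'$.
\end{compactenum}
One then checks that every plane of $\mathsf{D_{5,5}}(\K)$ is of the form
\[\{M : L\subseteq M,\ \dim(M\cap M')=3\}\]
for a unique incident pair $L\subseteq M'$. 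Such a plane is the intersection of the $3$-space from $(i)$ determined by $L$ and the $4$-space from $(ii)$ determined by $M'$. Uniqueness holds because both $L$ and $M'$ are recovered from the plane: $L$ is the intersection of its members, and $M'$ is the span of pairwise intersections of its members.

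Lifting back to $\mathsf{E_{6,1}}(\K)$, I must identify which $4$-spaces correspond to family $(i)$ and which to family $(ii)$. The $3$-dimensional ones from $(i)$ correspond to $4$-dimensional singular subspaces through $p$. These must be the $4$-spaces (type-$5$ vertices) rather than the $4'$-spaces, since $4'$-spaces are maximal while $4$-spaces lie in $5$-spaces. I would confirm this by computing the residue of a $4$-space, respectively a $4'$-space, from the diagram. The $4$-dimensional ones from $(ii)$ correspond to the $5$-spaces (type-$2$ vertices). Consequently $U$ is the intersection of a unique $4$-space $W$ and a unique $5$-space $F$ through $p$. Since any $4$-space or $5$-space containing $U$ contains $p$, uniqueness holds globally as well. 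The choice of $p$ is irrelevant, because $W$ and $F$ are defined intrinsically. Building-theoretically, this says that $\{W,F\}$ is the unique simplex of type $\{2,5\}$ determined by $U$, which matches the fact that nodes $2$ and $5$ are both adjacent to node $4$ in the diagram.

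The main obstacle is the classification step in $\mathsf{D_{5,5}}(\K)$: showing that every plane arises from a flag $L\subseteq M'$. In particular, one must rule out planes contained in a family-$(ii)$ $4$-space that fail to lie in a family-$(i)$ $3$-space. I would handle this as follows. Take three non-collinear points $M_1,M_2,M_3$ of the plane. Pairwise they meet in planes of $Q$, all lying inside one $M'$. Using the parity rule for oriflamme classes, the triple intersection $M_1\cap M_2\cap M_3$ is then forced to be a line $L$ of $Q$. An alternative route for this step is to verify it in an apartment, where the $1$-skeleton is the Schl\"afli graph as noted before \cref{pointsympE61}.
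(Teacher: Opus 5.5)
Your argument is correct, but it is not the paper's route. The paper gives no proof of this fact. It only remarks that it can be read off the diagram: a $3$-space is the point shadow of a simplex of type $\{2,5\}$, whose type-$5$ vertex is the $4$-space and whose type-$2$ vertex is the $5$-space. It adds that the fact can also be checked in an apartment or found in \cite{Tits:57}.

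You instead make the diagram reading explicit by passing to $\Res(p)\cong\mathsf{D_{5,5}}(\K)$ and computing in the rank-$5$ hyperbolic polar space $Q$. This is more elementary and self-contained, at the cost of matching residue subspaces with building types by hand. Your key three-point step works for an arbitrary plane, not only one inside a family-(ii) space. Suppose $M_1,M_2,M_3$ span the plane. Then $M_1\cap M_2\cap M_3$ cannot be a plane, since the three points would then be collinear in $\mathsf{D_{5,5}}(\K)$. It cannot be a single point either: then $M_1=\langle M_1\cap M_2,M_1\cap M_3\rangle$, and a point of $(M_2\cap M_3)\setminus M_1$ would be collinear to all of $M_1$. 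So it is a line $L$. The three pairwise intersections are then mutually perpendicular planes through $L$, none lying in the $3$-space spanned by the other two. Hence they span a maximal subspace $M'$ meeting each $M_i$ in a $3$-space. Run this way, you do not need the list of maximal singular subspaces of $\mathsf{D_{5,5}}(\K)$ at all, and planes lying in a family-(i) $3$-space are covered too.

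One sentence must be corrected. You write that $4'$-spaces are maximal while $4$-spaces lie in $5$-spaces; it is the other way round (see \cref{restE61}). The $4$-spaces are maximal, and the $4'$-spaces are hyperplanes of $5$-spaces. As written, your justification would assign the family-(i) $3$-spaces $\{M: M\supseteq L\}$ to the wrong class.

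With the correct fact you get your conclusion. These $3$-spaces are maximal in $\Res(p)$, because they lie in no family-(ii) space. A $4'$-space through $p$, by contrast, yields a $3$-space inside the family-(ii) $4$-space of the $5$-space containing it.

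Equivalently, read the types off the diagram. A $4$-space through $p$ is a type-$5$ vertex, which in $\Res(p)$ corresponds to a line of $Q$. A $5$-space through $p$ is a type-$2$ vertex, which corresponds to a maximal subspace of $Q$ of the class opposite to that of the points $M$ of $\Res(p)$.
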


\begin{fact}\label{restE61}
There are two kinds of maximal singular subspaces in $\mathsf{E_{6,1}}(\K)$.
One kind corresponds to the $4$-spaces, the other to $5$-dimensional projective spaces, called $5$-spaces, which contain $4'$-spaces.
Let $p$ be a point and $W$ a $5$-space.
Then either $p\in W$, or $p^\perp\cap W$ is a $3$-dimensional space (called a $3$-space; $p$ and $W$ are called \underline{close}), or $p$ is collinear to a unique point of $W$ (and $p$ and $W$ are called \underline{far}).
Let $W$ and $W'$ be two distinct $5$-spaces.
Then either $W\cap W'$ is a plane (then $W$ and $W'$ are called \underline{adjacent}), or $W\cap W'$ is just a point, or $W$ and $W'$ are disjoint and there exists a unique $5$-space intersecting both in a respective plane, or $W$ and $W'$ are disjoint and opposite in the building; in the latter case, every point of $W$ is far from $W'$ and every point of $W'$ is far from $W$, and collinearity defines a collineation between $W$ and $W'$.
.
\end{fact}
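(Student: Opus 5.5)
The plan is to read every assertion off an apartment of the $\mathsf{E_6}$ building and then transfer it to the whole geometry, using the fact that any two simplices lie in a common apartment. First I identify the maximal singular subspaces. A maximal singular subspace through a point $p$ corresponds to a maximal singular subspace of $\Res(p)\cong\mathsf{D_{5,5}}(\K)$, by \cref{pointsympE61}. The half-spin geometry $\mathsf{D_{5,5}}(\K)$ has exactly two kinds of maximal singular subspaces: projective $4$-spaces (coming from points of the dual polar picture, i.e.\ vertices of type $1$ of $\mathsf{D_5}$) and projective $3$-spaces (coming from the other oriflamme class). Lifting back, these become singular subspaces of dimensions $5$ and $4$ through $p$. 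The $4$-dimensional ones are the maximal singular subspaces of symps that are not $4'$-spaces, so they are the $4$-spaces of \cref{sympsympE61}. The $5$-dimensional ones are vertices of type $2$ of the building. A $4'$-space lies in a $5$-space because, in $\Res(p)$, a $3$-space of the other oriflamme class lies in a maximal $4$-space. Together with \cref{345E61}, this gives the first two sentences of the statement.

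For the position of a point $p$ relative to a $5$-space $W$, I put $p$ and $W$ in a common apartment. In the Schl\"afli graph on $27$ vertices, $W$ is a $6$-clique (a half of a double-six). Choose one such $6$-clique $C$. The remaining $21$ vertices split into the $6$ vertices of the complementary half of the double-six, each adjacent to exactly one vertex of $C$, and the $15$ remaining vertices, each adjacent to exactly $4$ vertices of $C$. Since $6+6+15=27$, this list is exhaustive, and it gives the trichotomy in the apartment. To pass to the building, I use that $p^\perp\cap W$ is a subspace of $W$ and is convex. Its apartment trace then determines it: the span of $4$ points in general position is a $3$-space, and a single point stays a single point. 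This requires the standard fact that $p^\perp\cap W$ is spanned by the apartment points it contains. That fact holds because intersections of convex subcomplexes with an apartment are again convex closures there.

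For two $5$-spaces $W,W'$, I view them as points of the Lie incidence geometry $\mathsf{E_{6,2}}$, which is a hexagonic parapolar space. In an apartment, the $36$ type-$2$ vertices stand to a fixed one in five relations: equal, collinear, symplectic, special and opposite. I compute $W\cap W'$ as intersections of $6$-cliques in the Schl\"afli graph: $3$ common vertices give a plane, $1$ common vertex gives a point, and $0$ common vertices occurs in two ways. The special case is the one where the two cliques are disjoint and there is a unique third clique meeting both in $3$ vertices, namely the projection $[W,W']$ in $\mathsf{E_{6,2}}$. The opposite case is the one where the two cliques form a double-six. Uniqueness of the intermediate $5$-space in the building is exactly uniqueness of $[W,W']$ for a special pair.

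In the opposite case, each point of $W$ is far from $W'$, since its apartment version lies in the other half of a double-six, and symmetrically for points of $W'$. So collinearity gives a well-defined map $W\to W'$ and an inverse map $W'\to W$, and these are mutually inverse. The step I expect to be the main obstacle is showing that this bijection maps lines to lines. For a line $L\subseteq W$, I would show that the points of $W'$ collinear with points of $L$ form a line of $W'$. To do this I take a point $p\in L$ with image $p'$, and work in $\Res(p)$, or alternatively in the symp $\xi$ spanned by two points of $L$ and the image of one of them. In a polar space, points collinear to a line and lying in a given maximal singular subspace disjoint from it form a subspace, and a dimension count forces it to be a line.
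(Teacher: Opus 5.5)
\textbf{Overall route and minor slips.} Your route is to verify everything in an apartment (the Schl\"afli graph, with $5$-spaces as sixers) and to treat pairs of $5$-spaces as points of $\mathsf{E_{6,2}}$. This is the same as the paper's, which offers nothing beyond ``read off the diagram, check in an apartment, or see Tits''. Your apartment counts are right: $1$ versus $4$ neighbours in a sixer, and $3,1,0,0$ common vertices. There are a few slips.
\begin{itemize}
\item An apartment has $72$ type-$2$ vertices, not $36$. Relative to a fixed one they split as $1+20+30+20+1$.
\item In $\mathsf{D_{5,5}}(\K)$ your labels are wrong, although the dimensions are right. The maximal $4$-spaces come from the generators of the \emph{other} class of the quadric, and the maximal $3$-spaces come from \emph{lines} of the quadric. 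Vertices of type~$1$ (points of the quadric) give symps, not singular subspaces.
\item Your justification of the transfer from apartment to building is circular as stated. Before the trichotomy is proved, $p^\perp\cap W$ is not known to correspond to a convex subcomplex. The paper also takes this transfer for granted, but an honest argument needs something like Tits' projection of $p$ onto $W$.
\end{itemize}

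\textbf{The gap: the collineation step.} The genuine gap is the step you flag yourself, the collineation between opposite $5$-spaces; the polar-space argument you sketch would not work. Take $\xi=\xi(y,x')$ with $x,y\in L$ and $x'$ the image of $x$.
\begin{itemize}
\item The intersection $\xi\cap W'$ is not a maximal singular subspace of $\xi$. If it contained a plane, then $x\in\xi$ would be collinear with at least a line of $W'$, contradicting that $x$ is far from $W'$.
\item Even for a generator $M$ of a rank-$5$ polar space disjoint from $L$, no dimension count produces a line: $L^\perp\cap M$ is a plane, and every point of $M$ is collinear with some point of $L$.
\end{itemize}

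\textbf{Fix.} The correct observation is short. The image $y'\neq x'$ of $y$ is collinear with $y$, and also with $x'$ because both lie in $W'$. On the other hand $y\not\perp x'$, since the only neighbour of $y$ in $W'$ is $y'$. So $y'\in\xi$ by convexity, and likewise $x\in\xi$, so $L\subseteq\xi$. Now $L$ and $x'y'$ are two lines of the polar space $\xi$. Each point of $L$ is collinear with at least one point of $x'y'$, hence by farness exactly one. Each point of $x'y'$ is collinear with some point of $L$. Thus $L$ is mapped onto the line $x'y'$.

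Alternatively, apply \cref{Tits} to the opposite vertices $W,W'$. It shows that $\proj^W_{W'}\colon\Res(W)\to\Res(W')$ is an isomorphism sending type $1$ to type $1$ and type $3$ to type $3$. One then checks that it maps each $x\in W$ to the unique point of $W'$ collinear with $x$.
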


We can now prove some (new) lemmas. 

\begin{lem}\label{lemE6}
Let $x$ be a point and $\xi$ a symp of $\mathsf{E_{6,1}}(\K)$ with $x\notin\xi$.
Then $x$ is opposite $\xi$ if, and only if, for some point $y\in\xi$, not collinear to $x$, the symp $\xi(x,y)$ intersects $\xi$ only in $y$ if, and only if, for all points $y\in\xi$ not collinear to $x$,  the symp $\xi(x,y)$ intersects $\xi$ only in $y$.
\end{lem}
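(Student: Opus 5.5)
Since $\mathsf{E_{6,1}}(\K)$ is strong of diameter $2$, every point $y\in\xi$ with $y\notin x^\perp$ is symplectic to $x$, so $\xi(x,y)$ is well defined. By \cref{sympsympE61}, $\xi(x,y)\cap\xi$ is either the single point $y$ or a $4$-space. By \cref{pointsympE61}, either $x^\perp\cap\xi=\emptyset$ ($x$ far from, i.e.\ opposite, $\xi$) or $x^\perp\cap\xi$ is a $4'$-space $W$ ($x$ close to $\xi$). The plan is to show two implications: if $x$ is opposite $\xi$, then for every $y\in\xi$ we get $\xi(x,y)\cap\xi=\{y\}$; and if $x$ is close to $\xi$, then for every $y\in\xi\setminus x^\perp$ we get $\xi(x,y)\cap\xi$ equal to a $4$-space. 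Together these give all three equivalences, since the "some" and "all" versions then both coincide with oppositeness.

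\textbf{The opposite case.} Suppose $x$ is opposite $\xi$, take $y\in\xi$, and suppose for a contradiction that $\xi(x,y)\cap\xi$ is a $4$-space $U$. Since $x\in\xi(x,y)$, which is a polar space of rank $5$, and $U$ is a maximal singular subspace of that symp, the point $x$ is collinear with a hyperplane of $U$ (a $3$-space), or $x\in U$. Either way $x^\perp\cap\xi\neq\emptyset$, contradicting farness. So $\xi(x,y)\cap\xi=\{y\}$.

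\textbf{The close case.} Let $W=x^\perp\cap\xi$ be the $4'$-space and take $y\in\xi\setminus W$. In the polar space $\xi$, the point $y$ is not collinear with all of $W$, since $W$ is a maximal singular subspace of $\xi$. Hence $y^\perp\cap W$ is a $3$-space $T$. Every point of $T$ is collinear with both $x$ and $y$, so $T\subseteq\xi(x,y)$ by convexity of the symp. Together with $y$, the space $T$ spans a $4$-dimensional singular subspace $U'=\langle T,y\rangle\subseteq\xi$. Both $T$ and $y$ lie in $\xi(x,y)$, and symps are subspaces, so $U'\subseteq\xi(x,y)\cap\xi$. The intersection therefore contains more than a point and, by \cref{sympsympE61}, is a $4$-space.

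The main point needing care is this last step. Here the intersection is forced to be large simply because it contains a $4$-dimensional singular subspace, and \cref{sympsympE61} allows no option other than a point or a $4$-space. One should also check that $y\notin x^\perp$ is automatic once $y\notin W$, which holds by the definition of $W$. The equivalence of the three statements then follows immediately.
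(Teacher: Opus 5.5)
Your proof is correct. The forward direction, that if $x$ is opposite $\xi$ then every intersection $\xi(x,y)\cap\xi$ is just $\{y\}$, is exactly the paper's argument.

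For the converse you take a different route. You prove the contrapositive directly: if $x$ is close to $\xi$ with $W=x^\perp\cap\xi$, then for every $y\in\xi\setminus W$ the $3$-space $y^\perp\cap W$ lies in $\xi(x,y)\cap\xi$, so every such intersection is a $4$-space. The paper instead first proves ``some $\Rightarrow$ all'' by a comparison argument. If $\xi(x,y)\cap\xi=\{y\}$ but $\xi(x,z)\cap\xi$ is a $4$-space $C$, then $x$ and $y$ are collinear to $3$-spaces of $C$. These meet in a plane, which lies in $\xi(x,y)\cap\xi$ by convexity. The paper then simply asserts that triviality of all these intersections forces $x^\perp\cap\xi=\varnothing$.

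That final assertion is precisely your close-case computation, which the paper leaves implicit. So your decomposition (opposite $\Rightarrow$ all intersections trivial, close $\Rightarrow$ all intersections $4$-spaces) is both shorter and more complete.

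Two small remarks. Spanning $U'=\langle T,y\rangle$ is unnecessary: $T\subseteq W\subseteq\xi$ already lies in both symps and has more than one point. For ``all $\Rightarrow$ some'' you implicitly use $\xi\setminus x^\perp\neq\varnothing$; this holds because $x^\perp\cap\xi$ is either empty or a $4'$-space.
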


\begin{proof}
If $x$ is opposite $\xi$, then $\xi \cap \xi(x,y)$ can not be more than just the point $y$, for every $y \in \xi$, because otherwise, $\xi \cap \xi(x,y)$ is a $4$-space by \cref{sympsympE61} and $x$ is collinear to a $3$-space of that $4$-space, contradicting the fact that $x$ is opposite $\xi$.
Now suppose that $\xi \cap \xi(x,y) = \{y\}$, for some fixed $y \in \xi$.
We want to see that $x$ has to be opposite $\xi$.
Suppose there exists some $z \in \xi$ at distance $2$ from $x$, such that $\xi \cap \xi(x,z)$ is a $4$-space (cf.~\cref{pointsympE61}).
Then $y$ and $x$ have to be collinear to $3$-spaces $U_y$ and $U_x$, respectively, of that $4$-space, which will necessarily intersect in at least a plane $\pi$.
But then $\pi$ will also be contained in $\xi(x,y)$, by convexity, and hence $\xi \cap \xi(x,y)$ will be more than just the point $y$, which is a contradiction.
So for every point $z \in \xi\setminus x^\perp$, we have $\xi(x,z) = \{z\}$.
That means that $x$ is not collinear to any point in $\xi$ and thus, $x$ is opposite $\xi$.
\end{proof}

\begin{lem}\label{lemE6b}
Let $L$ be a line and $\xi$ a symp of $\mathsf{E_{6,1}}(\K)$ with $L\cap\xi=\varnothing$.
If no point of $L$ is opposite $\xi$, then $L$ is collinear to a unique plane of $\xi$.
 
\end{lem}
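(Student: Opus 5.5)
By \cref{pointsympE61}, since no point of $L$ is opposite $\xi$ and $L\cap\xi=\varnothing$, every point $x\in L$ is close to $\xi$. So $U_x:=x^\perp\cap\xi$ is a $4'$-space of $\xi$ for each $x\in L$. The plan is to study how the $4'$-spaces $U_x$ vary along $L$, show they pairwise meet in a common plane, and then check that this plane is the only one collinear to $L$.

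\textbf{Step 1: the $U_x$ pairwise meet in planes.} Take distinct $x,x'\in L$. The $4'$-spaces form one oriflamme class of $\xi$, a polar space of type $\mathsf{D_{5,1}}$ (\cref{sympsympE61}). Hence $U_x\cap U_{x'}$ is either a point, a plane, or all of $U_x$ (the latter only if $U_x=U_{x'}$).
\begin{compactenum}[$(i)$]
\item If $U_x=U_{x'}$, then $L$ is collinear with the $4'$-space $U_x$. Then $\langle L,U_x\rangle$ is singular of dimension $6$, exceeding the maximal dimension $5$ from \cref{restE61}. This is a contradiction.
\item If $U_x\cap U_{x'}$ is a single point $u$, pick $y\in U_x$ not collinear to $u$'s complement appropriately. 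More precisely, pick $y\in U_x\setminus U_{x'}$ and $y'\in U_{x'}$ with $y\not\perp y'$; such $y'$ exists since $y^\perp\cap U_{x'}$ is a hyperplane of $U_{x'}$. Then $y,x,x',y'$ is a path, which I expect to contradict either convexity of $\xi$ or the diameter-$2$ structure.

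A cleaner route for $(ii)$: consider the singular subspaces $\langle x,U_x\rangle$ and $\langle x',U_{x'}\rangle$. These are $5$-spaces, since a $4'$-space lies in a $5$-space and $x$ spans with $U_x$ a $5$-dimensional singular subspace. They share $u$ and contain $x,x'$, so they share the line $L$ and the point $u$, hence at least a plane $\langle L,u\rangle$. Since they are distinct (as $U_x\neq U_{x'}$), they are adjacent and meet in exactly a plane by \cref{restE61}, and that plane is $\langle L,u\rangle$. Compare these with $5$-spaces through $L$ meeting $\xi$.
\end{compactenum}

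I would instead aim for a direct argument using the following observation: every point $z\in U_x\cap U_{x'}$ is collinear with two points of $L$, hence with all of $L$ by the gamma property. So $U_x\cap U_{x'}=L^\perp\cap\xi=:\pi$ is independent of the pair chosen, and $L^\perp\cap\xi=\bigcap_{x\in L}U_x$ is a singular subspace. The $5$-space argument above shows $\dim\pi\ge$ the dimension forced by \cref{restE61}: $\langle x,U_x\rangle$ and $\langle x',U_{x'}\rangle$ meet in $\langle L,\pi\rangle$, and two distinct $5$-spaces meet in a plane, a point, or the empty set. Since this intersection contains the line $L$, it must be a plane, so $\pi$ is a point. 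That contradicts the parity count from the oriflamme classes, which says two distinct $4'$-spaces meet in a point or a plane. I would need to recheck which is right.

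\textbf{Main obstacle.} The hardest step is reconciling the dimension of $\pi$ with these two constraints. I expect the resolution to be that $\langle x,U_x\rangle$ is not a $5$-space but lies in one. I would redo the computation in an apartment, i.e. the Schläfli graph, to confirm that $\pi$ is a plane. Uniqueness is then immediate, since any plane collinear to $L$ lies in $L^\perp\cap\xi=\pi$.
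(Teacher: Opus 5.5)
Your opening steps match the paper. That covers the closeness of every point of $L$, the exclusion of $U_x=U_{x'}$ via a singular $6$-space, and the observation that $U_x\cap U_{x'}=L^\perp\cap\xi$ by the gamma property, which gives uniqueness. However, the actual content of the lemma is never established: you do not show that $U_x\cap U_{x'}$ is a plane rather than a single point. Your case $(ii)$ is left open. Deferring to an apartment computation is a plan, not a proof, unless you also argue why $L^\perp\cap\xi$ can be read off in an apartment containing $L$ and $\xi$.

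The $5$-space argument behind your apparent contradiction is wrong, and so is your diagnosis of it. The subspace $\langle x,U_x\rangle$ really is a $5$-space. What fails is the claim that it contains $L$. If $x'\in\langle x,U_x\rangle$, then $x'$ would be collinear with all of $U_x$, giving $U_x\subseteq U_{x'}$ and hence $U_x=U_{x'}$, which you excluded in $(i)$. So $\langle x,U_x\rangle$ and $\langle x',U_{x'}\rangle$ share only $U_x\cap U_{x'}$, not $\langle L,\pi\rangle$. This is perfectly compatible with $\pi$ being a plane, so there is no contradiction to resolve.

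The missing step is short, and you had the ingredients in your first attempt at $(ii)$:
\begin{compactenum}[$(1)$]
\item Take $y\in U_x\setminus U_{x'}$. Then $W:=y^\perp\cap U_{x'}$ is a $3$-space.
\item The points $y$ and $x'$ are not collinear, since $y\notin U_{x'}=x'^\perp\cap\xi$. Both are collinear with all of $W$, so they determine a symp $\xi(y,x')$. By convexity this symp contains $W$, and also $x$, since $y\perp x\perp x'$.
\item In this polar space, $x^\perp\cap W$ is at least a plane. It lies in $x^\perp\cap\xi=U_x$ and in $W\subseteq U_{x'}$.
\item Two distinct $4'$-spaces of $\xi$ lie in one oriflamme class, so they meet in a point or a plane. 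Hence $U_x\cap U_{x'}$ is exactly a plane.
\end{compactenum}
This is essentially the paper's argument. Instead of looking for a non-collinear $y'$ and a path, use the symp through $y$ and $x'$, which contains the hyperplane $y^\perp\cap U_{x'}$ together with $x$.
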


\begin{proof}
If no point of $L$ is opposite $\xi$, then every point of $L$ is collinear to a $4'$-space of $\xi$.
Two $4'$-spaces in a symp intersect in either a point or a plane or they coincide, because they belong to the same oriflamme class by \cref{restE61}.
If two points of $L$ were collinear to the same $4'$-space of $\xi$, then every point of that $4'$-space would be collinear to every point of $L$ and $L$ and that $4'$-space would span a $6$-space, which is impossible by \cref{restE61}.
Now, let $x$ and $y$ be two points of $L$, let $V$ be the $4'$-space that $y$ is collinear to in $\xi$ and let $x'$ be some point of $\xi \setminus V$ that $x$ is collinear to.
Then $x'$ has to be collinear to a $3$-space $U \subseteq V$.
 The symp $\xi(y,x')$ contains $U$ and $x$.
With that, $x$ has to be collinear to a plane $\pi$ of $U$.
That means $x^{\perp} \cap \xi$ and $y^{\perp} \cap \xi$ intersect in $\pi$ and since every point of $\pi$ is collinear to $x, y \in L$, and $\mathsf{E_{6,1}}(\K)$ is a gamma space, every point of $\pi$ has to be collinear to every point of $L$.
With that, $L$ is collinear to a unique plane of $\xi$.
\end{proof}

\begin{lem}\label{3spacesuffices}
Let $p$ be a point and $M$ a $3$-space of  $\mathsf{E_{6,1}}(\K)$, such that no point of $M$ is collinear to $p$.
Then the unique maximal $4$-space containing $M$ does not contain any point collinear to $p$.
\end{lem}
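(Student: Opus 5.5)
Let $W$ be the unique $4$-space containing $M$, given by \cref{345E61}. Suppose, for a contradiction, that some point $z\in W$ is collinear to $p$. The plan is first to show that $p^\perp\cap W=\{z\}$, and then to derive a contradiction from the oriflamme structure of a symp through $W$.

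\emph{Step 1: $p^\perp\cap W=\{z\}$.} Since $\mathsf{E_{6,1}}(\K)$ is a gamma space, $p^\perp\cap W$ is a subspace of the projective $4$-space $W$. By hypothesis it is disjoint from $M$, which is a hyperplane of $W$. Every line of $W$ meets $M$, so a subspace of $W$ disjoint from $M$ contains no line. Hence $p^\perp\cap W$ is at most a point, and therefore equals $\{z\}$.

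\emph{Step 2: locate $W$ in a symp and bound $p^\perp\cap\xi$.} By \cref{sympsympE61}, a $4$-space arises as the intersection of two adjacent symps and is a maximal singular subspace of each. Choose a symp $\xi\supseteq W$.
\begin{itemize}
\item If $p\in\xi$, then, $\xi$ being a polar space, $p^\perp\cap W$ is either $W$ or a hyperplane of $W$. Both have dimension at least $3$, which contradicts Step 1.
\item So $p\notin\xi$. Since $z\in p^\perp\cap\xi$, \cref{pointsympE61} shows that $p$ and $\xi$ are close, and $V:=p^\perp\cap\xi$ is a $4'$-space.
\end{itemize}

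\emph{Step 3: parity contradiction.} By \cref{sympsympE61}, the $4$-spaces and the $4'$-spaces of $\xi$ form the two distinct oriflamme classes of the polar space $\xi$ of type $\mathsf{D_{5,1}}$. So $W$ and $V$ are in different classes. By the parity rule for $\mathsf{D}_n$ with $n=5$, their intersection has odd projective dimension, namely $-1$, $1$ or $3$. On the other hand, $W\cap V=W\cap p^\perp=\{z\}$ has dimension $0$, which is a contradiction. Hence no point of $W$ is collinear to $p$.

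The only delicate point is Step 2: justifying that $W$ lies in a symp as a maximal singular subspace of the $4$-space class, so that the oriflamme rule applies. This is exactly the content of \cref{sympsympE61}, where $4$-spaces are defined as intersections of adjacent symps and are said to form one oriflamme class of each symp containing them.
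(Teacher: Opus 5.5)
Your proof is correct and follows essentially the same route as the paper: place the $4$-space in a symp $\xi$, rule out $p\in\xi$, and use the oriflamme parity between the $4$-space and the $4'$-space $p^\perp\cap\xi$. The only difference is cosmetic. You first reduce $p^\perp\cap W$ to a single point and then contradict parity, whereas the paper observes directly that the line or $3$-space forced by parity must meet $M$.
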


\begin{proof}
Suppose a point $p$ is collinear to a point $q$ of a $4$-space $C$ containing a $3$-space $M$ which does not contain any point collinear to $p$.
Put $C$ in a symp $\xi$.
If $p$ is in $\xi$, then $p$ is collinear to a $3$-space of $C$, which intersects $M$, a contradiction.
If $p$ is not in $\xi$, then $p$ is collinear to a $4'$-space $W$, which already intersects $C$ in $q$.
But the intersection must have odd codimension, hence the intersection $C\cap W$ is either a line or a $3$-space.
Both would intersect $M$, a contradiction. 
\end{proof}

\begin{lem}\label{554E61}
Let $W,W'$ be two opposite $5$-spaces, and let $U$ be a $4$-space intersecting $W$ in a $3$-space.
Then there exists a unique point $p\in U\setminus W$ close to $W'$.
Also, $p^\perp\cap W'=\{x'\in W\cap U'\mid (\exists x\in W)(x'\perp x)\}$. 
\end{lem}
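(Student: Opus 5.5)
The plan is to use the collineation $W\to W'$ given by collinearity (\cref{restE61}, since $W$ and $W'$ are opposite). Put $M:=U\cap W$, a $3$-space, and let $M'\subseteq W'$ be its image under this collineation, also a $3$-space. Reading the set in the statement as $\{x'\in W'\mid (\exists x\in W\cap U)(x'\perp x)\}$, it is exactly $M'$. So the claim becomes: there is a unique point $p\in U\setminus W$ with $p^\perp\cap W'=M'$, and every other point of $U\setminus W$ is far from $W'$.

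\textbf{Step 1 ($U\cap W'=\varnothing$).} If $y\in U\cap W'$, then $y$ would be collinear to all of the $3$-space $M\subseteq W$. But $y\in W'$ is far from $W$, so it is collinear to only one point of $W$; contradiction.

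\textbf{Step 2 (the lines $L_{x'}$).} Fix $x'\in M'$ and let $x\in M$ be its partner. Embed $U$ in a symp $\xi$ (\cref{sympsympE61}). By \cref{pointsympE61}, $x'\notin\xi$ is collinear either to nothing in $\xi$ or to a $4'$-space of $\xi$. If $x'\in\xi$, then $x'^\perp\cap U$ is a $3$-space; if $x'\notin\xi$, then, as in the proof of \cref{3spacesuffices}, $x'^\perp\cap U$ has odd codimension in $U$. Either way $x'^\perp\cap U$ is a line or a $3$-space, and it is nonempty because it contains $x$. A $3$-space of $U$ would meet $M$ in a plane. That is impossible, since $x'^\perp\cap W=\{x\}$. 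Hence $x'^\perp\cap U$ is a line $L_{x'}$ through $x$ with $L_{x'}\cap M=\{x\}$.

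\textbf{Step 3 (the $L_{x'}$ are concurrent).} This is the main obstacle. I would first show that for collinear $x',y'\in M'$ the lines $L_{x'}$ and $L_{y'}$ meet. The points $x'$ and $y$ are at distance $2$, and the space is strong, so $\xi(x',y)$ exists. It contains $x,y,x',y'$, hence the lines $xy$ and $x'y'$. I would then analyse $\xi(x',y)\cap U$ together with $L_{x'},L_{y'}$ inside this polar space, in the spirit of \cref{lemE6b}. The aim is to find a point of $U$ collinear to both $x'$ and $y'$ that lies off $M$. Granting that, the lines $L_{x'}$, $x'\in M'$, pairwise meet, but they pass through distinct points of $M$ that span a $3$-space, so they cannot all lie in one plane. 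Pairwise-intersecting lines not all in a plane are concurrent, so they share a point $p\notin M$. This $p$ is collinear to all of $M'$. Since $p\notin W'$ and $p^\perp\cap W'$ is a point or a $3$-space (\cref{restE61}), we get $p^\perp\cap W'=M'$, so $p$ is close to $W'$.

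\textbf{Step 4 (uniqueness).} Let $q\in U\setminus W$, $q\neq p$, be close to $W'$, and let $N:=q^\perp\cap W'$, a $3$-space. For each $x'\in N$, the point $q$ lies in $x'^\perp\cap U$, which by Step 2 is the line $L_{x'}$ if $x'\in M'$. If $x'\notin M'$, its partner lies outside $M$, so $x'^\perp\cap U\cap M=\varnothing$. But $x'^\perp\cap U$ is a nonempty line or $3$-space of the $4$-space $U$ (it contains $q$), and both meet the $3$-space $M$; contradiction. Hence $N\subseteq M'$, and since both are $3$-spaces, $N=M'$. Then $q\in\bigcap_{x'\in M'}L_{x'}=\{p\}$, because lines through distinct points of $M$ meet only at $p$. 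So $q=p$.
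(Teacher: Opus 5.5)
Your Steps 1, 2 and 4 are correct. Step 4 is essentially the argument of \cref{3spacesuffices}, which is what the paper uses for the description of $p^\perp\cap W'$; uniqueness is then immediate.

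The gap is Step 3, and it carries the whole existence part of the lemma. You say you ``would analyse'' $\xi(x',y)\cap U$ and then continue ``granting'' that $L_{x'}$ and $L_{y'}$ meet off $M$. Nothing in Steps 1--2 prevents two lines of the $4$-space $U$ through distinct points of $M$ from being skew, so as written the existence of $p$ is unproved. The paper gets existence directly: it places the simplex $\{U,W\}$ together with $W'$ in a common apartment and reads off $p$ there.

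The missing argument is short.
\begin{itemize}
\item Put $\zeta:=\xi(x',y)$. It exists because $x'\not\perp y$ and the space is strong of diameter~$2$. It contains $x$ and $y'$, so $\zeta=\xi(x,y')$.
\item Every point of $L_{x'}$ is a common neighbour of $x'$ and $y$, and every point of $L_{y'}$ is a common neighbour of $y'$ and $x$. Convexity gives $L_{x'}\cup L_{y'}\subseteq\zeta\cap U$.
\item $\zeta\cap U\neq U$: otherwise $x'\in\zeta$ would be collinear to a $3$-space of $U$, contradicting Step 2.
\item $\zeta\cap U$ is not a $3$-space $N$ either. Such an $N$ would lie in a $4$-space of the $\mathsf{D_{5,1}}$-polar space $\zeta$ (\cref{sympsympE61}). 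By \cref{345E61} that $4$-space must be $U$, again forcing $U\subseteq\zeta$.
\end{itemize}
Hence $\zeta\cap U$ is at most a plane, so $L_{x'}$ and $L_{y'}$ meet. The common point lies off $M$, because $L_{x'}\cap M=\{x\}$ and $L_{y'}\cap M=\{y\}$ with $x\neq y$.

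With this inserted, your concurrency argument and Step~4 give a complete synthetic proof that avoids apartments. It is longer than the paper's two-line argument.
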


\begin{proof}
Putting $W,W'$ and $U$ in a common apartment, the existence of $p$ readily follows.
\cref{3spacesuffices} shows the second assertion, and then uniqueness of $p$ also follows immediately. 
\end{proof}

\begin{lem}\label{linesEsixMaximalFourSpace} 
Let $L$ be a line, and let $b$ be a point not collinear to any point of $L$ in $\mathsf{E_{6,1}}(\K)$. 
Then $\<b, b^\perp \cap L^\perp\>$ is a maximal $4$-space.
\end{lem}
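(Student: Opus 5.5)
Let $L$ be a line and $b$ a point of $\mathsf{E_{6,1}}(\K)$ such that no point of $L$ is collinear to $b$. Since $\mathsf{E_{6,1}}(\K)$ is a strong parapolar space of diameter $2$, every point $x\in L$ is symplectic to $b$. So I would first pick two distinct points $x,y\in L$ and work inside the symps $\xi_x:=\xi(b,x)$ and $\xi_y:=\xi(b,y)$. Since $b\notin x^\perp$, the point $b$ is not collinear to all of $L$, so $L\not\subseteq\xi_x$. Then $L$ meets $\xi_x$ only in $x$, and in the polar space $\xi_x$ the set $b^\perp\cap x^\perp\cap\xi_x$ is a polar space of rank $4$ (of type $\mathsf{D_4}$).

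The first step is to understand $b^\perp\cap L^\perp$. This is $b^\perp\cap x^\perp\cap y^\perp$, since collinearity to two points of $L$ implies collinearity to all of $L$ by the gamma property. I would compute it inside $\xi_x$. The point $y$ lies outside $\xi_x$, and by \cref{pointsympE61} the set $y^\perp\cap\xi_x$ is a $4'$-space $W$ containing $x$, because $y$ is collinear to $x$ and hence not far from $\xi_x$. Thus
\[b^\perp\cap L^\perp = b^\perp\cap W,\]
and this holds because every point collinear to both $b$ and $x$ lies in $\xi_x$ by convexity. The point $b$ lies in $\xi_x$ but not in $W$, since $b\notin y^\perp$. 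In the polar space $\xi_x$ of type $\mathsf{D_5}$, the set $b^\perp\cap W$ is therefore a hyperplane of $W$, i.e.\ a $3$-space $M$. Moreover $x\notin M$, since $x\not\perp b$.

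The second step is to show that $\langle b,M\rangle$ is a maximal singular subspace. It is a singular $4$-dimensional subspace of $\xi_x$, hence a maximal singular subspace of $\xi_x$ meeting the $4'$-space $W$ in the $3$-space $M$. Two maximal singular subspaces of $\xi_x$ meeting in codimension $1$ lie in opposite oriflamme classes. By \cref{sympsympE61}, $\langle b,M\rangle$ is then a $4$-space in the sense of the paper, i.e.\ a maximal singular subspace of $\mathsf{E_{6,1}}(\K)$ of the $4$-space kind (\cref{restE61}). Alternatively, one can note that $M$ is a $3$-space and invoke \cref{345E61}: $M$ lies in a unique $4$-space, and $\langle b,M\rangle$ is a $4$-dimensional singular space through $M$ inside a symp. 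It therefore has to be that $4$-space rather than lying in a $5$-space, because $5$-spaces meet symps in $4'$-spaces.

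The main obstacle is the claim that $b^\perp\cap x^\perp\cap y^\perp$ lies entirely inside $\xi_x$ and equals $b^\perp\cap W$; the inclusion itself is straightforward. The delicate part is the oriflamme-class identification, which guarantees that we land on a genuine (maximal) $4$-space and not on a $4$-dimensional subspace of some $5$-space. For this I would rely on \cref{sympsympE61}: the two classes in a symp are exactly the $4$-spaces and the $4'$-spaces. Since $W$ is a $4'$-space and $\langle b,M\rangle\cap W=M$ has codimension $1$ (odd), the space $\langle b,M\rangle$ belongs to the other class and is a $4$-space, hence maximal.
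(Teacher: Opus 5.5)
Your proof is correct and follows the paper's own argument. Convexity places $b^\perp\cap L^\perp$ inside $\xi(b,p)$, where it equals $b^\perp\cap U'$ for the $4'$-space $U'=L^\perp\cap\xi(b,p)$, and $\langle b,b^\perp\cap U'\rangle$ is a $4$-space because it lies in the other oriflamme class. Two small remarks. First, $L\not\subseteq\xi_x$ holds because in the polar space $\xi_x$ the point $b$ would be collinear to some point of $L$, not merely because $b\notin x^\perp$. Second, your ``alternative'' route via \cref{345E61} is incomplete as stated, since a $4'$-space is also a $4$-dimensional singular subspace of a symp; it needs the remark that the unique $5$-space through $M$ is $\langle y,W\rangle$ and $b\notin\langle y,W\rangle$. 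The oriflamme argument alone suffices.
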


\begin{proof}
By convexity of symps.
$\<b,b^\perp\cap L^\perp\>$ is contained in each symp $\xi(b,p)$, with $p\in L$.
Fix such a symp $\xi=\xi(b,p)$, for some $p\in L$.
\cref{pointsympE61} implies that $L$ is collinear to a $4'$-space $U'$ of $\xi$.
Then $U:=\<b,b^\perp\cap U'\>$ is a $4$-space, because $U$ and $U'$ belong to different oriflamme classes.
By \cref{restE61}, $U$ is a maximal singular subspace and the assertions are proved.
\end{proof}

\subsection{Lie incidence geometries of type $\mathsf{E_{7,7}}$}  

For each field $\K$ there exists a unique Lie incidence geometry isomorphic to $\mathsf{E_{7,7}}(\K)$.
It is a strong parapolar space of diameter $3$ and also called a \emph{minuscule geometry}.
The following properties can easily be derived from an apartment of the corresponding building (in this case the $1$-skeleton of such an apartment, where vertices are points of $\mathsf{E_{7,7}}(\K)$ and edges are lines, is the Gosset graph; see \cite[\S10.3.5]{Bro-Coh-Neu:89}). 

\begin{fact}\label{preE77} Let $\Delta$ be the Lie incidence geometry $\mathsf{E_{7,7}}(\K)$.
Then the following assertions hold.
\begin{compactenum}[$(i)$]
\item $\Delta$ is strong, has uniform symplectic rank $6$, singular rank $7$ and diameter $3$.
Points at distance $3$ are opposite.
\item Symps in $\Delta$ are isomorphic to hyperbolic polar spaces $\mathsf{D_{6,1}}(\K)$. 
\item Point residuals in $\Delta$ are isomorphic to $\mathsf{E_{6,1}}(\K)$. 
\item The maximal singular subspaces of highest dimension in $\Delta$ are projective spaces of dimension $6$.
Like before, we call $5$-dimensional projective subspaces contained in $6$-spaces $5'$-spaces. 
\item Maximal $5$-spaces occur as the intersection of two symps.
On the other hand, $5'$-spaces occur as the intersection of a unique $6$-space and a unique symp.
\item For each symp $\xi$, its $5$-spaces form an oriflamme class, and its $5'$-spaces form the other oriflamme class of $\xi$.
\end{compactenum}
\end{fact}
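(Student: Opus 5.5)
The plan is to read off all six assertions from the Coxeter diagram of type $\mathsf{E_7}$ (Bourbaki labelling) together with the thin geometry in a single apartment, namely the Gosset graph. This uses the standard dictionary between residues of vertices of the building and subspaces of the Lie incidence geometry, as in \cite[\S 3]{Coh:86}. The properties are stated for the building, but each of them concerns at most two elements, and any two elements lie in a common apartment. So every assertion about mutual position reduces to a check in the thin geometry, plus the usual residue argument for thickness.

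The first step is to identify the objects by deleting nodes. Deleting node $7$ leaves a diagram of type $\mathsf{E_6}$ on nodes $1,\ldots,6$, in which node $6$ plays the role of node $1$ of $\mathsf{E_6}$. This gives $(iii)$. Deleting node $1$ leaves type $\mathsf{D_6}$ on nodes $2,\ldots,7$. Here $2$ and $3$ are the fork ends and $7$ is the end of the long arm, so the type-$7$ vertices incident with a type-$1$ vertex form a polar space $\mathsf{D_{6,1}}(\K)$. These are the symps, which gives $(ii)$ and uniform symplectic rank $6$. Deleting node $2$ leaves $\mathsf{A_6}$ on nodes $1,3,4,5,6,7$, so type-$2$ vertices are $6$-dimensional singular subspaces. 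Deleting node $3$ leaves $\mathsf{A_1}\times\mathsf{A_5}$, so type-$3$ vertices are $5$-spaces lying in exactly the symps corresponding to the type-$1$ vertices incident with them, and in no $6$-space. Comparing these residues gives singular rank $7$ and proves $(iv)$, that the $6$-spaces are the largest singular subspaces. It also proves $(vi)$: inside the $\mathsf{D_6}$ residue of a symp, the two oriflamme classes are the type-$2$ and type-$3$ vertices. The type-$2$ ones, which are $6$-spaces of $\Delta$, meet the symp in $5'$-spaces; the type-$3$ ones are $5$-spaces that are maximal in $\Delta$.

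The second step is the apartment analysis for $(i)$ and $(v)$. In the Gosset graph on $56$ vertices, every vertex has $27$ neighbours, $27$ vertices at distance $2$ and exactly one vertex at distance $3$, and that vertex is its antipode. Since antipodal vertices in an apartment are exactly the opposite ones, the diameter is $3$ and points at distance $3$ are opposite. For strongness, I would check in the apartment that two points at distance $2$ have $10$ common neighbours. Together with the two points these span a $12$-vertex cross-polytope, which is the thin $\mathsf{D_6}$ polar space corresponding to a type-$1$ vertex. So every distance-$2$ pair lies in a symp and no special pairs occur. For $(v)$: a type-$3$ vertex is incident with exactly two type-$1$ vertices in an apartment, since its $\mathsf{A_1}$ factor has two ends. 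In the building, any two symps sharing a $5$-space meet precisely in it, by convexity and the $\mathsf{D_6}$ structure. Conversely, a $5'$-space is a flag $\{1,2\}$, whose residue is of type $\mathsf{A_5}$ after deleting nodes $1$ and $2$, so the $6$-space and the symp through it are both unique.

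The main obstacle I expect is bookkeeping rather than a conceptual difficulty. One has to match the node labels carefully, in particular which fork end of the $\mathsf{D_6}$ residue corresponds to the $6$-spaces. One also has to justify that the intersection of two symps is exactly a $5$-space, and not something larger, in the thick building. I would handle the latter with the same convexity argument used in \cref{sympsympE61}: two distinct convex polar subspaces of rank $6$ cannot share more than a maximal singular subspace of either.
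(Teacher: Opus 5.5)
Your proposal is correct and takes the same approach as the paper, which justifies this Fact only by saying that everything can be read off from the diagram and an apartment whose point graph is the Gosset graph; your residue identifications and the Gosset parameters ($27$, $27$, $1$, $c_2=10$) are right. The one inference that fails as written is in $(v)$: the $\mathsf{A_5}$ type of the residue of a $\{1,2\}$-flag does not by itself make the $6$-space and the symp through a $5'$-space $W$ unique. To repair it, note that two $6$-spaces through $W$ would either span a singular $7$-space or force a $6$-space into a symp. Then any symp $\xi'\supseteq W$ has $W$ as a generator in its $\mathsf{D_6}$ residue; that generator cannot be of type $3$, since those are maximal in $\Delta$, so $\xi'$ is incident with the unique $6$-space $w$ and is determined by the hyperplane $W$ in the $\mathsf{A_6}$ residue of $w$.
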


\begin{fact}\label{point-sympE7} Let $p$ be a point and $\xi$ a symp of $\mathsf{E_{7,7}}(\K)$, with $p \notin \xi$.
Then precisely one of the following occurs.
\begin{compactenum}[$(i)$]
    \item $p$ is collinear to a $5'$-space $A$ of $\xi$, $p$ is symplectic to the points of $\xi \setminus A$, and we say that $p$ is \underline{close} to $\xi$.
    \item $p$ is collinear to a unique point $q \in \xi$, $p$ is symplectic to the points of $\xi \cap (q^\perp \setminus \{q\})$, and $p$ is opposite to the points $\xi \setminus q^\perp$.
We say that $p$ is \underline{far} from $\xi$.
 
\end{compactenum}
\end{fact}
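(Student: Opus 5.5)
The plan is to reduce everything to two ingredients: the convexity of symps together with strongness of $\mathsf{E_{7,7}}(\K)$ (\cref{preE77}$(i)$), and a check in a single apartment. A point $p$ and a symp $\xi$ are vertices of types $7$ and $1$ of the building, so they lie in a common apartment. There, points form the Gosset graph on $56$ vertices, and $\xi$ corresponds to a $12$-vertex cross-polytope, namely the six antipodal pairs of a $\mathsf{D_6}$-apartment.

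\textbf{Step 1: $p^\perp\cap\xi$ is a singular subspace.} If $p$ were collinear to two non-collinear points $a,b\in\xi$, then $p$ would lie on a path of length $2$ between $a$ and $b$. Convexity of $\xi$ would then give $p\in\xi$, a contradiction. Hence $p^\perp\cap\xi$ is a singular subspace of $\xi$; it is a subspace because $\Delta$ is a gamma space. In particular $p^\perp\cap\xi$ has dimension at most $5$.

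\textbf{Step 2: only two possibilities, by the apartment check.} In the Gosset graph, a vertex outside the $12$-set of the cross-polytope is adjacent either to exactly one vertex of each antipodal pair, or to exactly one vertex of the whole $12$-set. Since the relative position of $p$ and $\xi$ is determined inside a common apartment, exactly one of the following holds:
\begin{compactenum}[$-$]
\item $p^\perp\cap\xi$ is a maximal singular subspace of $\xi$;
\item $p^\perp\cap\xi$ is a single point $q$.
\end{compactenum}
In the first case, the maximal singular subspace $A$ together with $p$ spans a $6$-space. So $A$ is a $5'$-space and not a $5$-space, by \cref{preE77}$(v)$ and the fact that maximal $5$-spaces are not contained in $6$-spaces.

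\textbf{Step 3: the distances.} In the close case, let $y\in\xi\setminus A$. Then $y^\perp\cap A$ is a $4$-space, so $p$ and $y$ have at least two non-collinear common neighbours. Since $\Delta$ is strong, $p$ and $y$ are symplectic. In the far case, let $y\in q^\perp\setminus\{q\}$ in $\xi$. Then $p$ and $y$ are at distance $2$, because $y\perp q\perp p$ and $y\not\perp p$. Strongness again gives that $p$ and $y$ are symplectic.

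It remains to treat $y\in\xi\setminus q^\perp$, and this is where I expect the main obstacle. Suppose $d(p,y)=2$. Then $\xi(p,y)$ meets $\xi$ in a convex set containing $y$. I would argue that this forces $p^\perp\cap\xi$ to contain a point collinear to $y$, which must be different from $q$ because $q\not\perp y$. This contradicts $p^\perp\cap\xi=\{q\}$. Making this rigorous needs the possible intersections of two symps (empty, a point, or a $5$-space), which I would again read off from the apartment. The cleanest route is to check the distance directly in a common apartment of $p$, $\xi$ and $y$, using that $p,\xi$ lie in an apartment containing any given chamber-related vertex. Once $d(p,y)=3$ is established, \cref{preE77}$(i)$ gives that $p$ and $y$ are opposite.
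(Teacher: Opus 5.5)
Your proof is correct and is essentially the paper's own approach: the paper gives no separate proof and simply reads this fact off an apartment (the Gosset graph). Your decisive last step is exactly that check: take an apartment containing $p$ and the simplex $\{\xi,y\}$; there $p$ has $q$ as its only neighbour among the $12$ apartment points of $\xi$, so it is the antipode of the apartment point $y$ not adjacent to $q$. Steps~1 and~3 merely replace other parts of the apartment check by convexity and strongness.

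There are two harmless slips. First, the common neighbours of $p$ and $y$ in $y^\perp\cap A$ are mutually collinear; you only need $d(p,y)=2$ together with strongness. Second, two distinct symps of $\mathsf{E_{7,7}}(\K)$ meet in a $5$-space, in a line, or not at all, never in a single point; this follows from \cref{sympsympE61} applied in $\Res(y)$, or see \cref{symp-sympE77}. With that corrected, your first route for $y\in\xi\setminus q^\perp$ also closes: $p$ would be collinear either to a point of the line $\xi(p,y)\cap\xi\ni y$, which would have to be $q$, or to a $4$-space of the $5$-space $\xi(p,y)\cap\xi$.
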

This fact implies that, on each line $L$, there is at least one point symplectic to a given point $p$ (unique when $L$ contains at least one point opposite $p$). 

\begin{fact}\label{symp-sympE77} Let $\xi$ and $\xi'$ be two distinct symps of $\mathsf{E_{7,7}}(\K)$.
Then precisely one of the following occurs.
\begin{enumerate}[label=(\roman*)]
    \item $\xi \cap \xi'$ is a $5$-space, and we call $\xi$ and $\xi'$ \underline{adjacent}.
   
    \item $\xi \cap \xi'$ is a line $L$.
Then points $x \in \xi \setminus L$ and $x' \in \xi' \setminus L$ are never collinear.
 We call $\{\xi, \xi'\}$ \underline{symplectic}.
   
    \item $\xi \cap \xi' = \varnothing$, and there is a unique symp $\xi''$ intersecting both $\xi$ and $\xi'$ in respective $5$-spaces $A$ and $A'$, which are opposite in $\xi''$.
All points of $\xi \setminus A$ are far from $\xi'$, and each point of $A$ is close to $\xi'$.
Each line containing a point of $\xi$ and a point of $\xi'$ contains a point of $A \cup A'$.
We call $\{\xi, \xi'\}$ \underline{special}.
   
    \item $\xi \cap \xi' = \varnothing$, and every point of $\xi$ is far from $\xi'$.
In this situation, each point of $\xi'$ is also far from $\xi$, and $\xi$ and $\xi'$ are opposite. 
\end{enumerate}
\end{fact}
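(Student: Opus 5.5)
The plan is to reduce everything to a single apartment, using the fact that symps of $\mathsf{E_{7,7}}(\K)$ are the vertices of type $1$ of the $\mathsf{E_7}$ building. Any two vertices lie in a common apartment $\Sigma$, so I first classify the $W(\mathsf{E_7})$-orbits on pairs of type-$1$ vertices of $\Sigma$. Type-$1$ vertices of $\Sigma$ correspond to the $126$ roots of $\mathsf{E_7}$, since $|W(\mathsf{E_7})|/|W(\mathsf{D_6})|=126$. The $56$ points of $\Sigma$ correspond to the weights of the minuscule $56$-dimensional module, and a symp $\alpha$ of $\Sigma$ is the set of $12$ weights $\lambda$ with $\langle\lambda,\alpha^\vee\rangle=\pm1$. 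This set is a cross-polytope, namely the apartment of a $\mathsf{D_{6,1}}$ symp.

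For two roots $\alpha\neq\beta$, the inner products $\langle\alpha,\beta^\vee\rangle\in\{1,0,-1,-2\}$ give exactly four orbits, and the value $-2$ is opposition (opposition in $\mathsf{E_7}$ is $-\id$). I then count common weights directly in the Gosset-graph model:
\begin{compactenum}[$-$]
\item for $\langle\alpha,\beta^\vee\rangle=1$ there are $6$ common points, forming a maximal $5$-space of $\Sigma$;
\item for $0$ there are $2$ common points, forming a line;
\item for $-1$ and $-2$ there are none.
\end{compactenum}
In the case $-1$, the reflection produces $\gamma=\alpha+\beta$, a root with $\langle\gamma,\alpha^\vee\rangle=\langle\gamma,\beta^\vee\rangle=1$. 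This $\gamma$ is the symp $\xi''$ in $\Sigma$, adjacent to both, and the two $5$-spaces $\gamma\cap\alpha$ and $\gamma\cap\beta$ are disjoint in $\gamma$, hence $\gamma$-opposite.

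Next I transfer this to the building. The intersection $\xi\cap\xi'$ is convex, so it is a singular subspace; indeed, a proper convex subspace of a polar space is singular. The intersection of $\xi$ and $\xi'$ is then the convex closure of its points in $\Sigma$, since the intersection of the two simplicial residues is spanned by the chambers in $\Sigma$. Hence the dimensions found in $\Sigma$ are the real ones. This gives the four cases: a $5$-space, a line, empty with a unique $\xi''$, and empty with $\xi,\xi'$ opposite.

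The additional assertions come from \cref{point-sympE7} applied to points of one symp relative to the other, again checked in $\Sigma$ through a point and the two symps.
\begin{compactenum}[$-$]
\item In case (ii), a collinear pair $x\perp x'$ with $x\in\xi\setminus L$ and $x'\in\xi'\setminus L$ would force $x$ to be close to $\xi'$. Then $x$ would be collinear to a $5'$-space of $\xi'$, which meets $L$. Convexity of $\xi$ would then put points of $\xi'\setminus L$ into $\xi$, a contradiction.
\item In case (iii), points of $A$ are collinear to a $5$-space of $\xi''$ meeting $A'$ in a $4$-space, which yields closeness to $\xi'$. Points of $\xi\setminus A$ are far, as computed in $\Sigma$.
\item Uniqueness of $\xi''$ follows because any such symp contains the convex closure of $A\cup A'$, which is $\xi''$.
\item In case (iv), a point of $\xi$ close to $\xi'$ would produce a common neighbour structure that is impossible for opposite vertices. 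So all points are far, and the claim is symmetric in $\xi$ and $\xi'$.
\end{compactenum}

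The main obstacle is the transfer step: justifying that mutual positions of points and symps (near/far, the intersection dimensions, the line-meets-$A\cup A'$ claim) are faithfully read off one apartment containing the relevant objects. For the last claim of (iii), I take an apartment containing $\xi$, $\xi'$ and the given line, which is possible since the line and the symp $\xi$ form a simplex-like pair via the flag they determine. I then verify in $\Sigma$ that every edge from $\alpha$ to $\beta$ hits $\gamma$.
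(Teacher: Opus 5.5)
Your overall route is the paper's own. The paper gives no argument beyond saying these facts can be read off an apartment (the Gosset graph). Your classification of pairs of roots by $\langle\alpha,\beta^\vee\rangle\in\{1,0,-1,-2\}$, with $6,2,0,0$ common weights, is correct, with one slip: a symp must be the $12$ weights with $\langle\lambda,\alpha^\vee\rangle=1$, not $\pm1$. Otherwise $\alpha$ and $-\alpha$ would give the same $24$ weights.

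\textbf{The gap: the extra clause in (ii).} Your argument for this clause does not go through, and it cannot be repaired, because the clause is false as stated.

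Your convexity argument needs a point $z\in L$ not collinear to $x$. Then $z^\perp\cap A$, with $A=x^\perp\cap\xi'$, is a $4$-space of common neighbours of the non-collinear points $x,z\in\xi$. By convexity it lies in $\xi\cap\xi'=L$, which is absurd. If $x$ is collinear to all of $L$, there is no such $z$ and no contradiction.

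In fact collinear pairs then exist. Let $\pi$ be a plane of $\xi$ through $L$ and take $x\in\pi\setminus L$. Then $x\notin\xi'$ and $x$ is collinear to every point of $L\subseteq\xi'$. By \cref{point-sympE7}, $x$ is close to $\xi'$, so $x^\perp\cap\xi'$ is a $5'$-space containing $L$. Every point of that $5'$-space outside $L$ lies in $\xi'\setminus L$ and is collinear to $x$.

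Your own apartment model shows this too. Realise the $56$ weights as $\pm\bigl(e_i+e_j-\tfrac14\sum_k e_k\bigr)$ in the $\mathsf{A_7}$-model of $\mathsf{E_7}$, and take $\alpha=e_1-e_2$, $\beta=e_3-e_4$.
\begin{itemize}
\item $\alpha\cap\beta$ consists of $e_1+e_3-\tfrac14\sum_k e_k$ and $-\bigl(e_2+e_4-\tfrac14\sum_k e_k\bigr)$.
\item $e_1+e_5-\tfrac14\sum_k e_k$ lies in $\alpha\setminus\beta$, and $e_3+e_5-\tfrac14\sum_k e_k$ lies in $\beta\setminus\alpha$.
\item These two weights differ by the root $e_1-e_3$, so they are adjacent in the Gosset graph.
\end{itemize}

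What is true, and exactly what your convexity argument proves, is this: if $x\in\xi\setminus L$ and $x'\in\xi'\setminus L$ are collinear, then both are collinear to all of $L$. Equivalently, a point of $\xi$ collinear to a unique point of $L$ has no neighbour in $\xi'\setminus L$. This corrected form is the one actually used later, in the proof of \cref{lemE7}.

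\textbf{Smaller points to fix.}
\begin{itemize}
\item \emph{Transfer step.} The sentence about ``the intersection of the two simplicial residues'' does not make sense, since distinct type-$1$ vertices share no chamber. What you need is that the relative position of $\xi,\xi'$ is independent of the apartment. For a flag $F$ of singular subspaces inside $\xi\cap\xi'$, the simplex $F\cup\{\xi\}$ and the vertex $\xi'$ lie in a common apartment. That apartment is a full subcomplex, so $F\cup\{\xi'\}$ is also a simplex of it. This bounds $\dim(\xi\cap\xi')$ by the apartment count.
\item \emph{Last claim of (iii).} An apartment containing $\xi$, $\xi'$ and a line meeting both but contained in neither need not exist. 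Argue instead with a point $x\in\xi\setminus A$, which is far from $\xi'$, and its unique neighbour in $\xi'$. That neighbour lies in every apartment containing $x$ and $\xi'$.
\item \emph{Uniqueness of $\xi''$.} A second candidate $\xi'''$ is not known a priori to contain $A\cup A'$. First note that the points of $\xi'''\cap\xi$ are close to $\xi'$, hence lie in $A$, so $\xi'''\cap\xi=A$; likewise $\xi'''\cap\xi'=A'$. Then $\xi'''=\xi''$, because both contain a symplectic pair $a\in A$, $a'\in A'$.
\end{itemize}
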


\begin{fact} Let $\xi_1$ and $\xi_2$ be two opposite symps of $\mathsf{E_{7,7}}(\K)$.
Let $\cL$ be the set of all lines that contain a point of $\xi_1$ and a point of $\xi_2$.
Then, for each point $p$ that is contained in a line of $\cL$, there exists a unique symp $\xi_p$ that intersects each line $L \in \cL$. 
\end{fact}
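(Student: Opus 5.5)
We would first fix the local structure of $\cL$ and only then build $\xi_p$ inside a well-chosen symp. By \cref{symp-sympE77}$(iv)$, every point $x\in\xi_1$ is far from $\xi_2$. So, by \cref{point-sympE7}$(ii)$, $x$ is collinear to a unique point $x'\in\xi_2$. Hence the lines of $\cL$ are exactly the lines $xx'$, $x\in\xi_1$, and by symmetry $x\mapsto x'$ is a bijection $\xi_1\to\xi_2$. We will show it preserves collinearity. If $x\perp y$ in $\xi_1$, then $y$ is collinear to $x'$ by \cref{point-sympE7}$(ii)$ applied to $y$ and $\xi_2$, which would force $y'=x'$; so $y\not\perp x'$, and we instead argue that $x'\perp y'$ by looking inside $\xi(x,y')$. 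Indeed $x\not\perp y'$, and $x'$ and $y$ lie in $x^\perp\cap y'^\perp$, so $\{x,y'\}$ is symplectic. Then $x,x',y',y$ form a quadrangle in the polar space $\xi(x,y')$, and $xx'$ and $yy'$ are disjoint lines in it.

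Next we would construct $\xi_p$. If $p\in\xi_1\cup\xi_2$, take $\xi_p=\xi_1$ or $\xi_2$. Otherwise $p\in xx'\setminus\{x,x'\}$. For $z\perp x$ in $\xi_1$, the quadrangle above lies in a polar space, so $p$ is collinear to a unique point $p_z\in zz'$. Choose $y\in\xi_1$ not collinear to $x$. Then $x$ is collinear to no point of $\xi_2$ other than $x'$, so $x$ is opposite $y'$ by \cref{point-sympE7}$(ii)$, because $y'\notin x'^\perp$. Hence $yy'$ contains a point opposite $p$ or lies at distance $\ge2$ from $p$. The remark after \cref{point-sympE7} then gives a unique point $q\in yy'$ symplectic to $p$, and we put $\xi_p:=\xi(p,q)$. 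For $z\in x^\perp\cap y^\perp\cap\xi_1$, the point $p_z$ is collinear to $p$. Running the quadrangle argument from the side of $y$ shows it is also collinear to $q$. By convexity $p_z\in\xi(p,q)$, so $\xi(p,q)$ meets every line $zz'$ with $z\in x^\perp\cap y^\perp$.

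The main obstacle is to show that $\xi(p,q)$ meets $zz'$ for every $z\in\xi_1$, not only for $z\in x^\perp\cap y^\perp$. We would handle this by varying $y$. For any $z\in\xi_1\setminus x^\perp$, the construction with $z$ in place of $y$ gives a symp $\xi(p,q_z)$ meeting all lines $ww'$ with $w\in x^\perp\cap z^\perp$. The sets $x^\perp\cap y^\perp\cap z^\perp$ in the $\mathsf{D_6}$ polar space $\xi_1$ are large. Choosing $z$ suitably relative to $y$ (or passing through an intermediate point), this set contains two non-collinear points $w_1,w_2$, and then $p_{w_1},p_{w_2}$ are non-collinear points common to both symps. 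By \cref{symp-sympE77}, two distinct symps meet in a singular subspace (a $5$-space or a line) or not at all, so $\xi(p,q_z)=\xi(p,q)$. Connectivity of the relation ``non-collinear in $\xi_1$'' then covers every $z\in\xi_1$, including points collinear to $x$ via an intermediate non-collinear point.

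Uniqueness follows from the same dichotomy. A second symp $\xi'$ meeting every line of $\cL$ and containing $p$ must meet the lines $ww'$ for $w\in x^\perp\cap\xi_1$. By the quadrangle argument, it meets each of them in the point $p_w$, since $p_w$ is the only point of $ww'$ collinear to $p$ and symps are convex. So $\xi'\cap\xi_p$ contains the non-collinear points $p_{w_1},p_{w_2}$ for suitable $w_1,w_2\in x^\perp$. This intersection is therefore not singular, so \cref{symp-sympE77} forces $\xi'=\xi_p$.
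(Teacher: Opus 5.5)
There is a genuine gap in your existence argument. Everything else is repairable with short additions.

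The gap is the claim that, for $z\in x^\perp\cap y^\perp\cap\xi_1$, the point $p_z$ is collinear to $q$. Equivalently, the unique point $q_z$ of $zz'$ collinear to $q$ must equal $p_z$. This is precisely the statement that $\xi(p,q)$ meets $zz'$, so it is the heart of existence. Running the quadrangle argument in $\xi(y,z')$ only produces $q_z$. It never uses that $q$ is the point of $yy'$ symplectic to $p$. Collinearity is a bijection $yy'\to zz'$, so every other point of $yy'$ gets its own neighbour on $zz'$ by the same argument, and nothing forces $q_z=p_z$.

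The missing idea is to work in the symp $\Sigma:=\xi(x,z')$, which contains both lines $xx'$ and $zz'$.
\begin{itemize}
\item Since $x\pperp y$ and $x\equiv y'$, $x$ is collinear to no point of $yy'$. By the remark after \cref{point-sympE7}, $y$ is the only point of $yy'$ symplectic to $x$. Hence $x$ is opposite every point of $yy'\setminus\{y\}$, and in particular $q\equiv x\in\Sigma$.
\item So $q$ is far from $\Sigma$. It is collinear to a unique point $r\in\Sigma$ and symplectic exactly to the points of $\Sigma\cap(r^\perp\setminus\{r\})$.
\item Now $q\perp q_z\in\Sigma$ forces $r=q_z$, and $q\pperp p\in\Sigma$ then gives $p\perp q_z$.
\item Uniqueness of $p_z$ yields $q_z=p_z$.
\end{itemize}
With this inserted, the identification of the symps $\xi(p,q_z)$ and your uniqueness argument go through.

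Two smaller repairs are also needed.
\begin{itemize}
\item Your proof that $x\mapsto x'$ preserves collinearity is circular: it uses $x'\in y'^\perp$, which is what you want. Argue instead that $x\perp y\perp y'$ makes $x$ not opposite $y'$, so $y'\in x'^\perp$ by \cref{point-sympE7}$(ii)$.
\item You assert without proof that $p_{w_1}$ and $p_{w_2}$ are non-collinear, and you use this both for $\xi(p,q_z)=\xi(p,q)$ and for uniqueness. Since $w_1\equiv w_2'$, as above $w_1$ is opposite every point of $w_2w_2'\setminus\{w_2\}$. This includes $p_{w_2}$, which differs from $w_2$: otherwise $w_2$ would be collinear to $x$ and $p$, hence to $x'$. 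As $w_1\perp p_{w_1}$, this rules out $p_{w_1}\perp p_{w_2}$.
\end{itemize}
Also, no special choice of $z$ and no connectivity argument is needed. For every $z\in\xi_1\setminus x^\perp$, the set $x^\perp\cap y^\perp\cap z^\perp$ already contains non-collinear points of the hyperbolic polar space $\xi_1$.

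For comparison, the paper gives no proof of this statement; it lists it among the standard facts about $\mathsf{E_{7,7}}(\K)$. Once repaired, your argument is a self-contained derivation from the other listed facts.
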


We can now show the following lemma. 

\begin{lem}\label{lemE7}
Let $x,y$ be two points of $\mathsf{E_{7,7}}(\K)$.
Then $x$ and $y$ are opposite if, and only if, they are contained in respective symps intersecting in a line $L$ such that $x$ and $y$ are collinear to unique respective distinct points of $L$.
\end{lem}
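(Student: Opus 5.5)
Both directions rest on \cref{point-sympE7} and \cref{symp-sympE77}, together with the fact that in $\mathsf{E_{7,7}}(\K)$ opposite points are exactly the points at distance $3$ (\cref{preE77}$(i)$).

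\emph{``If''.} Let $\xi\ni x$ and $\xi'\ni y$ be symps with $\xi\cap\xi'=L$ a line, and let $x\perp a$, $y\perp b$ with $a,b\in L$ unique and $a\neq b$. By \cref{symp-sympE77}$(ii)$ the pair $\{\xi,\xi'\}$ is symplectic, and no point of $\xi\setminus L$ is collinear to a point of $\xi'\setminus L$. First, $x\notin L$: otherwise $x$ would be collinear to all of $L$. Likewise $y\notin L$. We determine the position of $x$ relative to $\xi'$.
\begin{compactenum}[$(i)$]
\item $x$ is not in $\xi'$, since $\xi\cap\xi'=L$.
\item $x$ is not close to $\xi'$. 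If it were, $x^\perp\cap\xi'$ would be a $5'$-space $A$ of $\xi'$. Then $A\cap L$ is nonempty, and $A\cap L=x^\perp\cap L=\{a\}$. Since $\dim A=5$ and $\xi'$ has rank $6$, $A$ contains points of $\xi'\setminus L$ collinear to $x\in\xi\setminus L$, contradicting \cref{symp-sympE77}$(ii)$.
\end{compactenum}
Hence $x$ is far from $\xi'$ by \cref{point-sympE7}, with unique collinear point $a$. So $x$ is opposite every point of $\xi'\setminus a^\perp$. It remains to show $y\notin a^\perp$. If $y\perp a$, then $y$ would be collinear to both $a$ and $b$, hence to all of $L$, contradicting uniqueness of $b$. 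So $x\equiv y$.

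\emph{``Only if''.} Suppose $x\equiv y$, so they are at distance $3$. Choose a path $x\perp a\perp b\perp y$. The points $x,b$ are at distance $2$ and the space is strong, so $\xi:=\xi(x,b)$ is a symp containing $a$. We want $y$ far from $\xi$.
\begin{compactenum}[$(i)$]
\item $y$ is not close to $\xi$: otherwise $y$ would be symplectic or collinear to every point of $\xi$, in particular to $x$.
\item $y\notin\xi$, since $\xi$ has diameter $2$.
\end{compactenum}
So $y$ is far from $\xi$ with unique collinear point $b$, and $x\in\xi\setminus b^\perp$.

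Now choose a second symp $\xi'$ through $y$ and $b$ with $\xi\cap\xi'$ a line $L\ni b$. The natural candidate is $\xi'=\xi(y,c)$ for a suitable $c\in\xi\cap b^\perp$. Any such $c\neq b$ is symplectic to $y$ by \cref{point-sympE7}$(ii)$, so $\xi(y,c)$ contains $b$, $c$ and $y$. Its intersection with $\xi$ contains the line $bc$. We must rule out that the intersection is a $5$-space, i.e. that $\xi$ and $\xi(y,c)$ are adjacent; we also want $c$ chosen so that $x$ is collinear to a unique point of $bc$ different from $b$. The cleanest route is to work in the point residual $\Res(b)\cong\mathsf{E_{6,1}}(\K)$. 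There, $\xi$ becomes a symp $\Xi$ and $by$ a point $Y$ far from $\Xi$, opposite $\Xi$ in the $\mathsf{E_6}$ building. By \cref{lemE6}, for every point $C\in\Xi$ not collinear to $Y$, the symp $\xi(Y,C)$ meets $\Xi$ only in $C$. Translated back, for $c$ such that $bc$ is not collinear in $\Res(b)$ to $by$, the symps $\xi$ and $\xi(y,c)$ meet exactly in the line $L=bc$. Finally, pick $c$ on a line through $b$ in $\xi$ with $bc$ not collinear to $by$ in $\Res(b)$ and with $x\not\perp b$, so that $x^\perp\cap L$ is a single point $a'\neq b$. Such $c$ exists; for instance $c=a$ works provided $ba$ is not collinear to $by$ in $\Res(b)$. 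If it is, replace $a$ by another common neighbour of $x$ and $b$ in $\xi$; these form a $4$-space, and they cannot all lie in the residual neighbourhood of $by$. Then $y$ is collinear only to $b$ on $L$ and $x$ only to $a'$. This gives the configuration.

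The main obstacle is this last choice of $c$: simultaneously ensuring $\xi\cap\xi'$ is a line, not a $5$-space, and that $x$ and $y$ hit distinct points of it. I would handle it via \cref{lemE6} in $\Res(b)$ as described.
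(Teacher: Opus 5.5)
\textbf{The gap is in step $(ii)$ of your ``if'' direction.} There you invoke the second sentence of \cref{symp-sympE77}$(ii)$: no point of $\xi\setminus L$ is collinear to a point of $\xi'\setminus L$. That clause is listed among the Facts, but it is false as literally stated. Take any $z\in\xi\setminus L$ collinear to all of $L$; such points exist since $\xi$ has rank $6$. Then $z\notin\xi'$, and $z$ is collinear to at least three points of $\xi'$. So by \cref{point-sympE7} it is close to $\xi'$, and $z^\perp\cap\xi'$ is a $5'$-space containing $L$, hence also points of $\xi'\setminus L$.

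Your argument shows the problem: it never uses the hypothesis $x^\perp\cap L=\{a\}$ essentially (only to note $a\in A$). Run on such a $z$, it would ``prove'' that every point of $\xi\setminus L$ is far from $\xi'$, which is wrong.

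The uniqueness hypothesis has to enter through convexity, as in the paper's proof:
\begin{compactenum}[$(a)$]
\item Since $x\not\perp b$, we have $\xi=\xi(x,b)$, hence $x^\perp\cap b^\perp\subseteq\xi$.
\item Since $b\in\xi'$ and $b\notin A\subseteq x^\perp$, the set $b^\perp\cap A$ is a $4$-space of $\xi'$.
\item This $4$-space lies in $x^\perp\cap b^\perp$, hence in $\xi\cap\xi'=L$, which is absurd.
\end{compactenum}
With this replacement, the rest of your ``if'' argument goes through: $x$ is far from $\xi'$ with unique neighbour $a$, and $y\notin a^\perp$.

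\textbf{Your ``only if'' direction is correct, but longer than necessary.} Because $by$ is opposite $\Xi$ in $\Res(b)$, no point of $\Xi$ is collinear to $by$. So every $c\in\xi\cap b^\perp\setminus\{b\}$ works, and your fallback case never arises. Incidentally, in that fallback the common neighbours of $x$ and $b$ in $\xi$ form a polar space of rank $5$, not a singular $4$-space.

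The paper avoids the residue altogether. It takes an arbitrary symp through $y$ and its unique neighbour in $\xi$. A $5$-space intersection is ruled out directly: $x$ and $y$ would then both be collinear to $4$-spaces of it, which meet, contradicting $d(x,y)=3$.
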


\begin{proof}
First suppose that $x$ and $y$ are opposite.
Let $\xi_x$ be an arbitrary symp containing $x$.
\cref{point-sympE7}$(ii)$ implies that $y$ is collinear to a unique point $z\in\xi_x$.
Let $\xi_y$ be an arbitrary symp through $y$ and $z$.
If $\xi_x$ and $\xi_y$ intersected in more than a line, they would intersect in a $5$-space (cf.~\cref{symp-sympE77}), and both $x$ and $y$ would have to be collinear to $4$-spaces of that $5$-space.
These would necessarily intersect, meaning that there would exist points collinear to both $x$ and $y$, contradicting the fact that $x$ and $y$ are opposite.
Hence $\xi_x\cap\xi_y$ is a line $L$.
Now, clearly $x$ and $y$ are not collinear to a common point on $L$, and the assertion follows. 

Next suppose, conversely, that $x$ and $y$ are contained in respective symps $\xi_x$ and $\xi_y$ intersecting in a line $L$ such that $x$ and $y$ are collinear to unique respective distinct points $x'$ and $y'$ of $L$.
Suppose, for a contradiction, that $x$ is collinear to a $5'$-space $U$ of $\xi_y$.
Then $y'^\perp\cap x^\perp$ contains points of $U$ that do not belong to $\xi_x$, a contradiction.
Hence $x$ is far from $\xi_y$, and the assertion follows from \cref{point-sympE7}$(ii)$.
 
\end{proof}

\subsection{Lie incidence geometries of types $\mathsf{E_{6,2},E_{7,1},E_{8,8},F_{4,1}}$ and $\mathsf{F_{4,4}}$.} 

These Lie incidence geometries are examples of \emph{hexagonic geometries}, as defined by Shult \cite[Section~13.7]{Shu:11}, inspired by his work with Kasikova \cite{Kas-Shu:02}.
We will not need the formal definition of such geometries; some defining properties will be part of the facts that we state below.
Since we are only concerned with exceptional geometries, we will restrict ourselves to these cases.
This implies, for instance, that we can assume that the parapolar space in question has uniform symplectic rank (which is at least $3$).
Other examples of hexagonic Lie incidence geometries are the line-Grassmannians of polar spaces, which have symplectic rank at least $3$ if the polar space has rank at least $4$, and uniform symplectic rank $3$ if, and only if, the polar space has rank $4$.
Many facts stated below also hold for these spaces, but can in that case easily be directly checked in the polar space. 

The following facts can again be easily checked in an apartment (for models of such, see \cite{Mal-Vic:22}), or follow from the diagram.
We will refer to the geometries in the title of this section as the \emph{exceptional hexagonic (Lie incidence) geometries}.
The ones of type $\mathsf{F_{4,1}}$ and $\mathsf{F_{4,4}}$ are also known as \emph{(thick) metasymplectic spaces}.
A detailed introduction to the latter is contained in \cite{Lam-Mal:25}.
We will always assume thickness when mentioning metasymplectic spaces. 

\begin{fact}\label{pointpointhex}
Let $x$ and $y$ be two distinct non-collinear points of an exceptional hexagonic Lie incidence geometry.
Then $x$ and $y$ are either symplectic, special, or opposite.
In the latter case, the distance between $x$ and $y$ is $3$.
The set $x^{\not\equiv}$ is always a proper geometric hyperplane. 
\end{fact}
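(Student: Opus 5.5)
The statement \cref{pointpointhex} collects three claims: the trichotomy for non-collinear pairs, the distance-$3$ claim for opposite points, and the hyperplane property of $x^{\not\equiv}$. I would verify the first two in an apartment and prove the third by a short gamma-space and apartment argument.

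\textbf{Trichotomy and distance.} Every pair $\{x,y\}$ lies in a common apartment. Its point graph (with vertices the points and edges the lines contained in the apartment) is the orbit graph of the Weyl group on the weights of the fundamental representation attached to the long-root node, that is, on the long roots of the root system. Collinear points correspond to long roots $\alpha,\beta$ with $\langle\alpha,\beta^\vee\rangle=1$. In the apartment the possible values of $\langle\alpha,\beta^\vee\rangle$ for distinct $\alpha\ne\pm\beta$ are $1,0,-1$, together with $\beta=-\alpha$. For the metasymplectic cases one restricts to long roots of $\mathsf{F_4}$, or to short roots for $\mathsf{F_{4,4}}$ via duality.
\begin{compactenum}[$(a)$]
\item The value $0$ gives points at distance $2$ whose common neighbours span a symp. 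Its apartment is a cross-polytope of the appropriate rank: in $\mathsf{E_6}$, for instance, $\alpha\perp\beta$ and the roots adjacent to both form the $\mathsf{D_4}$ structure.
\item The value $-1$ gives a unique common neighbour, namely $\alpha+\beta$, so the pair is special.
\item The value $\beta=-\alpha$ is the opposite root, hence opposition in the building.
\end{compactenum}
In case $(c)$ there is no root adjacent to both $\alpha$ and $-\alpha$. A path of length $3$ exists, however: pick $\gamma$ with $\langle\alpha,\gamma^\vee\rangle=1$ and $\delta=\gamma-\alpha$, which is adjacent to $-\alpha$. Distances measured in the apartment agree with distances in the geometry, by convexity of apartments together with the building-theoretic fact that opposite points cannot have a common neighbour. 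Hence opposite points are at distance exactly $3$, and these are all the possible relations.

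\textbf{Hyperplane property.} First I show that $x^{\not\equiv}$ is a subspace. Suppose two points $y,z$ of a line $L$ are non-opposite $x$, and some $w\in L$ is opposite $x$. Put $x$ and $L$ in a common apartment, which is possible since a vertex and a panel lie in a common apartment, and use the corresponding roots. The points of $L$ in that apartment are two roots $\alpha_1,\alpha_2$ differing by a root. If $x$ is opposite some point of $L$ then, by the projection of $x$ onto the panel determining $L$, exactly one point of $L$ (the projection's non-opposite point, $\operatorname{proj}$) is non-opposite $x$ and all others are opposite $x$. This is the standard fact that, for a vertex and a panel, either all chambers of the panel are non-opposite, or exactly one is. Transferred to the line it reads: either all points of $L$ are non-opposite $x$, or exactly one is. This simultaneously gives the subspace property and the fact that every line meets $x^{\not\equiv}$.

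\textbf{Properness and the main obstacle.} Properness is immediate, since $x$ has an opposite point in any apartment containing it. The main obstacle is the translation step, namely ensuring that ``exactly one chamber of a panel is not opposite a given simplex'' passes correctly to points of $L$. Here several panels may define $L$, and opposition of a point to $x$ must be read off from the chambers containing it. I would handle this by choosing the panel of cotype $i$ defining $L$ and noting that a point $w\in L$ is opposite $x$ if and only if some chamber on $w$ through the panel is opposite a chamber on $x$. I would then apply the projection $\operatorname{proj}_{P}(x)$ of $x$ onto the panel $P$ (Tits' projection theory), which singles out the unique non-opposite completion.
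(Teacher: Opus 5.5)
Your plan is what the paper intends; it gives no argument beyond ``check in an apartment''. Your root computations are also correct. There is a genuine gap, however: you never justify the transfer from the apartment $\Sigma$ to the geometry $\Delta$, and for the distance claim your stated reason is circular.

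\textbf{The distance claim.} That opposite points have no common neighbour is not a general building-theoretic fact. In a polar space $\mathsf{B}_{n,1}$, opposite points are at distance $2$ and have many common neighbours. Here it is exactly the assertion ``opposite points are at distance $3$'' that you are trying to prove. Convexity of apartments does not help either: it concerns galleries of chambers, and point-graph geodesics routinely leave $\Sigma$ (most common neighbours of a symplectic pair lie outside $\Sigma$).

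The right tool is a retraction $\rho=\rho_{\Sigma,C}$ onto $\Sigma$. It is a type-preserving chamber map fixing $\Sigma$. So for a panel $P$ of cotype $i$ it sends every chamber $P\cup\{w\}$ to a chamber on $\rho(P)$. Hence it maps each line of $\Delta$ into a two-point line of $\Sigma$, or onto a point, and cannot increase point-graph distances. With the trivial reverse inequality you get $d_\Delta=d_\Sigma$ on $\Sigma$. Only then does the absence of a common neighbour of $\alpha$ and $-\alpha$ in $\Sigma$ give distance exactly $3$.

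\textbf{The trichotomy.} A transfer is needed here too. A unique common neighbour inside $\Sigma$ does not rule out further common neighbours in $\Delta$, since a retraction may identify them. Use the parapolar axiom instead: a pair at distance $2$ is either special or lies in a symp. A symp is the shadow of the residue of a simplex $F$, and an apartment through $F\cup\{x\}$ and $F\cup\{y\}$ exhibits at least two common neighbours. Any two apartments through $x$ and $y$ are isomorphic via a map fixing $x$ and $y$, so the same would then hold in $\Sigma$, contradicting value $-1$. The same axiom turns case $(a)$ (distance $2$, several common neighbours) into ``symplectic''.

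\textbf{The hyperplane part.} The projection alone gives only \emph{at most} one non-opposite point on $L$: a chamber $D\ni x$ opposite some chamber on $P$ is opposite every chamber on $P$ other than $\proj_P(D)$. That there is \emph{at least} one non-opposite point comes from an apartment containing $x$ and $P$, in which $x$ has a single antipode but $P$ has two completions. Your ``standard fact'' also silently uses that the cotype of $P$ is the opposite type of $x$. This holds here because the relevant node is self-opposite in all five types.

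With these repairs the argument is complete.
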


The following fact can also be deduced from \cite[Lemma 2(v)]{Coh-Ivan:07}.

\begin{fact}\label{joinjoin}
Let $x$ and $u$ be two points of an exceptional hexagonic Lie incidence geometry.
Let $x\perp y\perp z\perp u$.
Then $x$ and $u$ are opposite if, and only if, both $\{x,z\}$ and $\{y,u\}$ are special pairs.
In particular, if $x\pperp v\perp u$ for some point $v$, then $x$ and $u$ are not opposite.
\end{fact}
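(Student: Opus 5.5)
The plan is to use \cref{pointpointhex}: $x$ and $u$ are opposite if, and only if, they are at distance $3$ in the point graph. The walk $x\perp y\perp z\perp u$ shows the distance is at most $3$. So the statement reduces to two things. First, $d(x,u)\le 2$ whenever one of $\{x,z\}$, $\{y,u\}$ fails to be special. Second, $d(x,u)=3$ when both are special. The ``in particular'' then follows from the first direction: if $x\pperp v\perp u$, pick any $y\in x^\perp\cap v^\perp$ (non-empty since $x,v$ lie in a common symp), and apply the statement to the walk $x\perp y\perp v\perp u$, in which $\{x,v\}$ is not special.

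\emph{First direction.} Suppose $d(x,u)=3$, so $x,u$ are distinct and non-collinear. If $x=z$ or $x\perp z$, then $d(x,u)\le 2$, so $d(x,z)=2$. The pair $\{x,z\}$ is then either special or symplectic; suppose for a contradiction it is symplectic, and let $\xi=\xi(x,z)$. I would pass to the residue $\Res(z)$, in which the line $zu$ is a point and $\xi$ is a vertex $S$ of a certain type. The key claim is that every point of $\Res(z)$ is collinear in $\Res(z)$ to some point of $S$. I would check this type by type: the residues are $\mathsf{A_{5,3}}$, $\mathsf{D_{6,6}}$, $\mathsf{E_{7,7}}$, $\mathsf{C_{3,3}}$ and $\mathsf{B_{3,3}}$, for $\mathsf{E_{6,2}},\mathsf{E_{7,1}},\mathsf{E_{8,8}},\mathsf{F_{4,1}},\mathsf{F_{4,4}}$ respectively. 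In each case the symps through $z$ are maximal singular subspaces or symps of the residue of the appropriate kind, and the claim follows from the basic facts recalled earlier (e.g.\ \cref{point-sympE7} for the $\mathsf{E_{8,8}}$ case) or directly from an apartment.

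Given the claim, $zu$ spans a plane with some line $M\subseteq\xi$ through $z$. Since $\xi$ is a polar space, $x$ is collinear to some point $w\in M$, and $w\perp u$ because both lie in that plane. Hence $d(x,u)\le 2$, a contradiction, so $\{x,z\}$ is special. Applying the same argument to the reversed walk $u\perp z\perp y\perp x$ shows that $\{y,u\}$ is special.

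\emph{Second direction.} Assume both pairs are special, so $y=[x,z]$ and $z=[y,u]$. Suppose, for a contradiction, that $d(x,u)\le 2$. The case $x=u$ is impossible, since $x\perp z$ would contradict $\{x,z\}$ special. If $x\perp u$, then $x$ and $y$ are both collinear to $z$ and $u$, contradicting $z=[y,u]$. So there is $w$ with $x\perp w\perp u$, and $x,y,z,u,w$ form a closed $5$-circuit in which $x,z$ are special and $y,u$ are special. I would rule this out in two steps. First show $w\not\perp z$: otherwise $w\in x^\perp\cap z^\perp=\{y\}$, so $y\perp u$, contradicting $\{y,u\}$ special. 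Then analyse the pair $\{w,z\}$, which is at distance $2$, and the pair $\{y,w\}$. If $\{w,z\}$ is symplectic, the point $u$ lies in $\xi(w,z)$, and I would use the convexity of $\xi(w,z)$ together with the residue claim at $z$, now applied in $\Res(z)$ to the point $zy$, to force $y$ or $x$ to be collinear with too much of $\xi(w,z)$. This should contradict the specialness of $\{x,z\}$ or $\{y,u\}$. If $\{w,z\}$ is special, then $u=[w,z]$, and I would repeat the analysis one step around the circuit.

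\emph{Main obstacle.} The second direction is where I expect the difficulty: excluding pentagons of this kind needs genuine hexagonic structure beyond the first direction. If the local argument becomes unwieldy, the fallback is the following. Place $x$ and $z$ in a common apartment; the unique point $y=[x,z]$ then lies in it too. The position of $u$ relative to $x$ is determined by the Weyl distance, which can be read off from $\Res(z)$ and the relative position of $zu$ and $zy$ there. Finally, invoke \cite[Lemma 2(v)]{Coh-Ivan:07}, which is exactly this statement for the geometries at hand.
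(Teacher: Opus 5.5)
\textbf{Verdict: genuine gap in your main route for the converse.} Your fallback citation is exactly the paper's justification, so that direction ultimately rests on the same external source.

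\textbf{What works.} Your reduction via \cref{pointpointhex} to point-graph distance $3$ is fine. So are the direction ``opposite $\Rightarrow$ both pairs special'' and the ``in particular'' clause. This is more than the paper offers: it gives no argument and only refers to \cite[Lemma~2(v)]{Coh-Ivan:07} or to a check in an apartment. Your type-by-type residue claim is just \cref{pointsymphex}: since $u\perp z\in\xi$, either $u\in\xi$ or $u^\perp\cap\xi$ is a line or a maximal singular subspace through $z$. One slip: if $x\perp u$, the contradiction is that $x$ and $z$ are two distinct common neighbours of $y$ and $u$. It is not that ``$x$ and $y$ are collinear to $z$ and $u$''; $x$ is not collinear to $z$.

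\textbf{Where the converse breaks down.} Your symplectic-diagonal case can be completed using only \cref{pointsymphex} and \cref{generalspecial}$(i)$,$(ii)$, which do not depend on this Fact:
\begin{itemize}
\item Suppose $\{w,z\}$ is symplectic. Then $y^\perp\cap\xi(w,z)$ must be a line $L\ni z$, since otherwise $y\pperp u$.
\item The point $w$ is collinear to a unique point $a\in L\setminus\{z\}$. So $x\neq a$ are two common neighbours of $y$ and $w$, and $\{y,w\}$ is symplectic.
\item In $\xi(y,w)$ one gets $z^\perp\cap\xi(y,w)=ya$. Since $w$ is collinear to only $a$ on $ya$, this forces $w\Join z$, a contradiction.
\end{itemize}
However, ``repeating the analysis one step around the circuit'' never reaches a contradiction. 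Rotating and reflecting the circuit $x\perp y\perp z\perp u\perp w\perp x$, the argument only shows that all five pairs $\{x,z\},\{y,u\},\{z,w\},\{u,x\},\{w,y\}$ are special. Nothing in your plan excludes such a pentagon.

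Excluding it is exactly \cref{pentagon} applied to $x$ and the collinear points $z,u$, which would give $y=[x,z]\perp[x,u]=w$. But the paper derives \cref{pentagon} from this very Fact, and with it \cref{pointspecialline} and \cref{generalspecial}$(iii)$,$(iv)$. So invoking any of these here would be circular, and some global input is genuinely needed.

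Your fallback citation of \cite[Lemma~2(v)]{Coh-Ivan:07} supplies that input, and it is precisely the paper's justification.

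\textbf{On the Weyl-distance alternative.} This route can work; it is what ``checking in an apartment'' amounts to. As written, though, it is only a sketch. The point $u$ has the same type as $z$, so it is not a vertex of $\Res(z)$. After applying the gate property to $\Res(z)$, you still have to cross the panel of the line $zu$ and carry out the actual computation in the Coxeter group.
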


\begin{fact}\label{pointsymphex}
Let $x$ be a point and $\xi$ a symp of an exceptional hexagonic Lie incidence geometry.
Then exactly one of the following occurs.
\begin{compactenum}[$(i)$]
\item $x\in\xi$;
\item $x\notin\xi$ and $x^\perp\cap\xi$ is a maximal singular subspace in $\xi$ (this cannot happen in a metasymplectic space);
\item $x\notin\xi$ and $x^\perp\cap\xi$ is a line $L$;
\item $x^\perp\cap\xi=\varnothing$ and $x^{\pperp}\cap\xi$ is a maximal singular subspace $U$ (this cannot happen in a metasymplectic space); 
\item $x^\perp\cap\xi=\varnothing$ and $x^{\pperp}\cap\xi=\xi$ (this does not occur in types $\mathsf{F_4,E_8}$);
\item $x^\perp\cap\xi=\varnothing$ and $x^{\pperp}\cap\xi$ is a unique point $y$.
\end{compactenum}
\end{fact}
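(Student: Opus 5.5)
The plan is to reduce everything to a single apartment and then read off the possible configurations there. Let $\Delta$ be the building and $\Gamma$ the exceptional hexagonic geometry of type $\mathsf{X}_{n,i}$. A point $x$ is a vertex of type $i$. A symp $\xi$ is a vertex of type $j$, where $j$ is the unique node adjacent to $i$ in the diagram: type $1$ for $\mathsf{E_{6,2}}$ and $\mathsf{E_{7,1}}$, type $7$ for $\mathsf{E_{8,8}}$, and types $2$ resp. $3$ for $\mathsf{F_{4,1}}$ resp. $\mathsf{F_{4,4}}$. Any two simplices lie in a common apartment $\Sigma$. The relative position of $x$ and $\xi$ is determined by the double coset $W_{\hat i}\,w\,W_{\hat j}$, where $W_{\hat i}$ and $W_{\hat j}$ are the stabilisers of the types. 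Moreover, the sets $x^\perp\cap\xi$ and $x^{\pperp}\cap\xi$ are determined by this position: each such set is convex-closure data of $x$ and $\xi$, hence spanned by its traces on any apartment through $x$ and $\xi$.

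So the first step is to enumerate the double cosets $W_{\hat i}\backslash W/W_{\hat j}$. Equivalently, I will count the orbits of the stabiliser of a fixed symp $\xi_0$ of the thin geometry $\Sigma$ on the points of $\Sigma$. For each orbit I then record $x^\perp\cap\xi_0$ and $x^{\pperp}\cap\xi_0$.

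I will use the explicit thin models of these geometries:
\begin{compactenum}[$-$]
\item the root graphs, since for $\mathsf{E_{6,2},E_{7,1},E_{8,8}}$ the points of $\Sigma$ are the roots and the symps are the thin $\mathsf{D}$-type polar spaces of roots having inner product $1$ with a fixed root pair;
\item the long-root and short-root models of $\mathsf{F_4}$.
\end{compactenum}
The distance data of \cref{pointpointhex} and \cref{joinjoin} then tell me how each point of $\xi_0$ relates to $x$.

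For the generic argument, I proceed as follows. If $x\notin\xi$ and $x^\perp\cap\xi\neq\varnothing$, then $x^\perp\cap\xi$ is a singular subspace. Indeed, two non-collinear points of it would put $x$ in $\xi$ by convexity, and it is a subspace because $\Gamma$ is a gamma space. Pick $y$ in it. In $\Res(y)$, the line $xy$ and the symp $\xi$ become a point and a vertex of the residual geometry. Their relative position in this smaller building (of type $\mathsf{A_5}$, $\mathsf{D_6}$, $\mathsf{E_7}$ or $\mathsf{C_3}$) is known. This yields that $x^\perp\cap\xi$ is either a line or a maximal singular subspace of $\xi$, which gives cases $(ii)$ and $(iii)$.

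If $x^\perp\cap\xi=\varnothing$, then every point of $\xi$ is at distance $2$ or $3$ from $x$. The set $x^{\pperp}\cap\xi$ is then the set of points of $\xi$ not opposite $x$ and not special to $x$. I will show it is a subspace of $\xi$ that is either a point, a maximal singular subspace, or all of $\xi$. To do so, I use \cref{joinjoin}: take $z\in\xi$ with $x\pperp z$. Then $x^\perp\cap z^\perp$ together with the collinearity from $\xi(x,z)$ to $z^\perp\cap\xi$ controls which neighbours of $z$ in $\xi$ are symplectic to $x$. This reduces to a point–symp relation inside $\xi(x,z)$, viewed in $\Res(z)$.

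The exclusions come last:
\begin{compactenum}[$-$]
\item Cases $(ii)$ and $(iv)$ do not occur in metasymplectic spaces. There the symps are polar spaces of rank $3$, so a maximal singular subspace is a plane. The residue computation, in the $\mathsf{C_3}$ residue, shows that $x$ can only be collinear to a line of a symp.
\item Case $(v)$ does not occur in types $\mathsf{F_4}$ and $\mathsf{E_8}$, because opposition is type-preserving there. In those types a point far from a symp always has some point of the symp opposite it, namely the double coset of the longest word. Case $(v)$ corresponds to $x$ being opposite a vertex of $\xi$'s complementary type, which exists only when the opposition involution is nontrivial or when $j$ lies on a fork, as in $\mathsf{E_6}$ and $\mathsf{E_7}$.
\end{compactenum}

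The main obstacle I expect is the double-coset enumeration together with the exclusion in $(v)$. This is bookkeeping heavy rather than conceptually hard. I would do it uniformly by computing the orbits of $W_{\hat j}$ on the $W$-orbit of the fundamental weight $\varpi_i$, and checking inner products with $\varpi_j$.
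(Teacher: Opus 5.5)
Your strategy, placing $x$ and $\xi$ in a common apartment and enumerating relative positions, is exactly the paper's: it states this as a Fact that ``can be easily checked in an apartment''. However, two concrete steps of your execution are wrong, and as written your enumeration would compute the wrong thing.

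\textbf{First, the type of a symp is misidentified.} For an end node $i$, deleting the adjacent node $j$ isolates $i$, so the vertices of type $j$ are the \emph{lines} of $\mathsf{X}_{n,i}$. Your double cosets $W_{\hat i}\backslash W/W_{\hat j}$ would therefore classify point--line positions. For $\mathsf{E_{7,1}}$ your listed type $1$ is even the point type itself. The correct symp types are:
\begin{itemize}
\item type $6$ in $\mathsf{E_{7,1}}$ (residue $\mathsf{D_5}\times\mathsf{A_1}$ with $1$ the polar node, so these symps have rank $5$);
\item type $1$ in $\mathsf{E_{8,8}}$;
\item type $4$ in $\mathsf{F_{4,1}}$ and type $1$ in $\mathsf{F_{4,4}}$;
\item in $\mathsf{E_{6,2}}$, not vertices at all but simplices of type $\{1,6\}$ (residue $\mathsf{D_4}$ on $\{2,3,4,5\}$), so there you must use the parabolic subgroup $\langle s_2,s_3,s_4,s_5\rangle$.
\end{itemize}
Your root-model description of thin symps (an orthogonal pair of roots together with their common neighbours) is correct, and it is the safer thing to compute with.

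\textbf{Second, your argument excluding $(v)$ in $\mathsf{F_4}$ and $\mathsf{E_8}$ is invalid.} Opposition is type-preserving in $\mathsf{E_7}$ as well, yet $(v)$ occurs in $\mathsf{E_{7,1}}$. Take $\xi$ to be the ten roots with $\alpha_6$-coefficient $2$. The root $\alpha_7$ spans the $\mathsf{A_1}$-factor of the Levi of node $6$, which acts trivially on these ten roots, so $\alpha_7$ is orthogonal (that is, symplectic) to all of them. Node $6$ is not the branch node, so the fork criterion does not explain this either. Also, contrary to your remark, $(v)$ does not occur in $\mathsf{E_{6,2}}$; the Fact does not claim it does.

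What actually decides $(v)$ is the following. Such an $x$ is a root orthogonal to every root of a thin symp. The $2r$ roots of a thin symp of rank $r$ have the form $\pm e_k+t$ with $t$ fixed, so they span a space of dimension $r+1$.
\begin{itemize}
\item For $\mathsf{E_{8,8}}$ ($r=7$) and $\mathsf{F_4}$ ($r=3$), $r+1$ is the full rank, so $(v)$ is impossible.
\item For $\mathsf{E_{6,2}}$ ($r=4$) and $\mathsf{E_{7,1}}$ ($r=5$) the orthogonal complement is a line. For $\mathsf{E_{7,1}}$ it contains $\pm\alpha_7$. For $\mathsf{E_{6,2}}$ it lies in the span of $\varpi_1,\varpi_6$, which contains no root.
\end{itemize}

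With these corrections the rest of your plan is fine:
\begin{itemize}
\item $x^\perp\cap\xi$ is singular by convexity;
\item the residue computation at one of its points gives a line or a maximal singular subspace;
\item your $\mathsf{C_3}$-residue argument rules out $(ii)$ in metasymplectic spaces.
\end{itemize}
Two points still need the orbit enumeration (or, for $(iv)$, the apartment duality with $(ii)$ noted in the paper). One is the exclusion of $(iv)$ in metasymplectic spaces. The other is the non-emptiness of $x^{\pperp}\cap\xi$ when $x^\perp\cap\xi=\varnothing$, which your ``generic argument'' leaves open.
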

Regarding the possibilities $(ii)$ and $(iv)$, we note that these are ``dual'' situations in the sense that, if in an apartment $\Sigma$ a point $x$ is collinear to exactly a maximal singular subspace $U$ of a symp $\xi$ of $\Sigma$, then the point opposite $x$ in $\Sigma$ is symplectic to each point of the unique maximal singular subspace $U'$ of $\xi$ disjoint from $U$ and belonging to $\Sigma$. Hence situation $(ii)$ and $(iv)$ are simultaneously   possible or impossible (and the latter happens for metasymplectic spaces). 

The following fact follows from the diagrams by taking point residuals. 
\begin{fact}\label{sympsymphex}
Let $x$ be a point of the parapolar space $\Delta$ isomorphic to either $\mathsf{E_{6,2}}(\K), \mathsf{E_{7,1}}(\K),\mathsf{E_{8,8}}(\K)$, or a metasymplectic space.
Then $\Res_\Delta(x)$ is isomorphic to $\mathsf{A_{5,3}}(\K),\mathsf{D_{6,6}}(\K),\mathsf{E_{7,7}}(\K)$, or a dual polar space of rank $3$, respectively.
Consequently, when two symps of $\Delta$ have a line in common, then they have (at least) a plane in common.
Two non-disjoint symps either intersect in a point, a plane, or a maximal singular subspace.
The singular rank of $\Delta$ is $5,7,8$ or $3$, respectively.
\end{fact}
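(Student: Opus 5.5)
The plan is to read everything off the point residual at $x$, using the standard correspondence between objects through $x$ in $\Delta$ and objects of $\Res_\Delta(x)$. In a Lie incidence geometry of type $\mathsf{X}_{n,i}$, the residue of a vertex of type $i$ is the building obtained by deleting node $i$ from the Coxeter diagram. Deleting node $2$ from $\mathsf{E_6}$ gives $\mathsf{A_5}$, deleting node $1$ from $\mathsf{E_7}$ gives $\mathsf{D_6}$, deleting node $8$ from $\mathsf{E_8}$ gives $\mathsf{E_7}$, and deleting node $1$ or $4$ from $\mathsf{F_4}$ gives $\mathsf{C_3}$ or $\mathsf{B_3}$. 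In each case we mark the node that was adjacent to the deleted node $i$; this marked node is the type of the lines of $\Delta$. In Bourbaki labelling the marked node is node $4$ of $\mathsf{E_6}$, node $3$ of $\mathsf{E_7}$, node $7$ of $\mathsf{E_8}$, and node $2$ resp. $3$ of $\mathsf{F_4}$. Reading off its position in the residual diagram gives the middle node of $\mathsf{A_5}$, an end node of the $\mathsf{D_6}$ fork, the end node $7$ of $\mathsf{E_7}$, and the end node opposite the double bond in $\mathsf{C_3}$ resp. $\mathsf{B_3}$. Hence $\Res_\Delta(x)$, whose points are the lines through $x$ and whose lines are the planar pencils, is $\mathsf{A_{5,3}}(\K)$, $\mathsf{D_{6,6}}(\K)$, $\mathsf{E_{7,7}}(\K)$, or a dual polar space of rank $3$.

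Next we identify the residues of symps. A symp $\xi$ through $x$ corresponds to a vertex of $\Res(x)$, and the lines of $\xi$ through $x$ form a subspace of $\Res(x)$. Since $\xi$ is a polar space, its lines through $x$ form, in $\Res(x)$, a polar space of rank one less. In the four residual geometries these are exactly the maximal convex polar subspaces: $\mathsf{A_{5,3}}$ has symps $\mathsf{A_{3,2}}\cong\mathsf{D_{3,1}}$; $\mathsf{D_{6,6}}$ has symps isomorphic to $\mathsf{D_{4,4}}\cong \mathsf{D_{4,1}}$, the half-spin space of a $\mathsf{D_4}$ residue; $\mathsf{E_{7,7}}$ has symps $\mathsf{D_{6,1}}$ by \cref{preE77}; and the rank-$3$ dual polar space has quadrangles as symps. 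So symps of $\Delta$ through $x$ correspond bijectively to symps of $\Res(x)$, and a singular subspace of dimension $k$ of $\Delta$ through $x$ corresponds to a singular subspace of dimension $k-1$ of $\Res(x)$.

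Now let $\xi_1,\xi_2$ be two symps sharing a line $L$, and pick $x\in L$. Then $L$ is a common point of the two symps $\xi_1^x,\xi_2^x$ of $\Res(x)$. In each residual geometry, two distinct symps sharing a point share at least a line. In $\mathsf{E_{7,7}}$ this is \cref{symp-sympE77}: the possible intersections are a $5$-space, a line, or empty, so sharing a point forces at least a line. In $\mathsf{A_{5,3}}$, $\mathsf{D_{6,6}}$ and the dual polar space it is a direct check in the underlying projective or polar space. For instance, in $\mathsf{A_{5,3}}$ the symps are the sets of planes $\pi$ with $p\subseteq\pi\subseteq H$, for a point $p$ and a hyperplane $H$. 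If two such sets share a plane, then both point–hyperplane pairs are incident with that plane, and the set of planes containing both points and contained in both hyperplanes contains a planar pencil's worth, that is, a line of the Grassmannian. Translating back, the symps $\xi_1,\xi_2$ share a plane through $x$, which gives the "consequently" assertion.

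For the intersection types, let $\xi_1\cap\xi_2\neq\varnothing$. This intersection is convex, being an intersection of convex subspaces, and it is a singular subspace, since two symps cannot share a pair of non-collinear points (such a pair determines its symp uniquely). If it is more than a point, take $x$ in it; the intersection is then the singular subspace spanned by $x$ and the residual intersection $\xi_1^x\cap\xi_2^x$. By the residual classification, this residual intersection is a line or a maximal singular subspace of the residual symps: in $\mathsf{E_{7,7}}$ it is a $5$-space or a line, and similarly by direct check in the classical cases. Lifting, $\xi_1\cap\xi_2$ is a plane or a maximal singular subspace of the symps. Finally, the singular rank equals one plus the singular rank of $\Res(x)$. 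That residual rank is $4$ for $\mathsf{A_{5,3}}$ (maximal singular subspaces $\mathsf{PG}(2,\K)$... more precisely planes of $\PG(5,\K)$ through a fixed plane and dual ones, of dimension $2$, so one checks the maximum is $4$ via the $\mathsf{E_6}$ apartment), $6$ for $\mathsf{D_{6,6}}$, $7$ for $\mathsf{E_{7,7}}$ by \cref{preE77}, and $2$ for the dual polar space. This yields $5,7,8,3$.

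The main obstacle is the uniform verification that, in each residual geometry, two symps meeting in a point meet in a line, and the exact list of intersection types. Here I would rely on apartment computations, or cite \cite{Mal-Vic:22}, rather than case-by-case geometric arguments.
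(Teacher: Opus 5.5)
Your proposal follows the paper's route, and it is correct in substance: the paper gives no argument beyond saying that everything is read off the diagrams by passing to point residuals. Three slips need fixing.

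\begin{itemize}
\item In $\mathsf{F_4}$ the marked node is the end node \emph{incident with} the double bond. Marking the end node away from it would give a polar space, not a dual polar space.
\item For $\mathsf{E_{6,2}}$, a symp through $x$ corresponds to an incident point--hyperplane pair of $\Res(x)$, that is, a simplex of type $\{1,6\}$, not to a vertex. Your own description of the symps of $\mathsf{A_{5,3}}(\K)$ already shows this.
\item The maximal singular subspaces of $\mathsf{A_{5,3}}(\K)$ are $3$-dimensional, not $2$-dimensional: they are the planes of $\PG(5,\K)$ through a fixed line, or the planes inside a fixed solid. This is exactly what gives residual singular rank $4$ in the paper's convention (maximal projective dimension plus one), and hence singular rank $5$ for $\mathsf{E_{6,2}}$.
\end{itemize}
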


In general, an $i$-dimensional singular subspace whose points do not correspond to the set of vertices of the corresponding building contained in a simplex together with another given vertex, will be called an \emph{$i'$-space}. Hence, with some standard terminology that we will not need in this paper, an $i'$ space is a singular $i$-space that is not the point-shadow of a single vertex of the building (but usually is it the point-shadow of a simplex of size $2$ for spherical Coxeter diagrams have at most one branching node).
An $i'$-space usually arises as the intersection of a maximal singular subspace with a symp. However, if all $i$-dimensional singular subspaces are $i'$-spaces, then we call them again $i$-spaces as there is no confusion possible. 

We can now be more specific in \cref{pointsymphex}.

\begin{lem}\label{generalspecial}
Let $\Delta$ be an exceptional hexagonic Lie incidence geometry.
Let $x$ be a point of $\Delta$ and $\xi$ a symp of $\Delta$ not containing $x$.
Then the following hold. 
\begin{compactenum}[$(i)$]
\item If $x^\perp\cap\xi$ is a maximal singular subspace $U$ in $\xi$, then each point of $\xi\setminus U$ is symplectic to $x$;
\item If $x^\perp\cap\xi$ is a line $L$, then each point $y$ of $\xi\setminus L$ collinear to a unique point of $L$ is special to $x$ (the other points of $\xi\setminus L$ are symplectic to $x$);
\item If $x^{\pperp}\cap\xi$ is a maximal singular subspace $U$, then each point of $\xi\setminus U$ is special to $x$;
\item If $x^{\pperp}\cap\xi$ is a unique point $y$, then each point of $\xi$ not collinear to $y$ is opposite $x$ (consequently, each other point of $\xi\setminus\{y\}$ is special to $x$).
\end{compactenum}
\end{lem}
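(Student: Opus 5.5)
For each point $y$ of $\xi$ we determine its relation to $x$ from \cref{pointpointhex}: collinear, symplectic, special or opposite. The two main tools are \cref{joinjoin} and the fact that symps are convex. In each item, once the symplectic points are pinned down, everything else follows by elimination.

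\emph{Case $(i)$.} Let $y\in\xi\setminus U$. Then $y^\perp\cap U$ is a hyperplane of $U$, of dimension at least one since the rank is at least $3$. So $x$ and $y$ have at least two common neighbours, which are non-collinear in the sense required by the parapolar axioms. Hence $\{x,y\}$ is not special, and it is symplectic.

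\emph{Case $(ii)$.} First let $y\in\xi\setminus L$ be collinear to all of $L$. As in $(i)$, $x$ and $y$ have two common neighbours, so they are symplectic. Now let $y$ be collinear to a unique point $z\in L$. Suppose $x\pperp y$. The symp $\xi(x,y)$ then contains $z$, and by \cref{sympsymphex} it meets $\xi$ in at least a plane through $z$ and $y$. Here we apply \cref{sympsymphex} inside $\Res(z)$, where $\xi(x,y)$ and $\xi$ share the line $zy$. By convexity of symps, $x$ would then be collinear to a hyperplane of that intersection, which is a subspace of $\xi$ of dimension at least $1$. This is a point set in $x^\perp\cap\xi=L$ that contains points outside $L$, or else forces $y\perp L$. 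Either way we reach a contradiction. Finally $x$ and $y$ are at distance $2$ via $z$, so they are not opposite, and therefore they are special.

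\emph{Case $(iii)$.} Let $y\in\xi\setminus U$. Then $y^\perp\cap U$ is a hyperplane $H$ of $U$, and $y$ is not opposite $x$ by \cref{joinjoin}, since $x\pperp h\perp y$ for $h\in H$. Suppose $x\pperp y$. Pick $h\in H$. The symps $\xi(x,h)$ and $\xi(x,y)$ both meet $\xi$, and a convexity argument inside the symp $\xi(x,y)$ shows that $x$ would then be collinear to some point on a path from $x$ to $y$ lying in $\xi$. This contradicts $x^\perp\cap\xi=\varnothing$. The cleanest way to run this is probably in $\Res(h)$, using $\xi\cap\xi(x,h)\ni h$, or alternatively by checking in an apartment containing $x$ and a suitable pair of points of $\xi$. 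Since $x$ is not collinear to $y$, the pair must be special.

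\emph{Case $(iv)$.} Let $y'\in\xi$ with $y'\not\perp y$. Then $y'$ is not symplectic to $x$. It is also not collinear to $x$, and not at distance $2$ through a symplectic relation. Suppose $x\Join y'$, say with $w=[x,y']$. Then $w\notin\xi$, since $x^\perp\cap\xi=\varnothing$. Consider $w^\perp\cap\xi$, which is one of the cases of \cref{pointsymphex}. If it is a maximal singular subspace or a line, the previous items give a point of $\xi$ collinear to $w$ and symplectic to $x$; showing this requires a convexity or apartment argument. That point then differs from $y$ or is collinear to $y'$, which yields a contradiction.

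The case I expect to be the main obstacle is the one where $w\pperp$ some points of $\xi$; I would settle it by choosing an apartment through $x,y,y'$ and reading off the distances. After that, \cref{joinjoin} applied to the path $x\pperp y\perp z\perp y'$ for other points gives the consequence: the remaining points $z\in y^\perp\cap\xi\setminus\{y\}$ are at distance $2$ from $x$ and not symplectic to it, hence special.
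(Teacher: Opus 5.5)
Your items $(i)$ and $(ii)$ are fine and essentially match the paper. In $(iii)$ you are working too hard. A point $y\in\xi\setminus U$ is not symplectic to $x$ simply because $x^{\pperp}\cap\xi=U$ by hypothesis. It is not collinear to $x$ because this is case $(iv)$ of \cref{pointsymphex}. So the only thing to prove is non-opposition, which you correctly get from \cref{joinjoin}; the sketched convexity argument for the symplectic case should be dropped.

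The genuine gap is in $(iv)$. Your contradiction argument rests on the claim that ``the previous items give a point of $\xi$ collinear to $w$ and symplectic to $x$'', and nothing supports it. Items $(i)$--$(iii)$, applied to $w$ and $\xi$, describe how points of $\xi$ relate to $w$, not to $x$.
\begin{itemize}
\item When $w^\perp\cap\xi$ is a line $N\ni y'$, the claimed point would have to lie on $N$. But every $n\in N$ satisfies $x\perp w\perp n$, so $n$ is special to $x$ with centre $w$ unless $n=y$, which is impossible since $y\not\perp y'$. So in this case your claim is just a restatement of the contradiction you are after, and the earlier items do not provide it.
\item The apartment fallback is not available as stated, since an apartment through three given vertices need not exist. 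You would have to take one through $x$ and the flag $\{y',\xi\}$, argue that $y$ lies in it, and check the thin geometry type by type.
\end{itemize}

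The missing idea, which is how the paper proceeds, is to build a path to which \cref{joinjoin} applies directly.
\begin{itemize}
\item Choose $z\in\xi$ collinear to both $y$ and $y'$. You already know $z\Join x$.
\item The centre $u=[x,z]$ lies on the line $z^\perp\cap\xi(x,y)$, which passes through $y$, so $u\perp y$ and $u\perp z$.
\item The key step is that $u^\perp\cap\xi$ is not a maximal singular subspace. Otherwise, for $w'\in\xi\setminus u^\perp$, the symp $\xi(u,w')$ meets $\xi$ in a maximal singular subspace $W$ of dimension at least $3$, and $x^\perp\cap\xi(u,w')$ contains a line $M$. Then $M^\perp\cap W$ gives at least a line of points of $\xi$ symplectic to $x$, contradicting the uniqueness of $y$.
\item Hence $u^\perp\cap\xi$ is the line $yz$, and item $(ii)$ gives $u\Join y'$.
\item Now \cref{joinjoin} applied to $x\perp u\perp z\perp y'$ yields $x\equiv y'$.
\end{itemize}
If you insist on chasing $w=[x,y']$ instead, you are led back to this same question of whether $u^\perp\cap\xi$ is a maximal singular subspace. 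So the key step above is unavoidable.
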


\begin{proof}\begin{compactenum}[$(i)$]\item The maximal singular subspace $U$ of $\xi$ has dimension at least $2$, hence for each $y\in\xi\setminus U$, the set $y^\perp\cap U$ has at least three elements, implying, by the definition of parapolar spaces, that $x$ and $y$ are symplectic.
\item Suppose $x$ and $y$ were symplectic.
Then the symps $\xi$ and  $\xi(x,y)$ would have a line in common, hence, by \cref{sympsymphex}, they would share a plane $\alpha$, which has to contain  $L$ as $x^\perp\cap\alpha$ is a line.
Since also $y\in\alpha$, $y$ is collinear to all points of $L$, which contradicts the assumptions. 
\item This follows immediately from \cref{joinjoin}. 
\item No point of $\xi$ is collinear to $x$, as otherwise it follows from the other cases that $y$ is not unique.
Hence all points of $\xi$ collinear to $y$ are special to $x$.
Let $z$ be such a point.
Then, by the previous possibilities, $z^\perp\cap \xi(x,y)$ is a line $K$ and $u:=[x,z]\in K$.
Suppose, for a contradiction, that $u^\perp\cap\xi$ is a maximal singular subspace $U$ of $\xi$.
 Then, by \cref{pointsymphex}$(ii)$ and \cref{sympsymphex}, $\dim U\geq 3$ and any symp $\xi(u,w)$, with $w\in\xi\setminus u^\perp$, shares a maximal singular subspace $W$ with $\xi$.
It follows from \cref{pointsymphex} that $x^\perp\cap\xi(u,w)$ is at least a line $M$.
But then each point of $M^\perp\cap W$, which is at least $1$-dimensional, is symplectic to $x$, a contradiction.
So, $u^\perp\cap\xi$ is a line $N$, and it follows from the previous possibility that $u\Join v$, for every point $v\in\xi\setminus y^\perp$.
Now \cref{joinjoin} proves the assertion.
 
\end{compactenum}
\end{proof}

\begin{lem}\label{pentagon}
Let $x$ be a point of an exceptional hexagonic Lie incidence geometry, and suppose $x$ is special to $y_1$ and $y_2$, with $y_1\perp y_2$.
Then also the points $z_1=[x,y_1]$ and $z_2=[x,y_2]$ are collinear. 
\end{lem}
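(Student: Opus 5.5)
First dispose of the degenerate case $z_1=z_2$: then there is nothing to prove, since the statement asks for collinearity, which we take to include equality. So assume $z_1\neq z_2$. The plan is to argue by contradiction and locate a symp in which both $y_1$ and $y_2$ behave compatibly with respect to $x$. Set $L:=y_1y_2$.

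First, since $x$ is special to both $y_1$ and $y_2$, no point of $L$ is opposite $x$. Indeed, if some point $y\in L$ were opposite $x$, then $x^{\not\equiv}$ would be a proper geometric hyperplane by \cref{pointpointhex}. It would therefore contain either a single point of $L$ or all of $L$. Since $y_1,y_2\in x^{\not\equiv}$, we would get $L\subseteq x^{\not\equiv}$, a contradiction. Also, by \cref{joinjoin}, no point of $L$ can be opposite $x$ when $x\pperp v\perp$ that point for some $v$, and this is consistent with what follows.

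Next, consider the symp $\xi:=\xi(z_1,y_2)$, assuming $z_1$ and $y_2$ are non-collinear; if $z_1\perp y_2$, then $z_1$ is collinear to $x$ and $y_2$, forcing $z_1=[x,y_2]=z_2$. So suppose $z_1\not\perp y_2$. Since $z_1\perp y_1\perp y_2$, the distance between $z_1$ and $y_2$ is $2$. I would show that this pair is symplectic rather than special. If it were special with centre $y_1$, then the path $x\perp z_1\perp y_1\perp y_2$ has $\{x,y_1\}$ special and $\{z_1,y_2\}$ special. By \cref{joinjoin}, $x$ and $y_2$ would then be opposite, contradicting $x\Join y_2$. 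Hence $z_1\pperp y_2$, and $\xi$ is a symp containing $z_1$, $y_1$ and $y_2$, so that $L\subseteq\xi$ by convexity.

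Now position $x$ relative to $\xi$ using \cref{pointsymphex} and \cref{generalspecial}. We have $x\notin\xi$, since $x$ is special to $y_1\in\xi$ and symps are convex. Moreover $x\perp z_1\in\xi$, so $x^\perp\cap\xi$ is nonempty. It is therefore either a maximal singular subspace or a line $M$ through $z_1$. In the first case, \cref{generalspecial}$(i)$ makes every point of $\xi$ outside that subspace symplectic to $x$. But $y_1$ and $y_2$ are special to $x$, so they would have to be collinear to $x$, which is absurd. Hence $x^\perp\cap\xi=M$ is a line. By \cref{generalspecial}$(ii)$, $y_2$ is then collinear to a unique point $m$ of $M$, and $m\perp x$. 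So $m$ is a common neighbour of $x$ and $y_2$, giving $m=[x,y_2]=z_2$. Thus $z_2\in M$ and $z_1\in M$, so $z_1\perp z_2$.

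The main obstacle is the step excluding that $\{z_1,y_2\}$ is special. This relies on the correct reading of \cref{joinjoin} along the path $x,z_1,y_1,y_2$, together with the care needed in the subcase $z_1\perp y_2$. The rest is a direct application of the earlier facts.
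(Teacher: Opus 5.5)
Your proof is correct and is essentially the paper's: \cref{joinjoin} applied to the path $x\perp z_1\perp y_1\perp y_2$ gives $z_1\pperp y_2$, and $y_2$ must then be collinear to a point of the singular subspace $x^\perp\cap\xi(z_1,y_2)\ni z_1$, and that point can only be $z_2$. You handle the cases $z_1=z_2$ and ``$x^\perp\cap\xi$ maximal singular'' more carefully than the paper does, but your second paragraph (that no point of $y_1y_2$ is opposite $x$) is never used and can be dropped.
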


\begin{proof}
By \cref{joinjoin}, $z_1$ and $y_2$ are symplectic.
The point $x$ is collinear to $z_1$, and hence to a line $L$ of $\xi(z_1, y_2)$.
Then $y_2$ has to be collinear to a point of $L$, but this point can only be $z_2$, since $x$ and $y_2$ are special.
Note that it is possible that $z_1 = z_2$.
\end{proof}

\begin{lem}\label{pointspecialline}
Let $x$ and $L$ be a point and line, respectively, of an exceptional hexagonic Lie incidence geometry, and suppose that $x$ is special to at least two points $y_1$ and $y_2$ of $L$.
Then exactly one of the following occurs. 
\begin{compactenum}[$(i)$] \item We have $L\subseteq x^{\Join}$ and there exists a line $M$ consisting of the points collinear to $x$ and some point of $L$. Moreover, the mapping $L\to M:y\mapsto [x,y]$ is bijective.
\item There is a unique point $y\in L$ not special to $x$, and $L\setminus\{y\}\subseteq x^{\Join}$. Also, $x^\perp\cap L^\perp=\{z\}$, for some point $z$, and we have either $x\perp y=z$, or $x\pperp y$ and $y\neq z$.
\end{compactenum}
\end{lem}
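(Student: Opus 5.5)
Set $z_1=[x,y_1]$ and $z_2=[x,y_2]$; by \cref{pentagon}, $z_1$ and $z_2$ are collinear or equal. This splits the proof into two cases.

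\textbf{Case $z_1\neq z_2$.} Put $M:=z_1z_2$. First I show that $M$ is collinear to $x$. The point $x$ is collinear to both $z_1$ and $z_2$, and $\Delta$ is a gamma space, so $x$ is collinear to all of $M$. Next I compare $L$ and $M$. Each $z_i$ is collinear to $y_i$, and $z_i\not\perp y_j$ for $i\neq j$, since otherwise $z_i$ would be a second common neighbour of $x$ and $y_j$. Hence $L$ and $M$ are disjoint lines with $y_1\perp z_1$ and $y_2\perp z_2$, so $y_1\perp z_1\perp z_2\perp y_2$ is a quadrangle. By the definition of parapolar spaces, a quadrangle of this kind lies in a symp $\xi$ containing both $L$ and $M$, since $y_1$ and $z_2$ are at distance at most $2$ and are not special. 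Inside the polar space $\xi$, each point of $L$ is collinear to exactly one point of $M$. This gives the map $y\mapsto y'$, with $y'\in M$ and $y\perp y'$, and I take the inverse correspondence to be the bijection of the statement. It remains to show that $y\Join x$ with $[x,y]=y'$ for every $y\in L$. We have $x\perp y'\perp y$. Since $x\notin\xi$ (as $x\Join y_1$ with $y_1\in\xi$), \cref{pointsymphex} together with $M\subseteq x^\perp\cap\xi$ forces $x^\perp\cap\xi$ to be either $M$ or a maximal singular subspace. The maximal case is excluded by \cref{generalspecial}$(i)$, because then $y_1$ would be symplectic to $x$. So $x^\perp\cap\xi=M$, and \cref{generalspecial}$(ii)$ gives $y\Join x$ for every $y\in L$, as each $y$ is collinear to a unique point of $M$. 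This is conclusion $(i)$.

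\textbf{Case $z_1=z_2=:z$.} Now $z\perp y_1,y_2$, so $z\perp L$, and in particular $z\in x^\perp\cap L^\perp$. I claim $x^\perp\cap L^\perp=\{z\}$. A second point $z'$ would be a further common neighbour of $x$ and $y_1$, contradicting $x\Join y_1$. Let $y$ be a point of $L$ not special to $x$. We know $x\perp z\perp y$, so the distance from $x$ to $y$ is at most $2$. Hence either $y=z$, which forces $z\in L$ and $x\perp y$, or $x\pperp y$. In the latter case $y\neq z$ because $x\perp z$.

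To finish, I need uniqueness of such $y$ and its existence. The plane $\<z,L\>$, or the line $L$ itself if $z\in L$, is singular and contains $x^\perp$-points only along a point or a line. If $x$ were collinear to a line of $\<z,L\>$ through $z$, that line would meet $L$ in a point collinear to $x$, giving exactly one non-special point. Otherwise $x^\perp\cap\<z,L\>=\{z\}$ (when $z\notin L$). I then consider the symp $\xi(x,y)$ for some $y\in L$ with $x\pperp y$. If two points of $L$ were symplectic to $x$, then by convexity $L$ would be contained in a symp through $x$, and I expect this to contradict $x\Join y_1$. Existence should come from the line $xz$ and the plane $\<z,L\>$ lying in a common symp; such a symp exists because the $\Res(z)$ point residual gives two lines at distance $2$ in the residue. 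In that symp, $x^\perp$ meets $L$ in a point, which is then collinear or symplectic to $x$.

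The main obstacle is this last step: proving that exactly one point of $L$ is not special to $x$ in case $(ii)$, and not zero. It has to be checked in $\Res(z)$, using the fact that in the residues listed in \cref{sympsymphex}, any point and line can be placed in a common symp of $\Delta$ through $z$.
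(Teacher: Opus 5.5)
Your first case ($z_1\neq z_2$) is correct and follows the paper's route, with details the paper leaves implicit. The gap is in the case $z_1=z_2=z$, at the step you flag yourself: you never prove that $L$ contains exactly one point not special to $x$, and the arguments you sketch do not work.

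\begin{compactenum}[$\bullet$]
\item \emph{Uniqueness.} If $a,b\in L$ are symplectic to $x$, convexity only puts $z$ into both $\xi(x,a)$ and $\xi(x,b)$. When $z\notin L$ nothing forces these two symps to coincide, so ``by convexity $L$ lies in a symp through $x$'' is unjustified. The argument does work when $z\in L$, since then $L=za$.
\item \emph{Existence.} A symp containing both the line $xz$ and the plane $\langle z,L\rangle$ cannot exist. It would contain $x$ and $y_1$, forcing $x\perp y_1$ or $x\pperp y_1$.
\item \emph{The residue ``fact''.} For the same reason it is false. In $\Res(z)$ the point $zx$ is opposite the points $zy_1,zy_2$ of the line given by $\langle z,L\rangle$, so they lie in no common symp of the residue.
\item \emph{A vacuous case.} $x$ can never be collinear to a line of $\langle z,L\rangle$ through $z$. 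That line would meet $L$ in a second point of $x^\perp\cap L^\perp$.
\end{compactenum}

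The missing idea is to use a symp through $L$ that does \emph{not} contain $x$.
\begin{compactenum}[$\bullet$]
\item Put $\pi=\langle z,L\rangle$, or $\pi=L$ if $z\in L$, and let $\xi$ be any symp containing $\pi$. Then $x\notin\xi$ and $z\in x^\perp\cap\xi$.
\item By \cref{pointsymphex} together with \cref{generalspecial}$(i)$ (which would make $y_1$ symplectic to $x$), $x^\perp\cap\xi$ is a line $K$ through $z$.
\item Since $[x,y_1]=z$ is unique, $K\cap\pi=\{z\}$, and no point of $K\setminus\{z\}$ is collinear to $y_1$, hence none is collinear to all of $\pi$.
\item If $z\notin L$, then $K^\perp\cap\pi$ is a line through $z$ meeting $L$ in a unique point $y$. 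This $y$ is collinear to all of $K$, while every other point of $L$ is collinear only to $z$ on $K$. So \cref{generalspecial}$(ii)$ gives $x\pperp y$ and $L\setminus\{y\}\subseteq x^{\Join}$.
\item If $z\in L$, every point of $L\setminus\{z\}$ is collinear only to $z$ on $K$, so $y=z$ is the unique non-special point.
\end{compactenum}
This is how the paper finishes.

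Your residue idea could instead be rescued by using that, in $\Res(z)$, the points not opposite $zx$ form a geometric hyperplane. Then $zx$ is non-opposite exactly one point of the line corresponding to $\pi$. That is a different fact from the one you cite, and it is not recorded in the paper.
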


\begin{proof}
For $i=1,2$, set $z_i:=[x,y_i]$. Suppose firstly that $z_1\neq z_2$. \cref{pentagon} yields $z_1\perp z_2$; set $M=z_1z_2$ and note $x\perp M$. Hence $z_1\pperp y_2$ and the symp $\xi(z_1, y_2)=:\xi$ contains $y_1$ and $M$, hence also $L$, but not $x$.  Now, for every point $b$ on $L$, the point $[x,b]$ is the unique point on $M$ collinear to $b$. Now $(i)$ follows.

Suppose secondly $z_1=z_2=:z$. Then $z\perp L$ and if $y\in L\cap x^{\Join}$, then $[x,y]=z$. If $z\in L$, then, denoting by $\xi$ any symp containing $L$, we have, in view of \cref{generalspecial}, that $x^\perp\cap\xi$ is a line not collinear to $L$ and so the first possibility of $(ii)$ occurs.

Hence we may assume $z\notin L$. Then $L$ and $z$ are contained in a unique plane $\pi$. This time, let $\xi$ be any symp containing $\pi$. As in the previous paragraph, $x^\perp\cap\xi$ is a line $K$ not collinear to $\pi$. Hence $K^\perp\cap \pi$ is a line intersecting $L$ in a unique point $y$. \cref{generalspecial}$(ii)$ yields $y\pperp x$ and the second case of $(ii)$ occurs. 
\end{proof}

Similarly, we can say something about a point being symplectic to all points of a line. 

\begin{lem}\label{pointsymplecticline}
Let $x$ and $L$ be a point and line, respectively, of an exceptional hexagonic Lie incidence geometry, and suppose that $x$ is symplectic to each point of $L$.
Then there exists a maximal singular subspace $W$ not contained in a symp, containing $L$, such that $x^\perp \cap W$ is complementary to $L$ in $W$, and each symp containing $x$ and a point of $L$ contains $x^\perp\cap W$. 
\end{lem}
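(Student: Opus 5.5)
Our plan is to produce $W$ inside a single symp through $x$ and then check that it is independent of the choice of symp.

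\textbf{Step 1: reduce to one symp and find $x^\perp\cap\xi(x,y)$.} Fix $y\in L$ and put $\xi_y:=\xi(x,y)$. Since $x\pperp y'$ for every $y'\in L\setminus\{y\}$, \cref{joinjoin} shows that $x$ is not opposite any point of $L$. We first rule out $L\subseteq\xi_y$. If $L\subseteq\xi_y$, then $x^\perp\cap L\neq\varnothing$, because in a polar space a point is collinear to at least one point of each line. That contradicts the hypothesis, so $L\cap\xi_y=\{y\}$. Now look at the position of a second point $y'\in L$ relative to $\xi_y$, using \cref{pointsymphex} and \cref{generalspecial}. The point $y'$ is collinear to $y\in\xi_y$ and lies outside $\xi_y$, so $y'^\perp\cap\xi_y$ is either a line $K$ through $y$ or a maximal singular subspace $U$ through $y$. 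In the line case, \cref{generalspecial}$(ii)$ gives that $y'$ is symplectic to $x$ only if $x\perp K$. But $x\not\perp y$, and $y\in K$. So this case is impossible, and $y'^\perp\cap\xi_y=U$ is a maximal singular subspace of $\xi_y$. The space $x^\perp\cap U$ is a hyperplane $H$ of $U$ not containing $y$. We claim that $H$ is independent of $y'$. Every other point of $L$ is collinear to $y$ and $y'$, hence to $\langle y,H\rangle$ up to the gamma property. So $W:=\langle L,H\rangle$ is singular and $x^\perp\cap W\supseteq H$. We also have $x^\perp\cap L=\varnothing$. Hence $x^\perp\cap W=H$ is complementary to $L$ in $W$, because $\dim H=\dim U-1=\dim W-2$.

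\textbf{Step 2: $W$ is maximal and lies in no symp.} If $W$ lay in a symp $\zeta$, then $x$ would be collinear to $H\subseteq\zeta$ with $\dim H\geq 1$. So $x^\perp\cap\zeta$ is at least a line. By \cref{pointsymphex} it is then either a line or a maximal singular subspace of $\zeta$. In either case, inside the polar space $\zeta$ the line $L$ meets $x^\perp\cap\zeta$ or its perp. Arguing as before with \cref{generalspecial}, some point of $L$ would then be collinear or special to $x$, a contradiction. For maximality, suppose $W\subsetneq W'$ with $W'$ singular. Then $x^\perp\cap W'$ is a hyperplane of $W'$ and therefore meets $L$, again a contradiction. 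The dimension count can also be cross-checked in an apartment (in $\mathsf{E_{7,1}}$, for instance, $W$ is the $6$-space of the $\mathsf{A}$-type end of the diagram).

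\textbf{Step 3: every symp through $x$ and a point of $L$ contains $H$.} Let $y''\in L$. The symp $\xi(x,y'')$ contains $x$ and $y''$. Running Step 1 with $y''$ in place of $y$ gives the hyperplane $x^\perp\cap W''$ of a maximal singular subspace $W''\supseteq L$. By convexity, $\xi(x,y'')$ contains $x^\perp\cap y''^\perp\supseteq H\cap y''^\perp=H$. In fact $y''$ is collinear to all of $W$, since $y''\in L\subseteq W$, and $H\subseteq x^\perp$. So $H\subseteq x^\perp\cap y''^\perp\subseteq\xi(x,y'')$, and hence $W''=W$ is forced.

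\textbf{Main obstacle.} We expect the hardest step to be excluding the line case in Step 1. This needs \cref{generalspecial}$(ii)$ applied carefully, including the subcase $y'\perp y$ with $y'^\perp\cap\xi_y$ a line through $y$. If that argument has a gap, the fallback is to pass to $\Res(y)$, which is $\mathsf{A_{5,3}}$, $\mathsf{D_{6,6}}$, $\mathsf{E_{7,7}}$ or a dual polar space. There we would check directly that a line and a symp of the residue in the relevant position yield $W$.
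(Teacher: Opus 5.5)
Your Steps~1 and~3 follow the paper's proof. The paper also rules out that $x_2^\perp\cap\xi(x,x_1)$ is a line via \cref{generalspecial}$(ii)$: that line would contain $x_1\notin x^\perp$, which forces $x\Join x_2$. It then takes $W=\langle U,x_2\rangle$ and leaves to the reader exactly two things you supply: the codimension count giving $x^\perp\cap W=H$, and the convexity inclusion $H\subseteq x^\perp\cap y''^\perp\subseteq\xi(x,y'')$. One small correction: in your independence claim, apply the gamma property to the points of $U$. Each of them is collinear to $y$ and $y'$, hence to all of $L$.

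Step~2, however, has genuine gaps.

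\emph{Non-containment in a symp.} Your argument misuses \cref{generalspecial}. Suppose $W\subseteq\zeta$ and $x^\perp\cap\zeta$ is either a maximal singular subspace missing $L$, or a line $K$ with $L\subseteq K^\perp$. Then \cref{generalspecial}$(i)$, resp.\ $(ii)$, makes every point of $L$ \emph{symplectic} to $x$, so no contradiction arises. The correct reason is a dimension count: $\dim W=\dim U+1=r$, whereas a symp of rank $r$ contains no singular $r$-space.

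\emph{Maximality.} Nothing forces $x^\perp\cap W'$ to be a hyperplane of a singular $W'\supsetneq W$; your argument only shows that it is \emph{not} one. In fact maximality genuinely fails in $\mathsf{E_{7,1}}$. Its point residual $\mathsf{D_{6,6}}$ has symps $\mathsf{D_{4,1}}$, and its maximal singular subspaces have dimensions $5$ and $3$ only. Hence the symps of $\mathsf{E_{7,1}}$ are $\mathsf{D_{5,1}}$ and its maximal singular subspaces are $6$- and $4$-spaces. Your $W$ is therefore a $5$-space, and it lies in a $6$-space. Your parenthetical remark that $W$ is a $6$-space in $\mathsf{E_{7,1}}$ also contradicts your own formula $\dim W=\dim U+1$.

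Maximality does hold in $\mathsf{E_{6,2}}$ and $\mathsf{E_{8,8}}$, but only because there $r$ equals the top singular dimension ($4$, resp.\ $7$). For metasymplectic spaces the lemma is vacuous, by \cref{pointsymphex}$(ii)$ and your Step~1. The paper's one-line proof does not address maximality either. Its only application, case $\{24\}$ of \cref{combingprep2}, needs just two things: that $x^\perp\cap W$ lies in every symp through $x$ and a point of $L$, and that together with $x$ it spans a maximal singular subspace of each such symp.
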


\begin{proof}
Pick $x_1,x_2\in L$ and set $\xi_1:=\xi(x,x_1)$.
Then $x_2^\perp\cap\xi_1$ is either a line or a maximal singular subspace (cf.~\cref{pointsymphex}).
If it were a line $M$, then, by \cref{generalspecial}, $x$ would be special to $x_2$, as $x_1\in M$ and $x_1$ is not collinear to $x$.
Hence $x_2^\perp\cap\xi_1$ is a maximal singular subspace $U$.
Defining $W$ as the singular subspace generated by $U$ and $x_2$, the assertions follow. 
\end{proof}

\begin{lem}\label{opppointinsymps}
Let $\xi_1$ and $\xi_2$ be two non-disjoint symps of an exceptional hexagonic Lie incidence geometry, and let $x_i\in\xi_i$, $i=1,2$, be two points.
Then $x_1\equiv x_2$ if, and only if, $\xi_1\cap\xi_2$ is a point $z$, the symps $\xi_1$ and $\xi_2$ are locally opposite at $z$, and $\{x_i,z\}$ is a symplectic pair, $i=1,2$. 
\end{lem}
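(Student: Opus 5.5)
We prove the two implications separately. Throughout we use \cref{joinjoin} (opposition of points at distance $3$ is detected by special pairs at both ends), \cref{sympsymphex} (two non-disjoint symps meet in a point, a plane or a maximal singular subspace), and \cref{generalspecial}.

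\textbf{Sufficiency.} Assume $\xi_1\cap\xi_2=\{z\}$, that $\xi_1$ and $\xi_2$ are locally opposite at $z$, and that $x_i\pperp z$ for $i=1,2$. We show $x_1\equiv x_2$ by establishing the hypotheses of \cref{generalspecial}$(iv)$ for $x_1$ and $\xi_2$.

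First, $z\in x_1^{\pperp}\cap\xi_2$. Next we show $x_1^\perp\cap\xi_2=\varnothing$ and that no other point of $\xi_2$ is symplectic to $x_1$. To do this we pass to $\Res(z)$, where $\xi_1$ and $\xi_2$ become opposite symp-vertices. For a point $y\in\xi_2\cap z^\perp$, the line $zy$ is locally opposite $\xi_1$ in $\Res(z)$. In that residue (of type $\mathsf{A_{5,3}}$, $\mathsf{D_{6,6}}$, $\mathsf{E_{7,7}}$ or a dual polar space), a point is opposite a symp-vertex precisely when it is far from it. Hence $zy$ is collinear to a unique line $zu$ of $\xi_1$, and more precisely:
\begin{compactenum}[$(a)$]
\item no line of $\xi_1$ through $z$ lies in a common plane with $zy$, so $y^\perp\cap\xi_1=\{z\}$;
\item $y$ is symplectic to exactly the points of $\xi_1$ on one line $zu$ through $z$, and special to the remaining points of $\xi_1\cap z^\perp$.
\end{compactenum}
Then \cref{generalspecial}$(ii)$ and $(iv)$, applied to $y$ and $\xi_1$, give $y^{\pperp}\cap\xi_1=\{z\}$ together with a description of how $y$ relates to the points of $\xi_1$. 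In particular, every point of $\xi_1$ not collinear to $z$ is opposite $y$, so $x_1\equiv y$.

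This already covers the case where $x_2$ is collinear to $z$. Since in fact $x_2\pperp z$, we repeat the argument one step further, with $x_2$ in the role of $y$: pick $y\in z^\perp\cap x_2^\perp\cap\xi_2$ and use that $x_1\Join y$, which follows from the residual computation, so $x_1\equiv x_2$ follows from \cref{joinjoin}. More cleanly, one checks in the residue that $x_1^{\pperp}\cap\xi_2=\{z\}$, after which \cref{generalspecial}$(iv)$ directly gives $x_1\equiv x_2$. The key claim to verify is that $z$ is the only point of $\xi_2$ symplectic to $x_1$. This uses $(a)$: a point $w\in\xi_2\setminus\{z\}$ symplectic to $x_1$, combined with the convexity of $\xi(x_1,w)$, would produce a line of $\xi_2$ through $z$ lying in a plane with a line of $\xi_1$, contradicting local opposition.

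\textbf{Necessity.} Assume $x_1\equiv x_2$. Then $x_1\notin\xi_2$, and by \cref{generalspecial} the relation $x_1\equiv x_2$ with $x_2\in\xi_2$ forces case $(iv)$: $x_1^{\pperp}\cap\xi_2=\{y\}$ and $x_2\notin y^\perp$. Pick $z\in\xi_1\cap\xi_2$.
\begin{compactenum}[$(1)$]
\item $\xi_1\cap\xi_2$ is a single point. If the intersection contained a line, then $x_1$ would be collinear or symplectic to at least two points of it, since in the polar space $\xi_1$ the set $x_1^\perp$ is a hyperplane. This contradicts uniqueness of $y$. Hence $\xi_1\cap\xi_2=\{z\}$ and $y=z$.
\item Consequently $x_1\pperp z$, and symmetrically $x_2\pperp z$.
\item $\xi_1$ and $\xi_2$ are locally opposite at $z$. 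If they were not, then in $\Res(z)$ some line $zu\subseteq\xi_1$ would be coplanar with some line $zw\subseteq\xi_2$. Choose the apartment-type configuration accordingly. Then $x_1$ is collinear to a point of $zu$ (since $x_1\pperp z$ inside $\xi_1$), and that point is symplectic to some point of $\xi_2\setminus\{z\}$, which can be reached through $zw$. A short application of \cref{joinjoin} then gives $x_1\not\equiv x_2$.
\end{compactenum}

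\textbf{Main obstacle.} The hardest part is the residual translation: showing that local opposition at $z$ is equivalent to ``every line of $\xi_2$ through $z$ is far from $\xi_1$ in $\Res(z)$'', uniformly across the four residue types. I would verify this in an apartment of $\Res(z)$, using the models of \cite{Mal-Vic:22}.
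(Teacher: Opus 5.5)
There is a genuine gap: you translate local opposition at $z$ back into the global geometry the wrong way round. In $\Res(z)$ the symps $\xi_1,\xi_2$ are opposite, and each point of one of them is \emph{far} from the other. In these residues, far means collinear to \emph{exactly one} point of it (compare \cref{point-sympE7}$(ii)$). Globally, for $y\in\xi_2\cap z^\perp\setminus\{z\}$, this says the line $zy$ is coplanar with exactly one line $zu$ of $\xi_1$ through $z$, so $y^\perp\cap\xi_1=zu$ is a line. This produces four errors.
\begin{compactenum}[$(1)$]
\item Your item $(a)$, $y^\perp\cap\xi_1=\{z\}$, contradicts the sentence just before it and is impossible by \cref{pointsymphex}. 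In $(b)$, the points of $zu$ are collinear to $y$, not symplectic.
\item Your conclusion $x_1\equiv y$ is false: $x_1\pperp z\perp y$, and \cref{joinjoin} forbids opposition. In fact $x_1\Join y$, which is what you use in the next paragraph, so the two paragraphs are inconsistent.
\item Your ``key claim'' argument fails for the same reason. A line of $\xi_2$ through $z$ being coplanar with a line of $\xi_1$ contradicts nothing, since under local opposition every line of $\xi_2$ through $z$ is coplanar with one.
\item Necessity step $(3)$ negates the wrong condition. ``Some $zu\subseteq\xi_1$ is coplanar with some $zw\subseteq\xi_2$'' also holds when the symps are locally opposite, so no argument can derive $x_1\not\equiv x_2$ from it. The promised ``short application of \cref{joinjoin}'' is never given.
\end{compactenum}
Your necessity steps $(1)$ and $(2)$ are fine.

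The correct dictionary, which the paper's proof relies on, is this. Suppose $\xi_1\cap\xi_2=\{z\}$. Then $\xi_1,\xi_2$ are locally opposite at $z$ if, and only if, $w^\perp\cap\xi_j$ is a line, rather than a maximal singular subspace, for every $w\in\xi_i\cap z^\perp\setminus\{z\}$, $\{i,j\}=\{1,2\}$; by \cref{pointsymphex} these are the only two options.

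With it, sufficiency goes as follows. Pick $y\in\xi_2\cap z^\perp\cap x_2^\perp$. Then $y^\perp\cap\xi_1$ is a line $zu$, and $x_1$ is collinear to a single point $a\neq z$ of it. Hence $x_1\Join y$ with $[x_1,y]=a$, by \cref{generalspecial}$(ii)$. Symmetrically, $a^\perp\cap\xi_2=zy$ and $x_2$ is collinear only to $y$ on it, so $a\Join x_2$. Now \cref{joinjoin} applied to the path $x_1\perp a\perp y\perp x_2$ gives $x_1\equiv x_2$.

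For necessity step $(3)$, suppose the symps are not locally opposite. Then some $w\in\xi_j\cap z^\perp\setminus\{z\}$ has $w^\perp\cap\xi_i$ a maximal singular subspace $U\ni z$. Since $x_i\not\perp z$, we have $x_i\notin U$, so $x_i\pperp w$ by \cref{generalspecial}$(i)$. This contradicts $x_i^{\pperp}\cap\xi_j=\{z\}$, which your steps $(1)$--$(2)$ give.
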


\begin{proof}
This follows from \cref{joinjoin} and \cref{generalspecial}, taking into account that $\xi_1$ and $\xi_2$ are locally opposite at an intersection point $z$ if, and only if, each point $z_i\in\xi_i\cap z^\perp \setminus\{z\}$ is collinear to a unique line of $\xi_j$, $\{i,j\}=\{1,2\}$ (which can easily be seen in $\Res(x)$; for instance, for $\Res(x)\cong\mathsf{E_{7,7}}(\K)$, this is \cref{symp-sympE77}$(iv)$). 
\end{proof}

\begin{lem}\label{perp22perp}
Let $\xi_1$ and $\xi_2$ be two opposite symps of an exceptional hexagonic Lie incidence geometry, and let $L_1\subseteq\xi_1$ be a line.
Then the set of points of $\xi_2$ symplectic to some point of $L_1$ is a line $L_2$ of $\xi_2$.
All symps having a point on $L_1$ and a point on $L_2$ share a unique common point $x$, which is collinear to both $L_1$ and $L_2$. 
\end{lem}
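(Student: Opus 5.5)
Since $\xi_1$ and $\xi_2$ are opposite, every point $y\in\xi_1$ stands in the relation $(vi)$ of \cref{pointsymphex} with respect to $\xi_2$. Cases $(i)$--$(v)$ would produce points of $\xi_2$ not opposite $y$ forming a subspace too large to be compatible with opposition of the symps; this is most easily checked in an apartment containing $\xi_1$ and $\xi_2$. Hence $y$ is symplectic to a unique point $y'\in\xi_2$. The map $\xi_1\to\xi_2\colon y\mapsto y'$ is therefore well defined, and the first claim is that it maps the line $L_1$ onto a line $L_2$.

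\textbf{Step 1: the image of $L_1$ is contained in a line.} Take two distinct points $y,z\in L_1$ with images $y',z'$. By \cref{generalspecial}$(iv)$, every point of $\xi_2$ not collinear to $y'$ is opposite $y$. Now $z'$ is symplectic to $z$, and $y\perp z$, so $z'$ is not opposite $y$ by \cref{joinjoin}. It follows that $z'\in y'^\perp$. If $z'=y'$, then $y'$ would be symplectic to the two collinear points $y,z$, and then, by \cref{pointsymplecticline}, collinear to a subspace complementary to $L_1$ in some maximal singular subspace $W\supseteq L_1$. I would derive a contradiction here, via \cref{opppointinsymps} or directly in an apartment: since $\xi_1\equiv\xi_2$, no point of $\xi_2$ can be symplectic to two points of $\xi_1$. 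So $y'\neq z'$ and $y'\perp z'$. Applying the same argument to a third point $u\in L_1$, the images are pairwise collinear. A symmetric argument (swap the roles of the symps and use the inverse map) shows that the image of $L_1$ is a line: three pairwise collinear image points not on a line would span a plane in $\xi_2$ whose preimage under the inverse map is not contained in $L_1$. Concretely, I would show that $y'^\perp\cap z'^\perp\cap$(points symplectic to $L_1$) is exactly the line $y'z'$, and that every point of $y'z'$ is the image of a point of $L_1$, using the reverse map $\xi_2\to\xi_1$ together with the bijectivity coming from symmetry. Put $L_2=y'z'$.

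\textbf{Step 2: the common point $x$.} For $y\in L_1$ with image $y'\in L_2$, the symp $\xi(y,y')$ meets $\xi_1$ and $\xi_2$ in single points, by \cref{opppointinsymps} and the opposition of the symps. Take two such symps $\xi(y,y')$ and $\xi(z,z')$. In $\xi(y,y')$, the point $z$ is collinear to $y$, and $z$ is symplectic to $z'\perp y'$. Applying \cref{pointsymphex} and \cref{generalspecial} to $z$ and $\xi(y,y')$, the set $z^\perp\cap\xi(y,y')$ is a line $M$ through $y$, and similarly $z'^\perp\cap\xi(y,y')$ is a line $M'$ through $y'$. In the polar space $\xi(y,y')$, the lines $M$ and $M'$ through the non-collinear points $y,y'$ should meet in a point $x\in y^\perp\cap y'^\perp$, and I would aim to show $x\in\xi(z,z')$ by convexity. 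I expect this to be the main obstacle, namely proving $M\cap M'\neq\varnothing$. I would handle it by working in $\Res$ of a suitable point, or in an apartment containing $\xi_1,\xi_2$ and $L_1$, where $L_1$ and $L_2$ together with a point collinear to both form the standard configuration; the general case then follows because any symp $\xi(y,y')$ is determined by the pair $\{y,y'\}$.

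\textbf{Step 3: conclusion.} Once $x$ is collinear to $y,z,y',z'$, the gamma-space property gives $x\perp L_1$ and $x\perp L_2$. Every symp through a point $u\in L_1$ and a point $v\in L_2$ contains $u,x,v$ by convexity. Uniqueness of the common point follows because two such points would be collinear to both $L_1$ and $L_2$, which would give a point of $L_1$ at distance at most $2$ via two paths; this contradicts the uniqueness of symplectic partners established in Step 1, or the fact that different symps $\xi(y,y')$ meet only in $x$.
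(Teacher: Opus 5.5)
Your Step 1 follows the paper's argument: \cref{generalspecial}$(iv)$ and \cref{joinjoin} force images of collinear points to be collinear, and the symmetric map is the inverse. The genuine gap is Step 2. The existence of a point collinear to $y,z,y',z'$ is exactly what has to be proved, and you leave it to an apartment or residue computation that is only asserted (``the standard configuration''), never carried out.

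The missing observation is that $z$ and $y'$ are \emph{special}. The point $y'\in\xi_2$ is collinear to, and distinct from, the unique point $z'$ of $\xi_2$ symplectic to $z$, so $z\Join y'$ by \cref{generalspecial}$(iv)$. Hence $x:=[z,y']$ is the \emph{unique} common neighbour of $z$ and $y'$, and this uniqueness does all the work.
\begin{itemize}
\item The set $z^\perp\cap\xi(y,y')$ is a singular subspace through $y$ of dimension at least $1$, so it meets the hyperplane $y'^\perp\cap\xi(y,y')$ of that polar space. The meeting point is a common neighbour of $z$ and $y'$, hence equals $x$. So $x\in\xi(y,y')$ and $x\perp y$.
\item Symmetrically, $y'^\perp\cap\xi(z,z')$ is a singular subspace through $z'$ of dimension at least $1$, and it meets $z^\perp$. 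The meeting point is again $x$, giving $x\in\xi(z,z')$ and $x\perp z'$.
\end{itemize}
Thus $x\in M\cap M'$ and $x\perp y,z,y',z'$, after which your Step 3 (gamma space and convexity) goes through. This is precisely the paper's $x=[x_1',x_2]$ followed by ``switching roles''.

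Some smaller repairs are also needed.
\begin{itemize}
\item At the very start, the reason for case $(vi)$ is simply that $y$ has an opposite in $\xi_2$ (an apartment argument). Cases $(i)$--$(v)$ of \cref{pointsymphex} exclude any such opposite.
\item In Step 1, the ``plane'' argument gives no contradiction by itself, since the inverse image of such a plane is merely a clique containing $L_1$. What you need is that $\phi$ and its inverse both preserve collinearity. Then $\phi$ maps $L_1=\{y,z\}^{\perp\perp}$ onto $\{y',z'\}^{\perp\perp}=y'z'$, which is what your ``concretely'' sentence is reaching for.
\item In Step 3, uniqueness is immediate from $z\Join y'$: any point collinear to both $L_1$ and $L_2$ is a common neighbour of $z$ and $y'$.
\item If you want the stronger statement $\xi(y,y')\cap\xi(z,z')=\{x\}$, argue as follows. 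Otherwise the intersection is a singular subspace $S$ of dimension at least $2$ by \cref{sympsymphex}. Then $z^\perp\cap S$ has dimension at least $1$ and lies in $z^\perp\cap\xi(y,y')$. The latter is the line $xy$, because a maximal singular subspace is excluded by \cref{generalspecial}$(i)$, as $z\Join y'$. Hence $y\in\xi(z,z')$, contradicting $\xi(z,z')\cap\xi_1=\{z\}$.
\end{itemize}
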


\begin{proof}
First, we note that, from general building-theoretic considerations, every point in $\xi_1$ has an opposite in $\xi_2$; hence, if $x_1\in L_1$ and $x_2\in x_1^{\pperp}\cap\xi_2$, then \cref{opppointinsymps} implies that $\xi_2$ and $\xi(x_1,x_2)$ are locally opposite at $x_2$ and, by \cref{pointsymphex} and \cref{generalspecial}, every point of $\xi_2$ not collinear to $x_2$ is opposite $x_1$.
By \cref{joinjoin}, such a point $y_2$ is not symplectic to any point of $L_1$. 
We conclude that ``being symplectic'' preserves collinearity in both directions (interchanging the roles of $\xi_1$ and $\xi_2$), and hence is an isomorphism between $\xi_1$ and $\xi_2$.
Let $L_2\subseteq\xi_2$ correspond to $L_1$ under that isomorphism.

Let $x_1'\in L_1\setminus\{x_1\}$.
Then there is a unique point $[x_1',x_2]=:x\in \xi(x_1,x_2)$ collinear to $x_2$ and $x_1'$.
Clearly, $x\perp L_1$.
Standard arguments switching roles of points on $L_1$ and $L_2$ imply that $x$ is independent of $x_1$ and $x_2$, and so $x$ is collinear to each point of $L_1\cup L_2$.
It is now easy to see that $x$ is contained in each symp containing a point of $L_1$ and a point of $L_2$.
Uniqueness of $x$ follows from the fact that $x=[x_1',x_2]$. 
\end{proof}

\begin{lem}\label{generalnotspecial}
Let $\Delta$ be an exceptional hexagonic Lie incidence geometry, and let $\{x_1, x_2\}$ be a symplectic pair of points in $\Delta$.
Let $x_2'$ be another point in $\Delta$ symplectic to $x_1$ and collinear to $x_2$.
Let $x_1'$ be a point such that $x_1 \perp x_1' \perp x_2'$.
Then $x_1'$ and $x_2$ cannot be special.
\end{lem}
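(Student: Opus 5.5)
Since $x_1\perp x_1'$ and $x_1\pperp x_2$, the point $x_1'$ is close to $x_2$; non-collinear points are symplectic, special or opposite (\cref{pointpointhex}). So the claim is that the pair $\{x_1',x_2\}$ is not special. I argue by contradiction, assume $x_1'\Join x_2$, and set $z:=[x_1',x_2]$. The useful configuration is the symp $\xi:=\xi(x_1,x_2')$, which exists because $x_1\pperp x_2'$ by hypothesis.

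\textbf{Step 1.} I locate $x_2$ and $x_1'$ relative to $\xi$. The point $x_2\perp x_2'\in\xi$, and $x_2\pperp x_1\in\xi$. By \cref{pointsymphex} and \cref{generalspecial}, if $x_2\notin\xi$ then $x_2^\perp\cap\xi$ is either a line through $x_2'$ or a maximal singular subspace. If it were a line $L$ with $x_1\notin L^\perp$, then \cref{generalspecial}$(ii)$ would force $x_1$ to be special or collinear to $x_2$. Hence either $x_1$ is collinear to at least two points of $x_2^\perp\cap\xi$, or $x_2\in\xi$. In each case $x_1^\perp\cap x_2^\perp\cap\xi$ contains a line. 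Symmetrically, $x_1'$ is collinear to $x_1$ and $x_2'$, both in $\xi$, and $x_1\not\perp x_2'$. So $x_1'\in\xi$ or $x_1'^\perp\cap\xi$ contains the two non-collinear points $x_1,x_2'$. The latter is impossible when that intersection is a singular subspace, hence $x_1'\in\xi$.

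\textbf{Step 2.} With $x_1'\in\xi$ in hand, I show $x_1'$ is symplectic to $x_2$. If $x_2\in\xi$, then $x_1',x_2$ lie in the same polar space $\xi$. They are non-collinear because of the assumed specialness, and they are not special inside a symp, so they are symplectic, which is a contradiction. If $x_2\notin\xi$, then $x_2^\perp\cap\xi$ is a line $L\ni x_2'$ or a maximal singular subspace $U\ni x_2'$. In the subspace case, \cref{generalspecial}$(i)$ gives $x_1'\pperp x_2$ directly. In the line case, \cref{generalspecial}$(ii)$ says $x_1'$ is special to $x_2$ exactly when $x_1'$ is collinear to a unique point of $L$. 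But $x_1'\perp x_2'\in L$, so I must show $x_1'$ is collinear to all of $L$. By Step 1, $x_1$ is collinear to a second point $w\in L\setminus\{x_2'\}$, since $x_1\pperp x_2$ forces $x_1^\perp\cap L$ to be nonempty in the polar space $\xi$. Then $L=x_2'w$.

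\textbf{Main obstacle.} The difficulty is that $x_1'$ is collinear to $x_2'$ but need not be collinear to $w$. I plan to use that $x_1$ is special to $x_2$ would fail. Concretely, $x_1$ and $x_2$ being symplectic means $\xi(x_1,x_2)$ meets $\xi$ in at least the line $x_1w$ (by \cref{generalspecial}$(ii)$ applied to $x_1$, which must be collinear to all of $L$). Then \cref{sympsymphex} gives a plane $\pi\supseteq x_1w\cup\{x_2'\}$, because $x_2'\in L\subseteq x_1^\perp$. So $x_1\perp x_2'$, contradicting $x_1\pperp x_2'$. Therefore the line case, in which $x_1$ is collinear to all of $L$, cannot occur, and only the maximal singular subspace case remains, which Step 2 already handles.

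If this bookkeeping fails, the fallback is to invoke \cref{joinjoin} on the path $x_1'\perp x_2'\perp x_2\perp z$, together with \cref{pentagon} applied to the points $x_2$ and $x_1'$, to derive a collinearity inside $\xi$ that contradicts $x_1\pperp x_2'$.
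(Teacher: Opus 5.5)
Your argument is correct, but it goes through a different symp than the paper's.

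\textbf{The paper's route.} The paper works in $\xi(x_1,x_2)$. The point $x_1'$ is collinear to a line $L\ni x_1$ of that symp, and $L$ contains a point $y\perp x_2$. If $x_1'\Join x_2$, then $y$ and $x_2'$ are both common neighbours of $x_1'$ and $x_2$. Uniqueness of $[x_1',x_2]$ forces $y=x_2'$, which is absurd because $y\perp x_1$ while $x_2'\pperp x_1$.

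\textbf{Your route.} You work in $\xi=\xi(x_1,x_2')$ and place $x_1'$ inside it (by convexity, or by your singular-subspace argument). You then classify $x_2^\perp\cap\xi$ using \cref{pointsymphex} and \cref{generalspecial}$(i)$,$(ii)$.

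\textbf{Comparison.} Your route is longer and uses more of the point--symp machinery, whereas the paper needs only the uniqueness of the middle point of a special pair. In exchange, you obtain an extra structural fact: $x_2$ either lies in $\xi(x_1,x_2')$ or is collinear to a maximal singular subspace of it. In particular, in metasymplectic spaces this gives $\xi(x_1,x_2)=\xi(x_1,x_2')$.

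\textbf{Streamlining the line case.} The line case needs no separate treatment. Since $x_2'\in L=x_2^\perp\cap\xi$ and $x_1\not\perp x_2'$, the point $x_1$ is collinear to exactly one point of $L$. Then \cref{generalspecial}$(ii)$ gives $x_1\Join x_2$ immediately, so the line case is already excluded in Step~1.

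\textbf{What to drop.} The detour in your ``main obstacle'' paragraph through \cref{sympsymphex} and the plane $\pi$ is unnecessary. Moreover, \cref{sympsymphex} by itself does not produce a plane containing $x_2'$. The fallback paragraph can also go.
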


\begin{proof}
The point $x_1'$ is collinear to (at least) a line $L$ of $\xi(x_1, x_2)$, which contains $x_1$ and a point $y$ collinear to $x_2$.
If $x_1'$ were special to $x_2$, then $y=[x_1',x_2]=x_2'\pperp x_1$, contradicting $y\perp x_1$.
\end{proof}

\cref{generalspecial} implies that, whenever two points $x,y$ of an exceptional hexagonic Lie incidence geometry $\Delta$ are opposite, then every symp through $x$ contains a unique point symplectic to $y$.
This way, one obtains all points $x^{\pperp}\cap y^{\pperp}$, and we call such a set an \emph{equator}.
This defines a subspace of $\Delta$ which we call the \emph{equator geometry (with poles $x$ and $y$)} and denote as $E(x,y)$.
We view these as point-line geometries as soon as they contain lines.
The latter is the case in the simply laced case (types $\mathsf{A}_n,\mathsf{D}_n,\mathsf{E_6,E_7}$ and $\mathsf{E_8}$).
In all those cases, these equator geometries can be defined in exactly the same way for the corresponding hexagonic geometries (here the Lie incidence geometries of types $\mathsf{A}_{n,\{1,n\}}$, $\mathsf{D}_{n,2}$, and the exceptional ones not of type $\mathsf{F_4}$), and we have the following sequences (where $\Delta\to\Delta'$ means that $\Delta'$ is an equator geometry of $\Delta$), which can be deduced from \cite{Sch-Mal:23}:
\[\begin{cases} \mathsf{E_{8,8}}(\K)\to\mathsf{E_{7,1}}(\K)\to\mathsf{D_{6,2}}(\K),\\ 
\mathsf{E_{6,2}}(\K)\to\mathsf{A_{5,\{1,5\}}}(\K)\to\mathsf{A_{3,\{1,3\}}}(\K).\end{cases}
\]
\label{eqgeom}

In the case of type $\mathsf{F_4}$, equator geometries as defined here have no lines.
We shall give an alternative definition for that case in the next paragraph such that also in type $\mathsf{F_4}$, each equator admits an interesting geometric structure.

\subsection{Metasymplectic spaces} 

The previous paragraph includes the metasymplectic spaces, that is, the Lie incidence geometries of types $\mathsf{F_{4,1}}$ and $\mathsf{F_{4,4}}$.
We now introduce some notation making apparent the differences between $\mathsf{F_{4,1}}$ and $\mathsf{F_{4,4}}$, based on the Dynkin diagram of type $\mathsf{F_4}$ rather than the Coxeter diagram.
Everything in the paragraph can be found in \cite{Lam-Mal:25} and is, of course, based on the fundamental work of Tits in \cite{Tits:74}. 

Given a field $\K$, a quadratic algebra $\A$ over $\K$ is an algebra that admits a bilinear form $b:\A\to\K$ such that for every $x\in\A$ we have $x^2-(b(1,x)+b(x,1))x+b(x,x)=0$.
The element $b(x,x)$ is called the \emph{norm} of $x$ and briefly denoted as $\norm(x)$.
We assume that $\A$ is \emph{alternative}, that is, $\A$ satisfies the alternative laws $(ab)b=ab^2$ and $a(ab)=a^2b$, and that $\A$ is a \emph{unital division} algebra, that is, it has an identity and every element has an inverse. This force the norm $\norm$ to be anisotropic, that is, $\norm(x)=0$ if, and only if, $x=0$. 
We can associate a polar space of rank $r\geq 2$ with every quadratic alternative unital division algebra as follows.
Let $V$ be the vector space isomorphic to the direct sum of $\A$ and $2r$ copies of $\K$.
Then define the quadratic form \[\beta:V\to\K:(x_{-r},x_{-r+1},\ldots,x_{-1},x_0,x_1,\ldots,x_r)\mapsto x_{-1}x_1+x_{-2}x_2+\cdots+x_{-r}x_r-\norm(x_0),\] where $x_0\in \A$ and $x_i\in\K$, for all $i\in\{-r,-r+1,\ldots,-1,1,2,\ldots,r\}$.
Then the null set of $\beta$ defines a quadric of Witt index $r$, whose natural point-line geometry is a polar space of rank $r$, which we denote by $\mathsf{B}_{r,1}(\K,\A)$. 

From the classification of buildings of type $\mathsf{F_4}$ in \cite{Tits:74}, we know that such a building is uniquely determined by a field $\K$ and a quadratic alternative unital division algebra $\A$ over $\K$.
We denote that building by $\mathsf{F_4}(\K,\A)$, where we usually substitute $\K$ and $\A$ with their sizes if they are finite.
Now, we assign the type function to the diagram in such a way that the symps of $\mathsf{F_{4,1}}(\K,\A)$ are isomorphic to the polar space $\mathsf{B_{3,1}}(\K,\A)$.
The building $\mathsf{F_4}(\K,\K)$ will sometimes be referred to as ``split''.
It is also characterised by the fact that the residues of vertices of type $1$ correspond to \emph{symplectic} polar spaces, that is, polar spaces defined by a non-degenerate alternating bilinear form, or equivalently, a null polarity.
If we define a \emph{hyperbolic line} of a polar space as the set  $(x^\perp\cap y^\perp)^\perp=\{x,y\}^{\perp\perp}$, for two non-collinear points $x$ and $y$, then, in a symplectic polar space, a hyperbolic line is an ordinary line of the ambient projective space which is not a line of the polar space.

We now define equator geometries.
Let $p,q$ be two opposite points of $\mathsf{F}_{4,1}(\K,\A)$, $i\in\{1,4\}$.
The \emph{equator} $E(p,q)$ is the set of points that are symplectic simultaneously to $p$ and $q$.
The intersection of $E(p,q)$ with the union of the symps through a given plane containing either $p$ or $q$ is, by definition, a line of the \emph{equator geometry}.
One checks that, replacing ``plane'' with ``maximal singular subspace contained in a symp'', this definition applied to the simply laced case provides the same equator geometries as defined earlier (this is proved explicitly in many cases in \cite{Sch-Mal:23}).

We are going to briefly need the extended equator geometry, but only for the split case $\A=\K$.
Let $E(p,q)$ be an equator of $\mathsf{F_{4,4}}(\K,\K)$ and define $\widehat{E}(p,q)$ as the set of points symplectic to at least two opposite points of $E(p,q)$.
Endow $\widehat{E}(p,q)$ with all lines of each equator geometry included in it.
Then we obtain the \emph{extended equator geometry}, also denoted by $\widehat{E}(p,q)$.
This time, $p,q\in\widehat{E}(p,q)$.
It is always isomorphic to $\mathsf{B_{4,1}}(\K,\K)$, see \cite{Lam-Mal:25}.
If $\K$ is perfect of characteristic~$2$, then notice that $\mathsf{B_{4,1}}(\K,\K)$ is a symplectic polar space.
\label{extEG}

\subsection{Generalised hexagons}\label{secgenhex}

Our results include buildings of type $\mathsf{G_2}$; the associated point-line geometries are better known as generalised hexagons, introduced by Tits \cite{Tits:59}.
For $n\geq  3$, a \emph{generalised $n$-gon}, or \emph{generalised polygon} if we do not want to specify $n$, is a point-line geometry $\Gamma=(X,\cL)$ such that the (bipartite) graph on $X\cup\cL$, with $x\in X$ adjacent to $L\in\cL$ if $x\in L$, has diameter $n$ and girth $2n$ (we call this graph the \emph{incidence graph of $\Gamma$}).
We also assume that every line has at least three points and every point is contained in at least three lines (thickness of the associated building).
Generalised $3$-gons are the same things as projective planes, and generalised $4$-gons, also known as generalised quadrangles, are polar spaces of rank $2$.
For more general background and results on generalised $n$-gons, see \cite{Mal:98}.
Finite generalised quadrangles are studied in detail in \cite{Pay-Tha:84}.
We recall the following definitions.
The \emph{order} of a generalised $n$-gon is the pair $(s,t)$ such that each line contains precisely $s+1$ points and each point is contained in precisely $t+1$ lines.
A \emph{spread} in a generalised quadrangle $\Gamma=(X,\cL)$ is a partition of $X$ into members of $\cL$.
If $\Gamma$ has order $(s,t)$, then a spread contains exactly $1+st$ lines.
A \emph{subpolygon} $\Gamma'$ of a generalised polygon $\Gamma$ is a generalised polygon whose points and lines are point and lines, respectively, of $\Gamma$, and elements of $\Gamma'$ have the same mutual distance in the incidence graph of $\Gamma'$ as in that of $\Gamma$.

Here, we are particularly interested in generalised hexagons, and more specifically in those that satisfy the \emph{Moufang condition}, as these are the counterparts of type $\mathsf{G_2}$ of the spherical buildings of rank at least $3$ (since these automatically satisfy such condition) and are the natural geometries for the simple algebraic groups of that type.
Recall from \cite{Tits-Wei:02} that a Moufang hexagon is determined by a field $\K$ and a quadratic Jordan division algebra $\J$ over $\K$; using the Bourbaki labelling of nodes for Dynkin diagrams, we define $\mathsf{G_{2,2}}(\K,\J)$ as the hexagon where the point rows are parametrised by $\J$ and the line pencils by $\K$.
There is perhaps ambiguity when $\J=\K$, but we take that ambiguity away by mentioning that, in this case, $\mathsf{G_{2,2}}(\K,\K)$ is the hexagon arising from a triality of type I$_{\id}$, as defined by Tits \cite{Tits:59}.
Such a hexagon shall be called the \emph{split Cayley hexagon} (as in \cite{Mal:98}), and the corresponding building is referred to as being \emph{split}.
The \emph{triality hexagon} is $\mathsf{G_{2,2}}(\K,\J)$, where $\J$ is a cubic Galois field extension of $\K$.
We will only need this hexagon in the finite case.
The finite triality hexagon has order $(q^3,q)$, for some prime power $q$.
In the finite case, we replace the field and the Jordan algebra with their sizes in the notation, just like we did for finite metasymplectic spaces.

Let $\Gamma=(X,\cL)$ be a generalised hexagon.
We use the notation and terminology of parapolar spaces to express the mutual position of two points, with the obvious meaning.
In particular, points are special if they are not collinear but they are collinear to a unique common point.
With that, a \emph{hyperbolic line} $H$ in $\Gamma$ is a set of mutual special points collinear to a common given point $p$ such that, for each point $q$ opposite $p$, we have $q^{\Join}\cap p^\perp=H$ as soon as $|q^{\Join}\cap H|\geq 2$.
If we call a point of $\Gamma$ \emph{close to} a line if it is collinear to a unique point of that line, then a \emph{distance-$3$ trace} in $\Gamma$ is the set of points close to two given opposite lines $L$ and $M$.
We denote it by $[L,M]_3$.
A distance-$3$ trace $[L,M]_3$ is called \emph{regular} if $[N,M]_3=[L,M]_3$ whenever $N$ is opposite $M$ and $|[N,M]_3\cap[L,M]_3|\geq 2$.
There are reasons to call a distance-$3$ trace in a split Cayley hexagon over a perfect field in characteristic~2 an \emph{imaginary line}, as we shall explain in the proof of \cref{geomlineshex}.
The terminology between parentheses in \cref{A} and \cref{B} above, namely \emph{ideal line}, is Ronan's terminology \cite{Ron:80}.

\subsection{Opposition and projections} \label{oppprojsec}

In this final paragraph of the preliminaries, we invoke some general theory about opposition, define projections, and note down some consequences which are rather interesting in our context because they unify some arguments across all types.

Let $F$ and $F'$ be opposite simplices in a spherical building $\Omega$.
For us, $F$ and $F'$ will almost always be vertices, but we choose to state and define things slightly more generally.
Then, by \cite[look up]{Tits:74}, for every chamber $C\supseteq F$ there exists a unique chamber $C'\supseteq F'$ at nearest (gallery) distance from $C$.
By \cite[look up]{Tits:74}, the map $C\mapsto C'$ induces an isomorphism from $\Res_\Omega(F)$ to $\Res_\Omega(F')$.
This means that for every vertex $v$ joinable to $F$, there exists a unique vertex $v'$ joinable to $F'$ and closest to $v$; we denote $v'=\proj^F_{F'}(v)$ and we call $v'$ the \emph{projection of $v$ onto $F'$}.
If $F$ is obvious or not important, we sometimes write $\proj_{F'}$ instead of $\proj_{F'}^F$. 

For example, the map in the proof of \cref{perp22perp} between the two opposite symps $\xi_1$ and $\xi_2$ of an exceptional hexagonic Lie incidence geometry $\Delta$, given on the points by ``being symplectic'', coincides with the pair of projections $\proj_{\xi'}^\xi$ and $\proj_\xi^{\xi'}$.
As a second example, the projection $\proj^x_y$ from a point $x$ to an opposite point $y$ in an exceptional hexagonic Lie incidence geometry maps a line $L$ through $x$ to the unique line through $y$ containing a point collinear to some point of $L$. 

The following notion will be very convenient to classify geometric lines.
Let $\Omega$ be any spherical building and let $v_1,v_2,v_3$ be three vertices of the same type.
Then, using the terminology of \cite{Kas-Mal:13}, we call $\{v_1,v_2,v_3\}$ a \emph{round-up triple} if no vertex of $\Omega$ is opposite exactly one of $v_1,v_2,v_3$.
Then, by definition, round-up triples in Lie incidence geometries correspond to round-up triples in the corresponding building.
We will make extensive use of round-up triples when classifying geometric lines in this paper. 

We have the following connection between global and local opposition. 

\begin{prop}[Proposition~3.29 of \cite{Tits:74}] \label{Tits}
Let $F$ and $F'$ be opposite simplices of a spherical building $\Omega$.
Let $v$ be a vertex of $\Omega$ adjacent to each vertex of $F$, and let $i$ be the type of $v$.
Then the type $i'$ of the vertex $\proj^{F}_{F'}(v)$ is the opposite in $\Res(F')$ of the opposite type of $i$ in $\Omega$.
Also, vertices $v\sim F$ and $v'\sim F'$ are opposite in $\Omega$ if, and only if, $v'$ is opposite $\proj^{F}_{F'}(v)$ in $\Res_\Omega(F')$.
  
\end{prop}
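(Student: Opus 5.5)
The proof works entirely inside one apartment, using two standard facts about spherical buildings. The first is the gate property of projections onto residues: for a chamber $D$ and a simplex $F'$, the projection $\proj_{F'}(D)$ is the unique chamber $E_0\supseteq F'$ such that every chamber $E\supseteq F'$ satisfies $d(D,E)=d(D,E_0)+d(E_0,E)$. The second is convexity: apartments are convex, so all projections of chambers of an apartment $\Sigma$ onto simplices of $\Sigma$ stay inside $\Sigma$. The key step is an explicit formula for $\proj^F_{F'}$ restricted to an apartment, in terms of two opposition maps.

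\textbf{Step 1 (the formula in an apartment).} Let $\Sigma$ be an apartment containing $F$ and $F'$, let $\mathrm{op}_\Sigma$ be the antipodal map of $\Sigma$ (given by the longest word $w_0$), and let $\mathrm{op}_{F'}$ be opposition in the residue $\Res_\Sigma(F')$, which is an apartment of $\Res_\Omega(F')$. Since $F\equiv F'$, we have $\mathrm{op}_\Sigma(F)=F'$. So for a chamber $D\supseteq F$ of $\Sigma$, the chamber $\mathrm{op}_\Sigma(D)$ contains $F'$ and lies at maximal distance from $D$.

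By the gate property, the chamber of $\mathrm{St}(F')$ farthest from $D$ is the chamber farthest from the gate $\proj_{F'}(D)$ within $\Res(F')$, that is, its opposite in the residue. Hence
\[
\proj_{F'}(D)=\mathrm{op}_{F'}\bigl(\mathrm{op}_\Sigma(D)\bigr).
\]
On vertices, this gives $\proj^F_{F'}(v)=\mathrm{op}_{F'}(\mathrm{op}_\Sigma(v))$ for every vertex $v\sim F$ of $\Sigma$. The vertex $\mathrm{op}_\Sigma(v)$ has the type opposite to $i$ in $\Omega$, and applying $\mathrm{op}_{F'}$ replaces this by its opposite type in $\Res(F')$. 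This is exactly the type assertion of the statement.

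\textbf{Step 2 (the opposition criterion).} Take $v\sim F$ and $v'\sim F'$. Choose chambers $C\supseteq F\cup\{v\}$ and $D'\supseteq F'\cup\{v'\}$, and an apartment $\Sigma$ containing both $C$ and $D'$; then $\Sigma$ contains $F$, $F'$, $v$ and $v'$. By convexity, $\proj^F_{F'}(v)$ is computed inside $\Sigma$, so Step 1 applies.

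Opposition between two vertices of an apartment is antipodality in that apartment, so $v\equiv v'$ if and only if $v'=\mathrm{op}_\Sigma(v)$. By Step 1 and the fact that $\mathrm{op}_{F'}$ is an involution, this holds if and only if $v'=\mathrm{op}_{F'}(\proj^F_{F'}(v))$. This last condition says that $v'$ is opposite $\proj^F_{F'}(v)$ in the apartment $\Res_\Sigma(F')$ of $\Res_\Omega(F')$, which is the same as being opposite in $\Res_\Omega(F')$.

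\textbf{Main obstacle.} The delicate point is the identity in Step 1: one has to check carefully that "farthest chamber in $\mathrm{St}(F')$ from $D$" coincides with "residue-opposite of the gate". I would prove it from the length additivity $\ell(\delta(D,E))=\ell(\delta(D,E_0))+\ell(\delta(E_0,E))$ with $E_0=\proj_{F'}(D)$, together with uniqueness of the longest element of the parabolic subgroup of type $\mathrm{typ}(\Res(F'))$. The remaining steps only need the standard existence of apartments through two given chambers.
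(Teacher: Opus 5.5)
Your proof is correct. The paper gives no argument for this statement: it is quoted from Tits, as is the fact that chamber projection gives an isomorphism $\Res(F)\to\Res(F')$. So there is no in-paper proof to compare against. What you give is the standard apartment argument, and it makes the type assertion explicit. On $\mathrm{St}_\Sigma(F)$ one has $\proj^F_{F'}=\mathrm{op}_{F'}\circ\mathrm{op}_\Sigma$, so types transform by conjugation with $w_0$ followed by conjugation with the longest element $w_0^J$ of the parabolic subgroup of the cotype $J$ of $F'$.

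Two details deserve one more line when you write this up.

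\emph{Uniqueness in Step~1.} The farthest chamber is unique only inside $\Sigma$. In $\Omega$, every chamber of $\mathrm{St}(F')$ that is opposite $E_0$ in the residue is opposite $D$. The clean formulation is your last paragraph: $\mathrm{op}_\Sigma(D)\in\Res_\Sigma(F')$ realises the global maximum $\ell(w_0)$. By the gate identity it therefore maximises $d(E_0,\cdot)$ on $\Res_\Sigma(F')$, so it equals $\mathrm{op}_{F'}(E_0)$ by uniqueness of $w_0^J$.

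\emph{Passage from chambers to vertices.} $\proj^F_{F'}(v)$ is defined through the chamber isomorphism on all of $\mathrm{St}(F)$. You cannot read it off a single chamber $\proj_{F'}(D)$, because its type is exactly what is being determined. Argue instead as follows. The isomorphism maps the chambers containing $F\cup\{v\}$ onto those containing $F'\cup\{\proj^F_{F'}(v)\}$. Since $\mathrm{op}_{F'}\circ\mathrm{op}_\Sigma$ is a simplicial bijection $\mathrm{St}_\Sigma(F)\to\mathrm{St}_\Sigma(F')$, Step~1 shows that the chambers of $\Sigma$ containing $F'\cup\{\mathrm{op}_{F'}(\mathrm{op}_\Sigma(v))\}$ are among these images. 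Hence $\proj^F_{F'}(v)$ lies in their intersection, which in the Coxeter complex $\Sigma$ is exactly $F'\cup\{\mathrm{op}_{F'}(\mathrm{op}_\Sigma(v))\}$. As $\proj^F_{F'}(v)\notin F'$, the two vertices coincide. With these two points made explicit, Step~2 goes through as written.
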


\begin{coro}\label{corolTits}
Let $F$ be a simplex of a spherical building $\Omega$.
Then a collection $T$ of vertices in $\Res(F)$ admits an opposite in $\Res(F)$ (viewed as a spherical building on its own) if and only if it admits an opposite vertex in $\Omega$.
Also, a set of vertices in $\Res(F)$ is a geometric line of $\Res(F)$ if, and only if, it is a geometric line of $\Omega$.
A triplet of vertices is a round-up triple in $\Res(F)$ if, and only if, it is a round-up triple in $\Omega$.
\end{coro}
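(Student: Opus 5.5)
The plan is to reduce all three assertions to \cref{Tits}, using one opposite simplex $F'$ of $F$ and the projection $\proj^F_{F'}$. Since $\Omega$ is spherical, such an $F'$ exists. By \cref{Tits}, for $v\sim F$ and $w\sim F'$ we have $v\equiv w$ in $\Omega$ if, and only if, $w$ is opposite $\proj^F_{F'}(v)$ in $\Res(F')$. The map $\proj^F_{F'}\colon\Res(F)\to\Res(F')$ is an isomorphism of buildings; it is type-preserving up to the type permutation described in \cref{Tits}.

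\textbf{First assertion.} Let $T\subseteq\Res(F)$.
\begin{compactenum}[$(a)$]
\item Suppose $T$ admits an opposite $w$ in $\Res(F)$. Put $w':=\proj^F_{F'}(w)$. Since $\proj^F_{F'}$ is an isomorphism and opposition is intrinsic to a building, $w'$ is opposite $\proj^F_{F'}(v)$ in $\Res(F')$ for every $v\in T$. Then \cref{Tits} gives $v\equiv w'$ in $\Omega$, so $T$ has an opposite in $\Omega$.
\item Conversely, suppose $u$ is a vertex of $\Omega$ opposite every member of $T$. Here $u$ need not be joinable to $F$, so \cref{Tits} does not apply directly, and this is the step I expect to be the main obstacle. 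I would choose $F'$ adapted to $u$: pick a chamber $C\supseteq F$, take an apartment $\Sigma$ through $C$ and some chamber containing $u$, and let $F'$ be the simplex opposite $F$ in $\Sigma$. I then want $u$ to lie in $\Res(F')$, or at least to be replaceable by a vertex there. A cleaner route is a standard retraction/projection argument: replace $u$ by $\proj_{F'}(u)$, or use that $u$ is opposite $v$ if, and only if, $\proj_F(u)$ relates appropriately to $v$. Concretely, I would use the lemma that for $v\sim F$, we have $v\equiv u$ if, and only if, $v$ is opposite $\proj_F(u)$-related data in $\Res(F)$. This is Tits' 3.28/3.29 applied to the simplex $\proj_F(u)$ and an opposite of $F$ chosen in the convex hull of $F\cup u$. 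With that, the vertex of $\Res(F)$ obtained from $u$ is opposite all of $T$ in $\Res(F)$.
\end{compactenum}

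\textbf{Second and third assertions.} These are formal consequences of the same correspondence. The condition for a geometric line is: every object is opposite none, or is not opposite exactly one, member of the set. The condition for a round-up triple is: no object is opposite exactly one member. Both are statements about the sets $u^{\equiv}\cap T$ as $u$ ranges over all vertices. By the correspondence above, the family of sets $\{u^{\equiv}\cap T\}$ with $u$ ranging over $\Omega$ coincides with the family of such sets with $u$ ranging over $\Res(F)$ and opposition taken in $\Res(F)$. Direction $(a)$ shows the first family contains the second. The adapted choice in $(b)$ shows every set $u^{\equiv}\cap T$ with $u\in\Omega$ already arises from a vertex of $\Res(F)$. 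Hence the defining conditions transfer verbatim in both directions.

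For the geometric line statement, one should also check that lines correspond. The points of $\Res(F)$ of type $i$ together with the panels of $\Res(F)$ of cotype $i$ give exactly the lines of $\Omega$ inside $\Res(F)$, since panels of $\Res(F)$ are panels of $\Omega$ containing $F$. This is immediate.
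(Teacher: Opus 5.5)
Your direction $(a)$ is exactly the paper's argument and is correct. The gap is in $(b)$. You flag it yourself but do not close it, and your treatment of the second and third assertions depends on it.

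Your first attempt takes an arbitrary apartment through some chamber $C\supseteq F$ and a chamber containing $u$, with $F'$ the antipode of $F$. This does not give $u\in\Res(F')$ in general. The fallback, an opposite of $F$ ``in the convex hull of $F\cup u$'', is false. In a projective plane, take $F=\{p\}$ a point and $u\neq p$ a point. The convex hull of $p$ and $u$ consists of $p$, $u$, the line $pu$ and the two flags on it, so it contains no line missing $p$, that is, no simplex opposite $F$. The phrase ``$\proj_F(u)$-related data'' is not an argument.

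The missing idea is to use the hypothesis that $u$ is opposite some $t\in T$:
\begin{compactenum}[$(1)$]
\item Choose a chamber $C\supseteq F\cup\{t\}$ and an apartment $\Sigma$ containing $C$ and a chamber through $u$.
\item In $\Sigma$ the antipode of $t$ is $u$. So the chamber of $\Sigma$ opposite $C$ contains both $u$ and the antipode $F'$ of $F$. Hence $F'\equiv F$ and $u\sim F'$.
\item Apply \cref{Tits} to the pair $(F',F)$ and each $t''\in T$. The vertex $\proj^{F'}_F(u)\in\Res(F)$ is opposite $t''$ in $\Res(F)$ if, and only if, $t''\equiv u$.
\end{compactenum}
This is the paper's route; it obtains $F'$ via $\proj^t_{u}$, which amounts to the same construction. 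Without such a $t$, no suitable $F'$ need exist. For instance, if $u\in F$ then $u$ is joinable to every vertex of $\Res(F)$.

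For the same reason, your claim that the families $\{u^{\equiv}\cap T\}$, with $u$ ranging over $\Omega$ and over $\Res(F)$, coincide is false as stated. A vertex $u\in F$ gives $\varnothing$. In a rank-one residue with $|T|\geq 2$, every vertex $w$ yields $T\setminus\{w\}\neq\varnothing$, so $\varnothing$ is not realised there. What is true, and suffices once $(b)$ is repaired, is that the \emph{nonempty} sets agree. The defining conditions for geometric lines and round-up triples never exclude the empty intersection, so the transfer then goes through. Your final check that lines of $\Res(F)$ are lines of $\Omega$ is harmless but unnecessary, since geometric lines are defined purely in terms of opposition.
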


\begin{proof}
If $v$ is a vertex in $\Res(F)$ opposite each vertex of $T$ in $\Res(F)$, then, for any simplex $F'$ opposite $F$, the vertex $\proj^{F}_{F'}(v)$ is opposite every member of $T$ by \cref{Tits}.
Conversely, let $v'$ be a vertex of $\Omega$ opposite each member of $T$.
Select $t\in T$.
Applying \cref{Tits}, we find a simplex $F'$ in $\proj^t_{v'}(F)$ opposite $F$.
Then, again by \cref{Tits}, $\proj^{F'}_F(v')$ is, in $\Res(F)$, opposite every member of $T$.
The second and third assertions now also follow. 
\end{proof}

If we want to use \cref{corolTits}, then we have to know something about blocking sets, geometric lines, or round-up triples in residues; if these are classical, then from \cite{Bos-Bur:66}, \cite{Bus-Mal:24}, and \cite{Kas-Mal:13}, we infer:

\begin{prop}\label{typeAD}\begin{compactenum}[$(1)$]\item
Let $T$ be a set of $q+1$ vertices of type $j$ in either $\mathsf{A_{n}}(q)$, $1\leq j\leq n$, $n\geq 2$, or $\mathsf{D_{n}}(q)$, $1\leq j\leq n$, $n\geq 4$, such that no vertex is opposite all of them.
Then $T$ is a line in the corresponding Lie incidence geometry $\mathsf{A_{n,j}}(q)$, or $\mathsf{D}_{n,j}(\K)$, respectively.
\item Every geometric line of $\mathsf{A_{n,j}}(\K)$, $1\leq j\leq n$, $n\geq 2$, for $\K$ an arbitrary skew field, is an ordinary line.
Hence, every round-up triple of points in such a geometry is contained in an ordinary line.
The same holds in $\mathsf{D_{n,j}}(\K)$, $1\leq j\leq n$, $n\geq 4$, for $\K$ an arbitrary field.
\end{compactenum}
\end{prop}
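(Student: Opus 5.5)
This proposition is essentially a compilation of known results, so the plan is to reduce each part to the literature cited just before it and to say how the pieces fit together.

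For part $(1)$, take $T$ a set of $q+1$ vertices of type $j$ with no common opposite. For type $\mathsf{A}_n(q)$, vertices of type $j$ are $(j-1)$-dimensional subspaces of $\PG(n,q)$. A vertex opposite all members of $T$ is an $(n-j)$-dimensional subspace disjoint from each of them. So $T$ is a blocking set with respect to $(n-j)$-spaces in the Grassmannian sense. We then invoke the Bose--Burton theorem \cite{Bos-Bur:66}, in its dual/Grassmann form: the smallest family of $(j-1)$-spaces meeting every $(n-j)$-space has size $q+1$, and equality holds only for a pencil, namely all $(j-1)$-spaces through a fixed $(j-2)$-space inside a fixed $j$-space. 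Such a pencil is exactly a line of $\mathsf{A}_{n,j}(q)$. For $j=1$ this is the classical statement that a blocking set of $q+1$ points for hyperplanes is a line.

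For type $\mathsf{D}_n(q)$, $n\ge4$, we quote the classification of minimal blocking sets from \cite{Bus-Mal:24}. Unlike the symplectic/orthogonal cases of types $\mathsf{B}/\mathsf{C}$, the hyperbolic case gives lines in every characteristic, which is why type $\mathsf{D}$ appears here without restriction. One has to check that the type-$j$ conventions match, including the spin types $n-1$ and $n$ and the type $n-2$ coming from the identification of pairs of maximal subspaces. I would verify this by comparing opposition in the oriflamme setting: when $n$ is odd, opposition swaps types $n-1$ and $n$, and that swap has to be tracked.

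For part $(2)$, the statement for $\mathsf{A}_{n,j}(\K)$ and $\mathsf{D}_{n,j}(\K)$ is the main classification of geometric lines in \cite{Kas-Mal:13} for these classical types. The round-up consequence follows from a short argument. Suppose $\{v_1,v_2,v_3\}$ is a round-up triple. Then no vertex is opposite exactly one of them, and one shows that $v_3$ lies in the "line closure" of $v_1,v_2$ as defined in \cite{Kas-Mal:13}. In these types that closure is the geometric line through $v_1,v_2$, which by the first assertion is an ordinary line. In particular $v_1,v_2$ must be collinear, since otherwise some vertex separates them. Concretely, I would use the lemma from \cite{Kas-Mal:13} stating that every round-up triple is contained in a geometric line whenever geometric lines exist through every pair of collinear points, and that non-collinear pairs are never in a round-up triple.

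The main obstacle is bookkeeping rather than mathematics. The task is to check that the cited statements cover every $j$, in particular type $j=n-2$ in $\mathsf{D}_n$ and the small cases $n=2$ for type $\mathsf{A}$, and that the Grassmann version of Bose--Burton does give uniqueness of the pencil. For that last point I would argue by induction on $j$: project from a point of $T$'s common intersection, or dualise so that $j\le (n+1)/2$.
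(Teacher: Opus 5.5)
Your overall plan, treating the proposition as a compilation of \cite{Bos-Bur:66}, \cite{Bus-Mal:24} and \cite{Kas-Mal:13}, is what the paper does. Your treatment of type $\mathsf{D}_n$ in part $(1)$ and of the geometric-line assertion in part $(2)$ matches it. The genuine gap is in the arguments you add yourself, chiefly your derivation of the round-up assertion from the geometric-line assertion.
\begin{itemize}
\item Take a vertex opposite $v_1$ but not $v_2$; one exists for any two distinct vertices by \cref{3.30b}. The round-up property only forces it to be opposite $v_3$ as well. It is then opposite exactly two members of the triple, which is allowed, so ``some vertex separates them'' excludes nothing.
\item There is no reason why the set of all $v$ completing $\{v_1,v_2\}$ to a round-up triple should be a geometric line. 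A vertex opposite both $v_1$ and $v_2$ can be non-opposite several such $v$ without violating any round-up condition.
\item In general the implication runs the other way (\cref{RUPtoGL}).
\item The ``lemma'' you attribute to \cite{Kas-Mal:13}, that non-collinear pairs never lie in a round-up triple, is false in general. Every triple of a geometric line is a round-up triple, and the paper exhibits geometric lines of pairwise non-collinear points. Examples are hyperbolic lines in symplectic symps of split metasymplectic spaces (\cref{kasmal}), and hyperbolic lines and distance-$3$ traces in split Cayley hexagons (\cref{geomlineshex}).
\end{itemize}
So for types $\mathsf{A}$ and $\mathsf{D}$ the round-up statement has to be quoted directly from the round-up-triple analysis in \cite{Kas-Mal:13}. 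It does not follow from the geometric-line classification by the argument you give.

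A secondary issue is part $(1)$ for $\mathsf{A}_n(q)$, where Bose--Burton only covers $j\in\{1,n\}$ (points blocking hyperplanes, and its dual). For $1<j<n$, the fact that $q+1$ subspaces of dimension $j-1$ meeting every $(n-j)$-space form a pencil is a separate Grassmannian result. Dualising does not reduce it to Bose--Burton, since duality merely exchanges $j$ and $n+1-j$. The paper takes this case from \cite{Bus-Mal:24}. Your fallback induction ``project from a point of $T$'s common intersection'' is circular. The existence of a subspace common to all members of $T$ (indeed a $(j-2)$-space, inside a common $j$-space) is exactly what has to be proved. Replace that sketch by the citation.
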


\begin{prop}\label{typeB}
\begin{compactenum}[$(1)$]
\item
Let $L$ be a geometric line of a Lie incidence geometry $\Gamma$ of type $\mathsf{B}_{n,j}$, $1\leq j\leq n$.
Then $L$ is either a line of $\Gamma$, or $j\neq n$, $\Gamma$ is the $j$-Grassmannian of a symplectic polar space $\Delta$, there exists a singular subspace $U$ of dimension $j-2$ of $\Delta$, and $L$ corresponds to a hyperbolic line of $\Res_\Delta(U)$, or $j=n$, all symps of $\Gamma$ are symplectic polar spaces of rank~$2$, and $L$ is a hyperbolic line in a symp.
   
\item
Let $T$ be a round-up triple of points of a Lie incidence geometry $\Gamma$ of type $\mathsf{B}_{n,j}$, $1\leq j\leq n$, and let $\Delta$ be the associated polar space.
Then $T$ is either contained in a line of $\Gamma$, or $j\neq n$, there exists a singular subspace $U$ of $\Delta$ of dimension $j-2$, and $T$ is contained in a hyperbolic line of $\Res_\Delta(U)$, or $j=n$, and $L$ is contained in a hyperbolic line of a symp of $\Gamma$.
\item
Let $T$ be a set of at most $q+1$ vertices of type $j$ in a finite (thick) building $\Delta$ of type $\mathsf{B}_n$, $1\leq j\leq n$, $n\geq 3$, such that each panel of cotype $j$ is contained in exactly $q+1$ chambers.
Suppose no vertex of $\Delta$ is opposite all members of $T$.
Then $T$ is either a geometric line in the corresponding Lie incidence geometry of type $\mathsf{B}_{n,j}$, or $q$ is a power of $2$ with even exponent and one of the following occurs.

\begin{compactenum}[(a)] \item $j=n-1$, $\Delta$ has order $(q,\sqrt{q})$ (viewed as a polar space) and $T$ is an ovoid in an ideal (symplectic) subquadrangle of order $(\sqrt{q},\sqrt{q})$ of a $\mathsf{B}_2$ residue, viewed as generalised quadrangle of order $(q,\sqrt{q})$.
\item $j=n$, $\Delta$ has order $(\sqrt{q},q)$ (viewed as a polar space) and $T$ is a spread in a full (dually symplectic) subquadrangle of order $(\sqrt{q},\sqrt{q})$ of a $\mathsf{B}_2$ residue, viewed as generalised quadrangle of order $(\sqrt{q},q)$.
\end{compactenum}
In particular, if $|T|\leq q$, then the set $T$ always admits a vertex opposite all its members.
\end{compactenum}
\end{prop}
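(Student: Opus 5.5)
The plan is to assemble the three parts from the cited literature rather than argue from scratch. The tool that transfers statements between the building and its residues is \cref{corolTits}.

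For $(1)$ and $(2)$ I first treat the point case $j=1$, i.e.\ the polar space $\Delta$ itself.
\begin{compactenum}[$-$]
\item In a polar space, a point is not opposite $x$ exactly when it lies in $x^\perp$.
\item So a geometric line is a set $L$ of points such that every point of $\Delta$ is collinear to all or to all but one of its members.
\item Likewise, a round-up triple $\{x,y,z\}$ is a triple such that every point collinear to two of them is collinear to the third.
\end{compactenum}
If two members of $L$ (or of $T$) are collinear, the hyperplane property of $x^\perp$ forces the remaining member onto the line joining them. If no two members are collinear, the condition says exactly that $z\in\{x,y\}^{\perp\perp}$. By \cite{Kas-Mal:13}, this can be non-degenerate only when $\Delta$ is symplectic, in which case $\{x,y\}^{\perp\perp}$ is a hyperbolic line.

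For general $j<n$ I would use that a geometric line, or a round-up triple, of $j$-spaces consists of elements pairwise not opposite in the residue of their intersection. From \cite{Kas-Mal:13}, any two of them share a $(j-1)$- or $(j-2)$-space $U$, and then all members lie in $\Res_\Delta(U)$. In the first case \cref{corolTits} reduces to the point case in a projective residue, and \cref{typeAD} gives an ordinary line. In the second case the residue is a polar space of type $\mathsf{B}_{n-j+1}$ in which the members become points, so the $j=1$ analysis applies. For $j=n$ the analogous reduction goes to a rank-$2$ residue, i.e.\ a symp of the dual polar space, where the same dichotomy (line versus hyperbolic line of a symplectic quadrangle) is read off.

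Part $(3)$ is the finite blocking-set classification of \cite{Bus-Mal:24}. I would quote it for sets of exactly $q+1$ vertices, which yields either a geometric line or one of the characteristic-$2$ exceptions $(a)$ and $(b)$ living in a $\mathsf{B}_2$ residue. Those exceptions are themselves obtained by applying \cref{corolTits} to the quadrangle case, which uses the known ovoids and spreads of $\mathsf{W}(\sqrt q)$ inside $\mathsf{H}(3,q)$ or its dual. For $|T|\leq q$, the cited result shows that the minimum size of a point set admitting no common opposite is $q+1$ (compare \cite{Bos-Bur:66} for the projective case). Hence such a $T$ always admits an opposite vertex.

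The main obstacle is matching hypotheses. In particular, I have to check that \cite{Bus-Mal:24} indeed covers every $j$ including $j=n$ and all characteristics, and that its exceptional configurations are exactly those listed in $(a)$ and $(b)$. I would verify this by checking the rank-$2$ case directly: in a generalised quadrangle of order $(q,\sqrt q)$, a blocking set of $q+1$ points not on a line is an ovoid of a $\mathsf{W}(\sqrt q)$ subquadrangle.
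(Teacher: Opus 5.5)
Your route is the paper's. The paper gives no separate argument for this proposition: it infers it from \cite{Bos-Bur:66}, \cite{Bus-Mal:24} and \cite{Kas-Mal:13}, the sources you quote, with \cref{corolTits} used to pass to residues. So the citations carry the statement. However, several steps you argue yourself are wrong as written and should not be relied on.

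First, the translation of a geometric line into polar-space language is inverted. In the polar space, every point must be collinear to \emph{all} members or to \emph{exactly one} member (``not opposite exactly one''), not to ``all but one''. Under your formulation an ordinary line with at least three points would not qualify.

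Second, the claim that a round-up triple of pairwise non-collinear points forces $\Delta$ to be symplectic is false. Take the Hermitian polar space in $\PG(2n-1,q^2)$ and non-collinear points $x,y$. Then $\{x,y\}^{\perp\perp}=\langle x,y\rangle\cap\Delta$ has $q+1$ points. Every $p^\perp$ is a hyperplane section, so it contains either at most one point of this set or all of it. Hence no point is collinear to exactly two of any three of them, and any three form a round-up triple. This is why part~(2) has no symplectic hypothesis while part~(1) does. Symplecticity comes only from the stronger geometric-line condition, via \cite{Kas-Mal:13}. The same correction applies to your $j=n$ paragraph.

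Third, the reduction for $1<j<n$ has garbled indices, and its opening premise is false. In the hyperbolic-line case the members are pairwise \emph{opposite} in the residue of their intersection, not pairwise non-opposite. Here is the correct bookkeeping:
\begin{itemize}
\item Type-$j$ vertices are $(j-1)$-spaces.
\item The input needed from \cite{Kas-Mal:13} is that two distinct members meet in a $(j-2)$-space $U$.
\item \cref{apartment} then forces every member to contain $U$.
\item The line versus hyperbolic-line dichotomy is decided inside the polar space $\Res_\Delta(U)$ of rank $n-j+1$, where a line of $\Res_\Delta(U)$ is a line of $\Gamma$.
\end{itemize}
There is no separate ``shared $(j-1)$-space, projective residue'' case.
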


Also, let us quote the following property, which we will use regularly.
\begin{prop}[Proposition~8.5 of \cite{Bus-Sch-Mal:24}]\label{3.30}
If every panel of a spherical building is contained in at least $s+1$ chambers, with $s\in\mathbb{N}$, then every set of $s$ chambers admits an opposite chamber.
In particular, every set of $s$ vertices admits an opposite vertex. 
\end{prop}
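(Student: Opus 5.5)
The plan is a potential-function argument in the chamber system of the spherical building, using only the standard projection property of panels; the vertex statement then follows as a corollary.

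Let $C_1,\ldots,C_s$ be the given chambers and let $\delta$ be the Weyl distance, with $\ell$ the length function on $W$. For an arbitrary chamber $D$ put
\[\Phi(D)=\sum_{i=1}^{s}\ell(\delta(C_i,D)).\]
This is bounded above by $s\,\ell(w_0)$, where $w_0$ is the longest word. Equality holds precisely when $D$ is opposite every $C_i$. So it suffices to show that whenever $D$ is not opposite some $C_j$, there is a chamber $D'$ adjacent to $D$ with $\Phi(D')>\Phi(D)$. Starting anywhere and iterating then terminates, because $\Phi$ is integer-valued and bounded, and it can only terminate at a chamber opposite all $C_i$.

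The key input is the gate property of panels. For a panel $P$ and a chamber $C$ there is a unique chamber $\proj_P(C)\in P$ at minimal distance from $C$. Every other chamber of $P$ is at distance exactly one more, since $\ell(\delta(C,E))=\ell(\delta(C,\proj_P C))+1$ for $E\in P\setminus\{\proj_P C\}$.

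Now suppose $\delta(C_j,D)\neq w_0$. Then there is a generator $t$ with $\ell(\delta(C_j,D)t)>\ell(\delta(C_j,D))$. Let $P$ be the $t$-panel of $D$, so that $D=\proj_P(C_j)$. Compare $D$ with another chamber $D'\in P$:
\begin{compactenum}[$-$]
\item the contribution of $C_j$ goes up by one;
\item for $i\neq j$, the contribution of $C_i$ goes down only if $D'=\proj_P(C_i)$, and it never goes down by more than one.
\end{compactenum}
So we choose $D'\in P$ different from $D$ and different from all $\proj_P(C_i)$, $i\neq j$. At most $1+(s-1)=s$ chambers of $P$ are excluded, and $P$ has at least $s+1$ chambers, so such a $D'$ exists. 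For this choice no $C_i$ with $i\neq j$ loses length, while $C_j$ gains one, so $\Phi(D')\geq\Phi(D)+1$. This proves the chamber statement.

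For the vertex statement, given $s$ vertices $v_1,\ldots,v_s$, choose chambers $C_i\ni v_i$ and take a chamber $D$ opposite all $C_i$. The opposition involution on types is independent of $i$, so if all $v_i$ have the same type $k$, the vertex of $D$ of type $k^{\mathrm{op}}$ is opposite each $v_i$. If the types differ, one reads the statement per type, or notes that ``opposite vertex'' is then meant type-wise.

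The only delicate point I expect is the verification that moving within $P$ changes each $\ell(\delta(C_i,\cdot))$ by at most one, and downward only at $\proj_P(C_i)$. This is exactly the gate property, combined with the fact that all non-projection chambers of a panel are equidistant from $C_i$. Everything else is bookkeeping.
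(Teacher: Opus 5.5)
Your argument is correct and complete. The paper itself gives no proof of this proposition: it imports it as Proposition~8.5 of \cite{Bus-Sch-Mal:24}. So there is no internal argument to compare with, and your proof serves as a self-contained justification.

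All the steps check out.
\begin{itemize}
\item The longest word $w_0$ is the unique element of $W$ with no right ascent. Hence a generator $t$ with $\ell(\delta(C_j,D)t)>\ell(\delta(C_j,D))$ exists whenever $D$ is not opposite $C_j$.
\item For that $t$ you get $D=\proj_P(C_j)$, and $\delta(C_j,D')=\delta(C_j,D)t$ for every $D'\in P\setminus\{D\}$.
\item By the gate property, the $C_i$-term can drop when passing from $D$ to $D'$ only if $D'=\proj_P(C_i)$, and then only by one.
\item Excluding $D$ and the at most $s-1$ projections leaves a chamber because $|P|\geq s+1$, so $\Phi$ strictly increases.
\item Since $\Phi$ takes finitely many integer values, a chamber maximising $\Phi$ exists and must be opposite every $C_i$.
\end{itemize}

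Your reading of the vertex statement is the right one. A vertex can only be opposite vertices of the opposite type, so the ``in particular'' concerns $s$ vertices of one common type $k$, which is how the paper always uses it. The type-$k^{\mathrm{op}}$ vertex of $D$ then does the job, and the single chamber $D$ in fact handles all types at once.

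What your route buys is that it needs nothing beyond the gate property and the characterisation of $w_0$, and it works verbatim for infinite spherical buildings. It is also a chamber-level version of the ``combing'' strategy the paper uses later for lines: move one step inside a residue while never losing ground with respect to any member of the given set.

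Finally, the bound is sharp. If $D$ is opposite some chamber of a panel $P$, then $\proj_P(D)$ is at distance $\ell(w_0)-1$ from $D$. Hence the $s+1$ chambers of a panel with exactly $s+1$ chambers have no common opposite.
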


Noting that in any apartment each vertex has a unique opposite, the following assertion is immediate by considering an apartment through $S_1$ and $S_2$.
\begin{lem}\label{3.30b}
If $S_1$ and $S_2$ are two distinct simplices of a spherical building, then there exists at least one simplex opposite $S_1$, but not opposite $S_2$. 
\end{lem}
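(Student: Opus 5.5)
The plan is to choose an apartment $\Sigma$ of the building that contains both $S_1$ and $S_2$; such an apartment exists because, by the building axioms, any two simplices lie in a common apartment. Inside $\Sigma$, which is a finite Coxeter complex and hence a triangulated sphere, every simplex $S$ has a unique opposite simplex $S^{\mathrm{op}}$ in $\Sigma$: its image under the longest word, i.e.\ its antipode. Define $S_1^{\mathrm{op}}$ to be the opposite of $S_1$ in $\Sigma$. It is opposite $S_1$ in the building, since opposition in $\Omega$ is the same as opposition in any apartment containing both simplices. This gives the existence of a simplex opposite $S_1$.

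It remains to show that $S_1^{\mathrm{op}}$ is not opposite $S_2$. Suppose it were. Since $S_1^{\mathrm{op}}$ and $S_2$ both lie in $\Sigma$, they would be opposite in $\Sigma$. Then $S_2$ would be the unique opposite of $S_1^{\mathrm{op}}$ in $\Sigma$. But $S_1$ is also opposite $S_1^{\mathrm{op}}$ in $\Sigma$, so $S_1=S_2$, contradicting that $S_1$ and $S_2$ are distinct.

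The one point needing care is uniqueness of the opposite inside an apartment when $S_1$ and $S_2$ have different types. This causes no difficulty. Opposition in $\Sigma$ is the antipodal map $w_0$ acting on the Coxeter complex, which is an involutive bijection on simplices. So $S_2\equiv S_1^{\mathrm{op}}$ in $\Sigma$ forces $S_2=w_0(S_1^{\mathrm{op}})=S_1$ regardless of type; and if the types of $S_2$ and $S_1$ differ, opposition already fails for type reasons. No real obstacle is expected.
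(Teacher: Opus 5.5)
Your proof is correct and is exactly the paper's argument: take an apartment through $S_1$ and $S_2$, let $S_1^{\mathrm{op}}$ be the unique opposite of $S_1$ in it, and conclude from uniqueness of opposites in an apartment that $S_1^{\mathrm{op}}$ cannot also be opposite $S_2$. One small imprecision: the opposition map in $\Sigma$ is the geometric antipodal map (right multiplication by $w_0$ on cosets), not the left action of $w_0$, but all you actually use is that it is an involutive bijection on simplices, which holds.
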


One more property we will use frequently is the following.

\begin{prop}\label{RUPtoGL}
Suppose every round-up triple of points in a Lie incidence geometry $\Delta$ is contained in a line.
Then each geometric line of $\Delta$ is an ordinary line. 
\end{prop}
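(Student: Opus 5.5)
We prove \cref{RUPtoGL} directly from the definitions of geometric line and round-up triple. Let $G$ be a geometric line of $\Delta$: every vertex of the underlying building $\Omega$ is either opposite no member of $G$, or opposite all but exactly one member of $G$. The plan is to show first that every three points of $G$ form a round-up triple. Then, by hypothesis, every three points of $G$ lie on a line, so $G$ lies in a single line $L$. Finally we show that $G$ is all of $L$.

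\textbf{Step 1: triples in $G$ are round-up triples.} Take distinct $v_1,v_2,v_3\in G$ and a vertex $w$ of $\Omega$. If $w$ is opposite no member of $G$, it is opposite none of the $v_i$. Otherwise $w$ fails to be opposite exactly one member of $G$, so it is opposite at least two of $v_1,v_2,v_3$. In neither case is $w$ opposite exactly one of them, so $\{v_1,v_2,v_3\}$ is a round-up triple.

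\textbf{Step 2: $G$ lies in a line.} We need $|G|\geq 3$. If $G=\{v_1\}$, then by \cref{3.30b} (or thickness) some vertex opposite $v_1$ exists, and it is opposite all members of $G$, which the definition forbids. If $G=\{v_1,v_2\}$, then \cref{3.30b} gives a vertex opposite $v_1$ but not $v_2$. Applying \cref{3.30b} with the roles swapped, and using that every vertex has an opposite in an apartment through $v_1$ and $v_2$, one also expects a vertex opposite both $v_1$ and $v_2$. Such a vertex contradicts the definition, and $|G|=2$ is excluded this way. This boundary case is the one place needing care. If the paper's definition of geometric line already requires at least three points, it is immediate.

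So $|G|\geq 3$. Fix distinct $v_1,v_2\in G$. For every $v\in G\setminus\{v_1,v_2\}$, the triple $\{v_1,v_2,v\}$ lies in a line. Since $\Delta$ is a partial linear space (two collinear points span a unique line), that line is $v_1v_2=:L$. Hence $G\subseteq L$.

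\textbf{Step 3: $G=L$.} Suppose some $p\in L\setminus G$. Choose a vertex $w$ opposite $p$; by \cref{3.30b} we may pick $w$ opposite $p$ but not opposite $v_1$. The ordinary line $L$ is itself a geometric line. This is the standard fact from \cite{Kas-Mal:13}: it reflects that a panel's chambers are all opposite a given chamber except exactly one, projected to vertices. Therefore $w$, being opposite $p\in L$, is opposite all points of $L$ except exactly one, and that one is $v_1$. Then $w$ is opposite every member of $G\setminus\{v_1\}$ and not opposite $v_1$, which is consistent. So instead pick $w$ not opposite $p$ but opposite some point of $L$; by the line property, $p$ is then the unique non-opposite point of $L$. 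Then $w$ is opposite all members of $G\subseteq L\setminus\{p\}$, contradicting that $G$ is a geometric line.

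Such a $w$ exists: take an apartment containing $p$ and another point $u\in L$, and let $w$ be the vertex of that apartment opposite $u$. By the line property, $w$ is then non-opposite to exactly one point of $L$. One must check that this point is $p$ and not some other point. If necessary, replace $u$ by a point of $L$ chosen so that $p$ is the projection-determined exceptional point; we expect this apartment verification to be the main technical point. Hence $G=L$.
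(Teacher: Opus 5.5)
Your proposal follows the paper's route: triples in $G$ are round-up triples, so $G$ lies in the line $L=v_1v_2$. If $p\in L\setminus G$, one then finds a vertex opposite a point of $G$ but not opposite $p$; since $L$ is itself a geometric line, that vertex is opposite all of $G$. However, two steps are left unproved, and one of them cannot be closed with the tools you propose.

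\textbf{The bound $|G|\geq 3$.} To exclude $|G|=2$ you need a vertex opposite both $v_1$ and $v_2$. \Cref{3.30b} and apartment arguments cannot supply this. Both remain valid in a thin Coxeter complex, where each vertex has a unique opposite. There, two distinct vertices of the same type never have a common opposite, so every such pair satisfies the defining property of a geometric line. Thickness is therefore essential here. This is exactly what the paper invokes: \cref{3.30} with $s=2$ says that any two vertices of a thick spherical building have a common opposite, which rules out $|G|\leq 2$. The paper's definition of geometric line does not impose $|G|\geq 3$, so this step must be proved.

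\textbf{The choice of $w$ in Step~3.} Your first choice (opposite $p$, not opposite $v_1$) goes in the wrong direction, as you noticed. You then leave the existence of the correct $w$ as ``the main technical point'', but it is immediate. Take an apartment $\Sigma$ through $u$ and $p$, and let $w\in\Sigma$ be the vertex opposite $u$. Opposition of two vertices can be read off in any apartment containing both, and $u$ is the unique vertex of $\Sigma$ opposite $w$. Hence $w$ is not opposite $p\neq u$, and no re-choice of $u$ is needed. This is just \cref{3.30b} with $S_1=\{v_1\}$ and $S_2=\{p\}$, the existence statement implicit in the last sentence of the paper's proof. With these two repairs your argument coincides with the paper's.
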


\begin{proof}
Let $L$ be a geometric line of $\Delta$.
By \cref{3.30}, $L$ contains at least three elements.
Pick $x_1,x_2\in L$.
Clearly, every triple of elements of $L$ containing $x_1,x_2$ is a round-up triple and hence contained in a line $M$, which coincides with the unique line through $x_1$ and $x_2$.
Hence $L\subseteq M$.
If $L\neq M$, then each point opposite $x_1$ and not opposite some point of $M\setminus L$ would be opposite all members of $L$, a contradiction.
The proposition is proved.
 
\end{proof}

In the present paper, we will have to classify round-up triples in many geometries.
The following general properties will come in handy.

\begin{lem}\label{apartment}
Let $\{v_1,v_2,v_3\}$ be a round-up triple of vertices of some common type in a spherical building $\Delta$.
If $v_1$ and $v_2$ are joinable to a common vertex $v$, then also $v_3$ is joinable to $v$.
Hence, in this case, $\{v_1,v_2,v_3\}$ is a round-up triple in the residue of $v$. 
\end{lem}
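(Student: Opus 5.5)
We argue by contradiction, using the freedom to choose an apartment. Suppose $v_1$ and $v_2$ are both joinable to $v$, but $v_3$ is not. The goal is to produce a vertex $w$ of the type of $v_1$ which is opposite $v_3$ but opposite neither $v_1$ nor $v_2$. Such a $w$ is opposite exactly one member of the triple, which contradicts the round-up property.

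To build $w$, we first pass to the residue of $v$. Since $v_3$ is not joinable to $v$, the projection $\proj_v(v_3)$ is a simplex $S$ in $\Res(v)$; more precisely, it is the simplex of vertices joinable to $v$ that lies "closest" to $v_3$. Choose a chamber $D$ containing $v$ that is opposite $\proj_v(v_3)$ inside $\Res(v)$, and take an apartment $\Sigma$ containing $v_3$ and $D$. Alternatively, and more concretely, choose the apartment $\Sigma$ through $v_3$ and $v$, together with the chamber $C$ of $\Sigma$ containing $v$ nearest to $v_3$.

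Now let $w$ be the vertex of $\Sigma$ opposite $v_3$ in $\Sigma$. Then $w\equiv v_3$. We want $w$ to be non-opposite $v$'s whole star, in the following sense. In the Coxeter complex $\Sigma$, the antipode of $v_3$ lies at a position relative to $v$ that is itself opposite to the position of $v_3$ relative to $v$. Since $v_3$ is not joinable to $v$, the vertex $w$ is not opposite $v$ either: $w$ is opposite $v$ if and only if $v_3=v$ composed with the antipodal map, which fails. The key claim is then the following: \emph{a vertex $w$ that is not opposite $v$ may, after varying the apartment, be chosen so that no vertex joinable to $v$ (other than those near $v_3$) is opposite $w$.}

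This key claim is where the plan becomes delicate, so we replace it by a cleaner route through \cref{Tits}. Pick a vertex $v'$ opposite $v$ such that $v_3$ is also opposite $v'$ (or arranged generically); this is possible by \cref{3.30}, since thick buildings admit common opposites of two vertices. By \cref{Tits}, a vertex joinable to $v'$ is opposite $v_i$ ($i=1,2$) if and only if it is opposite $\proj^v_{v'}(v_i)$ in $\Res(v')$. Using \cref{3.30} in $\Res(v')$, take $w\sim v'$ of the correct type that is non-opposite both projections, that is, not opposite $v_1$ and not opposite $v_2$. Among such $w$ we need one opposite $v_3$. Since $v_3$ is not joinable to $v$, its projection $\proj_{v'}(v_3)$ is a proper simplex of $\Res(v')$ that does not encode a vertex of $\Res(v)$. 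We then use \cref{3.30b} inside $\Res(v')$ to separate it: choose $w$ opposite $v_3$ (which is equivalent to $w$ being opposite in $\Res(v')$ the projection of $v_3$) while avoiding opposition to the two projections.

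The main obstacle is exactly this simultaneous choice in $\Res(v')$: we must avoid opposition to two given vertices while being opposite a third object that is a different simplex. We would try to handle this with an apartment of $\Res(v')$ containing all three projections, and then use the fact that the thin set of antipodes differs for distinct simplices. The final sentence of the statement, that the triple is round-up in $\Res(v)$, then follows from \cref{corolTits}.
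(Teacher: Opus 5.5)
\textbf{There is a genuine gap.} Your opening construction is the right one. The antipode $w$ of $v_3$ in an apartment $\Sigma$ containing $v$ and $v_3$ is exactly the witness the paper uses; note that $w$ has the type opposite to that of $v_3$, not the type of $v_1$. However, the whole content of the lemma is the claim that this $w$ is opposite neither $v_1$ nor $v_2$ unless $v_3$ is joinable to $v$. You never prove this: your ``key claim'' is stated vaguely and then abandoned.

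In the sharper form ``no vertex joinable to $v$ is opposite $w$ when $v_3$ is not joinable to $v$'', that claim is exactly what is needed, and it follows from a short apartment argument. Suppose some vertex $x$ joinable to $v$ were opposite $w$. Take an apartment $\Sigma'$ containing the simplex $\{v,x\}$ and $w$, and an isomorphism $\varphi\colon\Sigma'\to\Sigma$ fixing $v$ and $w$ (building axiom). Since $\varphi$ preserves opposition and $v_3$ is the unique vertex of $\Sigma$ opposite $w$, we get $\varphi(x)=v_3$. Hence $\{v,v_3\}=\varphi(\{v,x\})$ is a simplex. Now $w\equiv v_3$ and the triple is round-up, so $w$ is opposite $v_1$ or $v_2$. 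Applying the argument with $x=v_1$ or $x=v_2$ shows directly that $v_3$ is joinable to $v$; no contradiction argument is needed.

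Your replacement route through $\Res(v')$ with $v'\equiv v$ cannot be repaired. Since $v$ is joinable to $v_1$, its type differs from that of $v_3$ (unless $v=v_1=v_2$), so no vertex $v'$ is opposite both. Also, \cref{3.30} yields common opposites, not common non-opposites.

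More fundamentally, the witness need not lie in the star of any vertex opposite $v$. In $\PG(2,\K)$, let $v$ be a line, $v_1,v_2$ distinct points on $v$, and $v_3$ a point off $v$. A line opposite $v_3$ but opposite neither $v_1$ nor $v_2$ must contain $v_1$ and $v_2$, so it equals $v$. But $v$ passes through no point $v'$ opposite $v$, so no $w$ joinable to such a $v'$ works.

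Moreover, take $v'$ off the lines $v_1v_3$ and $v_2v_3$. Then the three relevant projections are three distinct lines through $v'$, and these form a round-up triple in the rank~$1$ residue $\Res(v')$. So distinct projections do not yield a separating vertex, precisely because three vertices need not lie in a common apartment.
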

\begin{proof}
Consider an apartment $\Sigma$ of $\Delta$ containing $v$ and $v_3$, and let $w$ be the unique vertex of $\Sigma$ opposite $v_3$.
By the definition of round-up triple, we may assume that $w$ is opposite $v_1$.
Let $\Sigma'$ be an apartment containing $w$ and the simplex $\{v,v_1\}$.
Let $\varphi:\Sigma'\to\Sigma$ be an isomorphism of complexes fixing both $w$ and $v$ (as required to exist by the definition of a building).
Then $\varphi(v_1)=v_3$ as opposition is preserved.
Hence $v$ is joinable to $v_3$, as $\varphi$ preserves the simplicial structure.
\end{proof}

\begin{lem}\label{apartment2}
Let $\{v_1,v_2,v_3\}$ be a round-up triple of vertices of some common type in a spherical building $\Delta$.
Suppose there are simplices $\{v,w_1\}$ and $\{v,w_2\}$ such that $w_i$ is joinable to $v_i$, $i=1,2$.
Suppose also that $v_1$ and $v_2$ are not joinable to a common vertex of the same type as $w_1$ or $w_2$.
Then there exists a vertex $w_3$ joinable to both $v$ and $v_3$, and the type of $w_3$ can be chosen equal to the type of either $w_1$ or $w_2$. 
\end{lem}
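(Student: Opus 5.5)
We mimic the proof of \cref{apartment}: choose an apartment adapted to $v_3$, take the vertex $w$ of that apartment opposite $v_3$, and use the round-up property to make $w$ opposite $v_1$ or $v_2$. An isomorphism of apartments fixing $w$ and $v$ then transports the configuration around $v_1$ (or $v_2$) onto $v_3$.

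Concretely, let $\Sigma$ be an apartment containing $v$ and $v_3$, and let $w$ be the unique vertex of $\Sigma$ opposite $v_3$. Since $\{v_1,v_2,v_3\}$ is a round-up triple and $w$ is opposite $v_3$, the vertex $w$ cannot be opposite exactly one of the three. Hence $w$ is opposite $v_1$ or opposite $v_2$, or both. Say $w\equiv v_i$ with $i\in\{1,2\}$; this choice of $i$ is what lets us take $w_3$ of the type of $w_i$. Now consider the simplex $\{v,w_i\}$ and the vertex $v_i$ joinable to $w_i$.

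The plan is to find an apartment $\Sigma'$ containing $w$, $v$, $w_i$ and $v_i$, and then an isomorphism $\varphi\colon\Sigma'\to\Sigma$ fixing $w$ and $v$. Since $\varphi$ fixes $w$ and preserves opposition, we get $\varphi(v_i)=v_3$, the unique vertex of $\Sigma$ of that type opposite $w$. Then $w_3:=\varphi(w_i)$ is joinable to both $\varphi(v)=v$ and $\varphi(v_i)=v_3$, and has the type of $w_i$.

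The main obstacle is the existence of $\Sigma'$. In \cref{apartment} the objects $\{v,v_1\}$ and $w$ form two simplices, so an apartment containing both always exists. Here $\{v,w_i\}$ and $\{w_i,v_i\}$ need not lie in a single simplex, so we need some convexity argument. I would first try to take $\Sigma'$ containing $w$ and a chamber $C$ through $\{v,w_i\}$, chosen so that the gallery from $C$ to $v_i$ runs inside the convex hull of $C$ and $w$. To arrange this, let $\Sigma''$ be an apartment containing $v_i$ and $w$. It contains the projection of $\{v,w_i\}$, and one tries to replace $\{v,w_i\}$ by a simplex in $\Sigma''$. If this is too delicate, I would use the hypothesis that $v_1,v_2$ have no common neighbour of the type of $w_1$ or $w_2$. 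This hypothesis should guarantee that $v_i$ and $v$ are at a rigid Weyl distance through $w_i$, that is, that $w_i$ lies on a minimal gallery-type path between $v$ and $v_i$. Then $w_i$ lies in every apartment containing $v$ and $v_i$ (for example, as the projection of $v_i$ onto $v$ of the relevant type). Granting this, take $\Sigma'$ through $w$ and the simplex $\{v\}\cup\{v_i\}$, which works when $v$ and $v_i$ are suitably positioned. Then $w_i\in\Sigma'$ by convexity and the argument closes.
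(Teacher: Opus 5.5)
There is a genuine gap, and it sits exactly at what you call the main obstacle. Your argument needs one apartment $\Sigma'$ containing $w$, $v$, $w_i$ and $v_i$, and neither of your remedies supplies it.

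First, the set $\{v\}\cup\{v_i\}$ is in general not a simplex; for instance, it is not when $v$ and $v_i$ are distinct vertices of the same type, as in the example below. The axioms only give apartments through two simplices, not through three or four arbitrary vertices.

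Second, the rigidity claim is false. You claim the hypothesis forces $w_i$ into every apartment containing $v$ and $v_i$. Take a thick symplectic polar space of rank $3$, and let $v_1,v_2$ be non-collinear points. Let $v_3$ be a third point of their hyperbolic line, so $\{v_1,v_2,v_3\}$ is a round-up triple. Let $v$ be a point collinear to $v_1$ and $v_2$, and let $w_i$ be a plane through the line $vv_i$. The hypothesis holds, since $v_1,v_2$ lie in no common plane. Yet an apartment through $v$ and $v_1$ contains only two of the at least three planes on $vv_1$.

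The missing idea is that no common apartment is needed. Take:
\begin{itemize}
\item an apartment $\Sigma'$ through the simplex $\{v_i,w_i\}$ and $w$;
\item an apartment $\Sigma''$ through $\{v,w_i\}$ and $w$;
\item a type-preserving isomorphism $\varphi'\colon\Sigma'\to\Sigma''$ fixing $w$ and $w_i$;
\item a type-preserving isomorphism $\varphi''\colon\Sigma''\to\Sigma$ fixing $w$ and $v$.
\end{itemize}
The composite $\varphi''\circ\varphi'$ fixes $w$. Since $v_i\equiv w$ and opposition and types are preserved, it maps $v_i$ to $v_3$. Then $w_3:=\varphi''(w_i)$ is joinable to $v$, because $\{v,w_i\}\subseteq\Sigma''$. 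It is also joinable to $v_3$, being the image of $w_i$ under the composite.

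There is a second gap: your index $i$ is dictated by $w$, not chosen. With $\Sigma$ taken through $v$ and $v_3$ only, the repaired argument yields a $w_3$ of the type of $w_1$ or of $w_2$, but you cannot prescribe which. It also does not use the hypothesis at all. The statement asks for either type at will, and this stronger form is what \cref{apartment3} uses when $w_1,w_2$ have different types. This is where the hypothesis really enters; in your plan it is spent on the false rigidity claim.

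To obtain the type of $w_2$, take $\Sigma$ through the simplex $\{v,w_1\}$ and $v_3$. Suppose $w\equiv v_1$. Take an apartment through $\{v_1,w_1\}$ and $w$, and an isomorphism from it onto $\Sigma$ fixing $w$ and $w_1$. This sends $v_1$ to $v_3$, so $w_1$ is joinable to $v_3$. Then \cref{apartment} makes $w_1$ joinable to $v_2$, contradicting the hypothesis. Hence $w\equiv v_2$, and the composite-isomorphism argument with $i=2$ gives $w_3$ of the type of $w_2$. Interchanging the roles of the indices $1$ and $2$ gives the other type.
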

\begin{proof}
Consider an apartment $\Sigma$ of $\Delta$ containing $\{v,w_1\}$ and $v_3$, and let $w$ be the unique vertex of $\Sigma$ opposite $v_3$.
Assume, for a contradiction, that $v_1$ is opposite $w$.
Let $\Sigma^*$ be an apartment containing $\{v_1,w_1\}$ and $w$, and let $\varphi^*:\Sigma^*\to\Sigma$ be an isomorphism fixing $w$ and $w_1$.
Then $\varphi^*(v_1)=v_3$ by uniqueness of opposites in apartments.
Hence $w_1$ is joinable to $v_3$, and \cref{apartment} implies that it is also joinable to $v_2$, contradicting our assumptions.
We conclude that $w$ is opposite $v_2$.
Let $\Sigma'$ be an apartment containing $\{v_2,w_2\}$ and $w$, and let $\Sigma''$ be an apartment containing $\{v,w_2\}$ and $w$. 

Let $\varphi'':\Sigma''\to\Sigma$ be an isomorphism fixing $w$ and $v$, and denote $\varphi''(w_2)$ briefly as $w_3$.
Note that, by the definition of buildings, we may assume that $\varphi''$ is type-preserving, implying that $w_3$ has the same type as $w_2$.
Also, $w_3$ is joinable to $v$, as $w_2$ is.
Let $\varphi':\Sigma'\to\Sigma''$ be an isomorphism fixing $w_2$ and $w$.
Then $\varphi=\varphi''\circ \varphi':\Sigma'\to\Sigma$ is an isomorphism fixing $w$, hence mapping $v_2$ to $v_3$.
But $\varphi(w_2)=w_3$.
Hence $w_3$ is joinable to both $v_3$ and $v$.
Interchanging the roles of $v_1$ and $v_2$, we can also choose the type of $w_3$ to be the same as that of $w_1$. 

This completes the proof of the lemma. 
\end{proof}

Setting $v_2=w_2$ in the previous lemma, noting that the only vertex of the same type as $v_2$ joinable to $v_3$ is $v_3$ itself, and then also noting the symmetry of the situation, we obtain the following consequence.

\begin{coro}\label{apartment3}
Let $\{v_1,v_2,v_3\}$ be a round-up triple of vertices of some common type in a spherical building $\Delta$.
Suppose there exists a simplex $\{w_1,w_2\}$ such that $w_i$ is joinable to $v_i$, $i=1,2$.
Suppose also that $v_1$ and $v_2$ are not joinable to a common vertex of the same type as $w_1$ or $w_2$.
Then $v_3$ is joinable to both $w_1$ and $w_2$. 
\end{coro}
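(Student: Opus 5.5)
The plan is to derive the statement directly from \cref{apartment2}, specialised as the paper suggests: one of the two auxiliary vertices is taken to be one of the $v_i$ themselves. By symmetry in the indices $1,2$, it suffices to show that $v_3$ is joinable to $w_2$.

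\textbf{Specialisation of \cref{apartment2}.} We apply \cref{apartment2} with the following choices:
\begin{compactenum}[$(a)$]
\item the common vertex $v$ of that lemma is taken to be $w_2$;
\item the first simplex is $\{w_2,w_1\}$, which is a simplex by hypothesis, and $w_1$ is joinable to $v_1$;
\item the second simplex is $\{w_2,v_2\}$, which is a simplex since $w_2$ is joinable to $v_2$, and the role of the auxiliary vertex joinable to $v_2$ is played by $v_2$ itself (trivially joinable to itself).
\end{compactenum}

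\textbf{Checking the hypotheses.} We must verify that $v_1$ and $v_2$ are not joinable to a common vertex of the type of $w_1$, or of the type of $v_2$.
\begin{compactenum}[$-$]
\item For the type of $w_1$, this is exactly our assumption.
\item For the type of $v_2$: in a numbered simplicial complex, a simplex contains at most one vertex of each type. Hence the only vertex of the type of $v_2$ joinable to $v_2$ is $v_2$ itself.
\item So a common such vertex would have to be $v_2$, forcing $v_1$ to be joinable to $v_2$. Since $v_1\neq v_2$ have the same type, that is impossible.
\end{compactenum}
The vertices of a round-up triple are distinct, as implicit in the setting; if two coincided, the claim would be degenerate or handled by \cref{apartment}.

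\textbf{Conclusion.} \cref{apartment2} now yields a vertex $w_3$ joinable to both $w_2$ and $v_3$. Moreover, its type may be chosen equal to the type of the second auxiliary vertex, namely the type of $v_2$, which is the common type of $v_1,v_2,v_3$. Since $w_3$ is joinable to $v_3$ and has the same type as $v_3$, the same type argument forces $w_3=v_3$. Hence $v_3$ is joinable to $w_2$.

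Interchanging the indices $1$ and $2$ (the hypotheses are symmetric) gives that $v_3$ is joinable to $w_1$. If the stronger conclusion that $\{w_1,w_2,v_3\}$ is a simplex is wanted, it follows because buildings are flag complexes: pairwise joinable vertices span a simplex.

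The only delicate point is justifying that the type choice in \cref{apartment2} really can be made as the type of the second auxiliary vertex. Inspecting its proof, the type-preserving isomorphism $\varphi''$ sends $w_2$ to a vertex of the same type, so this holds. Everything else is bookkeeping of types.
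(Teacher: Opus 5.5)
Your proof is correct and matches the paper's: the paper also applies \cref{apartment2} with the lemma's common vertex taken to be $w_2$ and the second auxiliary vertex taken to be $v_2$ itself, uses that the only vertex of the type of $v_2$ joinable to $v_3$ is $v_3$, and concludes by symmetry in the indices $1,2$. On distinctness, you do not need to appeal to the setting or to \cref{apartment}: $v_1\neq v_2$ follows directly from the hypothesis, since otherwise $w_1$ would be a common vertex of the forbidden type joinable to both.
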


Finally, we note that we called certain objects ``far'' from each other.
It was always the case that these objects referred to vertices of the corresponding building, and either they were opposite (for example, a point and a symp in $\mathsf{E_{6,1}}(\K)$), or one vertex was joinable to a vertex opposite the other.
We will extend now this notion of \emph{far} to all such situations.
Hence two objects in a Lie incidence geometry will be called \emph{far (from each other)} if they correspond to vertices in the building and one vertex is joinable to a vertex opposite the other. 


From now on all buildings and geometries are finite, and hence $s$ will always denote a natural number.

\section{Points and lines in the exceptional minuscule geometries}\label{minuscule}

\subsection{Points of Lie incidence geometries of type $\mathsf{E_{6,1}}$}\label{E61}

\subsubsection{Blocking sets}

\begin{prop}\label{E61points}
If every line of a parapolar space $\Gamma$ of type $\mathsf{E_{6,1}}$ contains exactly $s+1$ points, then there exists a symp opposite each point of an arbitrary set $S$ of $s+1$ (distinct) points, except if these points are contained in a single line. 
\end{prop}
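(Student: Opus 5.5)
Throughout, points are vertices of type $1$ and symps are vertices of type $6$, which are opposite type $1$; by \cref{pointsympE61} a point and a symp are opposite exactly when they are far.

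The plan is induction on the size of the set via residues, using \cref{corolTits}. Since $S$ has $s+1$ points, \cref{3.30} already gives a symp opposite any $s$ of them. So we may fix $S=\{p_0,\ldots,p_s\}$ not contained in a line. We look for a symp opposite every $p_i$, and we split according to the mutual positions of the points. In $\mathsf{E_{6,1}}(\K)$ any two distinct points are either collinear or symplectic.

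\textbf{Case 1: some point of $\Gamma$ is collinear to every member of $S$.} The natural reduction is to a residue. If some point $x$ is collinear to all $p_i$, we can work in $\Res(x)$. First suppose $x\notin S$. Then the lines $xp_i$ are $s+1$ points of $\Res(x)\cong\mathsf{D_{5,5}}(\K)$, which is a half-spin geometry, a vertex residue of type $\mathsf{D}_5$. These points do not all coincide as vertices. Indeed, if two of the lines $xp_i$ coincided, then $p_i,p_j$ and $x$ would be collinear; if all coincided, $S$ would lie on a line. If the $s+1$ lines $xp_i$ are distinct, \cref{typeAD}(1) applies to the $\mathsf{D}_5$ residue, since its panels of the relevant cotype have $s+1$ chambers. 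It gives a common opposite in $\Res(x)$ unless the lines $xp_i$ form a line of $\mathsf{D_{5,5}}$, that is, they lie in a common plane pencil. If the lines $xp_i$ are not distinct, there are at most $s$ of them, and \cref{3.30} gives an opposite. In either non-exceptional subcase, \cref{corolTits} lifts this to a symp of $\Gamma$ opposite the simplex. We still have to translate ``opposite the line $xp_i$'' into ``opposite the point $p_i$''. I would do this directly instead: choose a point $y$ opposite $x$ in the building sense, i.e.\ a symp $\xi$ far from $x$, and use \cref{Tits} to see that a symp through the projection is opposite $p_i$ iff it is opposite $xp_i$ locally. This needs care, and I expect it to be the fiddliest step. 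The leftover configuration is: all $p_i$ in a plane $\pi$ through $x$, on distinct lines through $x$. We handle it directly. Pick a symp $\xi$ meeting $\pi$ in exactly one line through $x$ not containing any $p_i$; this is possible only if the $s+1$ lines through $x$ in $\pi$ are not all used, and if they are, pick $x$ differently. Alternatively, if $S\subseteq\pi$ is not a line, then $S$ is a non-collinear set in a projective plane of order $s$. It then misses some line $L$ of $\pi$, and a symp $\xi$ with $\xi\cap\pi=L$ and far from... Here I would rather use \cref{lemE6b} in reverse: choose the symp such that no line meeting $S$ in two points is collinear to a plane of it.

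\textbf{Case 2: the general case, no point collinear to all of $S$.} For the general case I would build the opposite symp stepwise through a $5$-space. Take a $5$-space $W$ containing a point $p_0$ and count as follows. By \cref{restE61}, every $5$-space $W'$ opposite $W$ gives a collineation $W\to W'$. Symps $\xi$ far from $p_0$ can be parametrized by points $w'$ opposite-related data. Alternatively, and more robustly, I would argue with \cref{lemE6}: a symp $\xi$ is opposite $p$ iff for some $y\in\xi\setminus p^\perp$ the symps $\xi$ and $\xi(p,y)$ meet only in $y$. So we fix a point $y$ not collinear to any $p_i$. Such a $y$ exists, since the $p_i^\perp$ are $s+1$ proper hyperplanes and no point lies in all of them, and the finite-geometry count gives a point outside their union. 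Working in $\Res(y)\cong\mathsf{D_{5,5}}$, the symps through $y$ correspond to $4$-dimensional vertices of $\mathsf{D}_5$, i.e.\ points of the polar space $\mathsf{D_{5,1}}$. The $s+1$ symps $\xi(y,p_i)$ also give $s+1$ vertices of that type. A symp $\xi\ni y$ is opposite $p_i$ iff it is locally opposite $\xi(y,p_i)$ at $y$. So \cref{typeAD}(1) in the residue gives the desired $\xi$ unless the symps $\xi(y,p_i)$ form a line of $\mathsf{D_{5,1}}$. That happens iff they all contain a common $4$-space through $y$. Then all $p_i$ would be collinear to $3$-spaces of a common $4$-space $U$.

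The main obstacle is to show that $y$ can be chosen to avoid this degenerate configuration unless $S$ is a line. To do this, I would vary $y$ along a line: replace $y$ by another point $y'$ not collinear to any $p_i$ with $y'\notin U$. Then the common $4$-space would have to be a different one, $U'$. Next I would use \cref{3spacesuffices} and \cref{linesEsixMaximalFourSpace} to show that two such $4$-spaces force every pair $p_i,p_j$ to be collinear to a common large subspace. This produces a common collinear point and returns us to Case 1.
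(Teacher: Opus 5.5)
There is a genuine gap. Your Case~2 uses the right mechanism: a point $y$ collinear to no member of $S$, and \cref{lemE6} to turn ``$\xi\ni y$ is opposite $p_i$'' into ``$\xi$ meets $\xi(y,p_i)$ only in $y$'', i.e.\ local opposition in $\Res(y)$. But the step you yourself call the main obstacle is not proved. You must exclude the configuration in which the $s+1$ symps $\xi(y,p_i)$ are exactly the symps through a $4$-space $U\ni y$. For this you give only a plan (``vary $y$ \dots\ two such $4$-spaces force \dots''), and its key implication is never established. That configuration is the heart of the proposition.

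The missing idea is to choose $y$ not generically but inside $\xi_0:=\xi(p_0,p_1)$, for two non-collinear points $p_0,p_1\in S$. Then $\xi(y,p_0)=\xi(y,p_1)=\xi_0$, so there are at most $s$ distinct symps $\xi(y,p_i)$. \cref{3.30} in $\Res(y)$ then directly yields a symp through $y$ meeting each of them only in $y$, and the degenerate configuration never arises. Such a $y$ exists:
\begin{itemize}
\item keep the $p_i$ lying in $\xi_0$;
\item replace each $p_i$ close to $\xi_0$ by an arbitrary point of the singular $4'$-space $p_i^\perp\cap\xi_0$;
\item ignore the $p_i$ far from $\xi_0$.
\end{itemize}
This gives at most $s+1$ points of the polar space $\xi_0$, not on a line since $p_0,p_1$ are among them. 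The polar-space case (Lemma~3.5 of \cite{Bus-Mal:24}) then gives $y\in\xi_0$ collinear to none of them. Since each $p_i^\perp\cap\xi_0$ is singular, $y$ is collinear to no member of $S$.

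Your Case~1 should be discarded, because it is both flawed and unnecessary.
\begin{itemize}
\item The lines $xp_i$ are vertices of type $3$. Lifting a common opposite from $\Res(x)$ via \cref{corolTits} therefore produces a $4$-space, not a symp.
\item The $p_i$ themselves are not in $\Res(x)$, so \cref{Tits} gives no translation of ``opposite $xp_i$'' into ``opposite $p_i$''.
\item The subcase $x\in S$ is never treated.
\item Your treatment of the planar leftover fails outright. A symp meeting $\pi$ in a line through $x$ contains $x$, which is collinear to every $p_i$, so it is opposite none of them.
\end{itemize}
The only situation not covered by the argument above is when $S$ is pairwise collinear. Then $S$ lies in a $4$-space or $5$-space, which is a residue of the building in which $S$ is a set of $s+1$ points of a projective space, and \cref{corolTits} together with \cref{typeAD} finishes.

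Finally, $p^\perp$ is not a geometric hyperplane of $\mathsf{E_{6,1}}(\K)$: a point far from a symp through a line is collinear to no point of that line. Your stated justification for the existence of $y$ is therefore wrong, although a counting argument would still produce such a point.
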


\begin{proof}
Let $S$ be a set of $s+1$ distinct points $p_{0}, \dots, p_{s}$ in $\Gamma$ that are not all on a common line.
Assume first that all points of $S$ are mutually collinear.
Then $p_{0}, \dots, p_{s}$ are either contained in a common $4$-space or in a common $5$-space.
Since both are residues in the corresponding building, \cref{corolTits} and \cref{typeAD} imply that $S$ is a line, proving the assertion. 

So, we may assume that at least two points of $S$ have distance $2$.
Let $p_{0}$ and $p_{1}$ be two non-collinear points of $S$.
Consider the symp $\xi_{0}=\xi(p_{0}, p_{1})$.
For each point of $S$ close to $\xi_0$, we choose an arbitrary point in $\xi_0$ collinear with it.
Let $S'$ be the set of points thus obtained, complemented with the points of $S$ contained in $\xi_0$.
By the main result of \cite{Bus-Mal:24}, in particular Lemma 3.5 therein, we find a point $b$ in $\xi_0$ non-collinear to each member of $S'$, and hence non-collinear to each member of $S$.
Let $\Xi$ be the set of symps $\xi(b,p_i)$, for $i\in\{0,1,\ldots,s\}$.
There are at most $s$ such different symps, as $\xi(b,p_0)=\xi(b,p_1)$.
Applying \cref{3.30} in $\Res(b)$, we find a symp $\xi$ through $b$ intersecting each member of $\Xi$ in exactly $b$.
Then, by \cref{lemE6}, the symp $\xi$ is far from all points of $S$. 

The proof is complete.
\end{proof}

\subsubsection{Geometric lines} 

Corollary 5.6 of \cite{Kas-Mal:13} states that geometric lines of Lie incidence geometries of type $\mathsf{E_{6,1}}$ are the same things as lines. 

\subsection{Lines of Lie incidence geometries of type $\mathsf{E_{6,1}}$}\label{E63}

We now look at the lines of Lie incidence geometries of type $\mathsf{E_{6,1}}$. 

\subsubsection{Blocking sets}

\begin{prop} \label{E61lines}
Let $\Delta$ be Lie incidence geometry of type $\mathsf{E_{6,1}}$ such that every line has $s+1$ points. 
Let $T=\{L_{0}, \dots, L_{s}\}$ be a set of $s+1$ different lines in $\Delta$ such that they do not form a line pencil in a plane. 
Then there exists a $4$-space opposite all lines $L_{0}, \dots, L_{s}$ in $\Delta$.
\end{prop}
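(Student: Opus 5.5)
The plan is to treat lines of $\Delta$ as vertices of type $3$ of the $\mathsf{E_6}$ building and $4$-spaces as vertices of type $5$, which is the opposite type. First I would reduce to residues wherever the lines of $T$ share a vertex, using \cref{corolTits} together with \cref{typeAD}. Then I would handle the generic situation by a local construction at a well-chosen point $b$, in the spirit of the proof of \cref{E61points}.

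\textbf{Step 1: a common vertex.} If all members of $T$ pass through a common point $p$, they are vertices of type $3$ in $\Res(p)$, a building of type $\mathsf{D_5}$ in which every panel of the relevant cotype lies in $s+1$ chambers. By \cref{corolTits}, $T$ has an opposite $4$-space in $\Delta$ if and only if it has an opposite vertex in $\Res(p)$. By \cref{typeAD}$(1)$ the latter fails only when $T$ is a line of the corresponding Lie incidence geometry of $\Res(p)$, which is exactly a planar line pencil. The same argument applies when all lines of $T$ lie in a common symp, a common $4$-space or a common $5$-space: these are vertices, with residues of types $\mathsf{D_5}$ and $\mathsf{A_5}$, and again the only obstruction is a planar pencil. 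Before this, I would verify that each of these candidate lines of the residue geometry really is a pencil in a plane of $\Delta$.

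\textbf{Step 2: an opposition criterion.} In an apartment (the Schl\"afli graph), I would derive a criterion of the following kind. A $4$-space $U$ is opposite a line $L$ if and only if $U$ contains a point $b$ with $b^\perp\cap L=\varnothing$ such that, for every point $x\in L$, the symp $\xi(b,x)$ meets $U$ only in $b$ (equivalently, $U$ is locally opposite $\xi(b,x)$ in $\Res(b)$). This is modelled on \cref{lemE6}. The tools for it are \cref{linesEsixMaximalFourSpace}, which says $\langle b,b^\perp\cap L^\perp\rangle$ is a maximal $4$-space, and \cref{3spacesuffices}, which lets one pass from a $3$-space to the $4$-space containing it. A cleaner formulation may come from \cref{Tits}: with $F=b$ and the type-$3$ vertex $L$, one checks that $U\ni b$ is opposite $L$ exactly when $U$ is opposite, in $\Res(b)\cong\mathsf{D_{5,5}}$, the projection of $L$ onto $b$. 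That projection is a single object in $\Res(b)$ determined by the $4$-space $\langle b,b^\perp\cap L^\perp\rangle$ from \cref{linesEsixMaximalFourSpace}.

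\textbf{Step 3: the generic case.} This is the main obstacle. I would first choose a point $b$ collinear with no point of $\bigcup T$. To find it, take two lines of $T$ not in a common singular subspace and a symp $\xi_0$ containing points of both. Every point of any line of $T$ is collinear to a $4'$-space or a plane of $\xi_0$; by \cref{lemE6b} each line either has a point far from $\xi_0$ or is collinear to a unique plane of $\xi_0$. I would then use the blocking-set results of \cite{Bus-Mal:24} in the polar space $\xi_0$ to obtain $b\in\xi_0$ avoiding all these collinearities. The delicate point is that each line of $T$ may contribute a whole subspace of $\xi_0$ that $b$ must avoid, rather than a single point, so a counting or hyperplane argument in $\xi_0$ is needed, and it may require choosing $\xi_0$ carefully.

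With $b$ in hand, each $L_i$ yields an object $\proj_b(L_i)$ in $\Res(b)$. Only $s+1$ such objects arise, and at least two of them coincide or are related because the two chosen lines both meet $\xi_0\ni b$. So at most $s$ genuinely different constraints remain, and \cref{3.30} applied in $\Res(b)$ gives a $4$-space $U\ni b$ opposite all of them. By Step 2, $U$ is opposite every $L_i$. If the ``two coincide'' reduction fails, I would fall back on \cref{typeAD} in $\Res(b)$ with $s+1$ objects, arguing that they cannot form a line of $\Res(b)$ unless $T$ is a pencil.
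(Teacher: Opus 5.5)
Your Steps~1 and~2 are fine. In Step~2, \cref{Tits} is applied with $b$ as $F'$ and, as $F$, a symp through $L$ opposite $b$; such a symp exists as soon as $b^\perp\cap L=\varnothing$. Then $\proj_b(L)=\langle b,b^\perp\cap L^\perp\rangle$, a line of the polar space $\Res(b)\cong\mathsf{D_{5,1}}$.

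The gap is in Step~3, and it has two parts.
\begin{itemize}
\item \emph{Existence of $b$.} You leave this open, and your route can fail. For instance, if the points $L_i\cap\xi_0$ fill a line of $\xi_0$, their perps already cover $\xi_0$, so no $b\in\xi_0$ works. The paper avoids this by putting each $L_i$ in a symp $\widehat\xi_i$, not all through one $4$-space, and applying the dual of \cref{E61points} to get $b$ opposite every $\widehat\xi_i$.
\item \emph{The count of constraints does not drop.} This is the serious part. $\proj_b(L_0)$ and $\proj_b(L_1)$ are two lines of $\Res(b)$ through the common polar point $\xi_0$, but they are in general distinct. So you still have $s+1$ lines and \cref{3.30} does not apply. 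With $s+1$ lines, \cref{typeAD} only helps when they do not form a planar pencil of $\Res(b)$.
\end{itemize}

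Your fallback claim, that a pencil at $b$ forces $T$ to be a pencil, is false. Here is a counterexample.
\begin{itemize}
\item Take a symp $\xi$, a plane $\beta\subseteq\xi$, a point $b\in\beta$, and the $s+1$ $4$-spaces $V_0,\dots,V_s$ of $\xi$ through $\beta$.
\item Choose $3$-spaces $U_i\subseteq V_i$ with $b\notin U_i$. Let $W_i$ be the $5$-space through $U_i$ (\cref{345E61}). Let $L_i\subseteq W_i$ be lines disjoint from $U_i$; these can be taken pairwise disjoint, so $T$ is not a pencil.
\item By \cref{restE61}, $b^\perp\cap W_i=U_i$. So $b$ is collinear to no point of $L_i$, and by \cref{linesEsixMaximalFourSpace}, $\proj_b(L_i)=V_i$.
\item Each $L_i$ meets $\xi$ in a point of the $4'$-space $W_i\cap\xi$ not collinear to $b$. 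This is exactly your setup with $\xi_0=\xi$.
\item The $V_i$ form a planar pencil of $\Res(b)$, so by \cref{Tits} no $4$-space through $b$ is opposite all $L_i$.
\end{itemize}

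In this situation the opposite $4$-space has to be found away from $b$, and that is the bulk of the paper's proof. With $W_i=\langle U_i,L_i\rangle$, it proceeds as follows.
\begin{itemize}
\item It takes a $5$-space $B$ opposite all $W_i$, or opposite $A_0,W_1,\dots,W_s$ when the $W_i$ share a plane $\alpha$.
\item It projects the $L_i$ to lines $L_i'$ of $B$.
\item It extends a $3$-space of $B$ opposite all $L_i'$ to its unique $4$-space via \cref{3spacesuffices}.
\item The case where the $L_i'$ form a pencil is shown to be impossible (Case~1), or to force all $L_i$ through a common point $e$, reducing to $\Res(e)$ (Case~2).
\end{itemize}
Your proposal contains no substitute for this, nor an argument that some other choice of $b$ always avoids the pencil.
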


\begin{proof}
We may put $L_{0}, \dots, L_{s}$ in respective symps not containing a common $4$-space.
Then (the dual of) \cref{E61points} yields 
a point $b$ opposite all of those symps.
Then \cref{linesEsixMaximalFourSpace} yields $3$-spaces $U_i=b^\perp\cap L_i^\perp$. 

In $\Res(b)\cong\mathsf{D_{5,5}}(\K)$, the $\<b,U_i\>$ are $3$-spaces. With $\Res(b)$
viewed as $\mathsf{D_{5,1}}(\K)$, they become lines.
If they do not form a planar line pencil, then \cref{typeAD} yields a line opposite all of them, which translates into a $4$-space $W$ through $b$ locally opposite each $\<p,U_i\>$.
\cref{Tits} implies that $W$ is opposite  each member of $T$.
So, we may suppose that the $\<b,U_i\>$ form a planar line pencil in $\Res(b)$, viewed as a polar space.
Since points and planes of that polar space correspond to symps and lines of $\Res(b)$, viewed as $\mathsf{D_{5,5}}(\K)$, we may assume that the $4$-spaces $\<b,U_i\>$ form the set of $4$-spaces through a given plane $\beta$ of $\Delta$ contained in a given symp $\xi$.

All $U_{i}$ intersect $\beta$ in lines $N_{i}$ not containing $b$.
Set $W_i=\langle U_{i}, L_{i} \rangle$.

We can find a $5$-space opposite all $W_i$
unless all $W_i$ intersect in a common plane $\alpha$.

\textbf{Case 1:} Suppose first that all $W_i$ do not
intersect in a common plane and denote by $B$ a $5$-space opposite all of them.
The last assertion of \cref{restE61} yields  
lines $L_{i}'$ in $B$ such that each point of $L_{i}$ is collinear to a unique point of $L_{i}'$,
but no point of $B - \bigcup_{i=0}^s L_{i}'$ is collinear to any point of any $L_{i}$. 

If the $L_{i}'$ do not form a planar line pencil in $B$,
then we can find a $3$-space $M$ in $B$ opposite each $L_{i}'$.
By \cref{345E61}, that $3$-space is contained in a unique maximal $4$-space that
we will denote by $C$, and, by \cref{3spacesuffices}, $C$ is opposite all $L_{i}$.
So suppose the $L_{i}'$ form a planar line pencil in $B$.
We will denote the corresponding plane as $\gamma$ and the intersection $L_0'\cap L_1'\cap\cdots L_s'$ as $d$.

\cref{554E61} yields a point $q_0\in\<b,U_0\>\setminus U_0$ collinear to a $3$-space of $B$ disjoint from $L'_0$, and hence intersecting $\gamma$ in a unique point $q_0'$.
Without loss of generality, we may assume that $q_0'\in L_1'$.
Hence there is a unique point $q_1\in L_1$ collinear to $q_0'$.
Since $q_0\notin U_1$, the symp $\xi(q_0,q_1)$ is well-defined and contains both $q_1'$ and $N_1$.
But then $N_1$, which is contained $\<L_1,U_1\>$, contains a point collinear to $q_1'$.
Since $b$ is collinear to all points of $N_1$ and to no point of $L_1$, these lines are disjoint.
This contradicts the last assertion of \cref{restE61}. Hence this case does not occur, the $L_i'$ do not form a planar line pencil and the assertion is proved in Case~1 . 

\textbf{Case 2:} Now suppose that all $W_i$ pairwise intersect in a common plane $\alpha$.
Let $i\in\{0,1,\ldots,s\}$.
Since $U_i\subseteq\xi$, the $5$-space $W_i$ intersects $\xi$ in a $4'$-space $V_i$.
Since, by \cref{pointsympE61}, $W_i$ is determined by $\xi$ and any point of $W_i\setminus V_i$, the plane $\alpha$ is contained in $\xi$.
Hence the $V_i$ form the set of $4'$-spaces through $\alpha$.
Since $U_i=b^\perp\cap V_i$, each line $N_i$ coincides with $L:=\alpha\cap b^\perp\subseteq U_i$. 

By the choice of $b$, the line $L_i$ is not contained in $\xi$, and a fortiori neither in $\alpha$.

Let $A_{0}$ be a $5$-space containing $L_{0}$ that does not contain the plane $\alpha$. Referring forward to \cref{longrootlemma}, in combination with the know result \cref{kasmal} (noting that $W_1$ is not opposite $W_2$), 
 we can find a $5$-space $B$ opposite all $5$-spaces $A_{0}$, $W_1, \dots, W_s$.

Again, since $\langle b, U_{i} \rangle$ is a $4$-space and $B$ a $5$-space,
we can find a point $q_1$ in $\langle b, U_{1} \rangle$ that is collinear to a $3$-space of $B$.
Since $U_1\subseteq W_1\equiv B$, we deduce $q_{1}\notin U_{1}$.  Let $L_0',L_1',\ldots,L_s'$ be defined as in Case~1. Then, as in Case~1, we may assume they form a planar line pencil (otheriwse we get the conclusion) and we can define $\gamma$ and $d$ as in Case~1. 
Since every $3$-space of $B$ intersects $\gamma$, $q_{1}$ is collinear to a point $q_{1}'$ in $\gamma$
that is collinear to some point $q_k$ of $L_{k}$, for some $k\in\{0,2,3\ldots,s\}$.

As in Case~1, $k\neq 0$ leads again to a contradiction.
Hence, considering $\xi(q_0,q_1)$, we see that $q'_1$ is collinear to some line $q_0q$, with $q\in L$. 
We now make some observations and define planes $\epsilon$ and $\epsilon'$.

\begin{itemize}
\item Since $W_0$ and $A_{0}$ have the line $L_{0}$ in common,
$A_{0} \cap W_0$ is a plane and hence they are adjacent.
Since $\delta:A_0\cap W_0$ is in $A_{0}$ and $A_{0}$ is opposite $B$, every point of $\delta$ is far from~$B$.

\item Since $W_0$ and $B$ cannot be opposite, but $W_0$ is adjacent to the opposite $A_0$ to $B$, \cref{joinjoin} yields a $5$-space $W$ intersecting both $W_0$ and $B$ in planes denoted by $\epsilon$ and $\epsilon'$, respectively.

\item Every point of $\epsilon$ is close to $B$, and every point of $\alpha$ is far from $B$.
Therefore, the planes $\alpha$ and $\epsilon$ cannot intersect.
\end{itemize}

As $L_0\subseteq A_0$, no point of $L_{0}$ is in $\epsilon$.
But since each point of $L_{0}$ has to be collinear to a point of $\epsilon'$,
and every point of $L_{0}$ is collinear to a unique point of $B$ on $L_{0}'$,
it follows that $L_{0}'$ is contained in $\epsilon'$.
In particular $d\in\epsilon'$.
It follows that $d^\perp\cap W_0$ is a $3$-space containing $\epsilon$, hence intersecting $\alpha$ in a unique point $e$.
Since $d\in L_i'$, $i=1,2,\ldots,s$, it is collinear to a unique point of $W_i$, and that point is on $L_i$.
hence $e\in L_1\cap L_2\cap\cdots\cap L_s$.
We conclude $\{e\}=\xi\cap L_i$.
Switching the roles of $L_0$ and $L_1$, we obtain $L_0\cap\xi=L_2\cap\xi=\{e\}$. 

Now $T$ belongs to $\Res_\Delta(e)$ and the assertion follows from \cref{corolTits}.
  
\end{proof}

\subsubsection{Geometric lines}

Now we classify geometric lines in Lie incidence geometries of type $\mathsf{E_{6,3}}$.
In order to do so, we classify round-up triples of lines in $\mathsf{E_{6,1}}(\K)$, for an arbitrary field $\K$.

\begin{lem}\label{E6RuT0} 
Let $\{L_1,L_2,L_3\}$ be a round-up triple of lines in the exceptional Lie incidence geometry $\mathsf{E_{6,1}}(\K)$ such that $L_1$ and $L_2$ have at least one point in common.
Then exactly one of the following holds.
\begin{compactenum}[$(i)$]
\item $L_1=L_2=L_3$;
\item $L_1,L_2,L_3$ are three lines in a common planar line pencil.
\end{compactenum}
\end{lem}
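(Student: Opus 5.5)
We reduce to a residue and then apply the classification of round-up triples there. Lines of $\mathsf{E_{6,1}}(\K)$ correspond to vertices of type $3$ of the $\mathsf{E_6}$ building. Points correspond to vertices of type $1$, and a point lies on a line exactly when the two vertices are joinable. The triple $\{L_1,L_2,L_3\}$ is a round-up triple of type-$3$ vertices.

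First suppose $L_1=L_2$. Then any vertex opposite $L_3$ but not opposite $L_1$ would be opposite exactly one member of the triple. So $L_1^{\equiv}=L_3^{\equiv}$, and \cref{3.30b} gives $L_3=L_1$, which is case $(i)$. Since $(i)$ and $(ii)$ are mutually exclusive, we may from now on assume $L_1\neq L_2$ and aim for $(ii)$. Then $L_1\cap L_2$ is a single point $p$, and $p$, as a type-$1$ vertex, is joinable to both $L_1$ and $L_2$. \cref{apartment} then gives that $p$ is also joinable to $L_3$, so $p\in L_3$. Moreover, $\{L_1,L_2,L_3\}$ is a round-up triple in $\Res(p)$. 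By \cref{corolTits}, round-up triples in $\Res(p)$ and in the whole building coincide anyway.

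Next we work in $\Res(p)$, which by \cref{pointsympE61} is a building of type $\mathsf{D_5}$ whose point-line geometry is $\mathsf{D_{5,5}}(\K)$. Lines through $p$ become type-$1$ vertices of $\Res(p)$ in the $\mathsf{E_6}$ labelling. Removing node $1$ from the $\mathsf{E_6}$ diagram leaves a $\mathsf{D_5}$ diagram in which node $3$ is an end node of the long arm, so it is type $1$ in $\mathsf{D_5}$ labelling. Hence the three lines through $p$ are points of the polar space $\mathsf{D_{5,1}}(\K)$. By \cref{typeAD}$(2)$, a round-up triple of points of $\mathsf{D_{5,1}}(\K)$ lies on a line of that polar space. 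A line of $\Res(p)$, viewed as $\mathsf{D_{5,1}}$, is a planar line pencil of $\Delta$ at $p$, that is, the set of lines through $p$ in a plane $\pi\ni p$. Hence $L_1,L_2,L_3$ lie in a common planar line pencil. They are not all equal, and the argument of the first paragraph, applied to any coinciding pair, forces all three to be equal; so the three lines are distinct, which is case $(ii)$.

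The main step to check carefully is the type identification in $\Res(p)$: that lines of $\Delta$ through $p$ are points, and not maximal singular subspaces, of the $\mathsf{D_5}$ polar space. One could also verify this directly, since the lines through $p$ and the planar line pencils form the polar space $\mathsf{D_{5,1}}(\K)$ with symps through $p$ as the half-spin objects. A secondary check is that \cref{typeAD}$(2)$ genuinely covers round-up triples of points for $\mathsf{D_{5,1}}$, which is $n=5\geq 4$, $j=1$. After that, the argument is immediate.
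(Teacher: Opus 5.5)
Your route is the paper's route: \cref{3.30b} in the coincident case, \cref{apartment} to get $p\in L_3$, then \cref{corolTits} and \cref{typeAD} in $\Res(p)$. However, the step you flagged as the crux is wrong.

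With Bourbaki labelling, the $\mathsf{E_6}$ diagram is the path $1$--$3$--$4$--$5$--$6$ with $2$ attached to $4$. Deleting node $1$ leaves a $\mathsf{D_5}$ diagram with branching node $4$ and arms $\{3\}$, $\{2\}$, $\{5,6\}$. Node $3$ is adjacent to the branching node, so it is a half-spin node of $\mathsf{D_5}$. The end of the long arm is node $6$, not node $3$.

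Concretely, in the polar space $\mathsf{D_{5,1}}(\K)$ attached to $\Res(p)$:
\begin{itemize}
\item the points are the symps through $p$;
\item the lines are the $4$-spaces through $p$;
\item the planes are the planes through $p$;
\item the lines of $\mathsf{E_{6,1}}(\K)$ through $p$ form one oriflamme class of maximal singular subspaces, and the $5$-spaces through $p$ form the other.
\end{itemize}
This is exactly what \cref{pointsympE61} says ($\Res(p)\cong\mathsf{D_{5,5}}(\K)$). You quote that fact and then contradict it. It is also how the paper uses $\Res(b)$ in the proof of \cref{E61lines}. Your ``direct verification'' swaps the polar points and the half-spin objects. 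Your claim that a line of the polar space $\mathsf{D_{5,1}}(\K)$ is a planar line pencil is false: such a line corresponds to a $4$-space through $p$. So, as written, you apply \cref{typeAD}$(2)$ with $j=1$ to objects that are not of type $1$.

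The repair is easy, because \cref{typeAD}$(2)$ holds for every $j$ in $\mathsf{D}_n$, $n\geq 4$. Apply it with $j=5$, that is, in $\mathsf{D_{5,5}}(\K)$. There the points are the lines through $p$, and the lines are precisely the planar line pencils at $p$. Hence $L_1,L_2,L_3$ lie in a common planar line pencil and $(ii)$ follows, exactly as in the paper.

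A minor point: when $L_1=L_2$ and the triple is read as a multiset, the round-up condition only gives $L_3^{\equiv}\subseteq L_1^{\equiv}$, not equality. That inclusion together with \cref{3.30b} is all you need.
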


\begin{proof}
If $L_1=L_2\neq L_3$, then a $4$-space opposite $L_1$ but not opposite $L_3$ (which exists by \cref{3.30b}) violates the defining property of a round-up triple.
Hence, if $L_1=L_2$, then $(i)$ holds. 

Now assume $L_1\cap L_2=\{x\}$.
By \cref{apartment}, $x\in L_3$.
Then \cref{corolTits} and \cref{typeAD} imply that  
$L_1,L_2,L_3$ are contained in a plane, and $(ii)$ follows. 
\end{proof}

\begin{lem}\label{E6RuT1}
Let $\{L_1,L_2,L_3\}$ be a round-up triple of pairwise disjoint lines in the exceptional Lie incidence geometry $\mathsf{E_{6,1}}(\K)$.
Let $y_1$ be an arbitrary point of $L_1$.
If $y_1$ is symplectic to some point of $L_2$, then it is collinear to some point of $L_3$.
\end{lem}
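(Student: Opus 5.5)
The plan is to argue by contradiction. Assume $y_1\in L_1$ is symplectic to some $y_2\in L_2$ but collinear to no point of $L_3$. I will then build a $4$-space $W$ that is opposite $L_3$ but opposite neither $L_1$ nor $L_2$. Such a $W$ is opposite exactly one member of $\{L_1,L_2,L_3\}$, which contradicts the round-up property.

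Two criteria are needed first. \emph{Non-opposition:} a $4$-space $C$ and a line $L$ are not opposite as soon as some point of $C$ is collinear to (or equal to) some point of $L$. This follows from \cref{3spacesuffices}, \cref{345E61}, or directly by checking in an apartment. \emph{Opposition:} we need a usable test for when a $4$-space through $y_1$ is opposite $L_3$. Since $\mathsf{E_{6,1}}(\K)$ has diameter $2$ and $y_1$ is collinear to no point of $L_3$, \cref{linesEsixMaximalFourSpace} says that $U:=\<y_1,y_1^\perp\cap L_3^\perp\>$ is a maximal $4$-space. The point $y_1$ and the line $L_3$ are far, so one can pick a vertex joinable to $y_1$ and opposite $L_3$. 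By \cref{Tits}, a $4$-space $W\ni y_1$ is then opposite $L_3$ if and only if, in $\Res(y_1)\cong\mathsf{D_{5,5}}(\K)$, it is opposite the projection of $L_3$. I expect this projection to be the residue object determined by $U$, namely the lines of $U$ through $y_1$. Pinning down this identification in the apartment is the first concrete step.

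Next I choose $W$ inside the symp $\xi:=\xi(y_1,y_2)$, as a $4$-space of $\xi$ through $y_1$ from the appropriate oriflamme class. Since $y_1\in W\cap L_1$, $W$ is not opposite $L_1$. Since $y_2\notin y_1^\perp$, the space $y_2^\perp\cap W$ is a $3$-space, so some point of $W$ is collinear to $y_2\in L_2$ and $W$ is not opposite $L_2$. It remains to choose $W$ so that it is locally opposite $U$ at $y_1$. In $\Res(y_1)$, the $4$-spaces of $\xi$ through $y_1$ form the shadow of the vertex $\xi$, which is a residue. I will apply \cref{corolTits} together with \cref{typeAD} inside the type $\mathsf{A}/\mathsf{D}$ residue of $\xi$. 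This should show that such a residue contains an element opposite the fixed vertex given by $U$, unless $\xi$ and $U$ are in a degenerate (close) position at $y_1$.

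The main obstacle is that degenerate case: $U$ and $\xi$ are too close in $\Res(y_1)$, for instance $U\subseteq\xi$ or $U\cap\xi$ is large. In that case I would use the freedom in the choice of $y_2$. By \cref{pointspecialline} and \cref{pointsymplecticline}, the points of $L_2$ that are symplectic to $y_1$ usually form more than one point, giving several symps $\xi(y_1,y_2')$. If the degenerate position occurred for all of them, then $U\cap\xi$ would contain points collinear to $L_2$ and to $L_3$ simultaneously. I would then try to derive that $L_2$ and $L_3$ share a point, or that $y_1$ is collinear to $L_3$, each contradicting the hypotheses. As a fallback, one can swap the roles of $L_1$ and $L_2$ and apply \cref{apartment2} to the simplices through $y_1$ and $y_2$.
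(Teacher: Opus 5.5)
\textbf{Gap in the proposal.} The non-degenerate case of your argument is fine. The gap is the ``degenerate case'' you flag yourself. It really occurs under all the hypotheses of the lemma, and when it does, no $4$-space through $y_1$ can serve as the test object. So neither varying $y_2$ nor the escape routes you list can save the argument.

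Here is a concrete configuration. Take:
\begin{itemize}
\item a maximal $4$-space $U$, a point $y_1\in U$, and a $3$-space $M\subseteq U$ with $y_1\notin M$;
\item the unique $5$-space $W\supseteq M$ (\cref{345E61});
\item disjoint lines $L_2,L_3\subseteq W$, both missing $M$;
\item a line $L_1\ni y_1$ not in $U$, which then misses $W$.
\end{itemize}
By \cref{restE61}, $y_1^\perp\cap W=M$, so $y_1$ is collinear to no point of $L_2\cup L_3$. Then \cref{linesEsixMaximalFourSpace} gives $\<y_1,y_1^\perp\cap L_2^\perp\>=\<y_1,y_1^\perp\cap L_3^\perp\>=U$. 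So $L_2$ and $L_3$ have the same projection onto $y_1$. By your own criterion via \cref{Tits}, a $4$-space through $y_1$ is then opposite $L_2$ if and only if it is opposite $L_3$.

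Moreover, every symp $\xi(y_1,y_2')$ with $y_2'\in L_2$ contains $M$, hence $U$, so you are in the degenerate position for every choice of $y_2'$. When $y_1$ is collinear to a point of $L_2$, there is even only one such symp, so there is no freedom at all.

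The contradictions you hope to derive are not available here: $L_2\cap L_3=\varnothing$, and $y_1$ is collinear to no point of $L_3$. Also, \cref{apartment2} only gives that $L_3$ meets $\xi(y_1,y_2)$, which holds here: $L_3$ meets the $4'$-space $W\cap\xi(y_1,y_2)=\<M,y_2\>$. Excluding such configurations requires testing the round-up property on a $4$-space \emph{not} through $y_1$.

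\textbf{The missing idea (the paper's route).} Move the base point off $L_1$: find $u\in y_1^\perp\cap y_2^\perp$ collinear to no point of $L_3$. Every $4$-space through $u$ contains a point collinear to $y_1\in L_1$ and to $y_2\in L_2$, so it is opposite neither line. A $4$-space through $u$ locally opposite the projection of $L_3$ onto $u$ is opposite $L_3$ by \cref{Tits}.

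The real work is proving that $u$ exists. Suppose every point of $y_1^\perp\cap y_2^\perp$ is collinear to a point of $L_3$, and run through how $L_3$ sits relative to $\xi:=\xi(y_1,y_2)$:
\begin{itemize}
\item A single point of $L_3$ collinear to all of $y_1^\perp\cap y_2^\perp$ would lie in $\{y_1,y_2\}^{\perp\perp}=\{y_1,y_2\}$, since $\xi$ is hyperbolic. This contradicts the disjointness of the lines.
\item If $L_3\subseteq\xi$, then $y_1$ is already collinear to a point of $L_3$, which is the conclusion.
\item If $L_3\cap\xi=\{z\}$, then all points of $\xi$ collinear to $L_3$ lie in $z^\perp$. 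This set cannot contain $y_1^\perp\cap y_2^\perp$ because $z\notin\{y_1,y_2\}$.
\item If $L_3\cap\xi=\varnothing$, \cref{lemE6b} handles it.
\end{itemize}
In the example above, any $u\in\{y_1,y_2\}^\perp\setminus z^\perp$ works, where $\{z\}=L_3\cap\xi$.
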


\begin{proof}
Suppose $y_i\in L_i$, $i=1,2$, with $y_1\pperp y_2$.
We claim that there exists a point $u\in y_1^\perp\cap y_2^\perp$ not collinear to any point of $L_3$.
Indeed, suppose each point of $y_1^\perp\cap y_2^\perp$ is collinear to some point of $L_3$.
If that point of $L_3$ is unique, then, since $\{y_1,y_2\}^{\perp\perp}=\{y_1,y_2\}$ in each hyperbolic quadric, either $y_1$ or $y_2$ belongs to $L_3$, a contradiction.
If $L_3$ were contained in $\xi(y_1,y_2)$, then $y_1$ would be collinear to a point of $L_3$, and the assertion would follow.
If $L_3$ intersected $\xi(y_1,y_2)$ in a point $z$, then for each point $z_3\in L_3$ we would have $z_3^\perp\cap\xi(y_1,y_2)\subseteq z^\perp\cap\xi(y_1,y_2)$, and this only contains $y_1^\perp\cap y_2^\perp$ if $z\in\{y_1,y_2\}$, a contradiction again.
Hence $L_3$ is disjoint from $\xi(y_1,y_2)$.
Now \cref{lemE6b} implies that $L_3$ is collinear to a plane $\pi$ of $\xi(y_1,y_2)$, and all points of $\xi(y_1,y_2)$ collinear to some point of $L_3$ are collinear to all points of $\pi$, a contradiction since $y_1^\perp\cap y_2^\perp$ is only collinear to $\{y_1,y_2\}$.
The claim is proved. 

Now, by \cref{Tits}, a $4$-space $U$ through $u$ locally opposite the projection of $L_3$ onto $u$ is opposite $L_3$, but not opposite either $L_1$ or $L_2$, as $u$ is collinear to both $y_1\in L_1$ and $y_2\in L_2$.
This final contradiction proves the lemma.
\end{proof}

\begin{lem}\label{E6RuT2}
Let $\{L_1,L_2,L_3\}$ be a round-up triple of lines in the exceptional Lie incidence geometry $\mathsf{E_{6,1}}(\K)$.
Then exactly one of the following holds.
\begin{compactenum}[$(i)$]
\item $L_1=L_2=L_3$;
\item $L_1,L_2,L_3$ are three lines in a common planar line pencil.
\end{compactenum}
\end{lem}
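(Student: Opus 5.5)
By \cref{E6RuT0}, it suffices to show that the lines $L_1,L_2,L_3$ cannot be pairwise disjoint. If two of them meet, that lemma applies directly; after relabelling, the pairwise-disjoint case is the only one left. So assume for a contradiction that $L_1,L_2,L_3$ are pairwise disjoint. The strategy is to use \cref{E6RuT1} repeatedly, with all permutations of the indices, to force collinearities between the lines, and then to derive a contradiction.

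\textbf{Step 1: the line pairs are close.} Since $\mathsf{E_{6,1}}(\K)$ has diameter $2$ and is strong, any two points are collinear or symplectic. Take $y_1\in L_1$ and assume first that $y_1$ is collinear to no point of $L_2$. Then $y_1$ is symplectic to every point of $L_2$, so \cref{E6RuT1} makes $y_1$ collinear to some point of $L_3$. Swapping the roles of $L_2$ and $L_3$, a point of $L_1$ collinear to no point of $L_3$ is collinear to a point of $L_2$. Hence every point of $L_1$ is collinear to a point of $L_2$ or to a point of $L_3$, and the same holds for each line relative to the other two.

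We can say more. Suppose $y_1$ is collinear to exactly one point $z$ of $L_2$. Then $y_1$ is symplectic to the other points of $L_2$, since $\mathsf{E_{6,1}}(\K)$ is a gamma space. So \cref{E6RuT1} again gives collinearity with a point of $L_3$. Therefore every point of every $L_i$ is collinear to at least one point of each of the other two lines, unless it is collinear to all points of one of them.

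\textbf{Step 2: the lines are pairwise collinear in a common singular subspace.} Fix $L_1,L_2$ and consider the relation ``collinear'' between their points. By the gamma property, each $y_1\in L_1$ is collinear to one or all points of $L_2$. Suppose some point $y_1$ is collinear to exactly one point of $L_2$. Using $\xi$, the symp through $L_1$ and $L_2$ when they are not coplanar (available since disjoint lines with such collinearities span a symp or a singular subspace), one gets a partial bijection. The likely outcome, which I expect to verify via the polar space structure of the symp and \cref{lemE6b}, is that either $L_1,L_2$ span a singular $3$-space, or they are opposite lines in a symp $\xi$ with collinearity giving a bijection.

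\textbf{Step 3: contradiction, which is the main obstacle.} In the first situation of Step 2 for all three pairs, $L_1,L_2,L_3$ are mutually collinear. They then lie in a common $4$-space or $5$-space, which are residues. By \cref{corolTits} and \cref{typeAD}, any round-up triple of lines there lies in a planar pencil, so the lines meet, a contradiction. In the second situation I would construct a witness, namely a $4$-space opposite exactly one line. To do so, choose a point $u\in\xi(y_1,y_2)$ that is collinear to points of $L_1$ and $L_2$ but to no point of $L_3$. Then a $4$-space through $u$ that is locally opposite the projection of $L_3$ is, by \cref{Tits}, opposite $L_3$ but not opposite $L_1$ or $L_2$. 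This is the same mechanism as in \cref{E6RuT1}. Producing $u$ requires excluding the case that $L_3$ is collinear to all of $L_1^\perp\cap L_2^\perp\cap\xi$. I would handle this as in \cref{E6RuT1}, by splitting according to whether $L_3$ lies in $\xi$, meets it, or is disjoint from it, and applying \cref{lemE6b} in the disjoint case. This is the step where I expect the most case analysis. Once disjointness is refuted, \cref{E6RuT0} yields $(i)$ or $(ii)$, and the two cases are clearly exclusive.
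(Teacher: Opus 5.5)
\textbf{There is a genuine gap, in your Steps~2 and~3.} The reduction to pairwise disjoint lines and your Step~1 are fine.

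\textbf{Step~2 is only announced, and its justification fails.} You write ``I expect to verify'' but give no argument. The parenthetical reason is not true for arbitrary disjoint lines of $\mathsf{E_{6,1}}(\K)$. If $y_1\in L_1$ is collinear to a single point $z\in L_2$, then for $y_1'\in L_1\setminus\{y_1\}$ the symp $\xi(y_1',z)$ contains $L_1$ and $z$, but in general not $L_2$. Nothing forces another symp or singular subspace to contain both lines; only the round-up hypothesis can do that, and you do not say how. Your dichotomy is also incomplete: two disjoint lines in a symp can be $\xi$-special rather than opposite. Mixed situations among the three pairs are not treated either.

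\textbf{Step~3 cannot work in the ``opposite in a symp'' situation.} A symp is a vertex joinable to both $L_1$ and $L_2$, so \cref{apartment} forces $L_3\subseteq\xi$. Then every $u\in\xi=\xi(y_1,y_2)$ is collinear to some point of $L_3$, because $u^\perp\cap\xi$ is a geometric hyperplane of the polar space $\xi$. So the point $u$ you are looking for never exists. This is exactly the subcase in which the argument of \cref{E6RuT1} yields its conclusion rather than a contradiction.

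\textbf{The missing idea is to transport incidences to $L_3$ with \cref{apartment2} (or \cref{apartment}), and then finish inside a residue.} You already do the residue step in your first situation; the paper argues as follows.
\begin{enumerate}
\item \cref{E6RuT1} gives, after relabelling, collinear points $x_1\in L_1$ and $x_2\in L_2$.
\item Since $L_1\cap L_2=\varnothing$, \cref{apartment2} applied to the line $x_1x_2$ and the points $x_1,x_2$ shows that $x_1x_2$ meets $L_3$ in a point $x_3$.
\item Repeat this for a second point $y_1\in L_1$ to get a second transversal meeting the lines in $y_1,y_2,y_3$. Swapping $L_2$ and $L_3$ here if needed is harmless, because the first transversal already meets all three lines. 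The $y_i$ differ from the $x_i$ by disjointness.
\item If every $x_i$ is collinear to every $y_j$, the three lines lie in a singular $3$-space. Otherwise some $x_i\not\perp y_j$ with $i\neq j$, and all three lines lie in the symp $\xi(x_i,y_j)$.
\item Both a singular $3$-space and a symp are residues. So \cref{corolTits} and \cref{typeAD} make $L_1,L_2,L_3$ coplanar, contradicting disjointness.
\end{enumerate}
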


\begin{proof} By \cref{E6RuT0}, the statement is true if, for some $i,j\in\{1,2,3\}$, $i\neq j$, the lines $L_i$ and $L_j$ have a point in common.
So we may assume that $L_1,L_2,L_3$ are pairwise disjoint. 

By \cref{E6RuT1}, we may assume that some point $x_1\in L_1$ is collinear to some point $x_2\in L_2$. 
Set $M:=x_1x_2$.
Since $L_1$ and $L_2$ are disjoint, \cref{apartment2} implies that $M$ also intersects $L_3$.
So $M\cap L_3$ is also a point $x_3$.
Now select $y_1\in L_1\setminus\{x_1\}$.
By \cref{E6RuT1}, we may again assume that $y_1$ is collinear to some point $y_2$ of $L_2$.
Then, as in the previous paragraph, the line $L_3$ has a point $y_3$ in common with $y_1y_2$.
Clearly $y_3\neq x_3$, as otherwise $L_1,L_2$ are contained in the plane generated by the lines $M$ and $y_1y_2$, contradicting our assumption that $L_1\cap L_2=\varnothing$.
Similarly, $y_2\neq x_2$.
If all of $x_1,x_2,x_3$ are collinear to all of $y_1,y_2,y_3$, then all of $L_1,L_2,L_3$ are contained in a singular $3$-space, which is a residue of a simplex of type $\{2,5,6\}$ in the corresponding building.
Then \cref{corolTits} and \cref{typeAD} lead to the contradiction that $L_1,L_2,L_3$ are coplanar and hence not disjoint.
So we may assume that $x_1$ is not collinear to $y_2$.
But then all of $L_1,L_2,L_3$ are contained in the symp $\xi(x_1,y_2)$, which is again a residue, and so \cref{corolTits} and \cref{typeAD} again lead to a contradiction.

The lemma is proved.
\end{proof}

\begin{prop}\label{GLE63}
Every geometric line of $\mathsf{E_{6,3}}(\K)$ is an ordinary line. 
\end{prop}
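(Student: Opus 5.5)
The plan is to reduce the statement to \cref{RUPtoGL}. The points of $\mathsf{E_{6,3}}(\K)$ are the vertices of type $3$ of the building of type $\mathsf{E_6}$. In $\mathsf{E_{6,1}}(\K)$ these vertices are exactly the lines. A line of $\mathsf{E_{6,3}}(\K)$ is the set of vertices of type $3$ completing a panel of cotype $3$. Such a panel consists of a point, a $4'$-space/$5$-space flag, and so on; concretely, in $\mathsf{E_{6,1}}(\K)$ it determines a point $x$ and a plane $\pi$ through $x$. The corresponding line is the planar line pencil of lines of $\pi$ through $x$. So lines of $\mathsf{E_{6,3}}(\K)$ are precisely the planar line pencils of $\mathsf{E_{6,1}}(\K)$, and the opposition relation on type-$3$ vertices is the opposition between lines of $\mathsf{E_{6,1}}(\K)$ and vertices of $\Delta$ that we used in \cref{E6RuT0,E6RuT1,E6RuT2}. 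Also, a round-up triple of points of $\mathsf{E_{6,3}}(\K)$ is, by definition, a round-up triple of vertices of the building, hence a round-up triple of lines of $\mathsf{E_{6,1}}(\K)$.

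First I would verify the hypothesis of \cref{RUPtoGL}: every round-up triple of points of $\mathsf{E_{6,3}}(\K)$ is contained in a line. Let $\{L_1,L_2,L_3\}$ be such a triple, viewed as lines of $\mathsf{E_{6,1}}(\K)$. \cref{E6RuT2} says that either $L_1=L_2=L_3$, or the three lines lie in a common planar line pencil. In the first case the triple is a single point, which lies on some line of $\mathsf{E_{6,3}}(\K)$ since every panel is nonempty. In the second case, the common pencil is by the identification above a line of $\mathsf{E_{6,3}}(\K)$ containing the triple.

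Before invoking \cref{RUPtoGL}, I would check how it handles a round-up triple with a repeated element. The proof of \cref{RUPtoGL} uses triples $\{x_1,x_2,x_3\}$ with $x_1\neq x_2$ distinct elements of a geometric line $L$. If $x_3$ equals $x_1$ or $x_2$, the triple still lies on a line by the above, and uniqueness of the line through $x_1,x_2$ holds because $\mathsf{E_{6,3}}(\K)$ is a partial linear space: two distinct lines through a point span at most one plane. With this check done, \cref{RUPtoGL} immediately gives that every geometric line of $\mathsf{E_{6,3}}(\K)$ is an ordinary line.

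The only step requiring care is the identification of lines of $\mathsf{E_{6,3}}$ with planar line pencils of $\mathsf{E_{6,1}}$ and of type-$3$ vertices with lines. I would confirm it from the diagram: removing node $3$ leaves residue $\mathsf{A_1}\times\mathsf{A_4}$, and the lines-of-$\mathsf{E_{6,1}}$ interpretation of type $3$ is standard (as already used in \cref{E61lines}). Everything else is a direct citation of \cref{E6RuT2} and \cref{RUPtoGL}.
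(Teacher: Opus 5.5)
Your proposal is correct and is exactly the paper's proof, which deduces the statement directly from \cref{RUPtoGL} and \cref{E6RuT2}, using the identification of points and lines of $\mathsf{E_{6,3}}(\K)$ with lines and planar line pencils of $\mathsf{E_{6,1}}(\K)$. Your extra check on triples with a repeated element is unnecessary, and as phrased it does not follow: such triples need not be round-up triples, so \cref{E6RuT2} does not apply to them. The argument of \cref{RUPtoGL} only uses a third element distinct from $x_1,x_2$, so the conclusion is unaffected.
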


\begin{proof}
This follows directly from \cref{RUPtoGL} and \cref{E6RuT2}. 
\end{proof}

\subsection{Points of Lie incidence geometries of type $\mathsf{E_{7,7}}$}\label{E77}

\subsubsection{Blocking sets}

\begin{prop}\label{E77points}
If every line of a parapolar space $\Gamma$ of type $\mathsf{E_{7,7}}$ contains exactly $s+1$ points, then there exists a point at distance $3$ from each point of an arbitrary set $S$ of $s+1$ (distinct) points, except if these points are contained in a single line. 
\end{prop}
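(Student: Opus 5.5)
We follow the strategy of \cref{E61points}, replacing the symp there by a point opposite all members of $S$ (in $\mathsf{E_{7,7}}(\K)$ opposite points are exactly points at distance $3$, by \cref{preE77}$(i)$). Let $S=\{p_0,\ldots,p_s\}$ be a set of $s+1$ points not on a common line.

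\textbf{Case 1: the points of $S$ are mutually collinear.} Then $S$ lies in a singular subspace, hence in a maximal one. This is either a $6$-space or a maximal $5$-space. Both are residues of vertices of the building, of type $\mathsf{A_6}$ and $\mathsf{A_5}$ respectively. Within such a projective space, \cref{typeAD}$(1)$, applied with $j=1$, gives a hyperplane of that space containing no point of $S$. Equivalently, it gives a vertex of the residue opposite all members of $S$. By \cref{corolTits} this yields a vertex of type $7$, that is, a point of $\Gamma$, opposite every member of $S$.

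\textbf{Case 2: two points of $S$ are not collinear.} Say $p_0\pperp p_1$; since $\Gamma$ is strong, this is the only alternative to collinearity at distance $2$. Set $\xi_0:=\xi(p_0,p_1)$, a polar space of type $\mathsf{D_{6,1}}$. For each $p_i$ we choose a point $p_i'\in\xi_0$ as follows:
\begin{compactenum}[$(a)$]
\item if $p_i\in\xi_0$, we take $p_i'=p_i$;
\item if $p_i$ is close to $\xi_0$, we choose $p_i'$ in the $5'$-space $p_i^\perp\cap\xi_0$;
\item if $p_i$ is far from $\xi_0$, we let $p_i'$ be the unique point of $\xi_0$ collinear to $p_i$.
\end{compactenum}
The resulting set $S'$ has at most $s+1$ points, and $p_0,p_1\in S'$ are non-collinear, so $S'$ is not contained in a line. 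By the main result of \cite{Bus-Mal:24} (Lemma~3.5 there), there is a point $b\in\xi_0$ collinear to no member of $S'$.

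We claim $b$ is collinear to no $p_i$. If $p_i$ is close to $\xi_0$, then $b\notin p_i^\perp\cap\xi_0$, because $b\not\perp p_i'$ while every point of that $5'$-space is collinear to $p_i'$. If $p_i$ is far, then $b\notin p_i^\perp\cap\xi_0=\{p_i'\}$. Since $\Gamma$ has diameter $3$, each $p_i$ is therefore symplectic or opposite to $b$. In the far case, \cref{point-sympE7}$(ii)$ even gives $b$ opposite $p_i$ whenever $b\not\perp p_i'$.

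\textbf{Step 3: passing to $\Res(b)\cong\mathsf{E_{6,1}}(\K)$.} For each $p_i$ symplectic to $b$, the symp $\xi(b,p_i)$ is a point of $\Res(b)$. For each $p_i$ opposite $b$, the projection $\proj_b^{p_i}$ provides a vertex of $\Res(b)$. We now look for a point $c$ collinear to $b$ that is opposite all $p_i$. By \cref{Tits}, the line $bc$ must then be opposite, in $\Res(b)$, the vertex determined by $p_i$.

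For symplectic pairs, \cref{point-sympE7}$(ii)$ together with \cref{lemE7} translates the requirement as follows: $c$ is opposite $p_i$ if and only if $c$ lies in a symp through $b$ meeting $\xi(b,p_i)$ in a line and is suitably placed. In the language of $\Res(b)$, this says that the point $bc$ is far from the symp of $\Res(b)$ corresponding to $\xi(b,p_i)$. The constraints therefore become: find a point of $\mathsf{E_{6,1}}(\K)$ opposite each of at most $s+1$ given symps. Since $\xi(b,p_0)=\xi(b,p_1)$, there are at most $s$ distinct ones, possibly together with opposite-type objects coming from the $p_i$ opposite $b$. Applying \cref{3.30} in $\Res(b)$ then finishes the argument.

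\textbf{Main obstacle.} The delicate step is this translation: we must check carefully, via \cref{Tits}, that for $p_i\pperp b$ the condition ``$c\equiv p_i$'' is exactly opposition in $\Res(b)$ to a single vertex. If this proves awkward, the alternative is to choose $b$ opposite as many $p_i$ as possible and to handle the symplectic ones by the counting in \cref{3.30}, noting that $p_0$ and $p_1$ give the same vertex. This saving of one vertex is what keeps the number of constraints at most $s$.
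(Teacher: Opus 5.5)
\textbf{Gap 1: a whole case is missing.} Your Case~2 begins with ``Say $p_0\pperp p_1$; since $\Gamma$ is strong, this is the only alternative to collinearity at distance~$2$''. But two non-collinear points of $\mathsf{E_{7,7}}(\K)$ need not be at distance~$2$: they can be opposite, as you yourself use in Step~3. So the case where $S$ contains opposite pairs but no symplectic pair is never treated. It does occur. For example, take $S=\{p_0\}\cup(M\setminus\{m\})$, where $M$ is a line containing a point opposite $p_0$ and $m$ is its unique point symplectic to $p_0$.

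In this case there is no symp $\xi(p_0,p_1)$ to start from. Moreover, for any $b$ symplectic to two opposite points $p_0,p_1$ of $S$, the symps $\xi(b,p_0)$ and $\xi(b,p_1)$ are distinct. So the coincidence $\xi(b,p_0)=\xi(b,p_1)$ on which your count rests is unavailable.

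The paper handles this as a separate case:
\begin{compactenum}[$(a)$]
\item Collinearity is then an equivalence relation on $S$. Take a class $C$ with $|S\setminus C|\geq 2$.
\item Take a symp $\xi$ through some $p_0\in C$ that contains $C$, or a $5'$-space of the $6$-space spanned by $C$.
\item Take a point $b\in\xi$ collinear to no point of $S$ but collinear to the point $q\in\xi$ that is collinear to some $p_1\in S\setminus C$.
\item Then $b\pperp p_0$ and $b\pperp p_1$, and $\xi(b,p_0)\cap\xi(b,p_1)=bq$. Hence the symps through $b$ and points of $S$ do not all share a $5$-space.
\item The dual of \cref{E61points} in $\Res(b)$ now yields a line $L\ni b$ locally opposite all these symps. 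Note that \cref{3.30} does not suffice here, since there may be $s+1$ distinct such symps.
\item Since $p_0$ and $p_1$ both have $b$ as their unique point of $L$ symplectic to them, some point of $L$ is opposite all of $S$.
\end{compactenum}

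\textbf{Gap 2: Step~3 does not work as written, even in your Case~2.} Your construction of $b$ there matches the paper's. The problem is the next step. A point $p_i$ opposite $b$ does not determine a vertex of $\Res(b)$ that the line $bc$ must be opposite. For every line $L$ through $b$, such a $p_i$ is collinear to no point of $L$, symplectic to exactly one point of $L$ (different from $b$), and opposite all the others. So it constrains the choice of point on $L$, not the choice of $L$. Adding such ``opposite-type objects'' to up to $s$ symps would also exceed the bound of \cref{3.30}.

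The correct finish, which is the paper's, runs as follows:
\begin{compactenum}[$(a)$]
\item Apply \cref{3.30} in $\Res(b)$ only to the at most $s$ symps $\xi(b,p_i)$ with $p_i\pperp b$, using $\xi(b,p_0)=\xi(b,p_1)$. This gives a line $L\ni b$ locally opposite all of them.
\item Every point of $L\setminus\{b\}$ is then far from each such symp, hence opposite the corresponding $p_i$ by \cref{point-sympE7}$(ii)$.
\item The at most $s-1$ points $p_i\equiv b$ each exclude one point of $L\setminus\{b\}$, which has $s$ points. So some point of $L$ is opposite every member of $S$.
\end{compactenum}
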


\begin{proof}
Suppose $S$ is not a line.
Set $S=\{p_0,\ldots,p_s\}$.
We distinguish several cases. 
\begin{compactenum}[$(i)$]
\item \emph{Suppose all points in $S$ are pairwise collinear.} Then $S$ is contained in a (maximal) singular subspace.
As this corresponds to a residue in the corresponding building, \cref{corolTits} and \cref{typeAD} lead to the conclusion. 

\item \emph{Suppose some pair of points from $S$ is symplectic.} Suppose $p_0$ and $p_1$ are symplectic and let $\xi$ be the symp containing them.
Let the set $T$ of points of $\Delta$ consist of the points of $S$ in $\xi$, the set of points of $\xi$ collinear to a point of $T$ far from $\xi$, and, for each point $p_i$ of $S$ close to $\xi$, an arbitrary point of $\xi$ collinear to $p_i$.
Then $T$ is a set of at most $s+1$ points not forming a line (as $p_0$ and $p_1$ belong to $T$ and are not collinear).
Hence, by \cref{typeAD} (or more precisely, \cite[Lemma~3.5]{Bus-Mal:24}), we find a point $b\in \xi$ not collinear to any member of $T$.
Hence, for each point $p_i\in S$ not far from $\xi$, there is a unique symp $\xi_i$ containing $b$ and $p_i$.
Since $\xi=\xi_0=\xi_1$, there are at most $s$ such symps.
\cref{3.30} yields a line $L$ through $b$ locally opposite each such symp, which means that each point of $L\setminus\{b\}$ is far from each such symp.
This implies that each point of $L\setminus\{b\}$ is opposite each point of $S$ that is not far from $\xi$.
But since $b$ is opposite every point of $S$ that is far from $\xi$ by construction, the line $L$ contains points opposite each member of $S$.
Hence, each point $p_i$, $i=0,1,\ldots,s$, has a unique projection $p_i'$ onto $L$.
Since $b=p_0'=p_1'$, there is at least one point of $L$ opposite each point of $S$. 

\item \emph{Some pair of points from $S$ is opposite and there are no symplectic pairs in $S$.} It follows that collinearity is an equivalence relation in $S$.
Let $C\subseteq S$ be a corresponding equivalence class, which we may choose such that  $|S\setminus C|\geq 2$.
Let $\xi$ be a symp either containing $C$ (if the latter is contained in a singular $5$-space or a singular $5'$-space; we assume $p_0\in C$), or containing at least one point, say again $p_0$, of $C$ and intersecting the $6$-space spanned by $C$ in a $5'$-space (if $C$ generates a $6$-space; \cref{preE77} allows for this).
Either way, let $W$ be the singular subspace of $\xi$ obtained by intersecting $\xi$ with the singular subspace generated by $C$.
Let $S'\neq\varnothing$ be the set of points of $\xi$ collinear to a point of $S\setminus C$.

Select $q\in S'$ arbitrarily and suppose $q\perp p_1\in S\setminus C$.
As $|S\setminus C|\geq 2$, we can select a line $M$ in $\xi$ through $q$ disjoint from $W$ and not containing $S'$.
With that, $M$ contains at least one point $b\perp q$ such that $b\notin S'$ and $b$ is not collinear to a member of $C$.
It follows that $b$ is not collinear to any point of $S$.
So we can consider the set $\Xi$ of symps containing $b$ and some point of $S$.
Since $p_0$ and $p_1$ are opposite, the symps $\xi=\xi(b,p_0)$ and $\xi(b,p_1)$ only intersect in the line $bq$.
Hence, in the residue of $b$, the set $\Xi$ does not correspond to the set of symps containing a given maximal singular subspace.
Hence, the dual of \cref{E61points} yields a line $L$ through $b$ locally opposite each member of $\Xi$.
As in $(ii)$ above, the line $L$ is far from each member of $S$, but $p_0$ and $p_1$ project to the same point of $L$, implying that there is at least one point on $L$ opposite every member of $S$.
\qedhere
\end{compactenum}
\end{proof}

\subsubsection{Geometric lines}

Corollary 5.6 of \cite{Kas-Mal:13} states that a geometric line of a Lie incidence geometry of type $\mathsf{E_{7,7}}$ is the same thing as a line. 

\subsection{Lines of Lie incidence geometries of type $\mathsf{E_{7,7}}$}\label{E76}

We now look at the lines of Lie incidence geometries of type $\mathsf{E_{7,7}}$.

\subsubsection{Blocking sets}

\begin{prop}\label{E77lines}
If every line of a parapolar space $\Gamma$ of type $\mathsf{E_{7,7}}$ contains exactly $s+1$ points, then there exists a line opposite each member of an arbitrary set $T$ of $s+1$ (distinct) lines, except if these lines form a planar line pencil. 
\end{prop}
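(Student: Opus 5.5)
The plan is to mimic the proof of \cref{E61lines}, using the point result \cref{E77points} together with the local-to-global principle \cref{Tits}/\cref{corolTits}. Lines of $\mathsf{E_{7,7}}(\K)$ correspond to vertices of type $6$, and the opposite of a type-$6$ vertex is again of type $6$. So we seek a line $K$ opposite each $L_i$.

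First we reduce to the case where the lines have no common structure. If all $L_i$ share a point $x$, then $T$ lies in $\Res(x)\cong\mathsf{E_{6,1}}(\K)$, where they are points. In that case \cref{corolTits} and \cref{E61points} give the result unless $T$ is a line of the residue, i.e.\ a planar line pencil. Similarly, if all $L_i$ lie in a common symp or a common singular subspace, we use \cref{corolTits} together with \cref{typeAD} for the residues of types $\mathsf{D_6}$ and $\mathsf{A_6}$ (or $\mathsf{A}$-type residues). In those residues a set of $s+1$ lines with no common opposite must be a line of the corresponding Grassmannian, which is again a planar pencil.

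In the general case we pick one point $x_i$ on each $L_i$ so that $S=\{x_0,\dots,x_s\}$ is not a line; this is possible since $T$ is not a pencil and every line has $s+1$ points. By \cref{E77points} there is a point $b$ opposite every $x_i$. Each $L_i$ then contains a unique point $x_i'$ not opposite $b$, namely the one symplectic to $b$ (see the remark after \cref{point-sympE7}). It would be better to choose $b$ opposite all points of all $L_i$ except these projections. In any case, for every $i$ the line $L_i$ is far from $b$: it is joinable to $x_i$, which is opposite $b$. The projection of $L_i$ onto $b$ is therefore well defined as a type-$6$ vertex of $\Res(b)\cong\mathsf{E_{6,1}}(\K)$, and by \cref{Tits} it corresponds to a symp of $\Res(b)$; concretely, it is the line through $b$ in $\xi(b,x_i')$ collinear with a point of $L_i$, read locally.

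By \cref{Tits}, a line $K$ through $b$ is opposite $L_i$ if and only if, in $\Res(b)$, $K$ (a point there) is opposite the projection of $L_i$. The projection is of opposite type, a symp of $\mathsf{E_{6,1}}$ or a $4$-space viewed through the duality. We thus need a point of $\mathsf{E_{6,1}}(\K)$ far from the $s+1$ projected symps. Since $\mathsf{E_{6,1}}$ is self-dual via its diagram automorphism, the dual of \cref{E61points} gives such a point unless all the projected symps contain a common $4$-space, i.e.\ form a dual line.

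The main obstacle is this exceptional configuration, where the projections of the $L_i$ onto $b$ form a dual line in $\Res(b)$. Here I would argue as in Case~2 of \cref{E61lines}: either vary $b$, using that a second choice $b'$ (for instance opposite $x_0$ but not lying in the common structure, obtained via \cref{3.30b}) cannot produce the degenerate configuration for all choices, or derive geometrically that the degenerate configuration forces all $L_i$ to pass through a common point or lie in a common $6$-space/symp. The latter would reduce us to the residue case handled at the start. I expect the cleanest path to be showing that if for every admissible $b$ the projections form a dual line, then the $L_i$ pairwise meet, and then concluding with \cref{E6RuT0}-style arguments via \cref{corolTits}.
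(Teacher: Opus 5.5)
\textbf{The proposal has a genuine gap: the exceptional configuration is left unresolved.}

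Your reduction is sound as far as it goes. If $b$ is opposite a point $x_i$ of each $L_i$, then \cref{Tits} shows that a line $K\ni b$ is opposite $L_i$ if and only if $K$ is opposite, in $\Res(b)$, the type-$1$ vertex $\proj^{x_i}_b(L_i)$. That vertex is the symp $\xi(b,x_i')$ itself, not a line as your ``concretely'' sentence says. The dual of \cref{E61points} in $\Res(b)$ then yields $K$, unless all these symps contain a common maximal $5$-space $W\ni b$. Your preliminary reductions (common point, common symp, common singular subspace) are also fine. This is a different first step from the paper's: the paper uses \cref{kasmal} and \cref{geomlinesexc} in $\mathsf{E_{7,1}}$ to place the $L_i$ in symps $\widehat\xi_i$ with a common opposite symp $\xi$, and then applies the $\mathsf{D_6}$-result inside $\xi$.

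The problem is that the exceptional configuration is where essentially all the difficulty lies, and you give no argument for it, only a list of things you might try. It genuinely occurs for non-pencil $T$ and an admissible $b$:
\begin{compactenum}[$(i)$]
\item take the $s+1$ symps $\zeta_0,\dots,\zeta_s$ through a maximal $5$-space $W\ni b$;
\item pick points $x_i'\in\zeta_i\setminus b^\perp$;
\item pick lines $L_i\ni x_i'$ locally opposite $\zeta_i$ at $x_i'$.
\end{compactenum}
By \cref{point-sympE7}$(ii)$, $b$ is opposite every point of $L_i\setminus\{x_i'\}$. So nothing in your construction prevents \cref{E77points} from producing this $b$, yet $\proj_b(L_i)=\zeta_i\supseteq W$ for all $i$. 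In general these $L_i$ are pairwise disjoint and lie in no common symp or singular subspace, so none of your reductions apply.

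You would therefore have to do one of two things. Either prove that some other $b$ (or other choice of the $x_i$) avoids the degeneracy, or extract enough structure from the degeneracy to force the $L_i$ to meet pairwise. Neither of the tools you mention does this. \cref{3.30b} gives no control over the new projections. \cref{E6RuT0} concerns round-up triples, not sets without a common opposite, so it cannot close the argument either.

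For comparison, the paper runs into exactly your obstruction inside its own degenerate case, where the symps $\xi(q,p_i)$ all share a $5$-space $Q$. It overcomes it only by exploiting the extra structure coming from the opposite symp $\xi$ and the plane pencil in it:
\begin{compactenum}[$(i)$]
\item a second point $q_0$ gives a $5$-space $Q^0$ with $Q\cap Q^0=V$ a $3$-space;
\item a pigeonhole argument in a $5$-space forces $p_m\perp p_n$;
\item all $p_i$ are shown to lie in one symp $\zeta$;
\item the count $s^2>\binom{s+1}{2}$ over the lines of $\pi$ not through $p$ gives two collinear pairs on the same two lines;
\item in Cases (a) and (b) the $\widehat\xi_i$ are re-chosen to break the pencil.
\end{compactenum}
An argument of comparable substance is missing from your proposal.
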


\begin{proof}
Since symps are vertices of type $1$ in the corresponding building, \cref{kasmal} and \cref{geomlinesexc}, proved independently, allow us to consider symps $\widehat{\xi}_0, \widehat{\xi}_1, \dots, \widehat{\xi}_s$, such that $L_i \in \widehat{\xi}_i$ and such that there exists a symp $\xi$ opposite every $\widehat{\xi}_i$, for $i \in \{0,1,\dots,s\}$.
Then, in view of \cref{symp-sympE77}, every point of any $\widehat{\xi}_i$ is collinear to a unique point of $\xi$.
The points of $\xi$, which are in bijection with the points of $L_i \in \widehat{\xi}_i$, form a line again that we will denote by $L_i'$.
Suppose the lines $L_i'$ do not form a planar line pencil.
Then, by Theorem~A of \cite{Bus-Mal:24}, or more in particular Lemma~3.15 therein (see also \cref{typeAD}), we can find a line $M$ in $\xi$ that is locally opposite all $L_i'$ at $\xi$ (viewed as a residue).
With \cref{Tits}, it follows that $M$ is opposite all $L_i$.

Now suppose the $L_i'$ do form a planar line pencil in $\xi$ and denote the plane by $\pi$.
Let $p$ be the point in which all $L_i'$ intersect and let $q$ be a point in $\xi \setminus \pi$, not collinear to $p$.
Then $\proj_{\pi} (q)$ (in $\xi$) is a line $K$, which intersects each $L_i'$ in some point $p_i'$.
Set $p_i=\proj_{L_i}p_i'$.
Since $p_i$ and $q$ are both collinear to $p_i'$, they are symplectic and we denote the symp spanned by $p_i$ and $q$ by $\xi_i$.
If the $\xi_i$ do not all intersect in a common $5$-space, then we can find a line $M$ through $q$ locally opposite all $\xi_i$ and, by \cref{Tits}, $M$ is opposite all $L_i$.

So we may assume from now on that all $\xi_i$ intersect in a common $5$-space $Q$.
Note that they are pairwise distinct, as every $\xi_i$ intersects $\xi$ in exactly the line $qp_i'$.
Let $q_0$ be a point on $qp_0'$ distinct from $q$ and $p_0'$.
The point $q_0$ is in the plane $\alpha:=\langle q, K \rangle$ and with that $\proj_{\pi} (q_0) = \proj_{\pi} (q)=K$.
Hence, $q_0$ is symplectic to each $p_i$ as well.
We will denote the symps spanned by $p_i$ and $q_0$ by $\xi_i^0$.
Since $q_0$ is on $qp_0'$, $\xi_0^0$ coincides with $\xi_0$.
Similarly to before, we may assume that all $\xi_i^0$ intersect in a common $5$-space $Q^0$.
For each $j \in \{1, 2, \dots, s\}$, we have that $\xi_j \neq \xi_j^0$ and $\xi_j^0$ intersects $\xi$ in exactly the line $q_0p_j'$.

The $5$-spaces $Q$ and $Q^0$ are distinct, but both contained in $\xi_0$.
Since symps in $\Delta$ are polar spaces of type $\mathsf{D_{6,1}}$, two $5$-spaces in $\xi_0$ intersect in even codimension and hence either in nothing, a line, a $3$-space, or they coincide. 

Note that $p_j$, for $j \in \{1, 2, \dots, s\}$, cannot be contained in $\xi_0$, since otherwise $\xi_0 = \xi(q,p_j)=\xi_j$, a contradiction.
We have that $\proj_{\xi_0}(p_j)$ (in $\Delta$) is a $5'$-space $A_j$.
We know that $\proj_{Q} (p_j)$ is a $4$-space $U_j \subseteq A_j$ and $\proj_{Q^0} (p_j)$ is a $4$-space $U_j^0 \subseteq A_j$.
Since $U_j\cap U_j^0\subseteq Q\cap Q^0$, it follows that $Q\cap Q^0=U_j\cap U_j^0=:V$ is a $3$-space independent of $j\in\{1,2,\ldots,s\}$.

Let $q_1$ be a point of $qp_1'\setminus\{q,p_1'\}$.
Without loss of generality, assume $p_2'\in q_0q_1$.
Then the symps $\xi(q_1,p_1)$ and $\xi(q_1,p_2)$ contain both $V$, and it follows again that all symps $\xi(q_1,p_i)$ contain $V$.
Since they also contain $q_1$, we find $q_1\perp V$.
Since $q_1$ was essentially arbitrary, we conclude that $V$ is the intersection of all symps defined by $p_i$ and some point of $\alpha\setminus K$.
Moreover, $\alpha\perp V$ and $W:=\<\alpha,V\>$ is a (maximal) $6$-space.
Since both $p_0$ and $V$ are in the intersection of $\xi(q,p_0)$ and $\xi(q_1,p_0)$, we also deduce $p_0\perp V$. 

We observe that $A_j = \proj_{\xi_0}(p_j) = \<\proj_{Q} (p_j) , \proj_{Q^0} (p_j)\> = \<U_j , U_j^0\>$ and that all $A_j$ contain $V$.
We define $U_0$ and $U_0^0$ as the $4$-spaces $\proj_Q(p_0)$ and $\proj_{Q^0}(p_0)$, respectively.
By the foregoing, $V=U_0^0\cap U_0$. 

The point $q$ is contained in $Q$ and the point $q_0$ is contained in $Q^0$.
In a $5$-space, there can only be $s+1$ different $4$-spaces which contain a given $3$-space.
Since none of the $U_i$ ($U_i^0$ respectively) can contain $q$ ($q_0$ respectively), at least two of the $U_i$ ($U_i^0$ respectively) have to coincide.
If $U_m = U_n$ for some $m, n \in \{0,1,\dots,s\}$, then it follows that $U_m^0 = U_n^0$, since otherwise $A_m$ and $A_n$ would intersect in a $4$-space.
We conclude that two $A_i$, for $i \in \{0,1,\dots,s\}$, have to coincide.
Let $m, n \in \{0,1,\dots,s\}$ be such that $A_m = A_n$.
Since, by \cref{preE77}, $6$-spaces cannot intersect in $5'$-spaces, $\langle p_m, A_m \rangle$ and $\langle p_n, A_m \rangle$ have to be equal.
Hence $p_m$ and $p_n$ are collinear.

We now claim that all $p_i$ are contained in a common symp.
Indeed, by \cref{preE77}, there is a unique symp $\zeta$ containing the $5'$-space $\<V,K\>$.
Now $\xi_0$ and $\zeta$ have the $4$-space $\<p_0',V\>$ in common, and hence they intersect in the $5$-space $\<p_0,p_0',V\>$, since this is the unique $5$-space of $\xi_0$ containing $\<p_0',V\>$.
We get $p_0\in\zeta$.
Similarly, $p_i\in\zeta$, for all $i\in\{1,2,\ldots,s\}$, and the claim is proved. 

We now vary $K$ over all lines of $\pi$ not containing $p$.
We obtain $s^2$ distinct pairs of collinear points, where each pair of points is contained in two different lines $L_i,L_j$, $i\neq j$, and no point of such pair is collinear to $p$.
Since we only have $\binom{s+1}{2}=\frac12s(s+1)<s^2$ pairs of lines, there must exist $n,m\in\{0,1,\ldots,s\}$, $n\neq m$, and distinct point pairs $\{a_n,a_m\}$ and $\{b_n,b_m\}$, with $a_n,b_n\in L_n$ and $a_m,b_m\in L_m$, such that $a_n\perp a_m$ and $b_n\perp b_m$.
There are two cases. 

\emph{Case $(a)$: $a_n\neq b_n$ and $a_m\neq b_m$.}\\
In this case, it is easy to see that $L_n$ and $L_m$ are contained in a common symp $\xi^*$.
We claim that all points of $L_0\cup L_1\cup\cdots\cup L_s$ are mutually collinear.
Indeed, suppose not, then there exists some symplectic pair $\{a,b\}$ of points on that union, contained in a unique symp $\xi^{**}$. Consider the set $\Lambda=\{\widehat{\xi}_0, \widehat{\xi}_1, \dots, \widehat{\xi}_s,\xi^*,\xi^{**}\}\setminus\{\widehat{\xi}_{n},\widehat{\xi}_m\}$. By possibly re-choosing $\widehat{\xi}_i$, for $i\in\{0,1,\ldots,s\}\setminus\{m,n\}$, we may assume that not all members of $\Lambda$ contain the same maximal singular subspace. Moreover, $|\Lambda|\leq s+1$, and so, as before, there exists a  symp $\xi'$ opposite each member of $\Lambda$. 
But the projection of the union of the $L_i$ onto $\xi'$ contains a symplectic pair, namely the projection of $\{a,b\}$ from $\xi^{**}$ onto $\xi'$, hence cannot be a planar line pencil.
As before, this leads to a line opposite all of the $L_i$.
The claim is proved.
Hence, $T$ is contained in a singular subspace, and \cref{corolTits} and \cref{typeAD} lead to the assertion. 

\emph{Case $(b)$: without loss of generality $a_n=b_n$ and $a_m\neq b_m$.}\\
In this case, $a_n$ is collinear to the line $L_m$.
Let $c_n\in L_n$ be different from $a_n$ and not collinear to $p$.
If $c_n\perp a_m$, then we are back in Case $(a)$.
So we may assume that $c_n$ is not collinear to $a_m$.
Then, interchanging the role of $K$ with that of the line of $\pi$ containing the respective points collinear to $c_n$ and $a_m$, we see that there exists a unique symp $\zeta_n$ containing $c_n$, $a_m$, and a point $c_i$ of each $L_i$, $i\in\{0,1,\ldots,s\}\setminus\{n,m\}$.
It also contains $L_n$.
We may now re-choose $\widehat{\xi_n}$ as $\zeta_n$.
As $c_n$ and $a_m$ are not collinear, this again leads, as in Case $(a)$ above, to a line opposite each $L_i$, and the proposition is proved. 
\end{proof}

\subsubsection{Geometric lines}

Now we classify geometric lines in Lie incidence geometries of type $\mathsf{E_{7,6}}$.
This will follow from the classification of round-up triples of lines.

\begin{lem}\label{E7RuT0} 
Let $\{L_1,L_2,L_3\}$ be a round-up triple of lines in the exceptional Lie incidence geometry $\Gamma$ of type $\mathsf{E_{7,7}}$, such that $L_1$ and $L_2$ intersect.
Then exactly one of the following holds.
\begin{compactenum}[$(i)$]
\item $L_1=L_2=L_3$;
\item $L_1,L_2,L_3$ are three lines in a common planar line pencil.
 
\end{compactenum}
\end{lem}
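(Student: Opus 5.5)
I would follow the argument of \cref{E6RuT0} essentially verbatim, since the building-theoretic tools used there are type-independent. The proof splits according to whether $L_1=L_2$ or $L_1\cap L_2$ is a single point.

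\emph{The case $L_1=L_2$.} Suppose $L_1=L_2\neq L_3$. Lines of $\Gamma$ correspond to vertices of type $6$ of the underlying building of type $\mathsf{E_7}$. By \cref{3.30b}, there is a vertex of the type opposite to lines, that is, a line $M$, which is opposite $L_1$ but not opposite $L_3$. Then $M$ is opposite exactly two of $L_1,L_2,L_3$, hence not opposite exactly one of them. This contradicts the round-up property, since that property is symmetric with respect to opposite and non-opposite within a triple (equivalently, the complement of a vertex opposite exactly one member). Concretely, by the definition in \cite{Kas-Mal:13}, no vertex is opposite exactly one member, so the argument as in \cref{E6RuT0} must use a vertex opposite $L_3$ but not opposite $L_1$. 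Such a vertex is again provided by \cref{3.30b}, and it is opposite exactly one member, namely $L_3$. Hence $(i)$ holds.

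\emph{The case $L_1\cap L_2=\{x\}$.} The point $x$ is a vertex of type $7$ joinable to both $L_1$ and $L_2$. By \cref{apartment}, $x$ is also joinable to $L_3$, so $x\in L_3$, and $\{L_1,L_2,L_3\}$ is a round-up triple in $\Res(x)$ by \cref{apartment} (or \cref{corolTits}). By \cref{preE77}$(iii)$, $\Res(x)\cong\mathsf{E_{6,1}}(\K)$, and in it the three lines become three points. \cref{corolTits} transfers the round-up property to this residue. Round-up triples of points in $\mathsf{E_{6,1}}(\K)$ are contained in a line; this follows from \cite[Corollary~5.6]{Kas-Mal:13}, where it is the basis for geometric lines being lines, or alternatively from the corresponding round-up classification there. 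A line of $\Res(x)$ is a planar line pencil at $x$, so $L_1,L_2,L_3$ lie in a common plane through $x$, which gives $(ii)$. The two cases are clearly mutually exclusive.

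The only step needing care is the input that round-up triples of points in $\mathsf{E_{6,1}}(\K)$ are collinear. If \cite{Kas-Mal:13} states only the geometric-line version, I would instead reduce further: use \cref{apartment} again with a common symp or singular subspace containing the three points in $\Res(x)$, and then apply \cref{typeAD}$(2)$ in the resulting residue of type $\mathsf{D_5}$ or $\mathsf{A}_n$.
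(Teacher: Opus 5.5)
This is the paper's proof. \cref{3.30b}, applied to obtain a line opposite $L_3$ but not $L_1$, settles $L_1=L_2$; drop your first attempt there, because it rests on a false symmetry of the round-up property between opposite and non-opposite (three points on a line of a projective plane form a round-up triple, yet a line through just one of them is non-opposite exactly one). Otherwise \cref{apartment} gives $x\in L_3$, and \cref{corolTits} reduces everything to round-up triples of \emph{points} of $\Res(x)\cong\mathsf{E_{6,1}}(\K)$. That is the right input; the paper's pointer to \cref{E6RuT2} at this step, a statement about lines of $\mathsf{E_{6,1}}(\K)$, appears to be a slip.

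Rely on your fallback rather than \cite[Corollary~5.6]{Kas-Mal:13}. The statement ``geometric lines are lines'' does not by itself yield the round-up statement, since \cref{RUPtoGL} goes the other way. Your fallback does work. Two distinct points of $\mathsf{E_{6,1}}(\K)$ are collinear or symplectic, and \cref{apartment} puts the third point on that line or in that symp. In the symp case, \cref{typeAD}$(2)$ in the $\mathsf{D_5}$ residue contradicts the non-collinearity of the first two points.
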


\begin{proof}
Clearly, if $L_1=L_2$, then also $L_3=L_1$, since otherwise there exists a line opposite $L_3$ and not opposite $L_1$.
So we may assume $L_1\cap L_2=\{x\}$.
By \cref{apartment}, we see that $x\in L_3$, and \cref{corolTits}, in combination with \cref{E6RuT2}, implies $(ii)$. 
\end{proof}

\begin{lem}\label{E7RuT1} 
Let $\{L_1,L_2,L_3\}$ be a round-up triple of pairwise disjoint lines in an exceptional Lie incidence geometry of type $\mathsf{E_{7,7}}$.
Then no point of $L_2$ is collinear to any point of $L_1$. 
\end{lem}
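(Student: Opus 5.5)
Suppose, for a contradiction, that $x_1\in L_1$ and $x_2\in L_2$ are collinear, and put $M:=x_1x_2$. Since $L_1$ and $L_2$ are disjoint, $M$ differs from both. The plan follows \cref{E6RuT2}: first show that $L_3$ also meets $M$, then push this to a configuration inside a residue, where \cref{corolTits} together with \cref{typeAD} (or \cref{E6RuT2}) forces the three lines to be coplanar. That contradicts pairwise disjointness.

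\textbf{Step 1.} View lines as vertices of type $6$. Then $x_1$ is a type-$7$ vertex joinable to $L_1$, and $x_2$ is a type-$7$ vertex joinable to $L_2$. The line $M$ (type $6$) together with $x_1$, and together with $x_2$, gives simplices $\{M,x_1\}$ and $\{M,x_2\}$. We also need that $L_1$ and $L_2$ are not joinable to a common point, which holds because they are disjoint. With this, \cref{apartment2} applied with $v=M$ gives a point $x_3$ on $M$ that is incident with $L_3$. By disjointness, $x_3\notin\{x_1,x_2\}$.

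\textbf{Step 2.} Pick $y_1\in L_1\setminus\{x_1\}$. We want $y_1$ to be collinear to some point of $L_2$ too.
\begin{itemize}
\item If $y_1$ is collinear to some $y_2\in L_2$, Step 1 yields $y_3\in L_3\cap y_1y_2$, and $y_3\neq x_3$, since otherwise $L_1,L_2$ would lie in the plane $\langle M,y_1y_2\rangle$ and would meet.
\item If instead $y_1$ is symplectic to every point of $L_2$, take $y_2\in L_2\setminus\{x_2\}$ and apply \cref{apartment2} to a suitable common neighbour.
\item By \cref{point-sympE7} and the gamma property, $y_1$ is collinear or symplectic to some point of $L_2$. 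Indeed, $x_2$ is collinear to $x_1$, so $y_1$ and $x_2$ are at distance at most $2$.
\end{itemize}
So in the symplectic case we want an analogue of \cref{E6RuT1}: if $y_1\pperp y_2$, then $y_1$ is collinear to a point of $L_3$. For this, look for a point $u\in y_1^\perp\cap y_2^\perp$ not collinear to any point of $L_3$. Then \cref{Tits} gives a line through $u$ that is opposite $L_3$ but not opposite $L_1$ or $L_2$, a contradiction. Here the $\mathsf{E_{6,1}}$ facts \cref{lemE6b} are replaced by \cref{point-sympE7} and \cref{symp-sympE77}, applied to $\xi(y_1,y_2)$ and $L_3$. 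This shows every pair of points from different lines is at distance at most $2$.

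\textbf{Step 3.} Now all of $L_1,L_2,L_3$ lie within distance $2$ of each other. If all of $x_1,x_2,x_3$ are collinear to all of $y_1,y_2,y_3$, the three lines span a singular subspace, which is a residue; then \cref{corolTits} and \cref{typeAD} give coplanarity, a contradiction. Otherwise some pair, say $x_1,y_2$, is symplectic. Then $L_1,L_2\subseteq\xi(x_1,y_2)$, since the lines $M$ and $y_1y_2$ pass through these points and convexity applies. Also $L_3$ meets $M$ and $y_1y_2$ in distinct points, so $L_3\subseteq\xi(x_1,y_2)$. Since a symp is a residue of type $\mathsf{D}_6$, \cref{corolTits} and \cref{typeAD} again give three coplanar lines, which is impossible.

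The main obstacle is Step 2. The configuration analysis for $\xi(y_1,y_2)$ versus $L_3$ in $\mathsf{E_{7,7}}$ has more cases than in $\mathsf{E_{6,1}}$: $L_3$ may meet the symp, be close to it, or be far from it. My first attempt will be to use \cref{symp-sympE77} on a symp through $L_3$ to control which points of $y_1^\perp\cap y_2^\perp$ (a $\mathsf{D}_{5}$ polar space) are collinear to points of $L_3$, and to show that this set is a proper subset.
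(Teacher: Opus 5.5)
\textbf{There is a genuine gap, in Step 2.} Step~1 is exactly how the paper starts. Step~3 is the $\mathsf{E_{6,1}}$ argument, and it transfers, because $\mathsf{E_{7,7}}$ is strong and singular $3$-spaces and symps lie in residues of type $\mathsf{A}$ and $\mathsf{D_6}$. Step~2, however, is left open, and the mechanism you propose for it cannot close it.

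In $\mathsf{E_{7,7}}$, every point of a line opposite $L_3$ is opposite all but one point of $L_3$, to which it is symplectic (check this in the Gosset graph). So a line through $u$ opposite $L_3$ exists only if $u$ is \emph{opposite} some point of $L_3$. Being collinear to no point of $L_3$ is not enough, since $u$ may be symplectic to every point of $L_3$.

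Now $y_1^\perp\cap y_2^\perp\subseteq\xi:=\xi(y_1,y_2)$. By \cref{point-sympE7}, a point of $L_3$ lying in $\xi$ or close to $\xi$ is opposite no point of $\xi$. Hence your argument only excludes points of $L_3$ that are far from $\xi$ and whose unique neighbour in $\xi$ lies outside $\{y_1,y_2\}$. When every point of $L_3$ is in or close to $\xi$, there is no admissible $u$ at all, and nothing forces $y_1$ to be collinear with $L_3$.

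This configuration really occurs geometrically. Take $y_2=x_2$, so $\xi\supseteq M\ni x_3$. Let $A\ni x_3$ be any $5'$-space of $\xi$, let $W$ be the $6$-space through $A$, and let $L_3$ join $x_3$ to a point of $W\setminus A$. Then $y_1$ is collinear to no point of $L_3$: indeed $y_1^\perp\cap M=\{x_1\}$, and $z^\perp\cap\xi=A\not\ni y_1$ for $z\in L_3\setminus\{x_3\}$. Yet no point of $\xi$ is opposite any point of $L_3$, so your argument gives nothing here.

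Your other sub-case, ``apply \cref{apartment2} to a suitable common neighbour'', is also not an instance of that lemma. A common neighbour of $y_1$ and $y_2$ is not joinable to $L_1$ or $L_2$.

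\textbf{The missing idea.} Avoid any analysis of $y_1^\perp\cap y_2^\perp$ and use \cref{apartment3} with a point--symp simplex.

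After Step~1, suppose $x_1\pperp y_3$ for some $y_3\in L_3$. Then $\xi(x_1,y_3)$ contains $x_3$, hence contains both $M$ and $L_3$. So $\{x_1,\xi(x_1,y_3)\}$ is a simplex with $x_1$ incident with $L_1$ and $\xi(x_1,y_3)$ containing $L_3$. The lines $L_1$ and $L_3$ share no point. They also share no symp: by \cref{apartment} such a symp would contain $L_2$ too, and \cref{corolTits} with \cref{typeAD} in that $\mathsf{D_6}$ residue would make the lines meet. Hence \cref{apartment3} forces $L_2$ to be joinable to $x_1$, that is, $x_1\in L_2$, a contradiction.

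Otherwise, since $\mathsf{E_{7,7}}$ is strong and $x_1\perp x_3$, the point $x_1$ is collinear to every point of $L_3$. Applying Step~1 to each line $x_1z$ with $z\in L_3$ puts $L_2$ in the plane $\langle x_1,L_3\rangle$, so $L_2\cap L_3\neq\varnothing$, again a contradiction. This is the paper's proof, and it needs no second transversal $y_1y_2$ at all.
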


\begin{proof}
Let, for a contradiction, $M$ be a line joining a point $x_1\in L_1$ to a point $x_2\in L_2$.
Note that $L_1\neq M\neq L_2$.
Since $L_1$ and $L_2$ are disjoint, we can apply \cref{apartment2} and find 
that $M$ intersects $L_3$.
Set $x_i:=M\cap L_i$, $i=1,2,3$. 

Assume, for a contradiction, that $x_1$ is symplectic to some point $y_3\in L_3$.
Set $\xi:=\xi(x_1,y_3)$.
Noting that $M\subseteq \xi$, we want to apply \cref{apartment3}. Since $L_1$ and $L_2$ are disjoint, we only have to check that $L_1$ and $L_2$ are not contained in a common symp. If that were the case, then also $L_3$ would be contained in that symp and $\{L_1,L_2,L_3\}$ would be, by \cref{corolTits}, a round-up triple of lines in polar space of type $\mathsf{D}_6$, implying by \cref{typeAD} that $L_1$ and $L_2$ are not disjoint after all, a contradiction. Hence \cref{apartment3} yields $x_1\in L_2\subseteq \xi$, a contradiction again. 
Hence, $x_1$ is collinear to each point of $L_3$.
But then again, every line through $x_1$ intersecting $L_3$ meets $L_2$, and so $L_2,L_3$ are contained in a common plane, hence intersecting, contradicting our assumptions.
The lemma is proved. 
\end{proof}

\begin{prop}\label{E7RuT3}
Let $\{L_1,L_2,L_3\}$ be a round-up triple of lines in the exceptional Lie incidence geometry $\Gamma$ of type $\mathsf{E_{7,7}}$.
Then exactly one of the following holds.
\begin{compactenum}[$(i)$]
\item $L_1=L_2=L_3$;
\item $L_1,L_2,L_3$ are three lines in a common planar line pencil.
 
\end{compactenum}
\end{prop}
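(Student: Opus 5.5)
By \cref{E7RuT0}, the statement holds as soon as two of the lines intersect. It therefore suffices to show that a round-up triple $\{L_1,L_2,L_3\}$ of pairwise disjoint lines cannot exist, and we argue by contradiction.

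By \cref{E7RuT1}, applied to each ordered pair, no point of $L_i$ is collinear to any point of $L_j$ for $i\neq j$. So for each $x\in L_1$ and each $j\in\{2,3\}$, every point of $L_j$ is either symplectic or opposite to $x$. By the remark after \cref{point-sympE7}, each point $x$ has on each line at least one symplectic point. That point is unique as soon as the line contains a point opposite $x$.

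\textbf{Step 1: the triple lies in a common symp.} Pick $x_1\in L_1$ and $y_2\in L_2$ with $x_1\pperp y_2$, and set $\xi:=\xi(x_1,y_2)$. Mimicking \cref{E6RuT1}, we look for a point $u\in x_1^\perp\cap y_2^\perp$ not collinear to any point of $L_3$. If such a $u$ exists, then by \cref{Tits} a line through $u$ that is locally opposite the projection of $L_3$ onto $u$ is opposite $L_3$. It is not opposite $L_1$ or $L_2$, since $u$ is collinear to $x_1\in L_1$ and to $y_2\in L_2$. This contradicts the round-up property.

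So $x_1^\perp\cap y_2^\perp$, which is a $\mathsf{D}_5$-quadric-like set inside the $\mathsf{D_{6,1}}$ symp $\xi$, is covered by the sets $z^\perp$ for $z\in L_3$. We analyse this with \cref{point-sympE7} according to the position of $L_3$ relative to $\xi$:
\begin{compactenum}[$(a)$]
\item If $L_3\subseteq\xi$, we are done.
\item If $L_3$ meets $\xi$ in a single point $z$, then every $z_3\in L_3$ satisfies $z_3^\perp\cap\xi\subseteq z^\perp$, which cannot cover $\{x_1,y_2\}^\perp$ since $\{x_1,y_2\}^{\perp\perp}=\{x_1,y_2\}$.
\item If $L_3$ is disjoint from $\xi$, its points are close or far from $\xi$. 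A far point contributes one point of $\xi$. A close point contributes a $5'$-space, and two such $5'$-spaces must be handled like in \cref{lemE6b}. In the far case, points of $L_3$ would also be collinear to points of $\xi$; combining this with \cref{E7RuT1} should give a contradiction, or at least a bound that prevents covering.
\end{compactenum}
The conclusion we aim for is $L_3\subseteq\xi$. Repeating the argument with the roles of the lines exchanged, and using \cref{apartment} or \cref{apartment3} where needed, we then want to conclude that all three lines lie in $\xi$.

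\textbf{Step 2: conclude.} Once $L_1,L_2,L_3\subseteq\xi$, \cref{corolTits} shows that the triple is a round-up triple in a building of type $\mathsf{D}_6$. By \cref{typeAD}, it is then contained in a planar line pencil, so the lines are not pairwise disjoint. This is a contradiction.

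The main obstacle is the covering analysis in Step 1, in the case where $L_3$ is disjoint from $\xi$. If that analysis gets stuck, the fallback is to pass to the residue of $x_1$ (of type $\mathsf{E_{6,1}}$) and use \cref{lemE6b}-style arguments there. Alternatively, we would choose the pair $(x_1,y_2)$ so that $L_3$ has a point opposite both $x_1$ and $y_2$, and then use uniqueness of the symplectic point on $L_3$ to build $u$ directly.
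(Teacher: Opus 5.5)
There is a genuine gap at the first move of Step~1. You carry the argument of \cref{E6RuT1} over from $\mathsf{E_{6,1}}(\K)$, which has diameter $2$, to $\mathsf{E_{7,7}}(\K)$, which has diameter $3$, and it does not survive the transfer.

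A point $u$ that is not collinear to any point of $L_3$ need not lie on any line opposite $L_3$. If $K\equiv L_3$, then every point $k\in K$ is opposite some point of $L_3$: put $L_3$ and the flag $\{k,K\}$ in an apartment and take antipodes. Now suppose $u$ is symplectic to every point of $L_3$. This happens, for instance, when $u$ is close to a symp through $L_3$ and $u^\perp$ meets that symp in a $5'$-space disjoint from $L_3$. Then no line through $u$ is opposite $L_3$, and there is no "projection of $L_3$ onto $u$" to which \cref{Tits} applies.

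The right condition, which the paper uses, is that $u$ be \emph{opposite} some point of $L_3$. Then $u$ is symplectic to a unique point of $L_3$, and projecting from an opposite point yields the required line. So the round-up property only tells you that no point of $\{x_1,y_2\}^\perp$ is opposite a point of $L_3$. It does not give your covering statement.

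This weaker condition rules out points of $L_3$ far from $\xi$: the unique point of $\xi$ collinear to a far point would have to lie in $\{x_1,y_2\}^{\perp\perp}=\{x_1,y_2\}$, contradicting \cref{E7RuT1}. It does not force $L_3\subseteq\xi$, however. Take $L_3$ meeting $\xi$ in a point $x_3\notin x_1^\perp\cup y_2^\perp$, with $L_3\setminus\{x_3\}$ collinear to a $5'$-space of $\xi$ through $x_3$. This configuration satisfies every constraint you can extract, because no point of $L_3$ is opposite any point of $\xi$. So your case analysis rests on the wrong dichotomy, leaves case $(c)$ at "should give a contradiction", and aims at a conclusion that cannot be reached this way.

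What is missing is the last part of the paper's argument. After excluding far points, the paper applies \cref{apartment2} to get a point $x_3\in L_3\cap\xi$; this is legitimate because $L_1$ and $L_2$ lie in no common symp. By symmetry, all points of $L_1\cup L_2$ are in or close to $\xi$. The paper then constructs a spoiler directly:
\begin{compactenum}[$\bullet$]
\item Since $\xi$ is hyperbolic, there is a point $y\in L_2^\perp\cap x_3^\perp\setminus x_1^\perp$.
\item Take a line $M$ through $y$ locally opposite $\xi$. If $y$ is not collinear to all of $L_3$, instead take $M$ locally non-opposite the symp through $y$ and $L_3$.
\item Pick $z\in M\setminus\{y\}$. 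Then $z$ is opposite $x_1$ but not opposite any point of $L_2\cup L_3$.
\item Hence a line through $z$ opposite $L_1$ is opposite neither $L_2$ nor $L_3$, contradicting the round-up property.
\end{compactenum}
Your proposal needs some such construction of a point opposite a point of one line but non-opposite every point of the other two.

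Incidentally, your Step~2 would be unnecessary. Once $L_3\subseteq\xi$, the point $x_1\in\xi$ is collinear to some point of $L_3$, which already contradicts \cref{E7RuT1}.
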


\begin{proof}
In view of \cref{E7RuT0} and \cref{E7RuT1}, it suffices to show that no round-up triple $\{L_1,L_2,L_3\}$ exists for which no point of $L_i$ coincides with or is collinear to any point of $L_j$, $i,j\in\{1,2,3\}$, $i\neq j$.
So suppose, for a contradiction, such a triple does exist.
Select $x_1\in L_1$.
Then there exists $x_2\in L_2$ symplectic to $x_1$.
Set $\xi:=\xi(x_1,x_2)$.
Suppose, for a contradiction, that some point $x_3\in L_3$ is opposite some point $y_{12}\in x_1^\perp\cap x_2^\perp$.
Then we can find a line through $y_{12}$ opposite $L_3$, but that line is certainly not opposite either $L_1$ or $L_2$, as it contains a point $y_{12}$ collinear to points of $L_1$ and $L_2$.
Hence no point of $L_3$ is opposite any point of $x_1^\perp\cap x_2^\perp$.
If some point $x_3\in L_3$ were far from $\xi$, this would imply that the unique point $x_3'$ of $\xi$ collinear to $x_3$ is collinear to all of $x_1^\perp\cap x_2^\perp$, forcing $x_3'\in\{x_1,x_2\}$, contradicting our assumption that no point of $L_3$ is collinear or equal to any point of $L_1\cup L_2$.
Hence all points of $L_3$ are close to $\xi$. 
Note also that, by \cref{apartment2} and the fact that $L_1$ and $L_2$ are not contained in a common symp (as we assume no point of $L_1$ is collinear to any point of $L_2$), some point $x_3\in L_3$ belongs to $\xi$.
By interchanging the roles of $L_3$ and $L_i$, $i=1,2$, we see that each point of $L_1\cup L_2$ is close to $\xi$.
Since $\xi$ is hyperbolic, there exists a point $y\in L_2^\perp\cap x_3^\perp\setminus x_1^\perp$.
Let $M$ be a line through $y$ locally opposite $\xi$ and select $z\in M\setminus\{y\}$.
However, if $y$ is not collinear to all points of $L_3$, then we (re)choose $M$ locally not opposite the symp through $y$ and $L_3$.
In any case, $z$ is not opposite any point of $L_2\cup L_3$.
Since it is opposite $x_1$, we find a line $K$ through $z$ opposite $L_1$.
But $K$ is not opposite either $L_2$ or $L_3$ by the properties of $z$, a contradiction.

The lemma is proved.
\end{proof}

We conclude:

\begin{prop}\label{GLE76}
Every geometric line of $\mathsf{E_{7,6}}(\K)$ is an ordinary line. 
\end{prop}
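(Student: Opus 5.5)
This follows the pattern of \cref{GLE63}. The points of $\mathsf{E_{7,6}}(\K)$ are the lines of $\mathsf{E_{7,7}}(\K)$. The lines of $\mathsf{E_{7,6}}(\K)$ are the planar line pencils of $\mathsf{E_{7,7}}(\K)$: the lines through a fixed point $x$ inside a fixed plane $\pi$. This holds because vertices of type $6$ are lines of the minuscule geometry, and a panel of cotype $6$ corresponds to a point–plane flag. Opposition of type-$6$ vertices in the building is exactly opposition of lines in $\mathsf{E_{7,7}}(\K)$, as used in \cref{E7RuT0}–\cref{E7RuT3}. Hence round-up triples of points of $\mathsf{E_{7,6}}(\K)$ are precisely round-up triples of lines of $\mathsf{E_{7,7}}(\K)$.

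By \cref{E7RuT3}, every round-up triple $\{L_1,L_2,L_3\}$ of lines of $\mathsf{E_{7,7}}(\K)$ either has $L_1=L_2=L_3$, or consists of three lines of a common planar line pencil. In both cases the triple, viewed in $\mathsf{E_{7,6}}(\K)$, lies in a line of $\mathsf{E_{7,6}}(\K)$. In the degenerate case we pick any pencil containing the line, which exists by thickness. So the hypothesis of \cref{RUPtoGL} holds for $\Delta=\mathsf{E_{7,6}}(\K)$, and that proposition gives that every geometric line is an ordinary line.

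The only point needing care is the translation between the two geometries. We must check that ``round-up triple'' is a building-theoretic notion depending only on the opposition relation on vertices of type $6$, which is immediate from the definition. We must also check that in the proof of \cref{RUPtoGL} the triples considered consist of distinct points, so the degenerate case (i) of \cref{E7RuT3} causes no issue. There is no real obstacle: the work was done in \cref{E7RuT3}.
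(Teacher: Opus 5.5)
Your proposal is correct and is exactly the paper's argument: the paper proves this in one line by combining \cref{RUPtoGL} with the classification of round-up triples of lines of $\mathsf{E_{7,7}}(\K)$ in \cref{E7RuT3}. Your additional checks (type-$6$ vertices are lines of $\mathsf{E_{7,7}}(\K)$, lines of $\mathsf{E_{7,6}}(\K)$ are planar line pencils, and round-up triples depend only on the opposition relation) are the implicit translations that the paper leaves to the reader.
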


\begin{proof}
This follows directly from \cref{RUPtoGL} and \cref{E7RuT3}. 
\end{proof}

\section{Points and lines of hexagonic Lie incidence geometries}\label{hexagonic}

\subsection{Points of hexagonic Lie incidence geometries}\label{hexpoints}

Here we prove Main Results A and B for the points  of the exceptional hexagonic geometries. 

\subsubsection{Blocking sets.
Reduction to geometric lines}

\begin{prop}\label{longrootlemma}
Let $\Gamma$ be an exceptional hexagonic geometry with $s+1$ points per line.
Then a given set $T$ of $s+1$ points of $\Gamma$ admits an opposite point if, and only if, $T$ is not a geometric line of $\Gamma$. 
\end{prop}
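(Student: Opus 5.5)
One direction is immediate from the definitions: a geometric line contains no point opposite all of its members. So the whole task is the converse. Let $T$ be a set of $s+1$ points with no point opposite all members of $T$. I want to show that every point $z$ is either opposite no member of $T$, or non-opposite exactly one member of $T$.

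The key tool is that $x^{\not\equiv}$ is a proper geometric hyperplane (\cref{pointpointhex}). Fix a point $z$ that is opposite some $t\in T$, and suppose $z$ is non-opposite at least two members of $T$. I would produce a line $L$ through $z$ that avoids being entirely inside the hyperplanes $t'^{\not\equiv}$, in a controlled way.

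Each hyperplane $t^{\not\equiv}$ with $t\in T$ meets every line in either one point or the whole line. For a line $L$ through $z$, consider the points $t\in T$ for which $L\subseteq t^{\not\equiv}$. These are exactly the $t\in T$ with $z\not\equiv t$ and $\proj_z(t)$ not locally opposite $L$; here I use \cref{Tits}, applied to a point $y$ opposite $z$ and read in $\Res(z)$. Every other $t\in T$ removes exactly one point of $L$.

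So I need a line $L$ through $z$ such that:
\begin{compactenum}[(a)]
\item every $t\in T$ with $t\not\equiv z$ cuts $L$ in a single point;
\item these, together with the points contributed by the remaining members of $T$, cover fewer than $s+1$ points of $L$.
\end{compactenum}
Then some point of $L$ is opposite all of $T$, a contradiction.

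To arrange (a) and (b), I let $A\subseteq T$ be the members non-opposite $z$, and $B=T\setminus A$, which is nonempty. Each $t\in B$ is opposite $z$, so $t^{\not\equiv}\cap z^\perp$ is the set of points on lines through $z$ meeting the distinguished line $\proj^t_z$-image. More precisely, $t^{\not\equiv}$ meets each line $L\ni z$ in exactly one point other than $z$, because $z\notin t^{\not\equiv}$.

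For $t\in A$, I need $L\not\subseteq t^{\not\equiv}$. Write $k$ for the number of lines $L$ through $z$ for which this fails. I plan to use the finiteness count through \cref{3.30} in $\Res(z)$: the members of $A$ correspond to certain vertices of $\Res(z)$ (the projection of $t$ on $z$ when $t$ is special or symplectic to $z$; and if $t\perp z$ or $t=z$, then $L$ through $z$ lies in $t^{\not\equiv}$ for all lines, which needs separate handling). The condition ``$L\subseteq t^{\not\equiv}$'' then becomes ``$L$ is not opposite $v_t$ in $\Res(z)$'' for a suitable vertex $v_t$. Since $|A|\leq s$ and $\Res(z)$ has $s$-thick panels of the relevant type, \cref{3.30} yields a line $L$ through $z$ locally opposite all the $v_t$. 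On such an $L$, the point $z$ is non-opposite each $t\in A$, so each $t\in A$ kills only $z$. Each $t\in B$ kills one point other than $z$. The number of killed points is therefore at most $1+|B|\leq 1+(s+1-|A|)\leq s$ once $|A|\geq 2$, so a point of $L$ survives.

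The main obstacle is the case where some $t\in A$ is collinear to or equal to $z$: then every line through $z$ meets $t^{\not\equiv}$ in its entirety near $z$. I would first try to move $z$ to an opposite of $t$ using the hyperplane property. Alternatively, I would argue that $z^{\not\equiv}$ already contains $z^\perp$, and choose $L$ through $z$ avoiding such $t$ by instead counting on a line through a neighbour, with the vertices $v_t$ for collinear $t$ being lines of $\Res(z)$ and \cref{3.30} still applicable.
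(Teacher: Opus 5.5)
Your count on a line $L\ni z$ is exactly the final step of the paper's argument. However, it is valid only when every member of $A$ is \emph{special} to $z$, and the reduction to that situation is missing.

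The problem is not confined to members of $A$ equal or collinear to $z$, which you flag. If $t\in A$ is symplectic to $z$, then \cref{joinjoin} ($t\pperp z\perp z'$ forces $t\not\equiv z'$) gives $z^\perp\subseteq t^{\not\equiv}$. Such a $t$ therefore swallows every line through $z$, however $L$ is chosen. Your reformulation ``$L\subseteq t^{\not\equiv}$ if and only if $L$ is not locally opposite $v_t$'' holds only for $t\Join z$, with $v_t=z[z,t]$. Choosing $L$ locally opposite a line of $\xi(z,t)$ through $z$ only makes the points of $L\setminus\{z\}$ special to $t$, never opposite. So the claim that ``each $t\in A$ kills only $z$'' is false in general. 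The remedies in your last paragraph are too vague to close this gap, and they do not address the symplectic case at all.

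The missing idea is to replace the spoilsport $z$ by another spoilsport that is special or opposite to every member of $T$. The paper does this in two stages: first it moves to a spoilsport not in $T$, then to one with no collinear or symplectic members of $T$. The mechanism is as follows.

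\begin{itemize}
\item Take $L\ni z$ locally opposite, for each $p\in A\setminus\{z\}$, the line $zp$ if $p\perp z$, an arbitrary line of $\xi(z,p)$ through $z$ if $p\pperp z$, and $z[z,p]$ if $p\Join z$.
\item On $L\setminus\{z\}$, collinear and symplectic members of $A$ become special, special members become opposite, and each $b\in B$ is non-opposite exactly one point of $L$.
\item The new point $z'\in L\setminus\{z\}$ must be chosen with care so that it is again a spoilsport. Depending on the composition of $A$, either take $z'$ avoiding the at most $s-1$ points of $L$ non-opposite some member of $B$, or take $z'$ to be the unique point of $L$ special to a fixed $b\in B$.
\end{itemize}

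A generic choice of $z'$ fails, for example when $z\notin T$, exactly one member of $A$ is collinear or symplectic to $z$, and all others are special. Then a generic $z'$ is non-opposite only one member of $T$, so it is no spoilsport. Only after this reduction does your count apply.

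A smaller point concerns $\mathsf{F_{4,4}}(q,q^2)$, where $s=q^2$. There some panels of $\Res(z)$ have only $q+1<s+1$ chambers, so \cref{3.30} does not apply. The local step there needs \cref{typeB}(3) instead.
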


\begin{proof}
Clearly, if $T$ is a geometric line, then $T$ does not admit any point opposite all its points.
Now suppose $T$ does not admit any point opposite all of its points.
We show that $T$ is a geometric line.
Suppose, for a contradiction, that $T$ is not a geometric line.
Then there exists a point $x$ not opposite at least two points of $T$, but opposite at least one point of $T$, and we shall call each such point a \emph{spoilsport}.
Suppose $x$ is not opposite $r\geq 2$ points of $T$, with $r\leq s$, and let $S$ be that set of points.
We adopt the following notation.
For each point $p\in S$ not equal or symplectic to $x$, we denote the line through $x$ closest to $p$ by $L_{x,p}$.
If $p\in S$ is symplectic to $x$, then we denote by $\cL_{x,p}$ the set of lines of $\xi(x,p)$ through $x$.

Note that each point $z\neq x$ on any line $K$ through $x$ locally opposite some member of $\cL_{x,p}$ is special to $p$. 

\begin{compactenum}[$(i)$]
\item \emph{Suppose $x\in T$.} For each point $p\in S$ symplectic to $x$, we choose an arbitrary line $L_{x,p}\in\cL_{x,p}$.
Then, by 
\cref{typeB} for $\mathsf{F_4}$, and by \cref{3.30} for the other cases,
we find a line $L\ni x$ locally opposite all of $L_{x,p}$, for $p$ ranging through $S\setminus\{x\}$.
Since $T\setminus S$ contains at most $s-1$ elements, there is a point $x'$ on $L$ opposite all members of $T\setminus S$.
If $S$ contains at least one point collinear or symplectic to $x$, then $x'\notin T$ is a spoilsport.
If $S\setminus\{x\}$ only contains points special to $x$, then some point of $L$ at distance $2$ of at least one member of $T\setminus S$ is a spoilsport not contained in $T$.
Hence we may assume from now on that $x\notin T$. 
\item \emph{Suppose $x\notin T$ and $S$ contains at least one point collinear or symplectic to $x$.}  Again, we choose an arbitrary line $L_{x,p}\in\cL_{x,p}$ 
for each $p\in S$ symplectic to $x$, 
and we find a line $L$ through $x$ locally opposite all $L_{x,p}$, $p\in S$.
There are two possibilities.
First assume that $S$ contains at least one point special to $x$.
Then we select a point $x'\in L$ at distance $2$ from at least one member of $T\setminus S$, and we see that $x'$ is a spoilsport not collinear and not symplectic to any point of $T$.
Secondly, assume that $S$ does not contain any point special to $x$.
Then we select a point $x''\in L\setminus\{x\}$ distinct from the at most $(s+1)-r\leq s-1$ points at distance $2$ from some member of $T\setminus S$.
Then $x''$ is opposite every member of $T\setminus S$ and special to each member of $S$, and hence $x''$ is a spoilsport.
So, in both cases we constructed a spoilsport not collinear and not symplectic to any point of $T$.
So from now we may assume that $x$ is special to each point of $S$. 
\item \emph{Suppose $x$ is special to each point of $S$.} Then we can find a line $L$ locally opposite each $L_{x,p}$, with $p\in S$, and a point $y\in L\setminus\{x\}$ opposite each member of $T\setminus S$.
The point $y$ is opposite each member of $T$, a contradiction.
\end{compactenum}  
We conclude that $T$ is a geometric line. 
\end{proof}

\cref{longrootlemma} reduces the classification of point sets $T$ in a finite exceptional hexagonic geometry, where $T$ has the size of a line and does not admit a point opposite each of its members, to the classification of geometric lines in such geometries.
This is the goal of the next theorem.
It completes the partial classification given in \cite{Kas-Mal:13}, which we now briefly repeat.

\begin{prop}[Theorem 6.5 in \cite{Kas-Mal:13}]\label{kasmal}
Let $L$ be a geometric line in an exceptional hexagonic geometry $\Gamma$.
Then exactly one of the following cases occurs. 
\begin{compactenum}[$(1)$]
\item $L$ is an ordinary line of $\Gamma$;
\item $L$ is a hyperbolic line in a symplecton of $\Gamma$ isomorphic to a symplectic polar space (and this only occurs in the hexagonic geometries of type $\mathsf{F_{4,4}}$ that arise from a split building of type $\mathsf{F_4}$);
\item $L$ consists of mutually opposite points. 
\end{compactenum}
\end{prop}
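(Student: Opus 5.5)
The plan is to classify geometric lines $L$ by the mutual position of two of their points, using round-up triples. Every triple in $L$ is a round-up triple, and by \cref{3.30} we have $|L|\geq 3$. By \cref{pointpointhex}, two distinct points are collinear, symplectic, special or opposite. The argument therefore splits into four cases according to the possible relations between two points $x_1,x_2\in L$. The aim is to show that if one pair in $L$ is not opposite, then $L$ is a line or a symplectic hyperbolic line. The remaining possibility is that all pairs in $L$ are opposite, which is case $(3)$ of the statement.

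\emph{Collinear pair.} Suppose $x_1\perp x_2$, and let $x_3\in L$. Then $x_1,x_2$ are joinable to the vertex of the building given by any line or symp through both of them. \cref{apartment} then forces $x_3$ to be joinable to that vertex, so $x_3$ lies on the line $x_1x_2$. Hence every triple containing $x_1,x_2$ lies on $M=x_1x_2$, and as in \cref{RUPtoGL} we conclude $L=M$.

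\emph{Symplectic pair.} Suppose $x_1\pperp x_2$. By \cref{apartment}, every $x_3\in L$ lies in $\xi(x_1,x_2)$. By \cref{corolTits}, $L$ is then a geometric line of the polar space $\xi(x_1,x_2)$, viewed as the building residue of the corresponding symp vertex. \cref{typeB} (together with \cref{typeAD} in the simply laced cases) then says that $L$ is either a line or a hyperbolic line of a symplectic polar space. Symps are symplectic only for split $\mathsf{F_{4,4}}$; in the other types they are hyperbolic or non-symplectic quadrics, so no hyperbolic-line case arises.

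\emph{Special pair.} This is the main obstacle. Suppose $x_1\Join x_2$, with $z=[x_1,x_2]$. The two points are joinable to no common line or symp. However, $x_1$ and $x_2$ are joinable to the lines $zx_1$ and $zx_2$, and these lie in a common simplex with the point $z$. I would apply \cref{apartment2}, or \cref{apartment3} with $w_1=zx_1$ and $w_2=zx_2$, to get that every $x_3\in L$ is collinear to $z$. After that, all points of $L$ lie in $z^\perp$ and are pairwise special or collinear. Working in $\Res(z)$, whose type is listed in \cref{sympsymphex}, the lines $zx_i$ should form a round-up triple there, via \cref{corolTits} applied to the simplex-type objects. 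Pairwise special points through $z$ correspond to opposite points in $\Res(z)$. To finish, I would produce a spoilsport: take a point $y$ collinear to $z$ and to $x_1$'s line but in general position, then a line through $y$ opposite $x_3$ and not opposite $x_1$, using \cref{pointspecialline} and \cref{pentagon} to control the relations. The difficulty is that the residue is a different geometry, so I expect to need an explicit construction of a point opposite $x_3$ but non-opposite $x_1$ and $x_2$. For that construction I would use \cref{generalspecial}(ii) in a symp through $zx_1$ and $zx_2$.

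\emph{Opposite pairs.} If no pair is collinear, symplectic or special, then all pairs are opposite, which is case $(3)$. Exactly one case holds, since a line and a hyperbolic line contain non-opposite pairs, and a set of mutually opposite points contains no non-opposite pair.
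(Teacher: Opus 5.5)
\textbf{Overall.} Your collinear and symplectic cases are sound. One small repair in the symplectic case: in $\mathsf{E_{6,2}}$ a symp is the shadow of a flag of type $\{1,6\}$, not of a single vertex. So apply \cref{apartment} to both vertices of that flag and use that a building is a flag complex. The paper does not reprove this statement; it quotes \cite{Kas-Mal:13}.

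\textbf{The gap: the special case.} You leave this case as a sketch, and the sketch fails at three points.
\begin{compactenum}[$(i)$]
\item \cref{apartment3} does not apply, since $zx_1$ and $zx_2$ are distinct vertices of the same type and never form a simplex. What does work is \cref{apartment2} with $v=z$ and $w_i=zx_i$. It yields a line through $z$ and $x_3$, exactly as in \cref{RUP2hex}; here $x_3\neq z$ by the collinear case.
\item \cref{corolTits} cannot make $\{zx_1,zx_2,zx_3\}$ a round-up triple of $\Res(z)$. The $x_i$ are not vertices of $\Res(z)$, and nothing in that corollary relates opposition of the $x_i$ to local opposition of the lines $zx_i$.
\item The planned spoilsport via \cref{generalspecial}$(ii)$ ``in a symp through $zx_1$ and $zx_2$'' cannot start. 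Since $x_1\Join x_2$, the lines $zx_1,zx_2$ are locally opposite at $z$, so no symp contains both; otherwise $x_1,x_2$ would be collinear or symplectic.
\end{compactenum}

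\textbf{The missing idea.} Transfer the round-up property to $\Res(z)$ through \cref{joinjoin}.
\begin{compactenum}[$(i)$]
\item Suppose a line $K\ni z$ is locally opposite $zx_1$ but not locally opposite $zx_2,zx_3$ at $z$. Then $K\neq zx_i$ for all $i$, because the $zx_i$ are pairwise locally opposite.
\item Pick $c\in K\setminus\{z\}$, and a point $w\neq c$ on a line through $c$ locally opposite $cz$ at $c$. Then $w\Join z$.
\item \cref{joinjoin}, applied to the path $w\perp c\perp z\perp x_i$, gives $w\equiv x_i$ if, and only if, $c\Join x_i$. This holds if, and only if, $K$ is locally opposite $zx_i$ at $z$.
\item So $w$ is opposite $x_1$ only, contradicting that $\{x_1,x_2,x_3\}$ is a round-up triple.
\end{compactenum}
Hence $\{zx_1,zx_2,zx_3\}$ is a round-up triple of $\Res(z)$. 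This residue has type $\mathsf{A_{5,3}}$, $\mathsf{D_{6,6}}$ or $\mathsf{E_{7,7}}$, or is a dual polar space of rank $3$. By \cref{typeAD}(2), \cite[Corollary~5.5]{Kas-Mal:13} and \cref{typeB}(2), the triple lies on a line or in a symp (quad) of $\Res(z)$. Its members are therefore pairwise non-opposite there, contradicting their pairwise local opposition. With this argument the special case is excluded, and your case distinction yields the statement.
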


In view of \cref{kasmal}, it remains to classify geometric lines in exceptional hexagonic geometries consisting of mutually opposite points. 

\subsubsection{Classification of geometric lines}

The following lemma will be very efficient for such classification.

\begin{lem}\label{reducingtoequator}
Each geometric line $L$ containing opposite points of any (exceptional) hexagonic geometry $\Gamma$ is a geometric line of any equator geometry of $\Gamma$ containing at least two points of $L$.
\end{lem}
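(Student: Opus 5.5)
The plan is to first show that $L$ is entirely contained in the equator $E(p,q)$, and then to transfer the defining property of a geometric line from $\Gamma$ to the equator geometry. Let $p,q$ be the poles and let $x_1,x_2\in L\cap E(p,q)$. By \cref{kasmal}, the points of $L$ are mutually opposite, since $L$ contains opposite points and is neither an ordinary line nor a hyperbolic line of a symp.

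\textbf{Step 1: $L\subseteq E(p,q)$.} The point $p$ is symplectic, hence not opposite, to both $x_1$ and $x_2$. By the definition of a geometric line, $p$ is therefore opposite no point of $L$; the same holds for $q$. Now let $y\in L\setminus\{x_1,x_2\}$. By \cref{pointpointhex}, $y$ is equal, collinear, symplectic or special to $p$, and we exclude all cases except symplectic.
\begin{compactenum}[$(a)$]
\item $y=p$ is impossible, because $p\equiv q$ while $y\not\equiv q$.
\item $y\perp p$ is impossible: then $x_1\pperp p\perp y$, and \cref{joinjoin} gives $y\not\equiv x_1$, contradicting mutual opposition in $L$.
\end{compactenum}

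\textbf{Main obstacle: excluding $y\Join p$.} Put $z=[p,y]$. I will look for a point $w\perp p$ with $w\equiv y$ but $w\not\equiv x_1$ and $w\not\equiv x_2$. Such a point contradicts the geometric-line property, because it is non-opposite at least two members of $L$ without being non-opposite all of them.
\begin{compactenum}[$(a)$]
\item By \cref{joinjoin}, $w\equiv y$ holds as soon as $w\Join z$, since then $w\perp p\perp z\perp y$ with $\{w,z\}$ and $\{p,y\}$ special. By \cref{generalspecial} this is the case for $w$ on a line through $p$ that is locally opposite $pz$ in $\Res(p)$.
\item Non-opposition to $x_1$ and $x_2$ holds as soon as $w\in\xi(p,x_1)\cap\xi(p,x_2)$ (again by \cref{joinjoin}), or more generally as soon as $w$ is symplectic or collinear to both $x_1$ and $x_2$.
\item So in $\Res(p)$ I need a point (a line through $p$) that is opposite the point $pz$ and is not opposite the vertices corresponding to $\xi(p,x_1)$ and $\xi(p,x_2)$. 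These conditions are compatible: one can argue in an apartment of $\Res(p)$ containing $pz$ and one of the two symps, and then adjust using \cref{3.30} in $\Res(p)$, with \cref{typeB} in the metasymplectic case.
\item If the intersection $\xi(p,x_1)\cap\xi(p,x_2)$ is too small, I would instead take $w$ with $w\not\equiv x_1$ only. If the resulting configuration is consistent with the geometric-line property, then $w$ must be opposite $x_2$ and all other points of $L$; I then replace $x_1$ by $x_2$ to derive a contradiction.
\end{compactenum}
This step is where the real work lies. Having excluded all other cases, $y\pperp p$, and symmetrically $y\pperp q$, so $L\subseteq E(p,q)$.

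\textbf{Step 2: transfer to the equator geometry.} As recalled in the preliminaries following \cite{Sch-Mal:23}, the equator geometry $E(p,q)$ is a Lie incidence geometry of a building (for type $\mathsf{F_4}$, with the alternative definition of lines). I will use the fact that two points of $E(p,q)$ are opposite in that building if, and only if, they are opposite in $\Gamma$. One way to see this is to identify $E(p,q)$ with a residue-type geometry via $x\mapsto\proj$ onto $p$, using \cref{Tits}. Given this, let $e$ be any object of the building of $E(p,q)$ of the relevant type; it corresponds to a point of $E(p,q)$. Since $L\subseteq E(p,q)$ by Step 1 and opposition agrees, $e$ is opposite none or all but exactly one member of $L$ in $E(p,q)$ precisely when the same holds in $\Gamma$. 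Hence $L$ is a geometric line of $E(p,q)$.
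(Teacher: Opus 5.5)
Your plan follows the paper's route. With poles $p,q$ and $x_1,x_2\in L\cap E(p,q)$, you rule out that a point $y\in L$ is equal, collinear, special or opposite to $p$. This is the paper's claim that every point of $L$ is symplectic to every point of $E(x_1,x_2)$, applied to $p,q\in E(x_1,x_2)$. Your Step~2 rests, as the paper does, on opposition in the equator geometry agreeing with opposition in $\Gamma$, which the paper also simply asserts. The gap is in the step you yourself flag as the main obstacle, and as written that step is not proved.
\begin{compactenum}[$\bullet$]
\item The route in (b)--(c), through $w\in\xi(p,x_1)\cap\xi(p,x_2)$, cannot work. Since $x_1\equiv x_2$, \cref{opppointinsymps} forces the symps $\xi(p,x_1)$ and $\xi(p,x_2)$ to meet only in $p$, so no such $w\neq p$ exists.
\item The ``compatibility'' claim in (c) is not substantiated.
\item The fallback (d) produces no contradiction. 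From $w\equiv y$ and $w\not\equiv x_1$, the geometric-line property only gives $w\equiv x_2$. Swapping the roles of $x_1$ and $x_2$ merely yields some $w'$ with $w'\equiv x_1$, which is perfectly consistent.
\end{compactenum}

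The missing observation is one you already used in Step~1(b). For \emph{every} point $w\perp p$ you have $x_i\pperp p\perp w$. So the ``in particular'' clause of \cref{joinjoin} gives $w\not\equiv x_i$ for $i=1,2$, with no further condition on $w$. Now take $w\neq p$ on a line through $p$ that is locally opposite $p[p,y]$. Then $w\Join[p,y]$, so $w\equiv y$ by your item (a), while $w$ is opposite neither $x_1$ nor $x_2$. This contradicts the geometric-line property. It is exactly the paper's argument, namely its path $z\perp[u,z]\perp u\perp v$ with $v\Join[u,z]$. With this one line replacing (b)--(d), your proof is complete.
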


\begin{proof}
Let $x,y$ be two points of the geometric line $L$, consisting of mutually opposite points of $\Gamma$.
We claim that no point of $L$ is special to any point of $E(x,y)$, the equator geometry with poles $x$ and $y$.
Indeed, suppose $z\in L$ is special to $u\in E(x,y)$.
Extend the unique path $z\perp [u,z]\perp u$ to a path $z\perp[u,z]\perp u\perp v$, with $v\Join[u,z]$.
Then $v$ is opposite $z\in L$, but since $x\pperp u\perp v$, \cref{joinjoin} implies that $v$ is not opposite $x$.
Similarly, $v$ is not opposite $y$, a contradiction to $L$ being a geometric line.
The claim is proved.

Since we assume that all points of $L\setminus\{x,y\}$ are opposite $x$ and $y$, no such point belongs to $E(x,y)$, and, by \cref{joinjoin}, no such point is collinear to any point of $E(x,y)$. Finally, by the defintion of geometric line, no point of $L$ is opposite any point of $E(x,y)$ as both $x$ and $y$ are not oppoiste any point of $E(x,y)$.

Hence we have shown that each point of $L$ is symplectic to each point of $E(x,y)$.
Taking two opposite points $a,b\in E(x,y)$, this implies $L\subseteq E(a,b)$.
Since opposition in $E(x,y)$ as a Lie incidence geometry coincides with the opposition inherited from $\Gamma$, the assertion follows. 
\end{proof}

Counterexamples to the converse of \cref{reducingtoequator} will be given in type $\mathsf{F_4}$ (see the proof of the next theorem). 

We can now prove the announced classification.

\begin{theorem}\label{geomlinesexc}
Let $L$ be a geometric line in an exceptional hexagonic geometry $\Gamma$.
Then $L$ does not consist of mutually opposite points. 
\end{theorem}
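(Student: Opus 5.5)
The plan is to argue by contradiction. Suppose $L$ is a geometric line of $\Gamma$ whose points are mutually opposite, and descend along the equator-geometry chains of \cref{eqgeom} using \cref{reducingtoequator} until we reach a geometry whose geometric lines are already classified. Pick two points $x,y\in L$. The proof of \cref{reducingtoequator} shows that every point of $L$ is symplectic to every point of $E(x,y)$. Taking opposite points $a,b\in E(x,y)$, we get $L\subseteq E(a,b)$, and $L$ is a geometric line of the equator geometry $E(a,b)$. Its points remain mutually opposite there, because opposition in $E(a,b)$ is the opposition inherited from $\Gamma$.

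\textbf{Simply laced types.} For $\mathsf{E_{8,8}}(\K)$ the equator geometry is $\mathsf{E_{7,1}}(\K)$, which is again exceptional hexagonic. So it suffices to treat $\mathsf{E_{7,1}}$ and $\mathsf{E_{6,2}}$. For $\mathsf{E_{7,1}}(\K)$ the equator geometry is $\mathsf{D_{6,2}}(\K)$, and for $\mathsf{E_{6,2}}(\K)$ it is $\mathsf{A_{5,\{1,5\}}}(\K)$. In $\mathsf{D_{6,2}}(\K)$, \cref{typeAD} says every geometric line is an ordinary line. An ordinary line has collinear points, not opposite ones, which gives the contradiction.

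For $\mathsf{A_{5,\{1,5\}}}(\K)$, \cref{typeAD} does not literally cover point–hyperplane pairs, so I descend once more to $\mathsf{A_{3,\{1,3\}}}(\K)$. There I argue directly that a set of mutually opposite flags $(p_i,H_i)$ cannot be a geometric line. The point is to exhibit a point–hyperplane pair opposite exactly one of two given flags but not the other, or opposite none of three. Concretely, take a pair $(q,G)$ with $q\in H_1$ and $p_1\in G$ chosen generically with respect to the other flags. This pair is not opposite $(p_1,H_1)$ but is opposite the others, contradicting the defining property of a geometric line once $|L|\ge3$. Using \cref{3.30}, $L$ has at least three points.

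\textbf{Type $\mathsf{F_4}$.} Equator geometries have no lines in the naive sense, so I use the equator geometry defined via planes in the metasymplectic-space subsection. I expect its structure to be a polar space of type $\mathsf{B_{3}}$ or $\mathsf{C_3}$ (a dual polar space or a symplectic-type space over $(\K,\A)$, as in \cite{Lam-Mal:25}). I then invoke \cref{typeB} there: geometric lines are ordinary lines or hyperbolic lines, and in both cases the points are not mutually opposite in $\Gamma$. Points of a hyperbolic line of a symp are symplectic, not opposite. In the split case one can alternatively pass to the extended equator geometry $\widehat{E}(p,q)\cong\mathsf{B_{4,1}}(\K,\K)$, where two points of a geometric line are collinear or symplectic in the polar space, and then translate this back to non-opposition in $\Gamma$.

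\textbf{Main obstacle.} I expect the main obstacle to be the $\mathsf{F_4}$ case. The converse of \cref{reducingtoequator} fails there, and the equator geometry's opposition relation and its identification with a known Lie incidence geometry need care. In particular, one has to check that "mutually opposite in $\Gamma$" forces the points to be non-collinear in the equator geometry, so that \cref{typeB} produces a contradiction. A second delicate point is the $\mathsf{A_{3,\{1,3\}}}$ step, where the exact opposition relation on flags (a point not on the opposite hyperplane, and conversely) must be used to build the spoiling flag.
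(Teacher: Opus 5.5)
Your reduction in the simply laced types (descending $\mathsf{E_{8,8}}\to\mathsf{E_{7,1}}\to\mathsf{D_{6,2}}$ and using \cref{typeAD}) matches the paper. However, there is a genuine gap in type $\mathsf{F_4}$, and the $\mathsf{A_{3,\{1,3\}}}$ step as written does not produce a contradiction.

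\textbf{Type $\mathsf{F_4}$.} You claim that \cref{typeB}, applied in the equator geometry, finishes the proof because the points of a hyperbolic line are not mutually opposite in $\Gamma$. This is false. The relevant hyperbolic lines are those of the rank-$3$ polar space $E(x,y)$, whose points correspond to the symps through the pole $x$; they are not hyperbolic lines of a symp of $\Gamma$.

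Take $u,v\in E(x,y)$ non-collinear in $E(x,y)$. Then $\xi(x,u)$ and $\xi(x,v)$ meet only in $x$ and are locally opposite there. Since $u,v$ are symplectic to $x$, \cref{opppointinsymps} gives $u\equiv v$. So hyperbolic lines of $E(x,y)$ consist of mutually opposite points of $\Gamma$, and they are geometric lines of $E(x,y)$.

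The reduction therefore only yields, via \cref{typeB} or \cite[Lemma~4.8]{Kas-Mal:13}, that the equator geometry is symplectic ($\Gamma$ split) and that $L$ is a hyperbolic line in it. This is exactly where the converse of \cref{reducingtoequator} fails. The missing idea is an argument that such hyperbolic lines are not geometric lines of $\Gamma$ itself. The paper assumes, by transitivity, that all of them are, and then finds a point $b$ whose set $b^{\not\equiv}$ meets the equator structure impossibly:
\begin{itemize}
\item If $\K$ is not perfect of characteristic~$2$, $b$ is taken far from a symp $\xi$ belonging to a line of $E(x,y)$. It is also chosen symplectic to a point of $\xi$ collinear to $0$ or $2$ points of that line. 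An argument in a non-singular plane of $\PG(5,\K)$ then gives the contradiction.
\item If $\K$ is perfect of characteristic~$2$, one uses $\widehat{E}\cong\mathsf{B_{4,1}}(\K,\K)$ and \cite[Corollary~5.38]{Sch-Sas-Mal:18}. This gives $b$ with $b^{\not\equiv}\cap\widehat{E}$ a subspace of type $\mathsf{D_{4,1}}$, which would have to be some $p^\perp$ in $\PG(7,\K)$.
\end{itemize}
Your alternative via $\widehat{E}(p,q)$ fails as well. A polar space has no ``symplectic'' relation, and non-collinear points of $\widehat{E}(p,q)$, such as $p$ and $q$ themselves, can be opposite in $\Gamma$.

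\textbf{The $\mathsf{A_{3,\{1,3\}}}$ step.} Your flag $(q,G)$ is non-opposite exactly one member of $L$ and opposite all the others. That is precisely what the definition of a geometric line permits, so no contradiction arises. You need a flag non-opposite at least two members and opposite at least one.

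The paper gets this from the explicit shape of $L$ forced by \cref{reducingtoequator}: points on a line $K$, with planes $\langle p,K'\rangle$ for a line $K'$ skew to $K$. A direct fix is also available. For three mutually opposite flags $(p_i,H_i)$ in $L$, take $q\in H_1\setminus H_3$ not on the line $p_2p_3$, and a plane $G\supseteq qp_2$ with $p_3\notin G$. Then $(q,G)$ is not opposite the first two flags but is opposite the third.
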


\begin{proof}
By \cref{reducingtoequator}, the non-existence of geometric lines consisting of mutually opposite points in hexagonic geometries of types $\mathsf{E_6}$, $\mathsf{E_7}$ and $\mathsf{E_8}$ follows from the non-existence of such geometric lines in the Lie incidence geometries of types $\mathsf{D_{6,2}}$ and $\mathsf{A_{5,\{1,5\}}}$. 

The former case is taken care of by \cref{typeAD}.
In the latter case, by taking again equator geometries, see the last paragraphs of \cref{eqgeom}, we reduce the question to the case $\mathsf{A_{3,\{1,3\}}}$.
Then by \cref{reducingtoequator}, we see 
that $L$ consists of incident point-plane pairs in $\PG(3,\K)$ with the point ranging over a given line $K$ and the plane determined by the point and a given line $K'$ skew to $K$.
Consider a point $x\notin K\cup K'$ and a plane $\alpha$ through $x$ intersecting both $K$ and $K'$ in respective unique points, say $y$ and $y'$, respectively.
Then $\{x,\alpha\}$ is not opposite $\{\<K',x\>\cap K,\<K',x\>\}$ and not opposite $\{y,\<y,K'\>\}$, but opposite every other member of $L$, a contradiction. 

This shows that no geometric line of $\Gamma$ consists of opposite points, for $\Gamma$ of types~$\mathsf{E_{6,2},E_{7,1}}$ or~$\mathsf{E_{8,8}}$. 

Hence we may suppose that $\Gamma$ has type $\mathsf{F_4}$.
In that case, a geometric line $T$ consisting of mutually opposite points is a geometric line of the polar space $\Delta$ of rank $3$ corresponding to a point residual of $\Gamma$.
It follows from \cite[Lemma~4.8]{Kas-Mal:13} that  $\Delta$ is a symplectic polar space (hence $\Gamma$ is split---but possibly over a non-perfect field) and $T$ is a hyperbolic line.
By the obvious transitivity of the automorphism group on the set of hyperbolic lines, we may assume that every hyperbolic line of each equator geometry is a geometric line.
Now let $x,y$ be two opposite points of $\Gamma$ and let $E(x,y)$ be the corresponding equator geometry.
Let $\xi$ be a symp corresponding to a line $L$ of $E(x,y)$.
Assume first that the underlying field is not perfect of characteristic $2$.
Then, again by \cite[Lemma~4.8]{Kas-Mal:13}, there exists a point $a\in\xi$ either collinear to at least two points of $L$ but not all, or not collinear to any point of $L$.
Also, more precisely, since $\Delta$ is split, $\xi$ is 
a polar space corresponding to a quadric in $\PG(6,\K)$, $L$ is the intersection of the perps of two opposite lines, and so, $a$ is collinear to either $0$ or $2$ points of $L$.
Let $b$ be a point of $\Gamma$ far from $\xi$ and symplectic to $a$.
Then $b^{\not\equiv}$ intersects every hyperbolic line of $E(x,y)$ in one or all its points (as these are all assumed to be geometric lines), $L$ in $0$ or $2$ points, and, by the above argument for $a$ applied to other appropriate points, intersecting every line in either $0$, $1$, $2$ or all points.
We view $E(x,y)$ in its natural embedding in $\PG(5,\K)$.
Let $\pi$ be a non-singular plane of $\PG(5,\K)$ (with respect to the underlying non-degenerate alternating form) containing $L$.
We can choose $\pi$ such that the unique point $x_L\in L$ for which $\pi\subseteq p_L^\perp$ is opposite $b$.
Let $\ell\neq x_L$ be another point on $L$ opposite $b$ in $\Gamma$ (which exists  by our assumptions above and the fact that there are at least four points on a line---indeed, $\mathbb{F}_2$ is perfect of characteristic $2$, and so $|\K|\geq 3$) and let $h_1,h_2,h_3$ be three distinct hyperbolic lines of $E(x,y)$ in $\pi$ through $\ell$.
Then there are unique points $a_1\in h_1$, $a_2\in h_2$ and $a_3\in h_3$ not opposite $\ell$.
Suppose first that $a_1,a_2,a_3$ lie on the same line $L'$ of $\pi$.
Then the whole line $L'$ belongs to $b^{\not\equiv}$, and hence at least one point $L\cap L'$ of $L$ does.
But then two points of $L$ do, and connecting that second point, which does not coincide with $x_L$, with all points of $L'$ leads to $\pi\subseteq b^{\not\equiv}$, a contradiction.
Hence we may assume that $a_1,a_2,a_3$ are not contained in the same line of $\pi$.
Then it is easy to see that joining with hyperbolic lines yields all points of $\pi$, except for $x_L$.
Since $|\K|>2$, this is again a contradiction to $|b^{\not\equiv}\cap L|\in\{0,2\}$. 

Now assume that $\K$ is perfect of characteristic $2$.
We may embed every equator geometry in an extended equator geometry $\widehat{E}$, which is then isomorphic to a symplectic polar space of rank $4$ (see \cref{extEG}).
By \cite[Corollary~5.38]{Sch-Sas-Mal:18}, there exists a point $b$ of $\Gamma$ such that $H:=b^{\not\equiv}\cap\widehat{E}$ is a (hyperbolic) polar subspace of type $\mathsf{D_{4,1}}$.
Note that we still may assume that every hyperbolic line is a geometric line.
Then $H$ is also a geometric hyperplane of the ambient projective space $\PG(7,\K)$ of $\widehat{E}$, and hence coincides with $p^\perp$ for some point $p\in\widehat{E}$ (where the perp $\perp$ is now with respect to the symplectic polar space).
This is clearly a contradiction.

The theorem is proved. 
\end{proof}

\subsection{Lines of exceptional hexagonic Lie incidence geometries}\label{hexlines}

\subsubsection{Two lemmas in the residues} 

We begin with two results in the point residuals of hexagonic geometries.
The first one summarises earlier findings. 
\begin{lem}\label{class2}
Let $\Delta$ be either a Lie incidence geometry of type $\mathsf{A_{5,3}}$, $\mathsf{D_{6,6}}$ or $\mathsf{E_{7,7}}$, or a dual polar space of rank $3$.
Suppose each line has exactly $s+1$ points.
Suppose also that $\Delta$ is not isomorphic to $\mathsf{B_{3,3}}(\sqrt{s},s)$.
Then a set of at most $s+1$ points of $\Delta$ admits no common opposite point if, and only if, the points form a geometric line.
In particular, if there are at most $s$ points, or if there exists a point opposite at least one point of the set, and not opposite at least two points of the set, then the set admits an opposite point. 
\end{lem}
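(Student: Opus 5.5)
Since the lemma is said to summarise earlier findings, the plan is to assemble it type by type from results already available, together with a general reduction to geometric lines. Throughout I read $\Delta$ as the Lie incidence geometry of a building $\Omega$, and "opposite" as opposition of vertices in $\Omega$. One implication is immediate from the definitions: a geometric line admits no point opposite all its members. The "in particular" clause then follows once the equivalence is proved. A set of at most $s$ points cannot be a geometric line. Indeed, by \cref{3.30} every set of $s$ vertices admits an opposite vertex, and the same holds for fewer points, since we may pad the set. A point opposite one member of the set but not opposite two of them witnesses that the set is not a geometric line. So the work lies in proving that a set $T$ of at most $s+1$ points with no common opposite is a geometric line. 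Again by \cref{3.30}, we may assume $|T|=s+1$.

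The cases go as follows. For $\Delta$ of type $\mathsf{A_{5,3}}$ or $\mathsf{D_{6,6}}$, \cref{typeAD}$(1)$ says that a set of $s+1$ vertices with no common opposite is an ordinary line, hence a geometric line. For type $\mathsf{E_{7,7}}$, \cref{E77points} says that a set of $s+1$ points with no point at distance $3$ from all of them is a line. By \cref{preE77}$(i)$, points at distance $3$ are exactly the opposite points. For a dual polar space of rank $3$, that is, type $\mathsf{B_{3,3}}$, I would use \cref{typeB}$(3)$ with $n=j=3$. There $T$ is either a geometric line or we are in exception (b). Exception (b) requires the polar space to have order $(\sqrt q,q)$ with $q$ the number of points on a line of the dual polar space minus one, so $q=s$. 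This is exactly the excluded geometry $\mathsf{B_{3,3}}(\sqrt s,s)$.

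The main obstacle is bookkeeping rather than mathematics. First, I must check that the parameter $q$ in \cref{typeB}$(3)$ and in \cref{typeAD} matches the number of points on a line of $\Delta$. For $\mathsf{D_{6,6}}$ the lines of the half-spin geometry have $q+1$ points, and for $\mathsf{B_{3,3}}$ the panels of cotype $3$ correspond to lines of the dual polar space. Second, I must confirm that the excluded exception matches the stated isomorphism type. If any of these identifications were off, I would fall back on the direct route of \cref{longrootlemma}: use \cref{corolTits} to pass to residues, and construct a spoilsport.
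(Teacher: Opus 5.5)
Your route is the paper's. The paper proves the lemma simply by citing the main results of \cite{Bus-Mal:24} (summarised here as \cref{typeAD} and \cref{typeB}) for $\mathsf{A_{5,3}}$, $\mathsf{D_{6,6}}$ and dual polar spaces, and \cref{E77points} together with \cite[Corollary~5.6]{Kas-Mal:13} for $\mathsf{E_{7,7}}$. Your parameter bookkeeping is correct, including identifying exception (b) of \cref{typeB}$(3)$ with the excluded $\mathsf{B_{3,3}}(\sqrt{s},s)$, the dual of $Q^-(7,\sqrt{s})$.

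One step is mis-justified, in the dual polar space case. You use \cref{3.30} to show that every set of at most $s$ points has a common opposite. You need this both for the ``in particular'' clause and for the reduction to $|T|=s+1$. But \cref{3.30} requires \emph{every} panel of the building to lie in at least $s+1$ chambers, and the hypothesis only controls panels of cotype $3$. This fails when the underlying polar space has order $(q,t)$ with $t>q$ and is not excluded. The concrete case is $H(6,q^2)$, of order $(q^2,q^3)$. Its dual polar space has $s+1=q^3+1$ points per line, but panels of cotype $1$ and $2$ lie in only $q^2+1$ chambers. So \cref{3.30} only gives common opposites for sets of $q^2$ vertices, not $q^3$.

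The repair is immediate. The last sentence of \cref{typeB}$(3)$ says that sets of at most $q$ vertices of type $j$ always admit a common opposite. Moreover, \cref{typeB}$(3)$ is stated for sets of \emph{at most} $q+1$ vertices, so in this case you need no reduction to $|T|=s+1$. This is why the paper, for instance in \cref{longrootlemma} and \cref{ADEdistances}, uses \cref{typeB} rather than \cref{3.30} in non-simply-laced situations. For $\mathsf{A_{5,3}}$, $\mathsf{D_{6,6}}$ and $\mathsf{E_{7,7}}$ your use of \cref{3.30} is fine, since every panel of these buildings over $\mathbb{F}_q$ lies in exactly $q+1$ chambers.
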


\begin{proof}
This follows from Main Results~A and~B of \cite{Bus-Mal:24} for types $\mathsf{A_{5,3}}$, $\mathsf{D_{6,6}}$, and for dual polar spaces, and from \cref{E77points} and \cite[Corollary~5.6]{Kas-Mal:13} for type $\mathsf{E_{7,7}}$.
\end{proof}

\begin{lem}\label{ADEdistances}
Let $\Delta$ be either a Lie incidence geometry of type $\mathsf{A_{5,3}}$, $\mathsf{D_{6,6}}$ or $\mathsf{E_{7,7}}$, or a dual polar space of rank $3$.
Suppose each line contains precisely $s+1$ points and let $p$ be a point.
Let $Q:=\{q_1,\ldots,q_\ell\}$ be a set of points not containing $p$, $\ell\le s$.
Then there exists a point $p'$ opposite each member of $Q$, and not opposite $p$. 
\end{lem}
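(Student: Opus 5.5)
We want a point $p'$ opposite each of $q_1,\dots,q_\ell$ but not opposite $p$. The idea is to work inside the set $p^{\not\equiv}$ and find there a point opposite all $q_j$. The natural place to look is a line through $p$.

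\textbf{Plan.} Choose a line $L$ through $p$ and study, for each $q_j$, the points of $L$ not opposite $q_j$.

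\begin{compactenum}[$(1)$]
\item In each of the four types, every point $q$ either is opposite all points of $L$ but one, or is opposite no point of $L$. This follows from \cref{pointpointhex}-type behaviour: $q^{\not\equiv}$ is a geometric hyperplane in each of these (polar-like, minuscule, or dual polar) geometries. So each $q_j$ that is not ``far'' from $L$ (that is, some point of $L$ is opposite $q_j$) excludes exactly one point of $L$.
\item It therefore suffices to choose $L\ni p$ such that every $q_j$ is opposite some point of $L$, in other words $L$ is far from each $q_j$. That excludes at most $\ell\le s$ points of $L$. Since $L$ has $s+1$ points and $p$ itself must be excluded too (as $p'$ should differ from $p$), we need a little care.
\item Refinement: if some $q_j$ projects onto $p$, i.e. $p$ is the unique point of $L$ not opposite $q_j$, then $p$ is counted among the excluded points. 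Hence at most $\ell$ points of $L$ are excluded in total, with $p$ excluded anyway. We need the number of excluded points other than $p$ to be at most $s-1$. This holds if at least one $q_j$ projects onto $p$; otherwise, since $\ell\le s$, we need to be able to pick $L$ so that some $q_j$ is not opposite $p$.
\end{compactenum}

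\textbf{Choosing $L$.} Take $L$ as a line through $p$ opposite, in $\Res(p)$, the projections of the $q_j$. Concretely, for each $q_j$ not opposite $p$, pick a vertex of $\Res(p)$ closest to $q_j$ (for $q_j$ opposite $p$, any line through $p$ works). By \cref{3.30}, applied in $\Res(p)$ to at most $s$ vertices (the residue has panels with at least $s+1$ chambers), there is a line $L$ through $p$ locally opposite all these projections. By \cref{Tits}, $L$ is then far from every $q_j$ that is not opposite $p$, and such $q_j$ are opposite all points of $L\setminus\{p\}$. The $q_j$ opposite $p$ then each exclude one point of $L\setminus\{p\}$, and there are at most $\ell$ of them.

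\textbf{Expected obstacle.} The difficulty is the case where all $\ell=s$ points $q_j$ are opposite $p$ and exclude $s$ distinct points of $L\setminus\{p\}$. Then $p'$ would be forced to equal $p$, which is opposite them, so this choice of $L$ fails. The fix is to vary $L$.
\begin{compactenum}[$(1)$]
\item The excluded point on $L$ for $q_j$ is $\proj_L(q_j)$. If for every line $L$ through $p$ the projections filled $L\setminus\{p\}$ bijectively, then counting over two lines in a common plane or symp (using the gamma property) should yield a contradiction.
\item Alternatively, pass to a point $p_1\perp p$, $p_1\neq p$, with $p_1$ not opposite any $q_j$, and then choose a point not opposite $p$ via a second line through $p_1$.
\end{compactenum}
I would try the second route first: the point $p_1$ is collinear to $p$, hence not opposite it, and the counting argument from the previous paragraphs applies to $p_1$ in place of $p$, with more freedom.
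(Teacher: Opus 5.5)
There is a genuine gap, and it is not where you locate it. In all four geometries, distinct points are collinear, symplectic or opposite, and opposite means distance $3$. Suppose some $q_j\perp p$. Then every point $x$ on every line through $p$ satisfies $x\perp p\perp q_j$, hence $x\not\equiv q_j$, so no point of $p^\perp$ is opposite $q_j$.

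For such $q_j$ your step ``by \cref{3.30} there is a line $L\ni p$ locally opposite all these projections'' cannot work. The projection of $q_j$ is the line $pq_j$ itself. Lines through $p$ form a non-self-opposite type of $\Res(p)$ (e.g.\ type $6$ of $\mathsf{E_6}$ when $\Delta\cong\mathsf{E_{7,7}}(\K)$), so no line through $p$ is locally opposite another one. Consequently, whenever $Q$ contains a point collinear to $p$, every admissible $p'$ lies at distance exactly $2$ from $p$, and no choice of $L\ni p$ can succeed.

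When no $q_j$ is collinear to $p$, your argument is fine. Symplectic $q_j$ are handled by taking $L$ locally opposite the symps $\xi(p,q_j)$, and each $q_j$ opposite $p$ excludes one point of $L\setminus\{p\}$. The case you flag as the obstacle is not one: the lemma only requires $p'\not\equiv p$, so if all $q_j$ are opposite $p$ you may take $p'=p$.

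The missing idea is to leave $p^\perp$ while staying inside a symp through $p$, which is what the paper does:
\begin{compactenum}[$(1)$]
\item For $q\perp p$ let $K_q=pq$; for $q\pperp p$ let $K_q$ be any line of $\xi(p,q)$ through $p$.
\item \cref{3.30} (or \cref{typeB}) in $\Res(p)$ yields a symp $\xi\ni p$ locally opposite every $K_q$, so $\xi$ is far from every $q\in Q$.
\item A point $p'\in\xi$ opposite, inside the polar space $\xi$, each $\xi\cap q^\perp$ is then opposite every $q$. It is not opposite $p$, since both lie in $\xi$.
\end{compactenum}
Your second route could also be completed, but only if $p_1\in p^\perp\setminus\{p\}$ is chosen collinear to no $q_j$. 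Non-opposition is the wrong condition, since it is collinearity that kills the line argument, and you give no argument that such a $p_1$ exists.

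Finally, your parenthetical claim that panels of $\Res(p)$ carry at least $s+1$ chambers is false for the dual polar space $\mathsf{B_{3,3}}(\sqrt s,s)$, where $\Res(p)\cong\PG(2,\sqrt s)$. The paper's proof relies on the same claim, and in that case the statement itself fails. View points as planes of the underlying polar space, and let $q_0,\ldots,q_{\sqrt s}\neq p$ be planes meeting $p$ in the $\sqrt s+1$ lines of $p$ through a fixed point. Then $\ell=\sqrt s+1\le s$, and every plane $p'$ meeting $p$ (that is, not opposite $p$) meets some $q_i$. So this case must be excluded, as in \cref{class2}. This is harmless, because the lemma is only used in \cref{proplinesF4}, where $\mathsf{F_{4,4}}(\sqrt s,s)$ is excluded.
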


\begin{proof}
We first construct a symp $\xi$ through $p$ far from each member of $Q$.
If $q\in Q$ is opposite $p$, each symp through $p$ qualifies.
If $q\in Q$ is symplectic to $p$, let $K_q$ be an arbitrary line through $p$ in the symp containing $p$ and $q$; if $q\perp p$ then let $K_q$ be the line containing $p$ and $q$.
Since $\ell\leq s$, we infer from \cref{3.30} and \cref{typeB} that there exists a symp $\xi$ through $p$ locally opposite all $K_q$, $q\in Q$.
Then $\xi$ is far from each member of $Q$.
Now the same references yield a point $p'$ in $\xi$ (locally) opposite in $\xi$ each intersection $\xi\cap q^\perp$, for $q\in Q$.
The point $p'$ is opposite each member of $Q$ and not opposite $p$.
\end{proof}

\subsubsection{Description of mutual positions}

We will describe the mutual position $\delta(L,M)$ of two lines $L$ and $M$ of an exceptional hexagonic Lie incidence geometry with four parameters $(a,b,c,d)$, chosen in the set $\{=,\perp,\pperp,\Join,\equiv\}$, where $a, b, c, d$ are defined as follows.
If there is a unique point on $L$ closest to $M$, we call it $x$; otherwise, $x$ is an arbitrary point on $L$.
Similarly, if there is a point on $M$ nearest to $L$, call it $y$; if not, but there is a point on $M$ nearest to $x$, call this $y$.
If not, then $y$ is an arbitrary point of $M$.
Let $x'$ be any point on $L$ distinct from $x$.
If there is a point on $M$ nearest to $x'$, and it is different from $y$, call it $y'$; otherwise, $y'$ is any point on the second line distinct from $y$.
Then $a$ is the relation between $x$ and $y$, while $b$ is the relation between $x$ and $y'$.
Also, $c$ is the relation between $x'$ and $y$, whereas $d$ is the relation between $x'$ and $y'$.
It will turn out that such a 4-tuple unambiguously determines the mutual position.

\begin{remark}
Of course the symbols $=,\perp,\pperp,\Join$ and $\equiv$ describe the possible relations of pairs of points of a hexagonic geometry, and hence, without going into the group theoretical details, also the relations between the root groups of a long root subgroup geometry (see \cite{Tim:01}). Indeed, the symbol $\perp$ means that the two root groups have trivial commutator and their product is a union of root groups. The symbol $\pperp$ means that the two root groups have trivial commutator, but their product is not the union of root groups. The symbol $\Join$ means that the commutator of the two root groups is a root group itself. Finally, the symbol $\equiv$ means that the two root groups generate an $\SL_2$.  In the finite case, this defines an association scheme $\Pi$. This also induces an association scheme $\Lambda$ on the set of lines. However, the authors are not aware of a (general) method to derive the parameters, and in particular the number of classes, or the ``distance''-distribution diagram, of $\Lambda$ from $\Pi$. This seems highly non-trivial, as we will see that even the number of classes depends on the type of the geometry---the answer is different for type $\mathsf{F}_4$ and a fortiori also for type $\mathsf{G}_2$.   Our methods will be entirely geometric.
\end{remark}

In a shorthand alternative notation, we write $0$ for $=$, $1$ for $\perp$, $\frac32$ for $\pperp$, $2$ for $\Join$ and $3$ for $\equiv$.
The \emph{inverse} of $\delta(L,M)$ is $\delta(M,L)$.
The dual of $\delta(L,M)$ is $\delta(M,K)$, for the line $K$ opposite $L$ in any apartment containing $L$ and $M$.
On the level of point distances, $0$ is dual to $3$, $1$ to $2$, and $\frac32$ is self-dual. We sometimes call $\delta(L,M)$ the \emph{distance} between $L$ and $M$.

There are basically four classes of positions $\delta(L,M)$, if one takes into account the homogeneity with respect to the points of the lines $L$ and $M$.
The classes are the following (where we use the notation of the previous paragraphs).

\textbf{Class I --- Completely homogeneous}

In this class, every point of either line has the same distance to each point of the other line.
All positions here are equal to their inverse.
The cases are:

\begin{itemize}
\item[$(1111)$] $(\perp,\perp,\perp,\perp)$: the two lines $L,M$ span a singular $3$-space.
\item[$(\frac32 \frac32 \frac32 \frac32)$] $(\pperp,\pperp,\pperp,\pperp)$: each point of each line is symplectic to each point of the other line.
\end{itemize}

The dual of $(1111)$ is the following:
\begin{itemize}
\item[$(2222)$] $(\Join,\Join,\Join,\Join)$: each point of each line is special to each point of the other line.
\end{itemize}

\textbf{Class II --- Projection homogeneous}

In this class, each point of each line has a unique projection onto the other line; the distances between corresponding points are constant, and the other distances as well.
All distances are their own inverse.
The cases are:

\begin{itemize}

\item[$(0110)$] $(=,\perp,\perp,=)$: the lines are equal, that is, $L=M$.

\item[$(1 \frac32 \frac32 1)$] $(\perp,\pperp,\pperp,\perp)$: the lines are opposite in a symp. 

\item[$(\frac32 2 2 \frac32)$] $(\pperp,\Join,\Join,\pperp)$: the lines are each other's projection from a symp to an opposite symp. 

\item[$(2332)$] $(\Join,\equiv,\equiv,\Join)$: the lines are opposite, that is, $L\equiv M$.
\end{itemize}

\textbf{Class III --- Symmetric non-homogeneous}

In this class, both lines contain a unique projection point with respect to the other line.
Moreover, all the positions are their own inverse again.
The cases are:

\begin{itemize}
\item[$(0111)$] $(=,\perp,\perp,\perp)$: the lines are coplanar.
\item[$(011 \frac32)$] $(=,\perp,\perp,\pperp)$: the lines meet and determine a unique symp.
\item[$(0112)$] $(=,\perp,\perp,\Join)$: the lines meet and are locally opposite.
\item[$(111\frac32)$] $(\perp,\perp,\perp,\pperp)$: the lines are special in (the line-Grassmannian of) a symp.
\item[$(1 \frac32 \frac32 \frac32)$] $(\perp,\pperp,\pperp,\pperp)$: the projection point $x$ of $M$ onto $L$ is contained in a symp $\xi$ with $M$, while $L$ is locally close to $\xi$ at $x$; the same holds with the roles of $L$ and $M$ interchanged.
\item[$(1 \frac32 \frac32 2)$] $(\perp,\pperp,\pperp,\Join)$: the projection point $x$ on $L$ is contained in a (unique) symp $\xi$ with $M$, while $L$ is locally far from $\xi$ at $x$; the unique line in $\xi$ collinear to $L$ is also collinear to the line connecting the two projection points $x$ and $y$; the same holds with the roles of $L$ and $M$ interchanged.
\item[$(\frac32 \frac32 \frac32 2)$] $(\pperp,\pperp,\pperp,\Join)$: the projection points $x$ and $y$ are symplectic, the lines $L$ and $M$ are locally close to the symp $\xi(x,y)$, and the projections of the lines $L,M$ onto $\xi(x,y)$ (which are maximal singular subspaces of $\xi(x,y)$) intersect in a unique point.
\item[$(\frac32 2 2 2)$] $(\pperp,\Join,\Join,\Join)$: the points $x$ and $y$ are symplectic, the lines $L$ and $M$ are locally far from $\xi(x,y)$, and the projections of the lines $L,M$ onto $\xi(x,y)$, which are lines themselves, are $\xi(x,y)$-special.
\item[$(12 2 3)$] $(\perp,\Join,\Join,\equiv)$: the lines $L,M$ are each other's projection from a point $x'$ or $y'$ to an opposite point $y'$ or $x'$, respectively. 
\item[$(\frac32 2 2 3)$] $(\pperp,\Join,\Join,\equiv)$: the lines are locally far from the symp $\xi$ determined by the projection points $x$ and $y$ (which are symplectic), and the projections of the lines onto $\xi$ are $\xi$-opposite lines. 
\item[$(2 2 2 3)$] $(\Join,\Join,\Join,\equiv)$: There is a pair of opposite points $x'\in L$ and $y'\in M$, and the projection of $L$ (or $M$) onto $y'$ (or $x'$) is locally symplectic to $M$ (or $L$) at $y'$ (or $x'$, respectively); equivalently, $L$ and $M$ lie in opposite symps and the projection of $L$ (or $M$) onto the symp through $M$ (or $L$)  is special to $M$ (or $L$, respectively) in (the line Grassmannian of) that symp. 
\end{itemize}

\textbf{Class IV --- Asymmetric positions}

Up to now, in all $4$-tuples, the second and third entry coincided.
This is going to change now.
The --- final --- class that we consider in this paragraph contains the asymmetric mutual positions, that is, those that do not coincide with their inverse.
There are four cases with two projection points.
These cases are:

\begin{itemize}
\item[$(1 \frac32 1 2)$] $(\perp,\pperp,\perp,\Join)$: the line $L$ is coplanar with $y$; the line $M$ and the plane $\<L,y\>$ are locally far at $y$.
\item[$(1 1 \frac32 2)$] $(\perp,\perp,\pperp,\Join)$: the line $M$ is coplanar with $x$; the 
line $L$ and the plane $\<M,x\>$ are locally far at $x$.
\item[$(1 \frac32 2 2)$] $(\perp,\pperp,\Join,\Join)$: the line $M$ and the point $x$ are in a unique symp $\xi$; the line $L$ is locally far from $\xi$ at $x$ and locally opposite the line $xy$ at $x$ (and $x$ and $y$ are collinear).
\item[$(1 2 \frac32 2)$] $(\perp,\Join,\pperp,\Join)$: the line $L$ and the point $y$ are in a unique symp $\xi$; the line $M$ is locally far from $\xi$ at $y$ and locally opposite the line $xy$ at $y$ (and $x$ and $y$ are collinear).
\end{itemize}

Finally, there are four cases where only one line has a projection point.
Hence, there will be only two distinct distances around, and the corresponding $4$-tuples have the shape $(a,a,b,b)$ or $(a,b,a,b)$ (where these are each other's inverse).
In a Lie incidence geometry of type $\mathsf{A_{5,3}}$, $\mathsf{D_{6,6}}$ or $\mathsf{E_{7,7}}$, or a dual polar space of rank $3$, we call a point and a line \emph{almost far} if every point of the line is symplectic to the point (however, this does not exist in dual polar spaces). 

\begin{itemize}
\item[$(1 \frac32 1 \frac32)$] $(\perp,\pperp,\perp,\pperp)$: the line $L$ is coplanar with $y$ (which is the projection point of $L$ on $M$); the line $M$ and the plane $\<L,y\>$ are locally almost far at $y$.
\item[$(1 1 \frac32 \frac32)$] $(\perp,\perp,\pperp,\pperp)$: the line $M$ is coplanar with $x$ (which is the projection point of $M$ on $L$); the line $L$ and the plane $\<M,x\>$ are locally almost far at $x$.
\item[$(\frac32 \frac32 2 2)$] $(\pperp,\pperp,\Join,\Join)$: the line $M$ is contained in a symp $\xi$ close to the projection point $x$ of $M$ onto $L$, and $x$ is collinear to a line of $\xi$ that is $\xi$-opposite $M$.
\item[$(\frac32 2 \frac32 2)$] $(\pperp,\Join,\pperp,\Join)$:  the line $L$ is contained in a symp $\xi$ close to the projection point $y$ of $L$ onto $M$, and $y$ is collinear to a line of $\xi$ that is $\xi$-opposite $L$
\end{itemize}
 
We note that dual distances are obtained from each other by interchanging and dualising the first and fourth entry, and dualising the second and third entry, except for the ``projection homogeneous'' cases, where one has to interchange the first two entries, as well as the last two, and take inverses. For instance, the duals of  $(1\frac3222)$  and $(\frac32 \frac32  2 2)$ are $(1 \frac32  12)$ and
$(1 \frac321\frac32)$, respectively, and the dual of $(1 \frac32 \frac32 1)$ is $(\frac32 2 2 \frac32)$.

We now have the following result. 

\begin{lem}\label{mutualpositions}
Let $L$ and $M$ be two arbitrary lines of an exceptional hexagonic Lie incidence geometry $\Delta$ of uniform symplectic rank $r$.
Then $\delta(L,M)$ is one of the $4$-tuples enumerated above in \emph{\textbf{Class I}} up to \emph{\textbf{Class IV}}.
All cases occur, except for metasymplectic spaces, where the following positions cannot occur: $(1111)$, $(\frac32 \frac32 \frac32 \frac32)$, $(2222)$, $(1 \frac32 \frac32 \frac32)$, $(\frac32 \frac32 \frac32 2)$, $(1 \frac32 1 \frac32)$, $(1 1 \frac32 \frac32)$, $(\frac32 \frac32 2 2)$ and $(\frac32 2 \frac32 2)$.
 
\end{lem}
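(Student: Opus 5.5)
The plan is to classify $\delta(L,M)$ by first fixing the relation between the two lines as a whole, and then reading off the $4$-tuple from the point residuals and from the point--symp relations already listed. Concretely, I would split according to the smallest relation $a$ occurring between a point of $L$ and a point of $M$, ordered as $=,\perp,\pperp,\Join,\equiv$. Within each case I determine whether $L$ has a unique point closest to $M$ and vice versa; this decides which of the four classes the pair falls into.

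\textbf{Case $a$ is $=$.} The lines meet in $x$, and their mutual position is a mutual position of two points of $\Res(x)$. By \cref{sympsymphex}, $\Res(x)$ is $\mathsf{A_{5,3}}$, $\mathsf{D_{6,6}}$, $\mathsf{E_{7,7}}$ or a dual polar space of rank $3$, so the two points of $\Res(x)$ are equal, collinear, symplectic or opposite. These give $(0110)$, $(0111)$, $(011\frac32)$ and $(0112)$ respectively.

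\textbf{Case $a$ is $\perp$.} Choose collinear points $x\in L$, $y\in M$ and consider the line $xy$. The positions of $L$ and $xy$ in $\Res(x)$, and of $M$ and $xy$ in $\Res(y)$, are point pairs in these residues, together with the almost far relation between a point and a plane. Combined with \cref{generalspecial}, \cref{pointspecialline} and \cref{pointsymplecticline}, these local data determine the remaining entries $b,c,d$. This yields the positions $(1111)$, $(1\frac32\frac321)$, $(111\frac32)$, $(1\frac32\frac32\frac32)$, $(1\frac32\frac322)$, $(1223)$, the four asymmetric positions with two projection points, and $(1\frac321\frac32)$ and $(11\frac32\frac32)$.

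\textbf{Cases $a\in\{\pperp,\Join,\equiv\}$.} Rather than computing these directly, I would use duality. Put $L$ and $M$ in an apartment and let $K$ be the line opposite $L$ there; then $\delta(M,K)$ is dual to $\delta(L,M)$. The dual of a position with no collinear or equal pair is a position with some equal or collinear pair (first entry $=$ or $\perp$), or one of the completely or projection homogeneous cases. So the $\Join$ and $\equiv$ cases follow from the two cases already done. What remains is the self-dual range where the minimum is $\pperp$. Here I would work in $\xi(x,y)$ with the projections of $L$ and $M$ onto that symp given by \cref{pointsymphex} and \cref{generalspecial}, using \cref{perp22perp} for the projection homogeneous case $(\frac3222\frac32)$.

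\textbf{Occurrence, and exclusion for metasymplectic spaces.} That every listed case occurs I would check by exhibiting it in an apartment, using the models of \cite{Mal-Vic:22}. For metasymplectic spaces, the excluded cases are exactly those that need either a point collinear to a maximal singular subspace of a symp, or a point symplectic to all points of a line (the almost far relation). Cases $(ii)$ and $(iv)$ of \cref{pointsymphex} fail in a metasymplectic space, and $\Res(x)$ is a dual polar space, which has no almost far pairs. This excludes the nine listed positions, and $(1111)$ in particular, since singular rank $3$ leaves no room for two lines spanning a singular $3$-space.

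The main obstacle will be the case analysis for $a=\perp$ and $a=\pperp$. There I must show that the local data really determine $b,c,d$, and that the $4$-tuple is well defined independently of the choices of $x'$ and $y'$. The tools I expect to rely on most are \cref{joinjoin}, \cref{pentagon} and \cref{generalnotspecial}.
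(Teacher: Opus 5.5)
This is essentially the paper's proof. It uses the same split by the smallest relation between points of $L$ and $M$, residues for meeting lines, and a point--symp analysis (\cref{generalspecial}, \cref{pointspecialline}, \cref{perp22perp}) for the $\perp$ and $\pperp$ cases. The metasymplectic exclusions likewise come from the absence of singular $3$-spaces and of \cref{pointsymphex}$(ii)$,$(iv)$. Your one deviation is reaching the minimum-$\Join$ positions by apartment duality, where the paper first isolates the completely and projection homogeneous positions and then settles that case directly in a few lines. The same duality in fact sends every minimum-$\pperp$ position except $(\frac32\frac32\frac32\frac32)$ to one you have already classified, so that range is not really ``self-dual''.

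One caution for the $\perp$ case. The two pairwise local positions at $x$ and $y$ do not by themselves determine $\delta(L,M)$. When $L\cup xy$ and $M\cup xy$ each lie in a symp, you must also record whether $L$ lies in, is locally close to, or is locally far from the symp through $x$ and $M$. That is how the paper separates $(1\frac32\frac321)$, $(1\frac32\frac32\frac32)$ and $(1\frac32\frac322)$.
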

\begin{proof}
We note that existence of a given mutual position is equivalent to the existence of its dual.

\textbf{Part I.} It is convenient to first consider the case where all points of $L$ have the same distance to all points of $M$.
Then clearly we have one of the three completely homogeneous cases.
The existence of $(1111)$ is easy: consider two lines in a common singular subspace of dimension at least $3$.
Conversely, clearly, if $\delta(L,M)=(1111)$, then $L$ and $M$ span a singular subspace of dimension~3.
Since these do not exist in metasymplectic spaces, this mutual distance occurs if and only if $\Delta$ is not a metasymplectic space.
By duality, the same holds for $(2222)$. 

Let $\delta(L,M)=(\frac32\frac32\frac32\frac32)$.
Pick $x,x'\in L$ and $y,y'\in M$, $x\neq x'$, $y\neq y'$.
If $y'$ were collinear to only a line of $\xi(x,y)$, then, by \cref{generalspecial}$(ii)$, $x$ and $y'$ would be special, a contradiction (and it follows that $\Delta$ does not have type $\mathsf{F_4}$).
It follows that $x^\perp\cap M^\perp$ is a singular subspace $U$ of dimension $r-2$.
We claim that $x'^\perp\cap U$ is $(r-5)$-dimensional.
To fix the ideas, suppose $\Delta$ has type $\mathsf{E_{8,8}}$.
Since $x'^\perp\cap\xi(x,y)$ is a $6'$-space and $\<x,U\>$ is a $6$-space, we have that $x'^\perp\cap \<x,U\>$ contains a line $K$.
In $\Res_\Delta(K)$, we have a point $z'$ (corresponding to $x'$) close to each symp through a given $4$-space $W$ (corresponding to $\<x,U\>$).
If $z'^\perp\cap W=\varnothing$, then there exist non-collinear points $a$ and $b$ in different symps through $W$, both collinear to $z'$.
The symp $\xi(a,b)$ contains $z'$ and a plane $\alpha$ of $W$; hence $z$ is collinear to a line of $W$ after all, a contradiction.
So $z'^\perp\cap W\neq\varnothing$ and, by parity, it is a line.
The claim follows.
It is now easy to see that, \begin{compactenum}[$(i)$]
\item in case of $\mathsf{E_{8,8}}$, $L$ and $M$ arise from opposite lines in the residue of a plane (which is a parapolar space of type $\mathsf{D_{5,5}}$);
\item in case of $\mathsf{E_{7,1}}$, $L$ and $M$ arise from opposite lines in a given convex subspace isomorphic to a parapolar space of type $\mathsf{D_{5,5}}$ in a given point residual;
\item in case of $\mathsf{E_{6,2}}$, $L$ and $M$ are opposite lines in a given convex subspace that is a parapolar space of type $\mathsf{D_{5,5}}$.
\end{compactenum}
The last claim might be the least straightforward, as in this case $L^\perp\cap M^\perp=\varnothing$.
So let us prove this case as an example (the other cases are then easier because the parapolar spaces in the respective residues are simpler; they have types $\mathsf{D_{5,5}}$ and $\mathsf{D_{6,6}}$, respectively).
Translated to type $\mathsf{E_{6,1}}$, we have to prove that, if each $5$-space through a given plane $\alpha$ intersects each $5$-space through another given plane $\beta$ in exactly a point, then either $\alpha$ meets $\beta$ in a point at which they are locally opposite, or $\alpha$ and $\beta$ are contained in a symp in which they are (locally) opposite.
So suppose $\alpha$ and $\beta$ do not intersect.
Then one checks that, if $U_1,U_2$ are two distinct $5$-spaces through $\alpha$ and $W_1,W_2$ two distinct $5$-spaces through $\beta$, the points 
$p_{ij}=U_i\cap W_j$, $i,j\in\{1,2\}$, form a quadrangle.
That quadrangle is contained in a unique symp $\xi$ that contains both $\alpha$ and $\beta$.
If the latter are not $\xi$-opposite, then, 
arguing in the polar space $\mathsf{D_{5,1}}(\K)$, we find $4'$-spaces through them that intersect in a plane; hence this yields adjacent $5$-spaces through them, a contradiction.

This concludes the completely homogeneous case.
We obtain all members of Class I.

\textbf{Part II.} Next we consider the case where each point of $L\cup M$ has a unique nearest point on the other line.
It is easy to deduce that the distances between such nearest points are always the same.
This distance can be equal, collinear, symplectic or special, in which case the other pairs are collinear, symplectic, special or opposite, respectively (use \cref{joinjoin} for instance).
Then it is easy to see that the lines are equal, opposite in a symp, the projection of each other from opposite symps, or opposite, respectively.
Hence we obtain precisely all cases of Class II. 

\textbf{Part III.} Having done the more homogeneous cases separately, we can proceed to consider the smallest distance that can occur between points of $L$ and $M$.
In order to do so, we let $p\in L$ and $q\in M$ be points at minimal distance. 

\textbf{Case 1: $p=q$.} Here the lines $L$ and $M$ meet in $p=q$, and we clearly have only the three possibilities $(0111)$, $(011\frac32)$ and $(0112)$ of Class III.
Existence is trivial in these cases.

\textbf{Case 2: $p\perp q$.} We set $K$ equal to the line through $p$ and $q$.
We now consider the different possible mutual positions of $L$ and $K$, and of $K$ and $M$.
First suppose that $K$ and $M$ are coplanar; say they span the plane $\alpha$.
Then $\alpha$ and $L$ are contained in a common symp if and only if one of the following two possibilities occurs: \begin{compactenum}[$(1)$] \item $L\perp M$; then we have case $(1111)$, \item $|L^\perp\cap M|=1$; then case $(111\frac32)$ occurs.
\end{compactenum} 
So we may assume that no point of $L\setminus\{p\}$ is collinear to any point of $\alpha\setminus\{p\}$.
Let $\xi$ be a symp containing $\alpha$.
There are again two possibilities.
\begin{compactenum}[$(1)$] \item $L^\perp\cap\xi$ is a line $N$.
Then $N$ is not contained in $\alpha$, so that there is a unique point $q'\in M$ collinear to all points of $N$.
Then $q'\pperp p'$, for all $p'\in L\setminus \{p\}$, and $q''\Join p'$, for all $q''\in M\setminus\{q'\}$ and all $p'\in L\setminus\{p\}$.
We get $(11\frac322)$.
\item $L^\perp\cap\xi$ is a maximal singular subspace $U$.
Then $U\cap M=\varnothing$, and each point of $M$ is symplectic to each point of $L\setminus\{p\}$.
We obtain $(11\frac32\frac32)$.
 
\end{compactenum}
If $L$ and $K$ are coplanar, then we obtain the inverse distances $(1111)$, $(111\frac32)$, $(1\frac3212)$ and $(1\frac321\frac32)$.
Hence we may assume that $K$ is not coplanar with either $L$ or $M$.
Pick $p'\in L\setminus\{p\}$ and $q'\in M\setminus\{q\}$.
If both pairs $\{p',q\}$ and $\{p,q'\}$ are special, then we have $(1223)$.
So we may assume $\{p,q'\}$ is symplectic.
Again, there are some possibilities. 
\begin{compactenum}[$(1)$]
\item $L^\perp\cap\xi(p,q')$ is a line $N$ not $\xi(p,q')$-opposite $M$.
Then our assumptions imply that $q\perp N$, and so we obtain $(1\frac32\frac322)$.
\item $L^\perp\cap\xi(p,q')$ is a line $N$ which is $\xi(p,q')$-opposite $M$.
Clearly, this gives rise to $(1\frac3222)$.
\item $L^\perp\cap\xi(p,q')$ is a maximal singular subspace $U$.
Then we clearly have $(1\frac32\frac32\frac32)$.
\end{compactenum}

The case where $L$ and $K$ are contained in a common symp gives additionally rise to the inverse $(12\frac322)$ of $(1\frac3222)$.
This takes care of all distances beginning with ``collinear''. 

\textbf{Case 3: $p\pperp q$.} Let $\xi$ be the symp containing both $p$ and $q$.
Note that neither $L$ nor $M$ is contained in $\xi$, as otherwise we are back in the previous case. 
There are a few possibilities.
\begin{compactenum}[$(1)$]
\item Both $L^\perp\cap\xi$ and $M^\perp\cap\xi$ are maximal singular subspaces, and they intersect in a subspace of dimension at least $1$.
Then we are in the homogeneous case $(\frac32\frac32\frac32\frac32)$, which we already discussed in detail in Part I. 
\item Both $L^\perp\cap\xi$ and $M^\perp\cap\xi$ are maximal singular subspaces, and they intersect in exactly a point.
By Part I, we are not in case $(\frac32\frac32\frac32\frac32)$; hence we have distance $(\frac32\frac32\frac322)$. 
\item Both $L^\perp\cap\xi$ and $M^\perp\cap\xi$ are maximal singular subspaces, and they are disjoint.
Select $p'\in L\setminus\{p\}$ and $q'\in M\setminus\{q\}$.
We claim that $p'\pperp q'$.
Indeed, the only alternative is $p'\Join q'$.
If so, let $p'\perp r\perp q'$.
Then, by considering symps through points of $U:=L^\perp\cap\xi$ and $q'$, we see that $r\perp U$; likewise, $r$ is collinear to each point of $M^\perp\cap\xi$, a contradiction.
Hence this leads to $(\frac32\frac32\frac32\frac32)$, which we discussed in Part I. 
\item Suppose $L^\perp\cap\xi$ is a maximal singular subspace $U$ of $\xi$ and $M^\perp\cap\xi$ is a line $K$.
If $K\cap U$ is a point $x$, then $\<x,L\>$ and $\<x,M\>$ are planes intersecting in $x$, and it follows from \cref{pointspecialline} that each point of $L$ is symplectic to either a unique point of $M$, or all points of $M$.
This now clearly leads to $(\frac32\frac32\frac322)$ again (and replacing $p$ with the unique point on $L$ symplectic to all points of $M$, we are back to (2)).
Hence we may assume that $K\cap U=\varnothing$.
If some point $p'$ of $L\setminus\{p\}$ were symplectic to some point of $M\setminus\{q\}$, then we could replace $p$ by $p'$ and are back to situation (2).
If no point of $L\setminus\{p\}$ is symplectic to any point of $M\setminus\{q\}$, then we have distance $(\frac322\frac322)$.

\item Similarly, $L^\perp\cap\xi$ a line and $M^\perp\cap\xi$ a maximal singular subspace lead to $(\frac32\frac32\frac322)$ or $(\frac32\frac3222)$. 

\item Finally, suppose $L^\perp\cap\xi$ is a line $K$ and $M^\perp\cap\xi$ is a line $N$.
If $K$ and $N$ intersect, then we are back to a previous case already handled, namely $(\frac3222\frac32)$. 
If $K$ and $N$ are $\xi$-special, then we claim that we have distance $(\frac32222)$.
Indeed, the alternative would be that some point $q'\in M\setminus\{q\}$ is symplectic to some point $p'\in L\setminus\{p\}$.
This would imply that $p'$ is locally close to the symp determined by $q'$ and $N^\perp\cap K$.
But this implies that $q$ is symplectic to $p'$, which contradicts the fact that $q$ is not collinear to all points of $K$.
If $K$ and $N$ are $\xi$-opposite, then, using similar arguments, we have distance $(\frac32223)$. 
\end{compactenum}
This takes care of the case $p\pperp q$.
 
\textbf{Case 4: $p\Join q$.} Since we may assume we are not in the ``Completely homogeneous'' case, there are opposite pairs of points on $L\cup M$.
Since we may also assume that we are not in the ``Projection homogeneous'' case, we may assume that no point of $L\setminus\{p\}$ is special to any point of $M\setminus\{q\}$.
But then every point of $L$ is special to $q$ and every point of $M$ is special to $p$, whereas each point of $L\setminus\{p\}$ is opposite each point of $M\setminus\{q\}$.
This is distance $(2223)$.

One checks that the cases involving a singular subspace of dimension at least $3$ (this includes each case where some point is collinear to a maximal singular subspace of a symp) are precisely the positions $(1111)$, $(\frac32 \frac32 \frac32 \frac32)$, $(2222)$, $(1 \frac32 \frac32 \frac32)$, $(\frac32 \frac32 \frac32 2)$, $(1 \frac32 1 \frac32)$, $(1 1 \frac32 \frac32)$, $(\frac32 \frac32 2 2)$ and $(\frac32 2 \frac32 2)$.

The lemma is completely proved. 
\end{proof}

For any ordered pair of lines $(L,M)$, we call each point of $L$ distinct from the projection point, if it exists, a \emph{free point (for $(L,M)$)}; hence if there is no projection point, every point is free. 

We define the following order $0<1<\frac32<2<3$.
We have the following lemma, where we denote the set of mutual positions of two lines, enumerated above in \textbf{Class I} up to \textbf{Class IV},  as $\mathcal{D}$. 
\begin{lem}\label{combingprep2}
For every $D \in\mathcal{D}$, there exists a unique $D'\in\mathcal{D}$ such that
\begin{compactenum} \item[$(*)$] for every pair of lines $(L, M )$ with $\delta(L, M ) = D$ and every free point $x \in L$, there exists a line $K$ through $x$ not locally opposite $L$ at $x$ and such that
$\delta(L',M) = D'$ for every line $L' \ni x$ locally opposite $K$ at $x$.
\end{compactenum}
The function that maps every $D \in \mathcal{D}$ onto the unique distance $D' \in \mathcal{D}$ that satisfies property $(*)$ is as described by the first column of arrows in  \emph{\cref{table}}.
Applying this assertion to $(L',M)$ again and again, we eventually arrive at an opposite pair.
In \emph{\cref{table}}, we list the consecutive distances when we apply this algorithm.
The penultimate column lists the local mutual position of $L$ and $K$ (with respect to the first arrow in the row), and the last column mentions when $K$ is not unique.
\end{lem}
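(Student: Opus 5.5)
The plan is a case-by-case verification over the list $\mathcal{D}$ of \cref{mutualpositions}. I will organise it by working in the point residual $\Res(x)$, which by \cref{sympsymphex} is of type $\mathsf{A_{5,3}}$, $\mathsf{D_{6,6}}$, $\mathsf{E_{7,7}}$ or a dual polar space of rank $3$. In $\Res(x)$ the line $L$ is a point $\ell$. The free point $x$ sees $M$ through a bounded amount of data:
\begin{compactenum}[$(a)$]
\item the projection of $M$ onto $x$, which is a line, or a point-symp configuration, or the set of symps through $x$ containing points collinear or symplectic to points of $M$;
\item this data read as an object $\mu$ of $\Res(x)$.
\end{compactenum}
The key observation is that, for a line $L'\ni x$ with residual point $\ell'$, the position $\delta(L',M)$ is determined by the relative position of $\ell'$ and $\mu$ in $\Res(x)$. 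By \cref{Tits}, the most generic position is the one where $\ell'$ is ``opposite'' (or far from) the relevant part of $\mu$. So for each $D$ I will define $K$ as a line through $x$ whose residual point lies in (or is closest to) the shadow of $\mu$. I will choose $K$ not opposite $\ell$ at $x$; this is possible since $x$ is free, so $\ell$ is not itself in extremal position relative to $\mu$. Then every $\ell'$ opposite $K$ in $\Res(x)$ is generic relative to $\mu$, and $\delta(L',M)$ is forced to a single value $D'$.

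The steps are as follows.
\begin{compactenum}[(1)]
\item For each of the positions in Classes I--IV, I identify the local data $\mu$ at a free point $x$, using the geometric descriptions given with the list and \cref{generalspecial}, \cref{pointspecialline} and \cref{pointsymplecticline}. For example, if $x\perp y'$ then $\mu$ contains the point $xy'$ of $\Res(x)$. If $x$ is symplectic to points of $M$, then $\mu$ involves the symps $\xi(x,\cdot)$. If $x$ is special to points of $M$, \cref{pointspecialline} produces the line of mediating points $[x,\cdot]$.
\item I take $K$ to be a line of $\Res(x)$-type point in $\mu$ chosen to be non-opposite $\ell$. Where $\mu$ is a single point, $K$ is unique. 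Where $\mu$ spans a line or subspace, the choice is non-unique; this is exactly what the last column of \cref{table} records.
\item Using \cref{Tits} locally at $x$, together with \cref{joinjoin}, I show that every $\ell'$ opposite $K$ is opposite (or maximally far from) all of $\mu$. I then compute the four entries $(a,b,c,d)$ of $\delta(L',M)$ directly:
\begin{compactenum}[$-$]
\item distances from $x$ to points of $M$ are unchanged;
\item every point of $L'\setminus\{x\}$ has distance to points of $M$ raised by one step in the order $0<1<\frac32<2<3$, wherever \cref{joinjoin} and \cref{generalspecial}$(iv)$ permit.
\end{compactenum}
This yields $D'$, and the first column of \cref{table} is read off.
\item Uniqueness of $D'$ follows because $D'$ is determined by the generic relative position, which is independent of the choice of $K$ among valid ones.
\item Iterating, the sum of the entries strictly increases until $(2332)$ is reached. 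This gives termination at an opposite pair, and the rows of \cref{table} are the resulting chains.
\end{compactenum}

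The main obstacle is step (3) for the asymmetric and symplectic-heavy positions, namely $(1\frac32\frac32\frac32)$, $(\frac32\frac32\frac322)$ and $(1\frac32 1\frac32)$. There the local object $\mu$ is a symp or maximal singular subspace of $\Res(x)$, and the argument must check that opposition to $K$ really pushes $L'$ into the intended position rather than an intermediate one. For these I would first pass to $\Res(x)$ of type $\mathsf{E_{7,7}}$ and use \cref{point-sympE7} and \cref{symp-sympE77}, which describe point-symp and symp-symp positions completely there. I would then transfer the conclusions to the smaller residue types, which behave as specialisations. For metasymplectic spaces the excluded positions simply drop out.
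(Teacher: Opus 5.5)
Your outline has the same shape as the paper's proof: a row-by-row check in which $K$ is ``a projection of $M$ onto $x$''. But the content of the lemma is \cref{table}, and you do not verify a single row. Instead, step (3) relies on a general mechanism that is false as stated.

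\begin{itemize}
\item It is not true that every $\ell'$ opposite $K$ is opposite all of $\mu$. In row $\{2\}$, $(0111)\to(11\tfrac32 2)$, the line $K$ lies in the plane $\langle L,M\rangle$, whose residual line at $x$ passes through $K$. A point of $\Res(x)$ opposite $K$ is symplectic to exactly one point of that residual line. This is precisely why one point of $M$ ends up symplectic, not special, to the points of $L'\setminus\{x\}$.
\item The rule that distances from $L'\setminus\{x\}$ go up ``by one step'' also fails. In row $\{6\}$, $(0112)\to(1223)$ with $K=L$, the intersection point $p$ is collinear to $x$ but special to every $u\in L'\setminus\{x\}$, and $u$ is opposite the other points of $M$ by \cref{joinjoin}. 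In row $\{2\}$ the points of $M$ do not all change in the same way.
\end{itemize}

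Since step (3) is where $D'$ is actually computed, the first column of \cref{table} is left unproved. So is the termination claim of step (5), which you only read off the table. Uniqueness of $D'$, by contrast, needs no genericity argument: two admissible lines $K_1,K_2$ have a common locally opposite line by \cref{3.30}.

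The paper supplies exactly the missing part. For each non-trivial row it names $K$:
\begin{itemize}
\item a line of $\langle L,M\rangle$ through $x$;
\item the line joining $x$ to the point of $M$ collinear to all of $L$;
\item the line $xz$ with $z$ from \cref{perp22perp};
\item a line of the plane $\langle x,N\rangle$ with $N$ obtained from \cref{pointspecialline} or \cref{pointsymplecticline};
\end{itemize}
and so on. It then checks $\delta(L',M)$ using \cref{joinjoin} and \cref{generalspecial}.

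Your reason why $K$ can be chosen not locally opposite $L$ (``$x$ is free'') is not the relevant one. What is needed is that $L$ is not already in the extremal position into which $L'$ is pushed, i.e.\ $D\neq D'$. For $D=(2332)$ every point of $L$ is free. Yet by \cref{Tits}, a line through $x$ is opposite $M$ if and only if it is locally opposite $\proj^{y'}_x(M)$, for any $y'\in M$ opposite $x$. So the only admissible $K$ is $\proj^{y'}_x(M)$, which is locally opposite $L$. This is the entry ``special'' in row $\{17\}$ of \cref{table}, so the clause cannot hold there.

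Also, nothing in your proposal justifies settling the hard cases in an $\mathsf{E_{7,7}}$ residue and then ``transferring'' them to residues of type $\mathsf{A_{5,3}}$ and $\mathsf{D_{6,6}}$. The same goes for the rank~$3$ dual polar residues of metasymplectic spaces, which are not simply laced.

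If you want the residue idea to become a genuine argument, take $\mu$ to be the building-theoretic projection $P$ of $M$ onto $x$, a simplex of $\Res(x)$. The gate property then shows that $\delta(L',M)$ depends only on the position of $L'$ relative to $P$ inside $\Res(x)$. Moreover, every line locally opposite a line joinable to $P$ is in the maximal such position. This produces an admissible $K$ and a value $D'$ abstractly, with $K$ not locally opposite $L$ exactly when $L$ is not itself in that maximal position. Identifying $D'$ with the entries of \cref{table} still requires the explicit per-row geometry that the paper carries out.
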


\begin{table}[ht]
\begin{tabular}{cccccccccc}\toprule
$\{1\}$ & $(0110)$ & $\to$ & $(0112)$ & $\to$ & $(1223)$ & $\to$ & $(2332)$ & equal & \\
$\{2\}$ & $(0111)$ & $\to$ & $(11 \frac32 2)$ &  $\to$ & $(\frac32 223)$ &  $\to$ & $(2332)$ &eq or coll& not unique\\
$\{3\}$ & $(011\frac32)$ & $\to$ & $(1 \frac32 22)$ &  $\to$ & $(2223)$ &  $\to$ & $(2332)$ &equal& \\
$\{4\}$ & $(111\frac32)$ & $\to$ & $(1 \frac32 22)$ &  $\to$ & $(2223)$ &  $\to$ & $(2332)$ &collinear&\\
$\{5\}$ & $(1 \frac32 \frac32 1)$ & $\to$ & $(1 \frac32 22)$ &  $\to$ & $(2223)$ &  $\to$ & $(2332)$ &symplectic&\\ 
$\{6\}$ &  $(0112)$ & $\to$ & $(1223)$ & $\to$ & $(2332)$&&&equal& \\
$\{7\}$ & $(1 \frac32 12)$ & $\to$ & $(1223)$ & $\to$ & $(2332)$ &&&collinear&\\
$\{8\}$ & $(11 \frac32 2)$ & $\to$ & $(\frac32 223)$ &  $\to$ & $(2332)$ &&&equal& \\
$\{9\}$ & $(1 \frac32 \frac32 2)$ & $\to$ & $(\frac32 223)$ &  $\to$ & $(2332)$ &&&collinear&\\
$\{10\}$ & $(12 \frac32 2)$ & $\to$ & $(\frac32 223)$ &  $\to$ & $(2332)$ &&&symplectic&\\
$\{11\}$ & $(\frac32 22 \frac32)$ & $\to$ & $(\frac32 223)$ &  $\to$ & $(2332)$ &&&collinear&\\
$\{12\}$ & $(1223)$ & $\to$ & $(2332)$ &&&&&equal&\\
$\{13\}$ & $(1 \frac32 22)$ &  $\to$ & $(2223)$ &  $\to$ & $(2332)$  &&&eq or coll&not unique\\
$\{14\}$ & $(\frac32 222)$ & $\to$ &  $(2223)$ &  $\to$ & $(2332)$ &&&coll or sympl&not unique\\
$\{15\}$ & $(\frac32 223)$ &  $\to$ & $(2332)$ &&&&&collinear&\\
$\{16\}$ & $(2223)$ &  $\to$ & $(2332)$ &&&&&symplectic&\\
$\{17\}$ & $(2332)$ & $\to$ & $(2332)$ &&&&&special& \\
$\{18\}$ & $(1111)$ & $\to$ & $(11 \frac32 2)$ & $\to$ &  $(\frac32 223)$ &  $\to$ & $(2332)$ & collinear & not unique \\
$\{19\}$ & $(\frac32 \frac32 \frac32 \frac32)$ & $\to$ & $(\frac32 \frac32 22)$ & $\to$ & $(2223)$ & $\to$ &  $(2332)$ & collinear & not unique\\
$\{20\}$ & $(2222)$ & $\to$ & $(2223)$ & $\to$ &  $(2332)$ &&& symplectic & not unique\\
$\{21\}$ & $(1 \frac32 1 \frac32)$ & $\to$ & $(1 \frac32 22)$ &  $\to$ & $(2223)$ &  $\to$ & $(2332)$ & collinear & \\
$\{22\}$ & $(11 \frac32 \frac32)$ & $\to$ & $(\frac32 \frac32 22)$ & $\to$ & $(2223)$ & $\to$ &  $(2332)$ & equal &\\
$\{23\}$ & $(1 \frac32 \frac32 \frac32)$ & $\to$ & $(\frac32 \frac32 22)$ & $\to$ & $(2223)$ & $\to$ &  $(2332)$ & collinear &not unique\\
$\{24\}$ & $(\frac32 \frac32 22)$ & $\to$ & $(2223)$ & $\to$ &  $(2332)$ & &&collinear &not unique\\
$\{25\}$ & $(\frac32 2 \frac32 2)$ & $\to$ & $(\frac32 223)$ &  $\to$ & $(2332)$ &&&symplectic&\\
$\{26\}$ & $(\frac32 \frac32 \frac32 2)$ & $\to$ & $(\frac32 223)$ &  $\to$ & $(2332)$ &&&collinear&\\ \bottomrule \addlinespace
\end{tabular}
\caption{Combing distances between lines\label{table}}
\end{table} 

\begin{proof} We have to treat the 26 cases one by one.
However, some cases are immediate or at least easy, and we skip those.
It concerns many of the cases where $K=L$, namely $\{1\}$, $\{3\}$, $\{6\}$, $\{12\}$.
Other cases are easy once one knows $K$, and we only give that information below.
Basically, $K$ can always be thought of as a kind of projection of $M$ onto $x$.
In cases $\{12\}$, $\{15\}$, $\{16\}$ and $\{17\}$, the point $x$ is opposite some point of $M$, and then $K$ is really that projection, and so these cases are straightforward and we skip them, too.
More tricky cases are treated in full detail.
It concerns in particular the cases that cannot occur in type $\mathsf{F_4}$.
  
\begin{compactenum}
\item[$\{2\}$] $(=,\perp,\perp,\perp)\to(\perp,\perp,\pperp,\Join)$.\\
 The line $K$ is any line through $x$ in the plane spanned by $L$ and $M$. 
 
 \item[$\{4\}$] $(\perp,\perp,\perp,\pperp)\to(\perp,\pperp,\Join,\Join)$.\\
 The line $K$ is the line joining $x$ with the unique point of $M$ collinear to each point of $L$. 
 
 \item[$\{5\}$] $(\perp,\pperp,\pperp,\perp)\to(\perp,\pperp,\Join,\Join)$.\\ 
 Here, $K$ is the line joining $x$ with the unique point $y\in M$ collinear to $x$.
Then let $N$ be a line through $x$ locally opposite $K$ and pick $z\in N\setminus\{x\}$.
Let $\xi$ be the symp containing $L$ and $M$ and set $z^\perp\cap\xi=M'$. 
Then $M'$ and $M$ are $\xi$-opposite, since if they were not, either $x$ would be collinear to $M$ (which contradicts the fact that $L$ and $M$ are $\xi$-opposite), or $y$ would be collinear to all points of $M'$ (which contradicts the fact that $z\Join y$ by the choice of $N$ locally opposite $K$). 
 
 \item[$\{7\}$]  $(\perp, \pperp, \perp,\Join)\to(\perp,\Join,\Join,\equiv)$.\\
 Let $y\in M$ be collinear to each point of $L$.
Then $K=xy$, and the rest follows from \cref{joinjoin}.
 
 \item[$\{8\}$] $(\perp,\perp,\pperp,\Join)\to(\pperp,\Join,\Join,\equiv)$.\\
Here $K=L$.
Set $p\in L$ the unique point collinear to each point of $M$ and $q\in M$ the unique point of $M$ symplectic to $x$.
By \cref{joinjoin}, every other point of $M$ is opposite each point of $L'\setminus\{x\}$.
It follows that the distance between $L'$ to $M$ is $(\frac32223)$.
 
 \item[$\{9\}$] $(\perp,\pperp,\pperp,\Join)\to(\pperp,\Join,\Join,\equiv)$.\\
 Let $p\in L$ and $q\in M$ be collinear.
Let $\xi$ be the symp through $M$ and $p$ and let $z$ be the unique point of $\xi$ collinear to $M$ and collinear to $x$.
Set $K=xz$.
Then, precisely like in the previous case $\{8\}$, we conclude that the distance between $L'$ to $M$ is $(\frac32223)$.
 
 \item[$\{10\}$] $(\perp,\Join,\pperp,\Join)\to(\pperp,\Join,\Join,\equiv)$.\\
 Let $\xi$ be the symp containing $L$ and a unique point $q\in M$.
Then there is a unique point $z\in\xi$ collinear to $M$ and $x$.
Putting $K=xz$, the rest of the argument is the same as for the previous cases $\{8\}$ and $\{9\}$. 
 
 \item[$\{11\}$] $(\pperp, \Join, \Join, \pperp) \rightarrow (\pperp, \Join, \Join,\equiv)$.\\
\cref{perp22perp} yields a point $z$ collinear to each point of $L\cup M$.
Then $K=xz$, and the same arguments as in the three previous cases imply that the distance between $L'$ to $M$ is $(\frac32223)$.

 \item[$\{13\}$] $(\perp, \pperp, \Join, \Join) \rightarrow (\Join, \Join, \Join,\equiv)$.\\
Let $p\in L$ be the point of $L$ contained in a common symp $\xi$ with $M$.
Then $x\neq p$ is collinear to a line $N$ of $\xi$.
This line $N$ is $\xi$-opposite $M$.
Then $K$ is any line through $x$ in the plane spanned by $x$ and $N$.
Set $K\cap N=\{p'\}$ and $p'^\perp\cap M=\{q'\}$.
Then the position $(2223)$ between $L'$ and $M$ follows from the facts that $L'$ is locally opposite $K$ at $x$; $K$ is locally opposite $p'q'$ at $p'$, and $p'q'$ locally symplectic to $M$ at $q'$.
 
 \item[$\{14\}$] $(\pperp, \Join,\Join, \Join) \rightarrow (\Join, \Join, \Join,\equiv)$.\\
Let $\xi$ be the symp through the unique points $p\in L$ and $q\in M$.
Set $N:=x^\perp\cap\xi$, and let $z$ be the point on $N$ collinear to $q$.
\cref{pointspecialline} yields a line $N'\ni z$ consisting of all points collinear to a point of $M$ and $x$.
Then $K$ is any line through $x$ in the plane $\<x,N'\>$.
Similar arguments as in the previous case $\{13\}$ show that the mutual position of $L'$ and $M$ is $(2223)$. 

 \item[$\{18\}$] $(\perp, \perp, \perp, \perp) \rightarrow (\perp, \perp, \pperp,\Join)$.\\
Let $K$ be any line through $x$ intersecting $M$.
Then, by \cref{pointspecialline}, there is a unique point on $M$ symplectic to the points of $L'\setminus\{x\}$.
This implies that the mutual position between $L'$ and $M$ is $(11\frac322)$. 
 
\item[$\{19\}$]  $(\pperp, \pperp, \pperp, \pperp) \rightarrow (\pperp, \pperp, \Join, \Join)$.\\
Pick two points $y_1,y_2$ on $M$.
By \cref{generalspecial}, 
the point $y_2$ is collinear to a maximal singular subspace $U$ of $\xi(x,y_1)$.
Then $x$ is collinear to a hyperplane $W$ of $U$, and $K$ is any line joining $x$ with a point $z$ of $W$.
Let $u\in L'\setminus\{x\}$ be arbitrary.
Then $u$ and $z$ are special.

Suppose $u$ were symplectic to some point of $M$, and we may without loss of generality assume that point is $y_1$.
Then $u$ would have to be collinear to a line $N$ of $\xi(x, y_1)$ including $x$ and $y_1$.
But $x$ is not collinear to any point of $M$.
It follows that $u$ is special to every point of $M$.
With that, $L'$ contains exactly one point, which is $x$, symplectic to all points of $L_2$; and otherwise, all remaining points of $L'$ are special to all points of $L_2$, since $u \in M$ was arbitrary.

\item[$\{20\}$] $(\Join, \Join, \Join, \Join) \rightarrow (\Join, \Join, \Join, \equiv)$.\\
\cref{pointspecialline} yields a line $N$ consisting of all points collinear to $x$ and some point of $M$.
Then $K$ is any line through $x$ in the plane $\<x,N\>$.
Note that $K$ is locally symplectic to $L$ at $x$.
Let $u\in L'\setminus\{x\}$.
Then there is a unique point on $N$ symplectic to $u$; all other points of $N$ are special to $u$.
\cref{joinjoin} implies that $u$ is opposite all but exactly one point of $M$.
Since $x$ is special to all points of $M$, the mutual distance between $L'$ and $M$ is $(2223)$. 

\item[$\{21\}$] $(\perp, \pperp, \perp, \pperp) \rightarrow (\perp, \pperp, \Join, \Join)$.\\
Let $q$ be the unique point of $M$ collinear to all points of $L$.
Then $K$ is the line $qx$.
 
Let $q'$ be any point of $M\setminus\{q\}$.
Then $M$ is contained in the symp $\xi(q', x)$.
Let $u$ be an arbitrary point of $L'$ not equal to $x$.
Then $u$ and $q$ are special, and consequently $u$ can only be collinear to a line of the symp $\xi(q', x)$ through $x$, 
which implies that $u$ and $q'$ are special.
In summary, $q$ is collinear to $x$ and special to every other point of $L'$, and every point in $M \setminus \{ q \}$ is symplectic to $x$ and special to every point in $L' \setminus \{ x_1 \}$.

\item[$\{22\}$] $(\perp, \perp, \pperp, \pperp) \rightarrow (\pperp, \pperp, \Join, \Join)$.\\
Let $p$ be the point of $L$ collinear to every point of $M$.
Then $x\neq p$.
Here, $K=L$. 
Let $u$ be some arbitrary point of $L'$ not equal to $x$.
Then $u$ and $p$ are special.
The point $x$ is symplectic to every point of $M$.
Considering the symp $\xi$ containing $x$ and an arbitrary point $y \in M$, we see that $u^{\perp} \cap \xi$ is a line $N$ through $x$ (indeed, $p\in\xi$ and $u\Join p$).
Now, $y$ is only collinear to a unique point of $L$, since $y$ is not collinear to $x$.
It follows, with \cref{generalspecial}, that $u$ is special to $y$.
We obtain $(\frac32\frac3222)$. 

\item[$\{23\}$] $(\perp, \pperp, \pperp, \pperp) \rightarrow (\pperp, \pperp, \Join, \Join)$.\\
Let $\xi$ be the symp through $M$ and the unique point $p$ of $L$ collinear to some point $q$ of $M$.
It is easy to see that $x^\perp\cap\xi$ is a maximal singular subspace $U$ of $\xi$.
Then $K$ is any line through $x$ and a point of $M^\perp\cap U$.
Considering the respective symps through $x$ and the points of $M$, we see that the points of $M$ and $L'\setminus\{x\}$ are special.
Hence we obtain $\left(\frac32\frac3222\right)$.

\item[$\{24\}$] $(\pperp, \pperp, \Join, \Join) \rightarrow (\Join, \Join, \Join, \equiv)$.\\
Let $p\in L$ be symplectic to all points of $M$.
\cref{pointsymplecticline} yields a singular subspace $U=p^\perp\cap M^\perp$ contained in each symplecton containing $p$ and a point of $M$, and such that $\<p,U\>$ is a maximal singular subspace of each such symp.
Combining this with \cref{pointspecialline}, we find a line $N\subseteq U$ consisting of all points collinear to $x$ and some point of $M$.
Then $K$ is an arbitrary line through $x$ in the plane $\<x,N\>$.
Taking into account that $N$ and $M$ are contained in a common symp $\xi$ in which they are $\xi$-opposite, we arrive at $(2223)$ for the distance between $L'$ to $M$. 

\item[$\{25\}$] $(\pperp, \Join, \pperp, \Join) \rightarrow (\pperp, \Join, \Join, \equiv)$.\\
Here, there is a unique point $q\in M$ symplectic to each point of $L$, in particular to $x$.
The line $M$ is collinear to a unique line $N$ of $\xi(q,x)$, and $x$ is collinear to a unique point $z$ of $N$.
Then $K$ is the line $xz$, and it is immediate that $L'$ and $M$ are at mutual distance $(\frac32223)$.

\item[$\{26\}$]  $(\pperp, \pperp, \pperp, \Join) \rightarrow (\pperp, \Join, \Join, \equiv)$.\\
Let $p \in L$ be the point symplectic to all points of $M$, and let $q \in M$ be the point symplectic to all points of $L$.
Then $M$ is collinear to a unique line $N$ of $\xi(q,x)$, and $x$ is collinear to a unique point $z$ of $N$.
We define $K=xz$. 
Then \cref{joinjoin} implies that each point of $L'\setminus\{x\}$ is opposite each point of $M\setminus\{q\}$.
Since $x\pperp q$ and $x$ is special to all points of $M\setminus\{q\}$, we conclude that the mutual distance of $L'$ and $M$ is given by $(\frac32223)$.
\end{compactenum}
This completes the proof of the lemma. 
\end{proof}

The length of the sequence in the previous lemma is called the \emph{level} of the corresponding line $M$ (with respect to $L$), except that when $M$ is opposite $L$, we say it has level $0$.  

\subsubsection{Algorithms and end of the proof}

Let $\Delta=(X,\cL)$ be a finite exceptional hexagonic Lie incidence geometry whose lines carry exactly $s+1$ points.  We introduce two algorithms, that we will call \emph{combing algorithms}.
They require that certain conditions are met, and we will also introduce these.
Naturally, we will only run them when all conditions are satisfied.
They are defined as follows. 

\begin{defn}[The combing algorithms] 
Let $L_0,L_1,\ldots,L_s\in\cL$ be $s+1$ lines of $\Delta$ and let $L$ be another arbitrary line.
Suppose 
\begin{compactenum} \item[(COMB1)] There exists a point $x\in L$ which is free for every pair $(L,L_i)$, $i=0,1,\ldots,s$.
\end{compactenum}
Condition (COMB1) just means that the set of projection points on $L$ with respect to the lines $L_i$, $i=0,1,\ldots,s$, does not cover $L$.

For each $L_i$, $i\in\{0,1,\ldots,s\}$, and each point $x$ on $L$ that is free with respect to each line $L_j$, $j\in\{0,1,\ldots,s\}$, we define a line $M_i$ as follows.
If $L_i$ is opposite $L$, then $M_i$ is the unique line through $x$ containing a point collinear to $L_i$.
The pair $\{L,M_i\}$ has distance $(0112)$, or is, in other words, locally opposite at $x$.
If $L_i$ is not opposite $L$, then we set $M_i$ equal to the line $K$, as (perhaps not uniquely) defined in \cref{combingprep2}.
If the line $K$ is not uniquely defined there, then we arbitrarily choose one (and one may think of taking the closest to $L$, if this exists). 
We set $\cM=\{M_i\mid i\in\{0,1,\ldots,s\}\}$. Suppose now

\begin{compactenum}[(COMB2)] \item There exists a line $L'$ through $x$ locally opposite each member of $\cM$
at $x$. 
\end{compactenum}

Then the line $L'$ is, by definition, the outcome of the first combing algorithm. Suppose on the other hand

\begin{compactenum}[(COMB3)] \item There exists a line $M\in\cM$ locally opposite $L$ at $x$ and there exists a line $L''$ through $x$ locally opposite each member of $\cM\setminus\{M\}$ (where we view $\cM$ as a set and not as a multiset) and not locally opposite $M$. 
\end{compactenum}

Then line $L''$ is, by definition, the outcome of the second combing algorithm (\emph{combing back at $M$}).

\end{defn}
We observe:

\begin{lem}\label{combingprep4}
Using the above notation, and under
hypothesis {\rm(COMB2)}, if $0$ is the level of $(L, L_i)$, then $0$ is also the level of $(L',L_i)$;
in contrast, under hypohesis {\rm(COMB3)}, if $0$ is the level of $(L,L_i)$, then the level of
$(L'', L_i)$ is at most $1$ (and equal to $0$ whenever $M_i\neq  M$)
\end{lem}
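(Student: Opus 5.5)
Level $0$ means that $L_i$ is opposite $L$, so the lemma says: under (COMB2), $L'$ is again opposite $L_i$; under (COMB3), $L''$ is either opposite $L_i$ or locally opposite the line through $x$ that carries a point collinear to $L_i$. The plan is to work in $\Res(x)$ and use \cref{Tits} with $F=L$ and $F'=L_i$.

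\textbf{Setup.} Since $L\equiv L_i$, the projection $\proj^{L}_{L_i}$ maps lines through $x$ to lines meeting $L_i$. By construction, $M_i$ is the unique line through $x$ that contains a point collinear to $L_i$; equivalently, $M_i$ is the line through $x$ closest to $L_i$. Fix $y\in L_i$ with $x\Join y$; this is possible by position $(2332)$. The key step is the following criterion.
\begin{compactenum}[$(\ast)$]
\item A line $N\ni x$ is opposite $L_i$ if and only if $N$ is locally opposite $M_i$ at $x$.
\end{compactenum}

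\textbf{Proof of $(\ast)$.} I would use \cref{Tits} with the vertex $x$ of the simplex $L$ (in the building, $L$ corresponds to a simplex containing $x$) and the opposite simplex $L_i$. Alternatively, one can argue directly with points.
\begin{compactenum}[$(i)$]
\item Suppose $N$ is locally opposite $M_i$. Then every $u\in N\setminus\{x\}$ is special to the point $[x,y]\in M_i$, and \cref{joinjoin} gives $u\equiv y$. Every other point of $L_i$ is symplectic or special to $x$ in the prescribed way, and the same argument shows the projections behave as in $(2332)$. In particular $N$ has no projection point and $N\equiv L_i$. The row $\{6\}\to\{12\}\to\{17\}$ of \cref{table} records exactly this: $(0112)\to(1223)\to(2332)$ with $K=M_i$.
\item Conversely, suppose $N$ is not locally opposite $M_i$. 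Then some point of $N\setminus\{x\}$ is collinear or symplectic to $[x,y]$, hence not opposite $y$ by \cref{joinjoin}. So $N\not\equiv L_i$.
\end{compactenum}
The cleanest route is probably just to quote \cref{Tits}: $N\equiv L_i$ if and only if $N$ is opposite $\proj^{x}_{y'}$ in the appropriate residue, and that projection is $M_i$.

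\textbf{(COMB2).} The line $L'$ is locally opposite every member of $\mathcal{M}$, in particular $M_i$. By $(\ast)$, $L'\equiv L_i$, so the level of $(L',L_i)$ is $0$.

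\textbf{(COMB3).} If $M_i\neq M$, then $L''$ is locally opposite $M_i$, and $(\ast)$ gives level $0$. If $M_i=M$, then $L''$ passes through $x$ and is not locally opposite $M_i$. The claim is that level at most $1$ means $\delta(L'',L_i)=(1223)$. So I would show $\delta(L'',L_i)=(1223)$, i.e.\ that $L''$ is locally opposite the line $K$ assigned by \cref{combingprep2}, possibly via a non-unique choice. Here is the intended reading:
\begin{compactenum}[$(i)$]
\item $(0112)$ has level $2$ and leads to $(1223)$, whose level is $1$.
\item $L''$ contains $x$, and $x$ is special to $y$, so $\delta(L'',L_i)$ can be neither $(2332)$ nor a position at distance $\geq 2$ with level $>1$.
\item Since $L''$ is not opposite $L_i$, and $x\Join$ (its projection), the table leaves only $(1223)$ or $(\tfrac32 223)$, $(2223)$.
\end{compactenum}

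\textbf{Main obstacle.} The hard part is this last identification. The honest claim is probably that level $\le 1$ refers to lines at distance exactly one step from opposite, i.e.\ positions whose row in \cref{table} has length $2$: $(1223)$, $(\frac32223)$ and $(2223)$. Every line through $x$ that is not opposite $L_i$, with $x$ special to a point of $L_i$ and $x$ not itself the unique point nearest $L_i$, falls into one of these three. I would verify this by running through Part~III, Case~4 and Case~3 of \cref{mutualpositions}, restricted to lines $L''$ through a point $x$ special to some point of $L_i$ and opposite the others.
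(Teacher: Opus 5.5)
Your criterion $(\ast)$ is correct, and it does settle both (COMB2) and the subcase $M_i\neq M$ of (COMB3). The subcase $M_i=M$, however, is left unproved, and the route you sketch for it does not work.

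Your first target, $\delta(L'',L_i)=(1223)$, is false in general: it holds only for $L''=M_i$. If $L''$ is coplanar with $M_i$, then exactly one point of $L''\setminus\{x\}$ is symplectic to $y$, and one gets $(\frac32 223)$. If $L''$ and $M_i$ lie in a common symp without being coplanar, every point of $L''$ is special to $y$, and one gets $(2223)$. Your items (ii) and (iii) are assertions rather than arguments. For instance, $x\Join y$ alone does not exclude $(2332)$, since $L$ itself contains such an $x$; only $(\ast)$ excludes it. The check you finally defer to, restricted to Cases~3 and~4 of \cref{mutualpositions}, would also miss $(1223)$, which arises in Case~2.

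The missing idea is a one-line observation. Because $\delta(L,L_i)=(2332)$, the point $x\in L''$ is opposite every point of $L_i\setminus\{y\}$, so $\delta(L'',L_i)$ involves an opposite pair of points. Inspecting Classes~I--IV, the only positions containing an entry $\equiv$ are $(2332)$, $(1223)$, $(\frac32 223)$ and $(2223)$. By \cref{table} their levels are $0,1,1,1$. Hence every line through $x$ has level at most $1$ with respect to $L_i$, with no case analysis needed; this is presumably why the paper records the lemma as an observation without proof. Combined with $(\ast)$, this gives the whole lemma.

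Two smaller corrections to $(\ast)$:
\begin{itemize}
\item Instantiate \cref{Tits} with $F=y'$ for some $y'\in L_i\setminus\{y\}$ (so $y'\equiv x$), $F'=x$ and $v=L_i$, and note that $\proj^{y'}_{x}(L_i)=M_i$. Your pairing of $x$ (or $L$) with $L_i$ does not give opposite simplices of the required kind.
\item In your direct argument, the points of $L_i\setminus\{y\}$ are opposite $x$, not symplectic or special to it.
\end{itemize}
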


We will always use \cref{class2} to be able to perform the first combing algorithm, whereas \cref{ADEdistances} will allow us to perform the second combing algorithm.
This is roughly the content of the proof of the next result. 

\begin{prop}\label{proplinesF4}
Every set $T=\{L_0,L_1,\ldots,L_s\}$ of $s+1$ lines in a metasymplectic space, not isomorphic to $\mathsf{F_{4,4}}(\sqrt{s},s)$, or in an exceptional long root subgroup geometry of type $\mathsf{E_6}$, $\mathsf{E_7}$ or $\mathsf{E_8}$, where every line has exactly $s+1$ points, such that every other line is not opposite at least one member of $T$, is a geometric line in the line-Grassmannian geometry, that is, has the property that every other line is either not opposite a unique member of $T$, or opposite no member of $T$. 
\end{prop}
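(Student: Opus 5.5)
The proof mirrors the proof of \cref{longrootlemma}, with points replaced by lines and the combing algorithms as the main tool. Suppose $T$ is not a geometric line of the line-Grassmannian. Since every line is non-opposite at least one member of $T$, there is then a \emph{spoilsport}: a line $L$ opposite at least one member of $T$ and non-opposite at least two members of $T$. Among all spoilsports, choose $L$ to minimise a suitable measure of its non-opposite members, namely the multiset of their levels with respect to $L$ (\cref{combingprep2}), ordered first by the number of non-opposite members and then by their levels. The goal is to construct from $L$ a line opposite every member of $T$, which contradicts the hypothesis.

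\textbf{Step 1: (COMB1) holds.} At most $s$ members of $T$ are non-opposite $L$, and each has at most one projection point on $L$. Opposite members have no projection point, since the pair is in Class II. As $L$ has $s+1$ points, some point $x\in L$ is free for every pair $(L,L_i)$.

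\textbf{Step 2: apply the first combing algorithm.} Form $\cM$ at $x$. Since $L$ is opposite some $L_j$, the member $M_j$ is locally opposite $L$ at $x$. In $\Res(x)$, which is of type $\mathsf{A_{5,3}}$, $\mathsf{D_{6,6}}$, $\mathsf{E_{7,7}}$ or a dual polar space of rank $3$ by \cref{sympsymphex}, we invoke \cref{class2}; the excluded case $\mathsf{F_{4,4}}(\sqrt s,s)$ is exactly what makes $\mathsf{B_{3,3}}(\sqrt s,s)$ residues absent. If $\cM$ is not a geometric line of $\Res(x)$, then (COMB2) holds. The resulting line $L'$ keeps every opposite member opposite by \cref{combingprep4}, and it strictly advances every non-opposite member one step along \cref{table}. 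Either $L'$ is opposite all of $T$, giving the contradiction, or $L'$ is a spoilsport with smaller measure, contradicting minimality. In the latter case one must check that $L'$ is still non-opposite at least two members; if it is non-opposite only one, then picking a further free point and applying \cref{ADEdistances} finishes.

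\textbf{Step 3: $\cM$ is a geometric line of $\Res(x)$.} By \cref{class2}, a set of at most $s$ points always has an opposite. So this case forces $\cM$ to consist of exactly $s+1$ distinct lines, hence the $M_i$ are pairwise distinct, and $L$ itself, being a point of $\Res(x)$, is opposite exactly one member $M=M_j$. Then (COMB3) applies via \cref{ADEdistances}: we find $L''$ locally opposite every member of $\cM\setminus\{M\}$ and not locally opposite $M$. By \cref{combingprep4}, $L''$ remains opposite every $L_i$ with $i\neq j$ that was opposite $L$, and the non-opposite members advance. The key case analysis is whether $L''$ is a spoilsport of smaller measure or is opposite all members but $L_j$. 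In the latter case we iterate, choosing a different free point or a different opposite member to comb back at, so that the roles exchange. Eventually this yields a line for which the set $\cM$ is not a geometric line, returning to Step 2.

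\textbf{Main obstacle.} I expect Step 3 to be the hardest part: controlling the situation where every free point gives a set $\cM$ forming a geometric line of the residue. The approach I would try first is to use the distinctness of the $M_i$ together with the Table to show that the distances $\delta(L,L_i)$ of non-opposite members must be pairwise different levels. I would then show that combing back once strictly lowers the maximal level, so that termination follows by induction on the sum of levels.
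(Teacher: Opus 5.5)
There is a genuine gap, and it sits in the step you dismiss in one sentence, not where you placed the main obstacle.

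\textbf{Step~3 never occurs.} $M_i$ is locally opposite $L$ at $x$ if, and only if, $L_i\equiv L$. So if the $M_i$ are pairwise distinct, then $L$, as a point of $\Res(x)$, is opposite at least one member of $\cM$ and non-opposite at least two. Otherwise $|\cM|\le s$. In either case \cref{class2} gives (COMB2) at every spoilsport. Your claim that $L$ would be ``opposite exactly one member'' inverts the defining property of a geometric line: a point opposite some member is non-opposite exactly one member. So the iteration you sketch in Step~3 is not needed.

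\textbf{The real gap.} The problem is the case where, after the first combing, $L'$ is non-opposite only one member $L_0$. This happens precisely when $L_0$ is the only member at level $\ge 2$ with respect to $L$. Then $L'$ is not a spoilsport, and (COMB2) is no longer guaranteed at $L'$: \cref{class2} only helps when the current line is non-opposite at least two members of $\cM'$. A set $\cM'$ to which $L'$ is non-opposite exactly one member may well be a geometric line of the residue.

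Your remedy, ``a further free point and \cref{ADEdistances} finishes'', does not work. \cref{ADEdistances} only ever produces a line not locally opposite the chosen $M'_j$, hence non-opposite $L_j$, so it never ends the argument by itself. Moreover, if $L_0$ has level $1$ with respect to $L'$ (level $2$ with respect to $L$), combing back at $L'$ makes $L_0$ opposite. It leaves a line non-opposite only $L_j$, which is the same dead end.

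\textbf{The missing idea.} Choose the algorithm \emph{before} combing, at $L$ itself:
\begin{itemize}
\item If at least two members have level $\ge 2$, use the first algorithm; the result is again a spoilsport.
\item If exactly one, $L_0$, does, then some $L_1$ has level $1$, since $L$ is a spoilsport. Run the second algorithm at $L$, combing back at some $M_2$ with $L_2\equiv L$. By \cref{combingprep2} and \cref{combingprep4}, the resulting $L''$ is opposite $L_1$, non-opposite $L_0$ (now at level $\ge 1$) and non-opposite $L_2$ (level $1$). Hence $L''$ is again a spoilsport, and your measure, or simply the maximal level, drops.
\item Once the maximal level is $1$, the first algorithm yields a line opposite all of $T$.
\end{itemize}
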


\begin{proof}
Obviously, a geometric line has the stated property.
So assume now that $T$ is not a geometric line, but every other line is not opposite at least one member of $T$.
The only way in which we can violate the defining property of a geometric line is to assume the existence of a line $L$ not opposite at least $2$ members of $T$ and opposite at least one member of $T$.
We prove that this leads to a contradiction.
The rough idea is to apply the combing algorithms to $L$ and $T$ until we find a line opposite all members of $T$.
Since our proof will be inductive in some sense, it is important that after each application of the combing algorithm, the new line $L$ satisfies the same assumption, that is, the new line $L$ is not opposite at least two members of $T$ and opposite at least one member of $T$, or the proof ends and $L$ is opposite each member of $T$.
(We say in these cases that the new line $L$ is \emph{legal}.) This little condition implies that we cannot blindly run the combing algorithms, but we have to choose the right one.
The way we do this goes as follows. 

We start by noting that a line $M_i\in\cM$ is locally opposite $L$ if and only if $L_i\equiv L$.
Hence, since at least two lines of $T$ are not opposite $L$, and at least one line of $T$ is opposite $L$, \cref{class2} implies that (COMB2) is satisfied.
Also, since opposite lines do not define projection points, and there is at least one line in $T$ opposite $L$, (COMB1) is satisfied.
Moreover, \cref{ADEdistances} allows us to run the second combing algorithm since $T$ contains at least one line opposite $L$. 

Now we combine the two combing algorithms in one overarching algorithm that proves the theorem.
That algorithm goes as follows. 

\begin{compactenum}[$\bullet$]  
\item[$\bullet$] If $T$ contains at least two members at level at least $2$, then we apply the first combing algorithm.
Note that elements of $T$ opposite $L$ remain opposite $L'$, and elements of $T$ at level $k\geq 1$ with respect to $L$ are at level $k-1$ with respect to $L'$.
Hence $L'$ is legal, and the maximum level decreases. 

\item If $T$ contains exactly one member $L_0$ at level at least $2$ (and hence at least one member $L_1$ of level $1$), then we apply the second combing algorithm combing back at an arbitrary $M_2\in\cM$ locally opposite $L$.
Then $L_0$ comes at level at least $1$, $L_1$ at level $0$, and $L_2$ at level $1$.
Hence $L''$ is legal.
Moreover, \cref{combingprep4} guarantees that the maximum level again decreases. 

\item We apply the previous two steps as long as the maximal level is at least $2$.
If the maximal level is or becomes $1$, then we apply the first combing algorithm and obtain a line $L'$ opposite each member of $T$, a contradiction that proves the assertion.
\qedhere
\end{compactenum}
\end{proof}

\subsubsection{The exceptional case $\mathsf{F_{4,4}}(q,q^2)$}\label{F44}

The above does not work for lines of metasymplectic spaces $\Delta$ isomorphic to $\mathsf{F_{4,4}}(q,q^2)$, because we cannot apply \cref{class2} since in the polar space $\mathsf{B_{3,1}}(q,q^2)$, corresponding to the residue of a point $p$, there are sets of $q^2+1$ planes admitting no common opposite plane, and yet not isomorphic to a geometric line (pencil of planes).
The examples arise when $q$ is a power of $2$ and are mentioned in \cref{typeB}(3): they are sets of planes through a common point $b$ forming a spread in a subquadrangle of order $(q,q)$ of the residue at $b$.
We will call such an example an \emph{OBS (ovoidal blocking set)}.
So we have to provide a different proof. 

Note that, viewed in $\Delta$, the point $b$ is a symp, and the elements of an OBS are lines through a common point forming an ovoid in a subquadrangle of order $(q,q)$ of the residue at that common point.
Also, such a set will be called an OBS. 

Also, note that, viewed in $\mathsf{F_{4,1}}(q,q^2)$, the point $p$ is a symp, and the elements of an OBS are planes through a common point $b$ of the symp $p$ forming a spread in a subquadrangle of order $(q,q)$ of the residue at $b$.

We first observe that it is really an example of a blocking set.

\begin{lem}\label{exceptionalexample}
Let $T$ be a set of $q^2+1$ lines of $\Delta=\mathsf{F_{4,4}}(q,q^2)$ incident with a common point $b$ and forming an ovoid in a subquadrangle of the residue $\Res_\xi(b)$ at $b$ of some symp $\xi$ through $b$.
Then no line of $\Delta$ is opposite each member of $T$.
\end{lem}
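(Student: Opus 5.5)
The plan is to work entirely at the point $b$ and reduce the statement to the blocking property in the residue, via \cref{Tits} and \cref{corolTits}. Let $M$ be an arbitrary line of $\Delta$; I will show that $M$ fails to be opposite at least one member of $T$. If $M$ is not far from $b$, then no point of $M$ is opposite $b$. Since each $L\in T$ contains $b$, opposition of $M$ and $L$ forces every point of $M$ to be opposite some point of $L$ (class $(2332)$). Hence some $L\in T$ fails to be opposite $M$, and in fact every $L\in T$ does, whenever $M$ contains a point not opposite $b$ which is also not opposite other points of the lines of $T$. This needs a little care, so I would use the mutual-position list instead. In the position $(2332)$ the projection point structure shows that every point of $L$ is opposite a unique point of $M$. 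So if $M\equiv L$, then $b$ is opposite some point $m\in M$.

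\textbf{Case 1: $b$ is opposite no point of $M$.} Then $M$ is opposite no member of $T$, and we are done.

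\textbf{Case 2: $b$ is opposite some point $m\in M$.} By \cref{Tits}, a line $L\ni b$ is opposite $M$ if and only if $L$ is locally opposite $\proj^m_b(M)$ in $\Res(b)$; here both are viewed as joinable to $b$ (for $M$ with respect to the simplex through $m$), and the type of the projection is computed from \cref{Tits}. Concretely, $\proj^m_b(M)$ is an object of $\Res(b)$, and I would identify its type. $\Res(b)$ is the polar space $\mathsf{B_{3,1}}(q^2,q)$, or its dual $\mathsf{B_{3,3}}$, depending on the convention. The lines of $\Delta$ through $b$ correspond to planes of that polar space, and opposition in type $\mathsf F_4$ is type-preserving, so $\proj^m_b(M)$ is again a line through $b$. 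Thus $M$ is opposite all of $T$ if and only if the line $\proj^m_b(M)$ through $b$ is locally opposite all members of $T$ in $\Res(b)$.

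It therefore remains to show that no element of $\Res(b)$ of the relevant type is opposite every member of $T$, viewed in $\Res(b)$. This is exactly the blocking property of the OBS in \cref{typeB}(3)(b): the members of $T$ form an ovoid of a subquadrangle of order $(q,q)$ in the rank-$2$ residue $\Res_\xi(b)$. So they are a spread of a full dually-symplectic subquadrangle in the corresponding $\mathsf B_2$ residue, that is, the residue of the vertex $\xi$ inside $\Res(b)$. Applying \cref{corolTits} with $F=\{b,\xi\}$, and then once more with $F=\{b\}$, the non-existence of a common opposite in the $\mathsf B_2$ residue lifts to $\Res(b)$, and then to $\Delta$. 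In fact this last double application of \cref{corolTits} already gives the whole statement directly, bypassing the case split above. Since $T$ lies in the residue of the simplex $\{b,\xi\}$ of the building, \cref{corolTits} states that $T$ admits an opposite vertex in $\Delta$ if and only if it admits one in that rank-$2$ residue. So I would simply invoke \cref{corolTits} for $F=\{b,\xi\}$.

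The main obstacle is verifying the rank-$2$ claim itself: in the generalised quadrangle $\Res_\xi(b)$ of order $(q^2,q)$ (or its dual), an ovoid $O$ of a subquadrangle $Q'$ of order $(q,q)$ admits no point opposite all its members. I would argue as follows. A point $z$ not collinear with any point of $O$ cannot lie in $Q'$, since $O$ is an ovoid there. If $z\notin Q'$, then $z$ is collinear with the $q^2+1=|O|$ points of an ovoid of $Q'$ (the standard fact that points outside a subquadrangle of order $(q,q)$ in a GQ of order $(q,q^2)$, dualised appropriately, are collinear with an ovoid of it). The parameters must be matched with the dualisation chosen in \cref{typeB}(3). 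Since this is precisely the example listed in \cref{typeB}(3) as a blocking set, I would cite it rather than redo the counting.
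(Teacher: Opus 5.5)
Your final argument, namely applying \cref{corolTits} to the simplex $\{b,\xi\}$ (whose residue is the quadrangle $\Res_\xi(b)$) and using that $T$ is blocking there, is exactly the paper's one-line proof, and the preliminary case split via \cref{Tits} is indeed unnecessary. Your sketch of the rank-2 fact is wrong as stated, though. In $\Res_\xi(b)$ (order $(q^2,q)$, with the members of $T$ as points), a point outside the ideal subquadrangle $Q'$ is collinear only with the $q+1$ points of a single line of $Q'$, not with an ovoid; and collinearity with \emph{some} ovoid of $Q'$ would not produce a point of $O$ anyway. The correct reason is that every point lies on a line of $Q'$ (dually, every line of $Q^-(5,q)$ meets the hyperplane section $Q(4,q)$), and that line contains a point of $O$.
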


\begin{proof}
This follows directly from \cref{corolTits}.
\end{proof}

We now show a converse to \cref{exceptionalexample}, that is, any set $T$ of $q^2+1$ lines of $\Delta=\mathsf{F_{4,4}}(q,q^2)$ with the property that no line of $\Delta$ is opposite each member of $T$ is either a planar line pencil, or an OBS. 

Let $T=\{L_0,\ldots,L_t\}$, $t=q^2$, be a set of lines of $\Delta\cong \mathsf{F_{4,4}}(q,q^2)$ admitting no common opposite line. 

Note that each point of a singular subspace $S$ is 
opposite some point of a given symp $\xi$ if, and only if, $S$ and $\xi$ are far.
Indeed, 
if there is a symp through $S$ opposite $\xi$, then clearly, each point of $S$ is opposite some point of $\xi$.
Now suppose each point of $S$ is opposite some point of $\xi$.
Pick $x\in S$ and let $\zeta$ be the unique symp through $x$ intersecting $\xi$.
Our assumption implies that $S$ and $\zeta$ intersect just in $x$.
Hence we can find a symp $\zeta'$ through $S$ locally opposite $\zeta$ in $x$.
Then $\zeta'$ is opposite $\xi$ by \cref{Tits}. 

\begin{lem}\label{projectionsbxi}
There exist a point $b$ and a symp $\xi$ in $\Delta$, with $b\in\xi$, such that both $b$ and $\xi$ are far from each member of $T$.
For each such $b$ and $\xi$ we have that the projections of the members of $T$ onto $b$ and $\xi$, respectively, form either both a planar line pencil, or both an OBS.
\end{lem}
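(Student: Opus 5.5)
Throughout, $T=\{L_0,\ldots,L_t\}$, $t=q^2$, is a set of lines of $\Delta\cong\mathsf{F_{4,4}}(q,q^2)$ with no common opposite line. The plan is to build $b$ and $\xi$ step by step using \cref{3.30}, and then to transport the blocking property into the residues of $b$ and $\xi$ via \cref{Tits} and \cref{corolTits}.

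\textbf{Existence of $\xi$.} Choose for each $L_i$ a symp $\zeta_i\supseteq L_i$. Symps are vertices of the building, and the points of the dual metasymplectic space $\mathsf{F_{4,1}}(q,q^2)$ are these symps. Its lines carry $q^2+1$ points if the dualisation is arranged so that the cotype-$\{1\}$ panels are $q^2$-thick. If the $\zeta_i$ can be chosen not to form a geometric line of $\mathsf{F_{4,1}}$, then \cref{longrootlemma} gives a symp $\xi$ opposite every $\zeta_i$. If $\xi$ is opposite $\zeta_i\ni L_i$, then $\xi$ is far from $L_i$. The choice of the $\zeta_i$ is free, because each $L_i$ lies in many symps. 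By \cref{kasmal} and \cref{geomlinesexc}, the only obstruction is that the $\zeta_i$ form a line of $\mathsf{F_{4,1}}$, i.e.\ all contain a common plane. Changing the symp $\zeta_0$ through $L_0$ destroys this. If the panel thickness does not fit, I would fall back on \cref{3.30}, which covers sets of at most $q$ vertices, after grouping lines that share symps.

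\textbf{Existence of $b\in\xi$.} Each $L_i$ is far from $\xi$. By \cref{Tits}, the projection $L_i':=\proj_\xi(L_i)$ is a line of $\xi$ (a plane of the polar space $\mathsf{B_{3,1}}(q,q^2)$ in the dual picture). Moreover, a singular subspace or point of $\xi$ is opposite $L_i$ exactly when it is locally opposite $L_i'$ in $\xi$. So we need a point $b\in\xi$ that is not collinear, within $\xi$, to any point of the projections. Here we need the count of $q^2+1$ objects against the order of the polar space $\xi$. Lines of $\xi$ (in the relevant model) have $q^2+1$ points... I would work in the polar space $\mathsf{B}_{3}$ with $q+1$ points per line and use the classical fact that a set of $q^2+1$ lines cannot cover $\xi$ by its perps. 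Such a point is then far from each $L_i$. The cleanest route is to apply \cref{typeB}(3) in $\Res(\xi)$ with $j$ the type of lines: either some point is opposite all projections, or the projections form a geometric line (a planar pencil) or an OBS. In both exceptional cases a point $b$ of $\xi$ off the relevant plane, respectively off the span of the OBS's subquadrangle, is still far from every $L_i$ by \cref{Tits}, since farness only requires opposition to one vertex joinable to $L_i$.

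\textbf{Projections.} Fix such $b\in\xi$. Since no line is opposite all $L_i$, \cref{corolTits} (via \cref{Tits}) shows that no line through $b$ is locally opposite all of the projections $\proj_b(L_i)$ in $\Res(b)\cong\mathsf{B}_{3,3}$ or its dual. It also shows that no element of $\Res(\xi)$ is opposite all of the $\proj_\xi(L_i)$. Applying \cref{typeB}(3) in both residues, each family of projections is a geometric line, hence (in the non-split case $q^2$ versus $q$) a planar pencil, or an OBS. Both families are nondegenerate, with $t+1$ distinct members, by the argument of \cref{proplinesF4}: coinciding projections would leave at most $q^2$ distinct objects, and \cref{3.30} would then produce an opposite, a contradiction.

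\textbf{Main obstacle.} The hard step is showing that the two types agree, i.e.\ that a pencil at $b$ with an OBS at $\xi$ (or the converse) is impossible. For this I would relate the two projections through the flag $\{b,\xi\}$. Since $b\in\xi$, the projection of $L_i$ onto $b$ is determined by the projection of $\proj_\xi(L_i)$ onto $b$ inside $\xi$. The induced map from lines of $\xi$ far from $b$ to the residue of $b$ preserves the relation of being contained in a common plane. A planar pencil in $\xi$ therefore maps to a pencil, and an OBS cannot map to a pencil, because collinearity among the members would be created. Making this compatibility precise is where I expect the real work to lie.
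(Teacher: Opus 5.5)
\textbf{Verdict: genuine gaps.} Your construction of $\xi$ and of $b$ does not work, and the type comparison in your last step is false for arbitrary admissible pairs.

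\textbf{Existence of $\xi$.} The thickness cannot be ``arranged''. In $\Delta\cong\mathsf{F_{4,4}}(q,q^2)$ lines have $q^2+1$ points. A plane, however, lies in only $q+1$ symps: the symps through a point $p$ are the points of the polar space $\mathsf{B_{3,1}}(q,q^2)$ of $\Res(p)$, whose lines have $q+1$ points. So in the dual geometry your $q^2+1$ symps $\zeta_i$ form a set much larger than a line. \cref{longrootlemma}, \cref{kasmal} and \cref{geomlinesexc} say nothing about such a set, and \cref{3.30} only handles $q$ symps.

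\textbf{Existence of $b\in\xi$.} This step is not coherent. In a polar space every point is collinear to some point of every line, so no $b$ can avoid collinearity with all points of the projections. Also, \cref{typeB}(3) applied to lines of $\xi$ returns a line, not a point. The correct criterion, from \cref{Tits}, is that $b\in\xi$ is far from $L_i$ exactly when $b$ is not collinear to all points of $\proj_\xi(L_i)$.

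\textbf{The type comparison.} Here the approach fails for a more basic reason. By \cref{Tits}, $\xi$ is far from $L_i$ precisely when $\proj_b(L_i)\not\subseteq\xi$, so $\proj_b(L_i)$ is not obtained from $\proj_\xi(L_i)$ inside $\xi$. What does hold (this is the claim in the paper's proof) is weaker: the line of $\xi$ through $b$ meeting $\proj_\xi(L_i)$ is the line of $\xi$ through $b$ coplanar with $\proj_b(L_i)$. That only transports $\proj_b(T)$ through the projection, inside $\Res(b)$, onto the symp $\xi$, and this projection can collapse an OBS.

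Here is how it collapses. Suppose $\proj_b(T)$ is an OBS in a symp $\zeta$. Let $\xi\neq\zeta$ meet $\zeta$ in a plane $\pi\ni b$ containing no $\proj_b(L_i)$; such $\pi$ exists because lines of $\Res_\zeta(b)$ outside the ideal subquadrangle miss it. Then $\xi$ is still far from every $L_i$. But all the lines of $\xi$ through $b$ coplanar with some $\proj_b(L_i)$ lie in $\pi$, and one checks that the projections of $T$ onto $\xi$ then form a planar pencil. So no argument valid for all admissible pairs can give agreement; it must be built into the choice of $\xi$.

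\textbf{What the paper does instead.} It reverses your order:
\begin{itemize}
\item It picks points $b_i\in L_i$ not forming a geometric line. \cref{longrootlemma} then applies, since point rows have $q^2+1$ points, and gives $b$ opposite all $b_i$.
\item \cref{typeB}(3) in $\Res(b)$ shows $\proj_b(T)$ is a planar pencil or an OBS, hence lies in a symp $\zeta\ni b$.
\item $\xi$ is chosen locally opposite $\zeta$ at $b$. By \cref{Tits}, $\proj_{b_i}(\zeta)\supseteq L_i$ is opposite $\xi$, so $\xi$ is far from every $L_i$.
\item The projection from $\zeta$ to the locally opposite $\xi$ inside $\Res(b)$ is an isomorphism. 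Together with the commutation claim, this transfers the type.
\end{itemize}
The agreement is thus established for pairs of this kind, not for every admissible pair.
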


\begin{proof}
We can choose points $b_i$ contained in $L_i$ such that the $b_i$ do not form a geometric line in $\Delta$.
Then \cref{longrootlemma}, \cref{kasmal} and \cref{geomlinesexc} yield a point $b$ opposite all $b_i$, $i\in\{0,1,\ldots,s\}$.
So, $b$ is far from each member of $T$.
Now set $T'=\{L_i'=\proj^{b_i}_b(L_i)\mid i\in\{0,1,\ldots,s\}\}$.
If $T'$ is not an OBS and not a planar line pencil, then we can find a line $L$ through $b$ locally opposite each member of $T'$, and so, by \cref{Tits}, $L$ is opposite each member of $T$, a contradiction.
We conclude that $T'$ is contained in a symp $\zeta$ through $b$.
Now let $\xi$ be a symp locally opposite $\zeta$ at $b$.
Then, again by \cref{Tits}, the projection $\xi_i$ of $\zeta$ onto $b_i$ is opposite $\xi$.
However, $\xi_i$ contains $L_i$ as $\zeta$ contains $L_i'$, $i=0,1,\ldots,s$.
Hence $\xi$ is far from each member of $T$. 

Now let $L_i''$ be the projection of $L_i$ onto $\xi$ and let $L_i'''$ be the unique line of $\xi$ collinear to $L_i'$.
We claim that $L_i''$ 
intersects $L_i'''$, which then shows that the projection of $T'$ onto $\xi$ coincides with the projection onto $b$ of the projection of $T$ onto $\xi$, and hence $T'$ is isomorphic to the projection of $T$ onto $\xi$ and the lemma follows.

Let $M'''_i$ be the unique line through $b$ intersecting $L_i''$, say in the point $x_i''$.
Since $x_i''\in L_i''$, there is a unique point $x_i\in L_i$ symplectic to $x_i''$.
Then $b$ is collinear to a unique line $K_i$ of $\xi(x_i,x_i'')$ through $x_i''$, and $x_i$ is collinear to a unique point $x_i'$ of $K_i$.
Now $x_i\perp x_i'\perp b$ defines a path of length $2$ from $x_i\in L_i$ to $b$, hence $bx_i'=L_i'$ and $M_i''=L_i'''$ and the claim follows.
\end{proof}

\begin{lem}\label{pointorsymp}
Each pair of members of $T$ is either contained in a symp, or has a point in common. 
\end{lem}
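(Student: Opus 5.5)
We argue by contradiction: assume $L_0,L_1\in T$ are disjoint and not contained in a common symp, and construct a line opposite every member of $T$. The tool is \cref{projectionsbxi}. For any pair $(b,\xi)$ with $b\in\xi$ and both $b$ and $\xi$ far from every member of $T$, the projections of $T$ onto $b$ form a planar line pencil or an OBS. In both cases these are $q^2+1$ lines through $b$ that lie in one symp $\zeta$ through $b$ and are pairwise \emph{distinct}. For an OBS this holds because an ovoid has $q^2+1$ distinct points; for a pencil it holds because a pencil has exactly $q^2+1$ lines. So the projection map $L_i\mapsto \proj_b(L_i)$ is injective, and no two of its images are $\zeta$-opposite or ``generic''. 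In particular the images of $L_0$ and $L_1$ either meet in $b$ and span a plane, or they meet in $b$ and lie in a common symp $\zeta$.

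The first step is to read off the mutual position $\delta(L_0,L_1)$ from \cref{mutualpositions}, restricted to the metasymplectic list. Because $L_0$ and $L_1$ are disjoint and not in a common symp, the cases $(0110)$, $(0111)$, $(011\frac32)$, $(111\frac32)$ and $(1\frac32\frac321)$ are excluded. The remaining possibilities are $(0112)$ (meeting and locally opposite), $(1\frac3212)$, $(11\frac322)$, $(1\frac32\frac322)$, $(1\frac3222)$, $(12\frac322)$, $(1223)$, $(\frac3222\frac32)$, $(\frac32222)$, $(\frac32223)$, $(2223)$ and $(2332)$. Strictly, $(0112)$ must also be treated, since ``having a point in common'' only requires meeting; if meeting lines always satisfy the conclusion, we drop it. For each remaining position, we choose the point $b$ and the symp $\xi$ so that $\proj_b(L_0)$ and $\proj_b(L_1)$ are locally opposite at $b$, or at least not in a common symp through $b$. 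This contradicts \cref{projectionsbxi}.

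The construction uses the same mechanism as the combing algorithms. We pick free points $x_0\in L_0$ and $x_1\in L_1$ that are opposite. This is possible in every position containing an entry $3$, and in the other positions we first reduce using the arrows of \cref{table}. We then take $b$ opposite $x_0$ and $x_1$, chosen generically so that $b$ is also far from the other $L_i$. The freedom in this choice comes from \cref{3.30} and \cref{longrootlemma}: we choose the $b_i\in L_i$ not to form a geometric line, and we keep the freedom in $b_0,b_1$. By \cref{Tits}, $\proj_b(L_j)$ is the local image of $L_j$ transported via $\proj^{x_j}_b$. Comparing two opposite points $x_0,x_1$ through a common $b$ then turns $\delta(L_0,L_1)$ into a local position at $b$. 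For the opposite pair $(2332)$, choosing $b$ in the equator $E(x_0,x_1)$ or far from it lets us arrange that the local lines at $b$ are locally opposite, and this is the contradiction. For positions containing a projection point, we instead use $b_0=x_0$ free and exploit that the projection changes as $b_0$ varies along $L_0$.

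The main obstacle is this case analysis: verifying, position by position, that some admissible $b$ makes the two projected lines locally non-cospherical at $b$. The cleanest route I would try first is to reduce to one case. Varying $b_0\in L_0$ while $b_1$ stays fixed gives $q+1$ projected lines $\proj_b(L_0)$ that must all lie in $\zeta$ together with the fixed $\proj_b(L_1)$. Unless $L_0$ and $L_1$ share a symp or a point, this should force a rigidity that fails by a counting argument: $q+1$ choices against the limited structure of an ovoid or pencil. If that fails, I would fall back to handling $(2332)$ and $(2223)$ directly and reducing the other positions to these via the first column of \cref{table}.
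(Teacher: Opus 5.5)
You have a correct reduction but not a proof: the step that carries the whole content of the lemma is never carried out. The reduction itself is sound. If some point $b$, far from every member of $T$, had $\proj_b(L_0)$ and $\proj_b(L_1)$ locally opposite, then the proof of \cref{projectionsbxi} would produce a line opposite all of $T$, a contradiction. The missing step is to show that such a $b$ exists, and there are several concrete problems.

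\begin{itemize}
\item \textbf{Positions without an entry $3$.} For the seven admissible positions of this kind (e.g.\ $(1\frac3212)$, $(\frac3222\frac32)$, $(12\frac322)$) you only propose to reduce along the arrows of \cref{table}. That tool is not available here. Those arrows replace $L$ by a new line $L'$ through a free point, whereas $L_0,L_1$ are fixed members of $T$. Nothing about $T$, in particular the absence of a common opposite line, survives replacing $L_0$ by $L'$.
\item \textbf{Positions with an entry $3$.} The sketch is self-contradictory: $b$ is required to be opposite $x_0$ and $x_1$, yet also to lie in $E(x_0,x_1)$, whose points are symplectic to $x_0$ and $x_1$.
\item \textbf{Farness from the other lines.} $b$ must at the same time be far from the other $q^2-1$ members of $T$. \cref{longrootlemma} gives some point opposite chosen $b_i\in L_i$, but it gives no control over the local position of $\proj_b(L_0)$ relative to $\proj_b(L_1)$. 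You give no argument that the two requirements can be met simultaneously.
\item \textbf{The "varying $b_0$" idea and the counting fallback.} $\proj_b(L_0)$ does not depend on which $b_0\in L_0$ opposite $b$ you use. It is the line $b[k,b]$, where $k$ is the unique point of $L_0$ not opposite $b$; $k\Join b$ by \cref{joinjoin}. If instead $b$ moves, then $\proj_b(L_1)$ and $\zeta$ move with it, so there is no fixed configuration in which to count. Also, lines here have $q^2+1$ points, not $q+1$.
\end{itemize}

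The missing idea is to accumulate the pencil/OBS constraint over many points rather than seek one point giving an immediate contradiction. The paper proceeds as follows.
\begin{itemize}
\item It dualises to $\mathsf{F_{4,1}}(q,q^2)$ and projects the planes $\alpha_i$ onto the far symp $\xi$ from \cref{projectionsbxi}. There they form a pencil or OBS through a common point $t$.
\item It uses the pencil/OBS constraint at a whole family of auxiliary points $p\in\xi$: those collinear to $z_0\in\alpha_0'$ and $z_1\in\alpha_1'$ but not to $t$. From this it extracts only the weak information that the associated points $y_0,y_1$ are collinear or symplectic.
\item It shows $\xi(p_0,z_0)$ and $\xi(p_1,z_1)$ are opposite, which upgrades this to $y_0\pperp y_1$.
\item Letting $p$ vary, the ``being symplectic'' isomorphism between these two symps maps $p_0^\perp\cap z_0^\perp$ onto $p_1^\perp\cap z_1^\perp$.
\item In $Q^-(7,q)$ one has $\{p_i,z_i\}^{\perp\perp}=\{p_i,z_i\}$, which gives $p_0\pperp p_1$.
\item Hence every two points of $\alpha_0\cup\alpha_1$ are equal, collinear or symplectic, so the planes meet or share a symp.
\end{itemize}
This uniform argument replaces the position-by-position analysis your sketch would need and never supplies.
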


\begin{proof}
It is convenient to consider the dual situation, that is, $T$ corresponds to a set $T^*$ of planes $\{\alpha_0,\ldots,\alpha_s\}$, $s=q^2$, of $\mathsf{F_{4,1}}(q,q^2)$.
By \cref{projectionsbxi} we can find a symp $\xi$ far from each member of $T^*$.
Hence we can project all planes $\alpha_i$ onto $\xi$ and obtain planes $\alpha_i'$. 
By \cref{corolTits} and \cref{typeB}, the $\alpha_i'$ form a full plane pencil or an OBS.
In particular, all planes $\alpha_i'$ contain a common point $t$, and for each line $L'_0$ of $\alpha_0'$ through $t$, except for the possible intersection line with $\alpha_1'$, there exist $q^2$ lines $L'_1$ of $\alpha_1'$ through $t$ not coplanar with $L'_0$.
Let $z_0$ and $z_1$ be two arbitrary points on $L'_0$ and $L'_1$, respectively.
Select a point $p$ in $\xi$ not collinear to $t$, but collinear to both $z_0$ and $z_1$.
Let $p_i$ and $x_i$ be the unique points in $\alpha_i$ symplectic to $z_i$ and $t$, respectively, $i=0,1$.
Since $p$ is collinear to a unique line of $\xi(p_i,z_i)$, there is a unique point $y_i$ in $\xi(p_i,z_i)$ collinear to $p_i, z_i$ and $p$, $i=0,1$.
By the ``dual'' of \cref{projectionsbxi}, the points $y_0$ and $y_1$ are either collinear or symplectic.
We claim that the symps $\xi(p_0,z_0)$ and $\xi(p_1,z_1)$ are opposite. Indeed, we observe that (1) the point $z_1$ is the unique point of $\xi(p_0,z_0)$ symplectic to $z_0$, and (2) $z_0$ is the unique point of $\xi(p_1,z_1)$ symplectic to $z_1$. It already follows that the symps cannot be adjacent. If they meet in a unique point, then, by Observation~(1), $z_0$ is symplectic to the intersection point, which must then be $z_1$, contradicting Observation~(2). If the symps are special, say $\xi(p_0,z_0)$ is adjacent to $\zeta$, and the latter to $\xi(p_1,z_1)$, then a similar argument shows that $z_1$ is contained in the plane $\zeta\cap\xi(p_1,z_1)$, contradicting the second observation again. The claim follows. 

It now easily follows that $y_0$ and $y_1$ are symplectic.
Let $t_i$ be the unique point of $\xi(p_i,z_i)$ collinear to $t, p_i$ and $z_i$, $i=0,1$.
Then, varying $p$ over all points of $\xi$ not collinear to $t$, but collinear to both $z_0$ and $z_1$, we deduce that  $p_0^\perp\cap z_0^\perp\setminus t_0^\perp$ corresponds to $p_1^\perp\cap z_1^\perp\setminus t_1^\perp$ under the projection map from $\xi(p_0,z_0)$ to $\xi(p_1,z_1)$ given on the points by ``being symplectic''.
It easily follows that $p_0^\perp\cap z_0^\perp$ corresponds to $p_1^\perp\cap z_1^\perp$.
Hence $(p_0^\perp\cap z_0^\perp)^\perp$ corresponds to $(p_1^\perp\cap z_1^\perp)^\perp$.
Since symps are isomorphic to quadrics $Q^-(7,q)$, which are embedded in non-degenerate (symplectic) polarities, we have $(p_i^\perp\cap z_i^\perp)^\perp=\{p_i,z_i\}$, 
$i=0,1$.
Since $z_0$ corresponds to $z_1$, we conclude that $p_0$ and $p_1$ are symplectic. 

We have shown that $p_0$ is symplectic to all points of $\alpha_1$, except possibly the points of a unique line.
It then easily follows that $p_0$ is collinear or symplectic to any given point of $\alpha_1$.
By the arbitrariness of $p_0$ in $\alpha_0\setminus\{x_0\}$, 
we deduce that any pair of points in $\alpha_0\cup\alpha_1$ is symplectic, collinear or identical.
Consider any symp $\xi_1$ through $\alpha_1$.
If $p_0\in\xi_1$, then $p_0$ is collinear to at least a line of $\alpha_1$.
If $p_0\notin\xi_1$, then it must be close to it and the line $p_0^\perp\cap\xi_1$ must be contained in $\alpha_1$.
Hence in any case, there is a line of $\alpha_1$ collinear to $p_0$, and so we can assume that $\xi_1$ contains $p_0$.
Suppose some point $r_0\in\alpha_0$ does not belong to $\xi_1$.
Then $r_0^\perp\cap\xi_1\subseteq\alpha_1$, as before, showing $p_0\in \alpha_1$.

Hence we have shown that either $\alpha_0$ and $\alpha_1$ are contained in a symp, or they have a point in common.
This means that, if $\alpha_i$ corresponds to $L_i$, then $L_0$ and $L_1$ either intersect in a point, or are contained in a common symp.

The assertion follows by the arbitrariness of $L_0$ and $L_1$ in $T$.
\end{proof}

We can now classify the blocking sets of lines of size $q^2+1$ in $\mathsf{F_{4,4}}(q,q^2)$.

\begin{theorem}\label{F44exception}
Let $T$ be a set of $q^2+1$ lines of $\Delta=\mathsf{F_{4,4}}(q,q^2)$.
Then all members of $T$ are incident with a common point $b$ and form either a planar line pencil, or an ovoid in a subquadrangle of the residue $\Res_\xi(b)$ at $b$ of some symp $\xi$ through $b$ if, and only if, no line of $\Delta$ is opposite each member of $T$.
\end{theorem}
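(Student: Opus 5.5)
The direction ``if'' is quick. A planar line pencil is a line of the line-Grassmannian, and hence a geometric line. If $T$ is an ovoid in a subquadrangle of $\Res_\xi(b)$, then \cref{exceptionalexample} applies. In both cases no line is opposite all members of $T$. For the converse, assume no line of $\Delta$ is opposite each member of $T$. I would first prove that the members of $T$ pass through a common point $b$. Once this is known, $T$ lies in $\Res_\Delta(b)$, which is a dual polar space of rank $3$, viewed as a building of type $\mathsf{B_3}$ with the lines through $b$ as vertices of extreme type. \cref{corolTits} then says that $T$ admits no opposite in that residue. \cref{typeB}(3) then gives two options: either $T$ is a geometric line there, or it is the exceptional spread/ovoid configuration in a $\mathsf{B_2}$ residue, that is, in $\Res_\xi(b)$ for a symp $\xi\ni b$. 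By \cref{typeB}(1), a geometric line of that residue is an ordinary line, since the symps of $\mathsf{F_{4,1}}(q,q^2)$ are not symplectic. An ordinary line there is a planar line pencil of $\Delta$ at $b$. The other option is exactly the OBS. This gives the statement.

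So the core is concurrency. By \cref{pointorsymp}, every pair $L_i,L_j$ either meets or lies in a common symp. I would distinguish two cases.

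\emph{Case 1: $L_0\cap L_1=\{b\}$.} I would fix a point $b'$ and a symp $\xi'$ far from all members of $T$, as provided by \cref{projectionsbxi}. The projections of $T$ onto $b'$ form a planar line pencil or an OBS, so in particular they all pass through a common point of $\Res(b')$. By \cref{Tits}, the projection from $b$ onto an opposite point is an isomorphism of residues. I would compare the projections of $L_0,L_1$ and of a third member $L_i$ with the projection of a line through $b$. From this I want to force $b\in L_i$: otherwise some $L_i$ lies with $L_0$ in a symp without meeting it, and varying $b'$ should then destroy the common structure of the projections. If this is awkward, a more robust route is to treat a single triple $\{L_0,L_1,L_i\}$ directly. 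For such a triple I would build a line $L$ through a suitable point of $L_0$ that is opposite $L_i$ but not opposite $L_0$ or $L_1$, and then use the combing argument of \cref{proplinesF4} to push $L$ to a line opposite all of $T$. That contradiction shows $b\in L_i$.

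\emph{Case 2: the members of $T$ are pairwise disjoint,} so every pair lies in a symp. If all lie in a single symp $\zeta$, then $\zeta$ is a residue, a polar space $Q^-(7,q)$, and its line-Grassmannian is a $\mathsf{B_{3,2}}$ geometry. By \cref{corolTits} and \cref{typeB}(3), $T$ would then be a geometric line of $\zeta$, hence, by \cref{typeB}(1) and non-symplecticity, an ordinary pencil, which contradicts disjointness. Otherwise there are three members lying pairwise in distinct symps. For this configuration I would construct a line opposite all of $T$ by the combing algorithms: the needed hypotheses should follow from \cref{ADEdistances} applied in the dual polar space residues (these are not of type $\mathsf{B_{3,3}}(\sqrt s,s)$ there, since lines have $q+1$ points), together with the projection picture of \cref{projectionsbxi}.

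I expect Case 2 to be the main obstacle. There \cref{class2} is unavailable, precisely because of the OBS, so the condition (COMB2) must be verified by hand. The first thing I would try is to check that the auxiliary set $\cM$ never forms an OBS when the $L_i$ are pairwise disjoint.
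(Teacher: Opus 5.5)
Your outer frame is sound and agrees with the paper. The easy direction is \cref{exceptionalexample}, together with the standard fact that a planar line pencil is a geometric line. Once the members of $T$ are known to lie in the residue of a point, \cref{corolTits} and \cref{typeB}(3) finish as you say. The gap is the step you yourself call the core: neither of your cases establishes concurrency.

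Projecting onto a far \emph{point} $b'$ turns every member of $T$ into a line through $b'$. That does not directly record whether two members meet, so ``varying $b'$ should destroy the common structure'' is not an argument.

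Your fallback in Case~1 and all of Case~2 rely on the combing algorithms of \cref{proplinesF4}. There (COMB2) is supplied only by \cref{class2}, which is exactly what fails here. The point residuals of $\mathsf{F_{4,4}}(q,q^2)$ are dual polar spaces with $q^2+1$ points per line, i.e.\ precisely the excluded $\mathsf{B_{3,3}}(\sqrt s,s)$ with $s=q^2$; your parenthetical claim to the contrary is wrong. So the auxiliary set $\cM$ may be an OBS, and the algorithm can stall.

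Moreover, a line opposite $L_i$ but not opposite $L_0,L_1$ only violates the geometric-line property. $T$ need not have that property (an OBS is a blocking set but not a geometric line), so this gives no contradiction by itself. Case~1 also ignores the configuration in which $L_i$ meets $L_0$ and $L_1$ in two distinct points other than $b$.

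The missing idea, which is the paper's route, is to project onto the far \emph{symp} and to aim first at pairwise intersection. Suppose $L_0\cap L_1=\varnothing$; then by \cref{pointorsymp} both lie in a symp $\zeta$. By \cref{projectionsbxi} there is a symp $\xi$ far from all members of $T$ such that the projections of the members of $T$ onto $\xi$ form a planar line pencil or an OBS, hence pairwise meet. Since projecting $\zeta$ onto $\xi$ is an isomorphism of polar spaces, $L_0$ and $L_1$ would then meet, a contradiction.

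This step needs $\xi\equiv\zeta$. In the proof of \cref{projectionsbxi}, $\xi$ only has to be locally opposite one prescribed symp at $b$. Since $b$ is opposite a point of $L_0\subseteq\zeta$, you may also make $\xi$ locally opposite the projection of $\zeta$ onto $b$, and then $\xi\equiv\zeta$ by \cref{Tits}.

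Pairwise meeting lines are concurrent or coplanar, since three members forming a triangle force all others into its plane. The residue of that point or plane, together with \cref{corolTits}, \cref{typeAD} and \cref{typeB}, then concludes; no combing is needed.

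A minor slip: the symps of $\mathsf{F_{4,4}}(q,q^2)$ are Hermitian polar spaces with $q^2+1$ points per line, not $Q^-(7,q)$. In your single-symp subcase, \cref{typeB}(3)(a) therefore also allows precisely an OBS. This is harmless there because its members are concurrent.
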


\begin{proof}
The ``only if'' part is \cref{exceptionalexample}.
We now show the ``if'' part.
We first claim that each pair of members of $T$ intersect nontrivially.
Indeed, we may assume for a contradiction that $L_0$ and $L_1$ do not intersect.
Then by \cref{pointorsymp} they are contained in a common symp $\zeta$.
\cref{projectionsbxi} yields a symp $\xi$ far from
each member of $T$.
Also, the same \cref{projectionsbxi} implies that the projection of $L_0,L_1$ from $\zeta$ onto $\xi$ is a pair of intersecting lines.
Since the projection from $\zeta$ to $\xi$ is an isomorphism of polar spaces, this implies that $L_0$ and $L_1$ also intersect.
The claim follows. 

We next claim that all members of $T$ are either contained in a plane, or contain a common point.
Suppose the latter does not hold.
Then there are three lines $L_0,L_1,L_2$ forming a triangle in a plane.
Clearly, all other members of $T$ have to be contained in that plane.
The claim is proved.

Since we now have that $T$ belongs to a point residual, or the residue of a plane, the theorem follows from \cref{corolTits}, \cref{typeAD}(2) and \cref{typeB}. 
\end{proof}
\color{black}

\subsubsection{Geometric lines}\label{geomlineslines}

We now classify geometric lines in the line-Grassmannian of hexagonic Lie incidence geometries.
This will follow from the classification of round-up triples of lines.
 
\begin{lem}\label{RuT0} 
Let $\{L_1,L_2,L_3\}$ be a round-up triple of lines in an exceptional hexagonic Lie incidence geometry $\Delta$ of rank at least $3$, such that $L_1$ and $L_2$ intersect.
Then exactly one of the following holds.
\begin{compactenum}[$(i)$]
\item $L_1=L_2=L_3$;
\item $L_1,L_2,L_3$ are three lines in a common planar line pencil;
\item $L_1,L_2,L_3$ are three lines in a common symp $\xi$ containing a common point $p$ and contained in a common hyperbolic line of $\Res_\xi(p)$.
This only happens if $\Delta$ corresponds to a building of type $\mathsf{F_4}$.
\end{compactenum}
\end{lem}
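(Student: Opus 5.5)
The plan mirrors the proofs of \cref{E6RuT0} and \cref{E7RuT0}: we reduce to the point residual and then use the known classification of round-up triples there.

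\emph{Equal lines.} If $L_1=L_2\neq L_3$, then \cref{3.30b} yields a line opposite $L_3$ but not opposite $L_1=L_2$. This violates the round-up property, so $L_1=L_2$ forces $(i)$.

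\emph{Reduction to the residue.} So assume $L_1\cap L_2=\{x\}$. The point $x$ is a vertex of the building joinable to both $L_1$ and $L_2$, so \cref{apartment} gives $x\in L_3$, and $\{L_1,L_2,L_3\}$ is a round-up triple in $\Res(x)$ (see also \cref{corolTits}). In $\Res(x)$ the three lines become three points. By \cref{sympsymphex}, $\Res(x)$ is of type $\mathsf{A_{5,3}}$, $\mathsf{D_{6,6}}$, $\mathsf{E_{7,7}}$, or a dual polar space of rank $3$ (types $\mathsf{C_{3,3}}/\mathsf{B_{3,3}}$ for metasymplectic spaces). The task is therefore to classify round-up triples of points in these geometries, which we do type by type:
\begin{itemize}
\item For $\mathsf{A_{5,3}}$ and $\mathsf{D_{6,6}}$, \cref{typeAD}(2) shows the three points lie on a line of $\Res(x)$. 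A line of the residue is a planar line pencil of $\Delta$ at $x$, giving $(ii)$.
\item For $\mathsf{E_{7,7}}$ we use the round-up classification underlying \cite[Corollary~5.6]{Kas-Mal:13}: every geometric line there is a line. The triple version follows by the same argument, or directly by adapting \cref{E7RuT3} to points, since opposition in $\mathsf{E_{7,7}}$ is controlled by \cref{lemE7}. Again we get $(ii)$.
\item For the dual polar space of rank $3$ we invoke \cref{typeB}(2) with $j=n=3$. Either the triple lies on a line of the dual polar space, giving $(ii)$, or it lies in a hyperbolic line of a symp of $\Res(x)$, i.e.\ of a quad, and this requires the quads to be symplectic generalised quadrangles.
\end{itemize}

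\emph{Translating the exceptional case.} A quad of $\Res(x)$ corresponds to a symp $\xi$ of $\Delta$ through $x$, and its points are the lines of $\xi$ through $x$. A hyperbolic line of that quad is then a hyperbolic line of $\Res_\xi(x)$, which is $(iii)$ with $p=x$. This occurs only in type $\mathsf{F_4}$, since only there is the residue a dual polar space.

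\emph{Main obstacle.} The hardest step is the $\mathsf{E_{7,7}}$ residue. One must check that the cited result genuinely covers round-up triples and not only geometric lines; if it does not, I would rerun the proof of \cref{E7RuT3} one dimension lower. A secondary point is confirming that the hyperbolic-line alternative in \cref{typeB}(2) for $j=n$ really is a hyperbolic line of a quad and hence translates to $\Res_\xi(p)$. Finally, the three alternatives are mutually exclusive: $(i)$ is the only case with equal lines, and a hyperbolic line of $\Res_\xi(p)$ contains no two collinear points of the residue, so $(ii)$ and $(iii)$ cannot both hold.
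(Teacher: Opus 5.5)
Your proposal matches the paper's proof step for step: it disposes of $L_1=L_2$ via \cref{3.30b}, gets $x\in L_3$ from \cref{apartment}, passes to $\Res(x)$ via \cref{corolTits}, and then applies \cref{typeAD} for the $\mathsf{A_{5,3}}$ and $\mathsf{D_{6,6}}$ residues, \cref{typeB} for the dual polar space residue, and Kasikova--Van Maldeghem for the $\mathsf{E_{7,7}}$ residue. The obstacle you flag is resolved in the paper by citing \cite[Corollary~5.5]{Kas-Mal:13}, which classifies round-up triples of points in $\mathsf{E_{7,7}}$ directly (Corollary~5.6 is the geometric-line version), so there is no need to rerun \cref{E7RuT3}; also, your side remark that the quads must be symplectic is not asserted by \cref{typeB}(2) and is not needed for (iii).
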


\begin{proof}
Clearly, if $L_1=L_2$, then also $L_3=L_1$ since otherwise there exists a line opposite $L_3$ and not opposite $L_1$.
So we may assume $L_1\cap L_2=\{x\}$.
By \cref{apartment}, also $x\in L_3$.
By \cref{corolTits}, 
$\{L_1,L_2,L_3\}$ is a round-up triple in $\Res(x)$.
The result now follows from \cref{typeAD} for types $\mathsf{E_6}$ and $\mathsf{E_7}$, from \cref{typeB} for type $\mathsf{F_4}$, and from \cite[Corollary~5.5]{Kas-Mal:13} for type $\mathsf{E_8}$. 
\end{proof}

\begin{lem}\label{RuT1} 
Let $\{L_1,L_2,L_3\}$ be a round-up triple of disjoint lines in an exceptional hexagonic Lie incidence geometry of rank at least $3$.
Then no point of $L_2$ is collinear to any point of $L_1$. 
\end{lem}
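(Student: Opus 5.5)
I would argue by contradiction, following the template of \cref{E7RuT1}. Suppose $M$ is a line joining a point $x_1\in L_1$ to a point $x_2\in L_2$. Since $L_1$ and $L_2$ are disjoint, $M$ differs from both. In the building, $L_1$ and $L_2$ are vertices of the line type, and $x_1,x_2$ are point-type vertices. The line $M$ corresponds to a simplex of type point--point-panel, i.e.\ the simplex $\{x_1,x_2\}$ completed by the vertex $M$. Applying \cref{apartment2} with $v=M$ and $w_i=x_i$, we get a point $x_3$ incident with both $M$ and $L_3$. The hypothesis needed there is that $L_1$ and $L_2$ are not joinable to a common point-vertex, which holds because they are disjoint. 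So $x_3:=M\cap L_3$ exists, and $x_3\neq x_1,x_2$ by disjointness.

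Next I would pin down how $x_1$ relates to the points of $L_3$, again mimicking \cref{E7RuT1}. Suppose some $y_3\in L_3$ is not collinear to $x_1$. By \cref{pointspecialline}, applied to the point $x_1$ and the line $L_3$ (which already contains $x_3\perp x_1$), the point $y_3$ is either symplectic or special to $x_1$.

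\emph{Symplectic case.} Put $\xi:=\xi(x_1,y_3)$; it contains $M$, since $x_3\in\xi$ and $\xi$ is convex. I want to apply \cref{apartment3} with the symp $\xi$ and the point $x_1$, which is a simplex joinable to $L_1$, suitably paired with $L_3$. The extra hypothesis to check is that $L_1$ and $L_2$ do not lie in a common symp. If they did, $L_3$ would lie in it too, by \cref{apartment}. Then \cref{corolTits} makes $\{L_1,L_2,L_3\}$ a round-up triple in a polar space (type $\mathsf D_n$ or $\mathsf B_3$), and \cref{typeAD} or \cref{typeB} forces the lines to meet or to lie in a hyperbolic line of a point residue; either way they meet, a contradiction. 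So \cref{apartment3} applies and forces $L_2\subseteq\xi$ together with $x_1$ incident with it, which I expect to produce a contradiction just as in \cref{E7RuT1}.

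\emph{Special case.} If $y_3$ is special to $x_1$, then $x_3=[x_1,y_3]$. I would handle this via the local structure at $x_3$. The lines $M$ and $L_3$ both pass through $x_3$; since $x_1$ and $y_3$ are special, they are locally opposite at $x_3$. I would seek a line $K$ through $x_3$ that is locally opposite $M$ but not opposite $L_3$. Pushing along $K$ with \cref{Tits}, I would build a line opposite $L_1$ and $L_2$ but not opposite $L_3$ (or the reverse), contradicting the round-up property. The cleanest version is to work in $\Res(x_3)$: there $M$ and $L_3$ give two points, and $L_1$ and $L_2$ project to a single point (the projection of $M$), so \cref{3.30b} supplies a vertex separating them.

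\emph{All points collinear.} It remains that $x_1$ is collinear to every point of $L_3$. Then every line through $x_1$ meeting $L_3$ lies in the plane $\langle x_1,L_3\rangle$. Running the first paragraph's argument with each $y\in L_1$, and symmetric choices, should force $L_2$ to meet $L_3$, or all three lines to lie in a singular subspace. A singular subspace is a residue of type $\mathsf A$, and \cref{corolTits} with \cref{typeAD} then gives coplanarity and hence intersection, a contradiction.

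The main obstacle is the symplectic case. Applying \cref{apartment3} correctly means identifying which vertex pair $\{w_1,w_2\}$ to use, for instance $w_1=\xi$ and $w_2$ the point $x_2$, and checking its non-joinability hypothesis. I would first try to replicate the structure of the $\mathsf E_{7,7}$ proof verbatim, with $\mathsf{D}_{6}$ replaced by the relevant symp type.
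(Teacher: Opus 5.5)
Your opening step (\cref{apartment2} with $v=M$, $w_i=x_i$, giving $x_3=M\cap L_3$) and your symplectic case (a symp $\xi\supseteq M\cup L_3$, then \cref{apartment3} with $\{\xi,x_1\}$ or $\{\xi,x_2\}$, forcing $x_1\in L_2$ or $x_2\in L_1$) are exactly the paper's argument. The same argument also covers the coplanar case, because the plane $\langle M,L_3\rangle$ lies in a symp. Your sketch of that case is loose: you need the lines $x_1y$ with $y\in L_3$, not $y\in L_1$. Since all three lines meet $M$, the argument applies to whichever $L_i$ is not locally opposite $M$ at $x_i$. What remains is the case where $M$ is locally opposite $L_i$ at $x_i$ for $i=1,2,3$. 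That case, not the symplectic one, is the real obstacle.

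Your treatment of that case has a genuine gap, for two reasons. First, a line opposite $L_1$ and $L_2$ but not $L_3$ contradicts nothing, since a round-up triple only forbids vertices opposite \emph{exactly one} member. You need a line opposite $L_3$ and opposite neither $L_1$ nor $L_2$. Second, working in $\Res(x_3)$ with \cref{Tits} and \cref{3.30b} only controls opposition to vertices incident with $x_3$. The lines $L_1,L_2$ are not incident with $x_3$, and there is no sense in which they ``project to $M$'' so that non-opposition to $M$ gives non-opposition to $L_1,L_2$. Concretely, take $K\ni x_3$ locally opposite $L_3$ but not $M$. A point $u\in K\setminus\{x_3\}$ is collinear or symplectic to $x_1$ and $x_2$, but it may be special to the other points of $L_1$ or $L_2$. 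Then the points $w\perp u$ you need to reach opposition with $L_3$ (via \cref{joinjoin}) may be opposite points of $L_1\cup L_2$.

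The missing idea is to find $u$ collinear or symplectic to \emph{every} point of $L_1\cup L_2$, with $ux_3$ locally opposite $L_3$. The paper does this inside a plane $\pi\supseteq M$. Let $K_i\subseteq\pi$ be the unique line through $x_i$ not locally opposite $L_i$, and put $z=K_1\cap K_2$. If $z\notin K_3$, any point $u\neq z$ on a line through $z$ locally opposite $zx_3$ is opposite the points of $L_3\setminus\{x_3\}$ but no point of $L_1\cup L_2$. A line through $u$ opposite $L_3$ then gives the contradiction. If $K_1,K_2,K_3$ are concurrent, take another line $K_3'\subseteq\pi$ through $x_3$ and set $y_2=K_3'\cap K_2$. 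Choose a line through $y_2$ locally opposite $K_3'$ but not locally opposite $x_1y_2$; its points other than $y_2$ serve the same purpose.
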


\begin{proof}
Let, for a contradiction, $M$ be a line joining a point $x_1\in L_1$ to a point $x_2\in L_2$.
Note that $L_1\neq M\neq L_2$. Since $L_1\cap L_2=\varnothing$,
\cref{apartment2} shows that $M$ intersects $L_3$, say in the point $x_3$.
Assume first that $M$ and $L_i$ are locally opposite at $x_i$, for every $i\in\{1,2,3\}$.
Let $\pi$ be any plane containing $M$.
Let $K_i$ be the line in $\pi$ through $x_i$ not locally opposite $L_i$ at $x_i$, guaranteed to exist by \cref{pointspecialline}.
Suppose first that $z:=K_1\cap K_2$ does not belong to $K_3$.
Let $N$ be a line locally opposite $zx_3$ at $z$.
Then any point $u\in N\setminus\{z\}$ is opposite some point of $L_3$, but is not opposite any point of $L_1\cup L_2$.
It follows that there exists a line through $u$ opposite $L_3$, but not opposite either $L_1$ or $L_2$, a contradiction.
Hence we may assume that there exists some line $K_3'\subseteq\pi$ through $x_3$ intersecting $K_2$ in some point $y_2\notin K_1$, with $y_2$ special to every point of $L_3\setminus\{x_3\}$.
Then we pick a line $N'$ through $y_2$ locally opposite $K_3'$ at $y_2$, but not locally opposite $x_1y_2$ at $y_2$.
Then no point $w$ on $N'$ is opposite some point of $L_1\cup L_2$ since the pair $\{w,x_1\}$ is collinear or symplectic, and the pair $\{y_2,y_2'\}$, with $y_2'\in L_2\setminus\{x_2\}$, is symplectic.
As above, there exists a line through $w$ opposite $L_3$, but not opposite either $L_1$ or $L_2$. 

So, we may assume without loss of generality that $M$ and $L_3$ are contained in a symp $\xi$.
Then, with a similar argument as in the proof of \cref{E7RuT1}, now also using \cref{typeB} (for metasymplectic spaces), we may apply \cref{apartment3} and obtain $x_1\in L_2\subseteq\xi$, contrary to our assumptions.
The lemma is proved.
\end{proof}

\begin{lem}
\label{RuT2}
Let $\{L_1,L_2,L_3\}$ be a round-up triple of lines in an exceptional hexagonic Lie incidence geometry $\Delta$ of rank at least $3$, such that no point of $L_i$ is collinear to any point of $L_j$, for all $i,j\in\{1,2,3\}$, $i\neq j$.
Then no point of $L_1$ is symplectic to any point of $L_2$. 
\end{lem}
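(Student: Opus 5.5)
We argue by contradiction, mimicking the proof of \cref{E7RuT3}. Suppose $x_1\in L_1$ and $x_2\in L_2$ are symplectic, and put $\xi:=\xi(x_1,x_2)$. By hypothesis no point of $L_1\cup L_2\cup L_3$ is collinear to a point of another of these lines. In particular, $L_1$ and $L_2$ are not contained in a common symp, since otherwise points of $L_1$ and $L_2$ would be collinear. The plan is to show, step by step, that $L_3$ must come close to $\xi$. We then exhibit a point collinear or symplectic to points of $L_1$ and $L_2$ but opposite a point of $L_3$. Through such a point we find a line opposite $L_3$ and not opposite $L_1$ or $L_2$, violating the round-up property.

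\textbf{Step 1.} No point of $L_3$ is opposite any point $y\in x_1^\perp\cap x_2^\perp$. Indeed, if $x_3\in L_3$ were opposite $y$, then the projection of $L_3$ onto $y$ and \cref{Tits} give a line $N$ through $y$ opposite $L_3$. Since $y$ is collinear to $x_1$ and $x_2$, the line $N$ is not opposite $L_1$ or $L_2$, a contradiction.

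\textbf{Step 2.} We locate $L_3$ relative to $\xi$ via \cref{pointsymphex} and \cref{generalspecial}. If some $x_3\in L_3$ satisfies $x_3^\perp\cap\xi=\varnothing$, then by cases $(iv)$--$(vi)$ some point of $x_1^\perp\cap x_2^\perp$ is opposite $x_3$; this uses that $x_1^\perp\cap x_2^\perp$ spans $\xi$ in the hyperbolic-quadric sense, with $\{x_1,x_2\}^{\perp\perp}=\{x_1,x_2\}$. That contradicts Step~1. The same reasoning rules out $x_3^\perp\cap\xi$ being a line missing most of $x_1^\perp\cap x_2^\perp$, unless that line forces collinearity with $x_1$ or $x_2$. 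Hence every point of $L_3$ is in $\xi$ or collinear to a subspace of $\xi$.

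We then invoke \cref{apartment2}. Since $L_1,L_2$ lie in no common symp, we get a symp through $L_3$ and the relevant joining simplex, and so a point $x_3\in L_3\cap\xi$. By symmetry, interchanging the roles of the three lines, all points of $L_1\cup L_2$ are close to $\xi$ in the same sense.

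\textbf{Step 3.} This step produces the spoilsport. In $\xi$ we choose a point $y\in x_2^\perp\cap x_3^\perp$ not collinear to $x_1$, preferably collinear to $L_2$, mirroring the choice $y\in L_2^\perp\cap x_3^\perp\setminus x_1^\perp$ in \cref{E7RuT3}. We take a line $M$ through $y$ locally opposite $\xi$, or locally far from $\xi$ in the metasymplectic case. Where needed, we adjust $M$ so that it is locally non-opposite the projections of $L_2$ and $L_3$ at $y$; this is possible using \cref{class2} and \cref{ADEdistances} in $\Res(y)$. A point $z\in M\setminus\{y\}$ is then opposite $x_1$, by \cref{joinjoin} with $z\Join$ the relevant points of $\xi$, and is not opposite any point of $L_2\cup L_3$. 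Finally, \cref{Tits} gives a line $K$ through $z$ opposite $L_1$ but not opposite $L_2$ or $L_3$, contradicting the round-up property.

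The main obstacle is Step~2 together with the choice of $M$ in Step~3. In $\mathsf{E_{7,7}}$ every point is simply close or far to a symp, but here \cref{pointsymphex} allows six relations, and metasymplectic spaces lack maximal-singular-subspace closeness. So one must check case by case, using \cref{generalspecial}, that $x_3$ cannot be special-close, and that $y$ and $M$ can be chosen with $z$ opposite $x_1$ but not opposite $L_2\cup L_3$. This uses \cref{typeB} for $\mathsf{F_4}$ residues.
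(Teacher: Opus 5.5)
\textbf{Step~3 fails, and this is the heart of the argument.} You choose $y\in\xi$ not collinear to $x_1$, so $x_1\pperp y$, and then take $z\in M\setminus\{y\}$, so $z\perp y$. By the last assertion of \cref{joinjoin}, $x_1\pperp y\perp z$ forces $z\not\equiv x_1$, whatever $M$ is. The trick of \cref{E7RuT3} works only because $\mathsf{E_{7,7}}$ is strong, so distance~$3$ already means opposite. In a hexagonic geometry, a point opposite $x_1$ must be reached either by a special--special path or by two symplectic steps through symps that are locally opposite (\cref{opppointinsymps}).

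The paper's spoilsport is of the second kind. Let $U_2:=L_2^\perp\cap\xi$ be a maximal singular subspace, and pick $z_2\in x_3^\perp\cap U_2$ not collinear to $x_1$. Let $\xi_3$ be a symp through $z_2$ and $L_3$; it exists because each $y_3\in L_3\setminus\{x_3\}$ is collinear or symplectic to $z_2$ by \cref{generalspecial}$(i)$. Take a symp $\xi_2\ni z_2$ locally opposite $\xi$ but not locally opposite $\xi_3$, and a point $w_2\in\xi_2$ symplectic to $z_2$. Then $w_2\equiv x_1$ by \cref{opppointinsymps}. The relation $w_2\pperp z_2\perp L_2$ rules out opposition with $L_2$, and non-local-opposition of $\xi_2,\xi_3$ rules out opposition with $L_3$. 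So a line through $w_2$ opposite $L_1$ violates the round-up property.

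\textbf{Step~2 does not prove the claim this construction needs.} The paper first shows that $L_3$, and by symmetry $L_1$ and $L_2$, is collinear to a \emph{maximal singular subspace} of $\xi$, not merely to a line. In type $\mathsf{F_4}$ this is already a contradiction, since no point outside a symp is collinear to a maximal singular subspace of it there.

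Your Step~1 cannot deliver this. Once $x_3\in L_3\cap\xi$, every point of $\xi$ is equal, collinear or symplectic to $x_3$, so by \cref{joinjoin} no point of $\xi$ is opposite a point of $L_3$ anyway; the criterion is vacuous. Your assertion $\{x_1,x_2\}^{\perp\perp}=\{x_1,x_2\}$ also fails in $\mathsf{F_4}$ when hyperbolic lines in the symps are larger (e.g.\ symplectic symps). That is exactly the hardest case.

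The paper rules out $L_3^\perp\cap\xi$ being a line in two subcases:
\begin{itemize}
\item If some $y\in x_1^\perp\cap x_2^\perp$ is not collinear to $x_3$, then $y\Join y_3$ for $y_3\in L_3\setminus\{x_3\}$. Take a line $M\ni y$ locally opposite the line joining $y$ and $[y,y_3]$, and a point $m\in M\setminus\{y\}$, so $m\equiv y_3$. A line through $m$ locally opposite $\proj_m(L_3)$ but not locally opposite $M$ is then opposite $L_3$ and not opposite $L_1,L_2$.
\item If $x_3\in\{x_1,x_2\}^{\perp\perp}$, which is possible only in $\mathsf{F_4}$, a separate and longer argument is needed. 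It uses a symp locally opposite $\xi$ at $x_3$ and the projections of $L_1,L_2$ onto a point of that symp.
\end{itemize}
Neither argument appears in your proposal.
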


\begin{proof}
Suppose for a contradiction that some point $x_1\in L_1$ is symplectic to some point $x_2\in L_2$.
Let $\xi$ be the corresponding symp. 
Noting that the conditions imply that $L_1$ and $L_2$ are not contained in any common symp, \cref{apartment2} implies that $L_3$ shares a point $x_3$ with $\xi$. 

We claim that $L_3$ is collinear to a maximal singular subspace of $\xi$.
Indeed, suppose not.
Then $L_3^\perp\cap\xi$ is a line $L_3^*$.
There are two cases.
\begin{compactenum}[$(1)$] 
\item \emph{Suppose every point of $x_1^\perp\cap x_2^\perp$ is collinear to $x_3$.} Then $x_3\in \{x_1,x_2\}^{\perp\perp}$ and $\Delta$ corresponds to type $\mathsf{F_4}$.
Let $\xi_3$ be an arbitrary symp not containing $L_3$ and locally opposite $\xi$ at $x_3$.
Select $z_3\in\xi_3\setminus x_3^\perp$.
Since $x_1\equiv z_3\equiv x_2$, we can define $M_i:=\proj_{z_3}^{x_i}(L_i)$, $i=1,2$.
If $M_1\neq M_2$, we can take a line $K$ through $z_3$ locally opposite $M_1$ at $z_3$, but not locally opposite $M_2$ at $z_3$, and then $K$ is opposite $L_1$, and not opposite either $L_2$ or $L_3$ (the latter because $z_3$ is symplectic to every point of $L_3$), a contradiction.
Hence, we may assume that $M_1=M_2$.
Set $u_i:=M_i\cap x_i^{\Join}$, $i=1,2$.
If $u_1\neq u_2$, then we may replace $z_3$ with any point in $(M_1^\perp\cap\xi_3)\setminus(\{z_3\}\cup x_3^\perp)$ and apply the previous argument.
So, we may assume $u_1=u_2$.
Let $N_i$ be the line through $u_i$ intersecting $L_i$, say in the point $w_i$, $i=1,2$.
Then, since by \cref{joinjoin}, $x_1$ and $w_2$ are not opposite, the same \cref{joinjoin} implies that $N_1$ and $N_2$ are not locally opposite at $u_1$.
Hence $w_1$ and $w_2$ are symplectic (as we may assume that they are not collinear by \cref{RuT1}). 

Hence, as above, \cref{apartment2} implies that the line $L_3$ intersects $\xi(w_1,w_2)$ in a point $w_3$, which we may assume to belong to $\{w_1,w_2\}^{\perp\perp}$ (as otherwise we are in case (2) below).
Hence $u_1\perp w_3$. If $x_3=w_3$, then $M_1\subseteq\xi_3$, which implies $L_1\subseteq \xi$ (as $\proj^{z_3}_{x_1}(M_1)\subseteq\proj^{z_3}_{x_1}(\xi_3)=\xi$), a contradiction. It follows from  the fact that we are in type $\mathsf{F}_4$, \cref{generalspecial}$(ii)$ and $w_3\notin\xi_3$ that $w_3\Join z_3$. We conclude $u_1=[w_3,z_3]$ which, however, is contained in $\xi_3$ and coincides with $L_3^\perp\cap\xi_3\cap u_1^\perp$. This implies $M_1\subseteq\xi_3$, which is, as above, a contradiction.

\item \emph{Suppose some point $y\in x_1^\perp\cap x_2^\perp$ is not collinear to $x_3$.} Select $y_3\in L_3\setminus\{x_3\}$.
Then, $y_3\Join y$ and $u=[y,y_3]\in L_3^*\setminus\{x_3\}$.
Let $M$ be some line through $y$ locally opposite $yu$ at $y$.
Let $m$ be some arbitrary point on $M$ not equal to $y$.
Let $L_3'$ be the projection of $L_3$ onto $m$.
Note that $L_3'\neq M$ as $L_3\neq uy_3$.
Hence there exists a line $K$ through $m$ locally opposite $L_3'$ but not locally opposite $M$.
Then $K$ is opposite $L_3$ but not opposite $L_1$ and $L_2$, because, by \cref{joinjoin}, no point of $K$ is opposite $x_1$ or $x_2$.
\end{compactenum}
Since both cases lead to contradictions, we conclude that $L_3$ is collinear to a maximal singular subspace $U_3$ of $\xi$.
Likewise, $L_1$ and $L_2$ are also collinear to respective maximal singular subspaces $U_1$ and $U_2$ of $\xi$.
Note that this implies that $\xi$ is top-thin (or hyperbolic).

It follows that, since $x_1$ is not collinear to $x_2\in U_2$, the set $x_3^\perp\cap U_2$ contains some point $z_2$ that is not collinear to $x_1$.
Let $y_3$ be an arbitrary point of $L_3\setminus\{x_3\}$.
If $y_3\pperp z_2$, let $\xi_3$ be the symp through $y_3$ and $z_2$.
If $y_3\perp z_2$, then let $\xi_3$ be a symp containing $L_3$ and $z_2$.
Let $\xi_2$ be a symp through $z_2$ locally opposite $\xi$ but not locally opposite $\xi_3$.
Let $w_2\in\xi_2$ be symplectic to $z_2$.
Then, since $z_2$ is collinear to each point of $L_2$, \cref{joinjoin} implies that $w_2$ is not opposite any point of $L_2$.
Also, since $\xi_2$ is not locally opposite $\xi_3$, the point $w_2$ is not opposite any point of $L_3$.
But $w_2$ is opposite $x_1$ and so there is a line $K$ through $w_2$ opposite $L_1$, and $K$ is not opposite either $L_2$ or $L_3$, a contradiction.

This completes the proof of the lemma. 
\end{proof}

\begin{lem}\label{special}
Let $\{L_1,L_2,L_3\}$ be a round-up triple of lines in an exceptional hexagonic Lie incidence geometry of rank at least $3$.
Let $x_1\in L_1$ and $x_2\in L_2$ be collinear to a common point $y$.
Then $y$ is collinear to a point of $L_3$.
\end{lem}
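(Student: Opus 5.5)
We argue by contradiction in the style of the earlier lemmas: assuming that $y$ is collinear to no point of $L_3$, we construct a line opposite $L_3$ but opposite neither $L_1$ nor $L_2$. This contradicts the round-up property. The construction uses only local opposition at $y$ or at a point near $y$, together with \cref{Tits}.

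First we dispose of easy configurations. If $y\in L_1\cup L_2$, say $y\in L_1$, then $L_1$ meets $x_2y$ and $L_1,L_2$ contain collinear points. If the $L_i$ are pairwise disjoint, \cref{RuT1} excludes this. If $L_1$ and $L_2$ meet, \cref{RuT0} puts all three lines in a planar pencil or a hyperbolic line of a point residual. If $L_1\cap L_2=\{x\}$, then $x\in L_3$ by \cref{RuT0}. In the pencil case $y$ is collinear to $x$ whenever $y$ lies in the plane, and otherwise $y\perp x_1,x_2$ forces $y\perp x$ via the gamma property. In the symp case with $x_1,x_2\neq x$ we get $y\in\xi$ and $y^\perp\cap L_3\neq\varnothing$ inside the polar space $\xi$. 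Hence we may assume $L_1,L_2,L_3$ pairwise disjoint, no point of $L_i$ collinear to a point of $L_j$ (\cref{RuT1}), and no point of $L_i$ symplectic to a point of $L_j$ (\cref{RuT2}). In particular $x_1\Join x_2$ and $y=[x_1,x_2]$.

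Now suppose $y^\perp\cap L_3=\varnothing$. By \cref{pointsymphex} and \cref{generalspecial} applied in a symp through $L_3$, either $y$ is symplectic to some point $z_3\in L_3$, or $y$ is special to or opposite every point of $L_3$. In the second case every point of $L_3$ is at distance at least $2$ from $y$ along a path not through $L_1\cup L_2$. We therefore pick a line $N$ through $y$ that is locally opposite the projection of $L_3$ at $y$ (or the lines $y[y,z]$), yet not locally opposite $yx_1$ or $yx_2$. This is possible by \cref{3.30}/\cref{class2} in $\Res(y)$, since only two lines must be avoided being opposite while we want opposition to one set. A point $u\in N\setminus\{y\}$ is then opposite some point of $L_3$. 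On the other hand $u\perp y\perp x_i$, so by \cref{joinjoin} $u$ is opposite no point of $L_1\cup L_2$ near $x_i$. For the other points of $L_i$ we use that $y\perp x_i$ and \cref{pentagon}/\cref{pointspecialline}. Then a line through $u$ chosen via \cref{Tits} to be opposite $L_3$ is not opposite $L_1$ or $L_2$, which is a contradiction. In the symplectic case $y\pperp z_3$, we consider $\xi(y,z_3)$ and choose a symp through $y$ locally opposite it but not locally opposite the lines $yx_1,yx_2$. Taking a point $w$ in it symplectic to $y$, \cref{joinjoin} gives that $w$ is not opposite points of $L_1,L_2$ near $x_i$, while $w$ is far from $L_3$. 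We conclude as before.

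The main obstacle is controlling the points of $L_i$ other than $x_i$. Being non-opposite to $x_i$ alone does not prevent the constructed line from being opposite $L_i$. What must be guaranteed is that the point $u$ (or $w$) is not opposite \emph{some} point of $L_i$, and this suffices, because a line through a point non-opposite a point of $L_i$ is never opposite $L_i$. The delicate part is rather ensuring that $u$ is opposite at least one point of $L_3$. This requires the local choice at $y$ to be compatible with the projection of $L_3$, and we expect to treat separately the cases $\Res(y)$ of type $\mathsf{F_4}$-residue (dual polar space) via \cref{typeB}.
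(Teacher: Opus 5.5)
There is a genuine gap, and it sits where you locate the ``main obstacle''. The principle you rely on is false: it is not true that a line through a point non-opposite some point of $L_i$ is never opposite $L_i$. In position $(2332)$, every point of either line is special, hence non-opposite, to exactly one point of the other line. A line $K$ is prevented from being opposite $L_i$ only if some point of $K$ is equal, collinear or symplectic to a point of $L_i$, or if some point of $L_i$ is non-opposite at least two points of $K$.

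Your symplectic case delivers neither. The relation $w\pperp y\perp x_i$ only gives $w\not\equiv x_i$ via \cref{joinjoin}. In a metasymplectic space it even forces $w\Join x_i$ whenever $x_i\notin\zeta$: then $x_i^\perp\cap\zeta$ is a line through $y$ by \cref{pointsymphex}, and \cref{generalspecial}$(ii)$ applies. Since $x_1\Join x_2$, this happens for at least one $i$, so nothing stops your line through $w$ opposite $L_3$ from being opposite $L_1$ or $L_2$.

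In the case where $y$ is special to all points of $L_3$, a line $N$ locally non-opposite $yx_1$ and $yx_2$ would indeed make $u$ equal, collinear or symplectic to $x_1$ and $x_2$. But you also need $N$ locally opposite one of the lines $y[y,z]$. Being opposite all of them is impossible, since they form a line of $\Res(y)$. This mixed opposition/non-opposition requirement, with $yx_1,yx_2$ themselves locally opposite, is not supplied by \cref{3.30} or \cref{class2}, which only produce common opposites. Nor is it supplied by \cref{ADEdistances}, which prescribes only one non-opposite point. Also, in your pencil case $y\perp x_1,x_2$ does not force $y\perp x$; the conclusion survives only because $L_3$ meets $x_1x_2$ in the plane.

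The missing idea is a global apartment argument. It makes all local choices, and your preliminary reduction via \cref{RuT0}, \cref{RuT1} and \cref{RuT2}, unnecessary. Take an apartment $\Sigma$ containing $y$ and $L_3$, and let $L_3^*$ be the line of $\Sigma$ opposite $L_3$. If $y$ is collinear to no point of $L_3$, it is symplectic, special or opposite to both points of $L_3$ in $\Sigma$. Opposition in $\Sigma$ dualises point relations ($\pperp$ to $\pperp$, $\Join$ to $\perp$, $\equiv$ to $=$). Hence $y$ is symplectic, collinear or equal to both points of $L_3^*$ in $\Sigma$. By \cref{joinjoin}, each of $x_1$ and $x_2$ is then non-opposite two points of $L_3^*$. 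So $L_3^*$ is opposite neither $L_1$ nor $L_2$, while it is opposite $L_3$, contradicting the round-up property. This is essentially how the paper argues; it additionally observes that $y$ is then equal, collinear or symplectic to every point of $L_3^*$.
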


\begin{proof}
Suppose $y$ is not collinear to any point of $L_3$.
Let $\Sigma$ be an apartment containing $L_3$ and $y$.
Since $y$ is not collinear to any point of $L_3$, it is not special to and not opposite at least two points of the line $L_3^*$ that is opposite $L_3$ in $\Sigma$.
But then $y$ is equal, collinear or symplectic with each point of $L_3^*$, implying that no point of $L_3^*$ is opposite either $x_1$ or $x_2$.
Hence $L_3^*$ is opposite $L_3$, but not opposite either $L_1$ or $L_2$, a contradiction.
\end{proof}

\begin{lem}\label{2'2'2'2'}
Let $\{L_1,L_2,L_3\}$ be a round-up triple of lines in an exceptional hexagonic Lie incidence geometry $\Delta$ of rank at least $3$ such that $L_1\cap L_2=\varnothing$. Then some point of $L_1$ is opposite some point of $L_2$.
\end{lem}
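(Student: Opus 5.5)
We argue by contradiction: suppose $L_1\cap L_2=\varnothing$ and no point of $L_1$ is opposite any point of $L_2$. We want to show that all points of $L_1\cup L_2$ are then too close to each other for the triple to be a round-up triple.

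\textbf{Step 1: reduce to completely special lines.} If the three lines are not pairwise disjoint, then two of them meet, and \cref{RuT0} forces $L_1\cap L_2\neq\varnothing$, contrary to assumption. So we may assume $L_1,L_2,L_3$ are pairwise disjoint. Then \cref{RuT1} (applied to every ordering of the triple) shows that no point of $L_i$ is collinear to a point of $L_j$ for $i\neq j$, and \cref{RuT2} shows that no such pair is symplectic. By \cref{pointpointhex}, every pair $x_1\in L_1$, $x_2\in L_2$ is therefore special. Hence $\delta(L_1,L_2)=(2222)$, which in particular excludes metasymplectic spaces by \cref{mutualpositions}.

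\textbf{Step 2: produce points collinear to both lines.} Apply \cref{pointspecialline}$(i)$ to a point $x_1\in L_1$ and the line $L_2$. This yields a line $N$ consisting of the points $[x_1,y]$ with $y\in L_2$. Each point $z\in N$ is collinear to $x_1\in L_1$ and to a point of $L_2$. Now \cref{special} applies to $z$, so $z$ is collinear to a point of $L_3$.

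Varying $z$ along $N$ and $x_1$ along $L_1$, we obtain many points collinear to points of all three lines. The aim is to show that some such $z$ cannot be collinear to two distinct points of $L_3$. This should force $L_3$ to be collinear to a point of $L_1$ or $L_2$, for instance via a path $x_1\perp z\perp x_3$ combined with \cref{pointspecialline} applied to $z$ and $L_3$. That contradicts Step 1 unless $L_3$ itself becomes special to all of $L_1$, giving $\delta(L_i,L_j)=(2222)$ for all $i\ne j$.

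\textbf{Step 3: the main obstacle.} It remains to rule out three pairwise $(2222)$ lines forming a round-up triple, and this is where the real work lies. I would try to construct a line opposite $L_1$ but opposite neither $L_2$ nor $L_3$, as follows.

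Take $z\in N$ as above, so that $z$ is collinear to $x_1\in L_1$, to a point $x_2\in L_2$, and to a point $x_3\in L_3$. Using the table of \cref{combingprep2}, row $\{20\}$, a line through $x_1$ locally opposite $x_1z$ has position $(2223)$ with respect to $L_2$; this yields points opposite points of $L_2$.

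Better, mimic the proof of \cref{special}. Choose an apartment $\Sigma$ containing $L_1$ and $z$, and let $L_1^*$ be the line opposite $L_1$ in $\Sigma$. Since $z$ is collinear to a point of each $L_i$, the point $z$ is at distance at most $\frac32$ from all points of $L_1^*$ except at most one. The goal is to arrange, by choosing $z$ suitably on $N$, that $z$ is not special to any point of $L_1^*$ while being collinear to points of $L_2$ and $L_3$. Then, by \cref{joinjoin}, $L_1^*$ is opposite neither $L_2$ nor $L_3$, contradicting the round-up property.

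If this choice of $z$ fails, I would fall back on the residue at $z$. By \cref{corolTits}, the triple projects to three points of $\Res(z)$ (of type $\mathsf{A_{5,3}}$, $\mathsf{D_{6,6}}$ or $\mathsf{E_{7,7}}$). I would then use \cref{ADEdistances} to find a point of $\Res(z)$ opposite the projection of $L_1$ but not opposite the projections of $L_2$ and $L_3$, and lift it via \cref{Tits} to the required line.
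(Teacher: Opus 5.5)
Your Steps 1 and 2 are sound and match the start of the paper's argument: all pairs of points across $L_1$ and $L_2$ are special, the points $[x_1,y]$ exist, and \cref{special} makes each of them collinear to a point of $L_3$. Your $(2222)$ claim for $L_3$ is only sketched, but it holds: two points of $N$ collinear to the same point of $L_3$ would make that point collinear or symplectic to $x_1$, and then \cref{pointspecialline} applies.

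The gap is Step 3, which carries the whole content, and neither of your two routes works.

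\textbf{The apartment route.} Every $z=[x_1,y]\in N$ is collinear to $x_1$ and \emph{symplectic} to every other point $x_1'\in L_1$. Indeed, $x_1$ and $[x_1',y]$ are distinct common neighbours of $z$ and $x_1'$, and $z\not\perp x_1'$ because $y$ is special to $x_1'$. Take an apartment $\Sigma$ through $z$ and $L_1$. Opposition in $\Sigma$ turns $\perp$ into $\Join$ and keeps $\pperp$. So $z$ is special to the point $x_1^*$ of $L_1^*$ opposite $x_1$, and symplectic to the other point $w_1^*$ of $L_1^*$ in $\Sigma$. Looking at $x_1^{*\perp}\cap\xi(z,w_1^*)$ and using \cref{generalspecial}$(ii)$, you see that $z$ is special to all $s\ge 2$ points of $L_1^*$ except $w_1^*$. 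Hence your claim that $z$ is within distance $\frac32$ of all but at most one point of $L_1^*$ is false. The goal ``$z$ not special to any point of $L_1^*$'' can never be met. \Cref{special} works only because its point $y$ is \emph{not} collinear to any point of $L_3$; you are in the opposite situation.

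\textbf{The residue fallback.} It misuses its tools:
\begin{itemize}
\item The $L_i$ are not vertices of $\Res(z)$, so \cref{corolTits} says nothing about them.
\item \cref{ADEdistances} gives a point opposite up to $s$ prescribed points and non-opposite one prescribed point. That is the reverse of what you need.
\item \cref{Tits} only compares vertices incident with $z$ to vertices incident with a point opposite $z$, and $L_1$ is incident with neither.
\end{itemize}
Worse, $z$ is collinear or symplectic to every point of $L_1$, so \cref{joinjoin} shows that no point of $z^\perp$ is opposite any point of $L_1$. Hence no line through $z$, or through a neighbour of $z$, can be opposite $L_1$. Any rescue along these lines needs further constructions that are absent from your proposal. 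For example: a point $c\perp z$ with $zc$ locally opposite $zx_1$ but not $zx_2,zx_3$, obtained from the classification of round-up triples in $\Res(z)$, and then a line inside $c^\perp$ opposite $L_1$.

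\textbf{The missing idea.} At the end of Step 2 you were one step from a purely combinatorial contradiction. The paper argues as follows.
\begin{itemize}
\item Let $Q_{12}$ be the set of points collinear to a point of $L_1$ and a point of $L_2$. By \cref{pointspecialline} it is a grid (hyperbolic quadric of rank~$2$) whose two reguli are indexed by $L_1$ and $L_2$.
\item Non-collinear points of $Q_{12}$ stay non-collinear in $\Delta$. Otherwise $Q_{12}$ spans a $3$-space and $L_1,L_2$ lie in a common symp.
\item By \cref{special}, each point of $Q_{12}$ is collinear to a point of $L_3$. This point is unique by \cref{pointspecialline} together with \cref{RuT1} and \cref{RuT2}.
\item On a grid line, collinearity to $L_3$ is either constant or bijective. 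If constant, the point of $L_3$ is symplectic to the point of $L_1\cup L_2$ collinear with that grid line, contradicting \cref{RuT2}.
\item If bijective on two intersecting grid lines, some $x_3\in L_3$ is collinear to two non-collinear points of $Q_{12}$. These points are $[x_3,x_1]$ and $[x_3,x]$ for distinct $x_1,x\in L_1$, contradicting \cref{pentagon}.
\end{itemize}
No apartment or residue argument is needed.
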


\begin{proof}
Suppose, for a contradiction, that no point of $L_1$ is opposite any point of $L_2$. Then, by \cref{RuT1} and \cref{RuT2}, each point of $L_1$ is special to each point of $L_2$.
Using \cref{pointspecialline}, we see that the set of points collinear to a point of $L_1$ and to a point of $L_2$ is a hyperbolic quadric $Q_{12}$ (of rank~2).
Each line of $Q_{12}$ is collinear to a unique point of $L_1\cup L_2$.
We claim that non-collinear points on $Q_{12}$ are also non-collinear in $\Delta$.
Indeed, one checks that in that case $Q_{12}$ generates a $3$-space $U$.
Picking non-collinear respective points in $Q_{12}$ and $L_1$, we see that they are symplectic and the corresponding symps contain $L_1$ and at least a plane of $U$.
The intersection of two such symps (with different planes in $U$) contains a $3$-space.
Hence the symps coincide.
Now the symp through $L_1$ and $U$ has a $3$-space in common with the symp through $L_2$ and $U$ and hence $L_1$ and $L_2$ are contained in a common symp, a contradiction.

By \cref{special}, each point of $Q_{12}$ is also collinear to a unique point of $L_3$ (unique indeed since, if not, then \cref{pointspecialline} would yield a point of $L_3$ collinear or symplectic to some point of $L_1$, contradicting \cref{RuT1} and \cref{RuT2}).
It follows that, for any line $L$ in $Q_{12}$, collinearity defines either a bijection between $L$ and $L_3$, or a constant transformation from $L$ to $L_3$.
In the latter case, the unique points of $L_3$ and $L_1\cup L_2$ collinear with all points of $L$ are symplectic, contradicting \cref{RuT2}.
In the former case, pick $p\in L$ and let $L'$ be the unique line of $Q_{12}$ through $p$ distinct from $L$.
Then again, collinearity defines a bijection between $L'$ and $L_3$.
Hence there is a point of $L_3$ collinear to two non-collinear points of $Q_{12}$, and since these points are also non-collinear in $\Delta$, this contradicts \cref{pentagon}. 

The lemma is proved.
\end{proof}

\begin{lem}\label{RuT3}
Let $L_1,L_2,L_3$ be three lines of an exceptional hexagonic Lie incidence geometry of rank at least $3$, such that each point of $L_1$ is special to or opposite each point of $L_2$.
Then $\{L_1,L_2,L_3\}$ is not a round-up triple. 
\end{lem}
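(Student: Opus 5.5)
I would argue by contradiction: assume $\{L_1,L_2,L_3\}$ is a round-up triple with every point of $L_1$ special to or opposite every point of $L_2$. Then $L_1\cap L_2=\varnothing$, no point of $L_1$ is collinear or symplectic to a point of $L_2$, and by \cref{2'2'2'2'} some pair $x_1\in L_1$, $x_2\in L_2$ is opposite. Applying \cref{RuT1} and \cref{RuT2} to the triple with roles permuted, I would first reduce to the case where no two of the three lines contain collinear or symplectic points. The roles are symmetric: the $L_3$-versions of the hypothesis come from \cref{RuT1} and \cref{RuT2} applied to the pairs $\{L_1,L_3\}$ and $\{L_2,L_3\}$, once \cref{RuT0} excludes intersections. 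Hence any two points on distinct lines of the triple are special or opposite.

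The key step is to exhibit a line opposite exactly one member of the triple. Take the opposite pair $x_1\equiv x_2$. The line $L_1$ has a well-defined projection $M_1:=\proj^{x_1}_{x_2}(L_1)$, a line through $x_2$. Using \cref{combingprep2}, I would locate $\delta(L_1,L_2)$ in the table. Since only special/opposite relations occur, it is one of $(2332)$, $(2223)$ or $(2222)$; the last is excluded by \cref{2'2'2'2'}. For $L_3$, the relevant projection or "closest" data onto $x_2$ is likewise determined by \cref{combingprep2}.

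Now work in $\Res(x_2)$. Choose a point $m$ on a suitable line $N$ through $x_2$ with $m\neq x_2$, such that $m$ is symplectic to $x_2$ or collinear to it. Then $m$ is not opposite any point of $L_2$, by \cref{joinjoin} applied to the path $m\perp x_2$ or a symplectic step followed by the special relations toward $L_2$. Choose $N$ locally opposite $M_1$ at $x_2$, so that $m$ is opposite some point of $L_1$. Choose it also to be not locally opposite the projection of $L_3$ (or the line $K$ of \cref{combingprep2} relative to $L_3$), so that $m$ is not opposite any point of $L_3$. \cref{class2} or \cref{3.30} in $\Res(x_2)$ should supply such an $N$, since only two constraints are imposed.

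Then through $m$ take a line $K'$ opposite $L_1$, which exists because $m$ is opposite a point of $L_1$, by \cref{Tits} via the projection onto $m$. The line $K'$ is not opposite $L_2$ or $L_3$, since $m$ is opposite no point of either. This contradicts the round-up property.

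The main obstacle is guaranteeing that $m$ is opposite no point of $L_3$ while remaining opposite a point of $L_1$, especially in the subcase where $L_3$ is opposite $L_2$ entirely (type $(2332)$). If a direct choice fails there, I would instead swap roles and use \cref{special} on the hyperbolic quadric arising from special pairs, as in the proof of \cref{2'2'2'2'}. That would force $L_3$ to be collinear to points collinear with $L_1$ and $L_2$, and would again yield collinear or symplectic pairs, contradicting \cref{RuT1} and \cref{RuT2}.
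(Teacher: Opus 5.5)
Your reduction to the positions $(2223)$ and $(2332)$ matches the paper. The key step cannot be carried out, however: under the round-up hypothesis the two constraints you impose at $x_2$ are incompatible. Let $w_1$ be the unique point of $L_1$ not opposite $x_2$; it is special to $x_2$. Put $y_1:=[x_2,w_1]$, so $M_1=x_2y_1$. Applying \cref{special} to $w_1\in L_1$ and $x_2\in L_2$, which have the common neighbour $y_1$, gives a point $v\in L_3$ with $y_1\perp v$. Then $x_2\Join v$ with $[x_2,v]=y_1$.

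If $x_2$ is opposite some point of $L_3$, then $v$ is the unique point of $L_3$ special to $x_2$. Hence $\proj_{x_2}(L_3)=x_2y_1=M_1$, and no line through $x_2$ is locally opposite $M_1$ but not locally opposite the projection of $L_3$. More directly, take any line $N\ni x_2$ locally opposite $M_1$ and any $m\in N\setminus\{x_2\}$. Then $m\Join y_1$, so in the path $m\perp x_2\perp y_1\perp v$ both $\{m,y_1\}$ and $\{x_2,v\}$ are special. \cref{joinjoin} then gives $m\equiv v\in L_3$. This also covers the case where $x_2$ is special to all of $L_3$, and your ``line $K$'' variant.

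So the point $m$ you need never lies on such an $N$, and neither \cref{class2} nor \cref{3.30} (nor \cref{3.30b}) in $\Res(x_2)$ can produce it. Your fallback does not repair this either. The hyperbolic quadric of \cref{2'2'2'2'} only exists when every point of $L_1$ is special to every point of $L_2$, which is excluded here. \cref{special} only yields a point collinear to one point on each line, which creates no collinear or symplectic pairs between the lines, so \cref{RuT1} and \cref{RuT2} are not contradicted.

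The missing idea is to base the argument at that common neighbour, not at a point of one of the lines. The paper picks $x_3\in L_3$ opposite some $x_1\in L_1$ and some $x_2\in L_2$. This is possible because in $(2223)$ and $(2332)$ at most one point of $L_3$ fails to be opposite a point of $L_j$. By \cref{special} the projections of $L_1$ and $L_2$ onto $x_3$ coincide, giving one point $y$ collinear to $x_3$ and to points $w_1\in L_1$, $w_2\in L_2$.

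The lines $yw_1$, $yw_2$, $yx_3$ are pairwise locally opposite at $y$, since the three points are pairwise special with centre $y$. By the classification of round-up triples in $\Res(y)$ (\cref{typeAD}, \cref{typeB}, \cite[Corollary~5.6]{Kas-Mal:13}), they do not form a round-up triple there. So, after renumbering, some line $M\ni y$ is locally opposite $yw_1$ but not locally opposite $yw_2$ or $yx_3$.

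A point $z\in M\setminus\{y\}$ is then opposite $x_1$ by \cref{joinjoin}. It is collinear or symplectic to $w_2$ and to $x_3$, hence opposite no point of $L_2\cup L_3$. A line through $z$ opposite $L_1$ gives the contradiction.
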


\begin{proof}
By Lemmas~\ref{RuT0},~\ref{RuT1} and~\ref{RuT2}, we may assume that no point of $L_3$ is equal, collinear or symplectic to any point of $L_1\cup L_2$.
Moreover, by \cref{2'2'2'2'}, we may also assume that some point of $L_3$ is opposite some point of $L_1$ and some point of $L_3$ is opposite some point of $L_2$.
This implies that the mutual positions of $L_i$ and $L_j$, $i,j\in\{1,2,3\}$, $i\neq j$, are given by either $(2223)$ or $(2332)$. 

We may assume that $\{L_1,L_2,L_3\}$ is a round-up triple.
By the nature of $(2223)$ and $(2332)$, there exists at most one point of $L_3$ that is not opposite all points of $L_1$, and at most one point of $L_3$ that is not opposite all points of $L_2$.
Hence we find a point $x_3\in L_3$ that is opposite at least one point $x_1$ of $L_1$ and at least one point $x_2$ of $L_2$.
We can then project $L_i$, $i=1,2$, from $x_i$ onto $x_3$ and obtain lines $L_i'$ and points $y_i\in L_i'$ collinear to a point of $L_i$.
\cref{special} and the uniqueness of the projections yield $L_1'=L_2'=:M_3$ and $y_1=y_2=:y$. 

Let $M_i$, $i=1,2$, be the lines through $y$ intersecting $L_i$.
Our assumptions imply that these lines are pairwise locally opposite at $y$.
By \cref{typeAD}, \cref{typeB} and \cite[Corollary~5.6]{Kas-Mal:13}, $\{M_1,M_2,M_3\}$ is not a round-up triple in the point residual $\Res(y)$.
Hence, up to renumbering, we find a line $M\ni y$ locally opposite $M_1$, and not locally opposite either $M_2$ or $M_3$ at $y$. 
Pick $z\in M\setminus\{y\}$.
Then \cref{joinjoin} implies that $z$ is opposite $x_1$, but not opposite any point of $L_2\cup L_3$.
It follows that each line $K$ through $z$ opposite $L_1$ (which exists) is not opposite either $L_2$ or $L_3$, a contradiction.

This proves the lemma completely. 
\end{proof}

\begin{prop}\label{geomlinegrass} 
Let $T$ be a geometric line of the line-Grassmannian of an exceptional hexagonic Lie incidence geometry $\Delta$ of rank at least $3$.
Then exactly one of the following holds.
\begin{compactenum}[$(i)$]
\item $T$ is an ordinary line of the corresponding line-Grassmannian parapolar space, that is, a planar line pencil of $\Delta$;
\item $\Delta$ is $\mathsf{F_{4,4}}(\K,\K)$ and $T$ is a cone over a hyperbolic line in a symplectic symp. 
\end{compactenum}
\end{prop}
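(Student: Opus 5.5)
The plan is to reduce the classification of geometric lines to that of round-up triples, in the spirit of \cref{RUPtoGL}, using the chain of lemmas \cref{RuT0}--\cref{RuT3} just established. Let $T$ be a geometric line and pick two distinct members $L_1,L_2\in T$. Every third member $L_3\in T$ gives a round-up triple $\{L_1,L_2,L_3\}$, since no line is opposite exactly one member of $T$. We then split according to the mutual position of $L_1$ and $L_2$.

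\textbf{Step 1: distinct members of $T$ intersect.} Suppose $L_1\cap L_2=\varnothing$. By \cref{RuT1}, no point of $L_1$ is collinear to a point of $L_2$. Then \cref{RuT2} applies, because the collinearity hypothesis holds for every pair: apply \cref{RuT1} to each pair inside the triple, using pairwise disjointness. This can fail only if some pair meets, and then \cref{RuT0} forces all three to share a point, contradicting $L_1\cap L_2=\varnothing$. Hence every point of $L_1$ is special to or opposite every point of $L_2$, and \cref{RuT3} says the triple is not round-up, a contradiction. So any two members of $T$ meet. By \cref{RuT0} applied to $L_1,L_2$ and any third $L_3$, every member passes through $x=L_1\cap L_2$. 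Moreover, either all members lie in the plane $\langle L_1,L_2\rangle$, or we are in type $\mathsf{F_4}$ with $L_1,L_2$ in a symp $\xi$ and the triples lying in a hyperbolic line of $\Res_\xi(x)$.

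\textbf{Step 2: identify $T$.} In the first case, $T$ is contained in the planar line pencil determined by $x$ and $\langle L_1,L_2\rangle$. In the second case, $\xi$ is determined by $L_1,L_2$ (the lines meet and span a unique symp), so $T$ lies in the hyperbolic line $\{L_1,L_2\}^{\perp\perp}$ of $\Res_\xi(x)$. Equality then follows as in the last step of \cref{RUPtoGL}. If some member $N$ of the pencil or hyperbolic line were missing from $T$, we use \cref{3.30b} to find a line opposite $L_1$ but not opposite $N$. It is not opposite the other members either, because in the pencil or hyperbolic line each line is non-opposite exactly one member (this holds for ordinary lines, and for hyperbolic lines via \cref{corolTits} and \cref{typeB}). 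That line would then be opposite all members of $T$, which is impossible. For this we invoke \cref{corolTits}, which transfers the local structure, together with \cref{typeB}(1) in the residue of $x$, a dual polar space or polar space of rank $3$.

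\textbf{Step 3: the type restriction.} In case $(ii)$, \cref{typeB} tells us that hyperbolic lines of residues are geometric only when the relevant polar space is symplectic. This happens only for the split building $\mathsf{F_4}(\K,\K)$ with points of type $4$, where the symps, and hence their point residues, are symplectic; this gives $\Delta\cong\mathsf{F_{4,4}}(\K,\K)$ and $T$ a cone over a hyperbolic line. We must also check that in $\mathsf{F_{4,1}}$ the residue $\Res_\xi(x)$ is never symplectic, which follows from the symp type $\mathsf{B_{3,1}}(\K,\A)$. The main obstacle is this bookkeeping at the end. We need to make sure the hyperbolic line in \cref{RuT0}$(iii)$ really is the full cone, and to rule out the non-split or $\mathsf{F_{4,1}}$ cases. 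The tool is \cref{typeB}(1) applied inside $\Res_\xi(x)$ via \cref{corolTits}; the rest is an immediate consequence of the preceding lemmas.
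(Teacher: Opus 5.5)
Your proposal is correct and follows essentially the paper's proof. Lemmas \cref{RuT0}, \cref{RuT1}, \cref{RuT2} and \cref{RuT3} (the last of which uses Lemma~\ref{2'2'2'2'}) force all members of $T$ through a common point; the coplanar case then closes as in \cref{RUPtoGL}, and the non-coplanar case closes via \cref{corolTits} and \cref{typeB}(1) in $\Res(x)$, just as in the paper. One harmless slip in your Step~3: $\mathsf{F_{4,1}}(\K,\K)$ does have symplectic symps when $\K$ is perfect of characteristic~$2$, but then it is isomorphic to $\mathsf{F_{4,4}}(\K,\K)$, so conclusion $(ii)$ is unaffected.
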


\begin{proof}
If $\Delta$ is not $\mathsf{F_{4,4}}(\K,\K)$, then \cref{RUPtoGL}, together with Lemmas~\ref{RuT0}, \ref{RuT1}, \ref{RuT2}, \ref{2'2'2'2'} and \ref{RuT3} imply $(i)$. 

Suppose now $\Delta\cong\mathsf{F_{4,4}}(\K,\K)$.
If some pair of elements of $T$ is contained in an ordinary line $K$ of the line-Grassmannian of $\Delta$, then, again by the previous lemmas and the fact that every triple of members of $T$ is a round-up triple, all elements are contained in that line, hence all triples are and \cref{RUPtoGL} implies that $T=K$.

Next suppose that two elements $L,M$ of $T$ are not coplanar.
Then, again by Lemmas~\ref{RuT0}, \ref{RuT1}, \ref{RuT2}, \ref{2'2'2'2'} and \ref{RuT3}, they are contained in a hyperbolic line $H$ of the point residual of a symp $\xi$.
The same lemmas now imply that $T\subseteq H$ and \cref{typeB}$(ii)$ yields $T=H$ and $\xi$ is a symplectic polar space.
The proposition is proved. 
\end{proof}

\section{Generalised hexagons}\label{hexagons}

\subsection{Blocking sets}

We start with a nonexistence result of a class of hexagons with certain parameters. 

\begin{lem}\label{nonex}
Let $t$ be a natural number at least $2$.
Then there does not exist a generalised hexagon of order $(s,t)$, with $s=t+t^2$. 
\end{lem}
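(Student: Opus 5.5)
The plan is to use the standard arithmetic conditions on finite generalised hexagons and show that $s=t+t^2$ violates them. Throughout I assume a generalised hexagon of order $(s,t)$ with $s=t+t^2$ and $t\geq 2$; thickness follows automatically.

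\textbf{Step 1: the Feit--Higman square condition.} For a finite generalised hexagon of order $(s,t)$ with $s,t>1$, the product $st$ must be a perfect square (see \cite{Mal:98}). Here $st=t^2(1+t)$, so $1+t$ must be a square. Write $t=m^2-1$ with $m\geq 2$; then $\sqrt{st}=tm$. This already rules out many values of $t$, for instance $t=2,4,5,6$. The Haemers--Roos inequality $s\leq t^3$ holds here, so it gives no extra restriction and the remaining cases need a finer argument.

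\textbf{Step 2: the integrality condition on eigenvalue multiplicities.} The collinearity graph of the hexagon is distance-regular. Its non-trivial eigenvalues are $s-1$, $-t-1$ and $s-1\pm\sqrt{st}$, and their multiplicities are given by explicit rational expressions in $s$ and $t$, which I will take from \cite[\S1.7]{Mal:98}. The eigenvalues $s-1\pm\sqrt{st}$ are the relevant ones. Their multiplicities have the shape
\[
\frac{s^{a}(1+s)(1+t)\,P_\pm(s,t)}{2\,(s+t\pm\sqrt{st})}
\]
for a suitable exponent $a$ and polynomial $P_\pm$. I will substitute $s=t(1+t)$, $\sqrt{st}=tm$ and $t=m^2-1$, so that everything becomes a rational function of $m$. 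In particular the denominators become $s+t\pm\sqrt{st}=t(t+2\pm m)=t(m^2+1\pm m)$.

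\textbf{Step 3: the divisibility contradiction.} The integer $m^2+m+1$ (or $m^2-m+1$) is odd and coprime to $m$. It should share only a small common factor with $t=m^2-1$ and $1+t=m^2$. I therefore plan to reduce the numerator modulo $m^2\pm m+1$, using $m^2\equiv\mp m-1$. The goal is to show that the numerator is congruent to a nonzero constant of small absolute value, or to a low-degree expression in $m$, that cannot vanish modulo $m^2\pm m+1$ once $m\geq 2$. Any small leftover values of $m$ (say $m=2,3$) I will check by hand against the explicit multiplicity formula. Hence some multiplicity is not an integer, which is a contradiction.

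\textbf{Main obstacle.} The hard part is Step 3. It requires getting the exact multiplicity formula right and carrying out the modular reduction cleanly. If the reduction modulo $m^2\pm m+1$ does not close, my fallback is to work prime by prime. I would take a prime $p$ dividing $m^2+m+1$ and note that $p\equiv 1\pmod 3$ or $p=3$, which gives control over how $p$ divides the numerator factors $s$, $1+s=m^4-m^2+1$ and $1+t=m^2$. I would then compare $p$-adic valuations of the numerator and the denominator.
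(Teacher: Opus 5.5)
Your plan is the paper's argument. The square condition gives $t=m^2-1$. Integrality of the multiplicity $\frac{st(1+s)(1+t)(1+\sqrt{st}+st)}{2(s+t+\sqrt{st})}$ of the eigenvalue $s-1+\sqrt{st}$ then reduces, via the gcd computations you anticipate, to $m^2+m+1\mid 6m-3$. This leaves only $m=4$, i.e.\ $(s,t)=(240,15)$, which the paper rules out using the multiplicity of $s-1-\sqrt{st}$: its denominator $2\cdot 195$ is divisible by $13$, while its numerator is not.

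Neither multiplicity works on its own. The $+$ one is integral at $m=4$. The $-$ one reduces to $m^2-m+1\mid 6m+3$ and is integral at $m=2,3$. So your hand check of leftover values must switch to the other sign.

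A minor correction: $s-1$ is not an eigenvalue of the hexagon's collinearity graph. That graph has diameter $3$, and its spectrum is $s(t+1)$, $s-1\pm\sqrt{st}$ and $-t-1$.
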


\begin{proof}
Since by \cite{Fei-Hig:64} the number $st$ is a perfect square, we have that $t^2+t^3$ is a perfect square.
Hence $t+1$ is a perfect square, say $t=a^2-1$.
Then $s=a^2(a^2-1)$.
Now, by \cite{Fei-Hig:64}, we know that the rational number
\[r:=\frac{st(1+s+t+st)(1+\sqrt{st}+st)}{2(s+t+\sqrt{st})}\]
is an integer.
The denominator of that expression is equal to $2(a-1)(a+1)(a^2+a+1)=2t(a^2+a+1)$.
Hence $a^2+a+1$ divides the numerator divided by $t$.
We now observe, taking into account that $a^2+a+1$ is odd, the following facts.

\begin{itemize}
\item Clearly $\gcd(a^2,a^2+a+1)=1$.
\item Since $a^2-1=(a^2+a+1)-(a+2)$ and $a^2+a+1=(a+2)^2-3(a+2)+3$, we find $\gcd(a^2-1,a^2+a+1)\in\{1,3\}$.
\item We have $1+s+t+st=a^2(a^4-a^2+1)$. 
Since
\[a^4-a^2+1=a^2(a^2+a+1)-a(a^2+a+1)-(a^2+a+1)+2(a+1),\]
we find $\gcd(a^4-a^2+1,a^2+a+1)=\gcd(a+1,a^2+a+1)=1$.
\item We have $1+\sqrt{st}+st=a^6-2a^4+a^3+a^2-a+1$, and consequently
\[a^6-2a^4+a^3+a^2-a+1=(a^4-a^3-2a^2+4a-1)(a^2+a+1)-4a+2.\]
\end{itemize}
We conclude that $a^2+a+1$ divides $6a-3$. This implies, since $a\neq 1$, that $a=4$. Nu further information can now lead to a contradiction since, if $a=4$, then  $r=80.667.520\in\mathbb{N}$. 
But in this case, one calculates that the number
\[\frac{st(1+s+t+st)(1-\sqrt{st}+st)}{2(s+t-\sqrt{st})}\]
is not an integer (the denominator is divisible by $13$, whereas this is not the case for the numerator), as is required by \cite{Fei-Hig:64}. 
\end{proof}

Every point $x$ of a generalised hexagon has a projection onto a given line $L$, which is $x$ itself if $x\in L$, which is the unique point of $L$ collinear to $x$ if $x$ is close to $L$, and which is special to $x$ if $x$ is far from $L$.
We call this projection occasionally the \emph{nearest point to $x$ on $L$}.
Also, recall that, if the nearest point to $x$ on $L$ is collinear to but distinct from $x$, then we called $x$ and $L$ close (as we also did above). 

\begin{prop}\label{hex}
If a set of $s+1$ points $S=\{p_0,p_1,\ldots,p_s\}$ of a generalised hexagon $\Delta$ of finite order $(s,t)$, $s,t>1$, admits no opposite point, then it is either
\begin{compactenum}[$(i)$]
\item a line, or
\item a hyperbolic line (and then $s=t$), or
\item a regular distance-$3$ trace (and then $s\geq t$).
\end{compactenum} 
\end{prop}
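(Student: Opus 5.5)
I would argue by case distinction on the mutual positions of points in $S$, imitating the proof of \cref{longrootlemma} but working directly in the hexagon. Two points of a generalised hexagon are equal, collinear, special (distance $4$ in the incidence graph) or opposite. Throughout, I use two facts. First, for a point $x$ and a line $L$ through $x$, every point of $L\setminus\{x\}$ except at most one is opposite a given point $p$ whenever $p$ is far from $L$. Second, by \cref{3.30} applied in the rank-one residue of a point (a set of $t+1$ lines), any $\le t$ lines through $x$ can be avoided.

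\textbf{Collinear pairs.} Suppose first that two points $p_0,p_1\in S$ are collinear, on a line $L$. If $S\neq L$, pick a point $p_2\in S\setminus L$. Choose a line $M$ through a point $x\in L$ with $x\notin S$ (if one exists) or through a suitable point of $L$, such that $M\ne L$ and $M$ avoids the $\le s$ lines through $x$ pointing towards the other members of $S$. On $M$ we then look for a point opposite everything. Each $p_i$ excludes at most one point of $M\setminus\{x\}$, and $p_0,p_1$ exclude nothing beyond $x$. This gives at most $s-1$ exclusions among $s$ points, so some point of $M\setminus\{x\}$ survives, and it is opposite all of $S$. Hence $S$ is a line.

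\textbf{No collinear pairs.} Next assume $S$ contains no collinear pair. If two points are special, say $p_0\Join p_1$ with $c=[p_0,p_1]$, I run the same counting from the lines through $c$. Taking a line $M\ni c$ different from $cp_0$, $cp_1$ and from the projections of the other $p_i$, the points of $M\setminus\{c\}$ are opposite $p_0$ and $p_1$. Each other $p_i$ kills at most one point of $M\setminus\{c\}$, unless $p_i$ is special to $c$ via a point on $M$, or is itself collinear to $c$.
\begin{itemize}
\item If fewer than $t-1$ of the lines through $c$ are ``used'', a free $M$ exists and counting gives an opposite point.
\item So every line through $c$ must be blocked, and points collinear to $c$ are forced. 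Counting ($s+1$ points, $t+1$ lines, each line absorbing the obstructions) forces all of $S$ into $c^\perp$, one point per line through $c$ or several on few lines.
\end{itemize}
Refining this with a second point $c'$ leads either to $S=p_0^{\Join}\cap p_1^{\Join}\cap c^\perp$ type sets, namely hyperbolic lines, which requires $s+1=t+1$, or to the configuration where $S$ consists of points close to two opposite lines $L,M$, i.e.\ a distance-$3$ trace. Here $s\ge t$ comes from $|[L,M]_3|=t+1$ versus $|S|=s+1$ together with the blocking property. If all pairs in $S$ are opposite, I would instead take $p_0$, a line $K\ni p_0$, and a point $y$ on it. Projections of the other $p_i$ onto $y$ give at most $s$ lines; to avoid them one needs $t\ge s$, and the extremal case again forces a trace structure $[L,M]_3$ with $S$ of size $t+1=s+1$.

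\textbf{Regularity and the main obstacle.} Finally, to upgrade to hyperbolic lines and regular traces, I would use the blocking property: if $S=[L,M]_3$ and $N\equiv M$ meets $S$ in two points but $[N,M]_3\neq S$, construct a point opposite all of $S$ near $N$. The same argument applies to hyperbolic lines, via their definition. The main obstacle is the all-special/opposite case, where the counting has tight margins. Here I expect to need the exact parameters, and \cref{nonex} presumably excludes the borderline order $(t+t^2,t)$ arising when the count is exactly tight.
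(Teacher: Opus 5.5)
There is a genuine gap. Your counting rests on a distance error. If $p_0\perp x$ and $M\neq xp_0$ is a line through $x$, then $p_0$ is \emph{close} to $M$, and every $y\in M\setminus\{x\}$ is special to $p_0$, never opposite it.

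In your collinear case, both $p_0$ and $p_1$ lie on $L\ni x$, so no point of $M$ is opposite $p_0$ or $p_1$. In your special case, likewise, no point of $M\setminus\{c\}$ is opposite $p_0$ or $p_1$. Your first fact applies only to points \emph{far} from $M$. So the pigeonhole count of ``exclusions'' on $M$ never produces a point opposite all of $S$. Also, choosing $M$ to avoid up to $s$ prescribed lines through $x$ is impossible in general, since $s$ may exceed $t$.

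Opposite points sit one step further out, and this is the mechanism the paper uses throughout:
\begin{itemize}
\item Suppose two members of $S$ have the same projection $x$ onto a line $L$ missing $S$. Pick $y\in L$ that is the projection of no member of $S$.
\item Then every line $K\neq L$ through $y$ is far from all members of $S$, and $p_0,p_1$ both project onto $y$.
\item Hence at most $s$ of the $s+1$ points of $K$ are projections, and a remaining point is opposite all of $S$.
\end{itemize}
This yields the paper's Claims~1--3. Every line through a common neighbour of two non-collinear members of $S$ meets $S$. A collinear pair forces $S$ to be a line. A line close to two opposite members and missing $S$ has each of its points collinear to exactly one member of $S$.

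The global cases are also not really argued in your sketch.
\begin{itemize}
\item \emph{Mixed case.} You do not address the case where $S$ contains both an opposite and a special pair. The paper rules it out by showing that $S$, together with the points all of whose lines meet $S$, forms a subhexagon of order $(s',t)$ with $s'>1$. Each line of this subhexagon carries exactly one point of $S$, whence $s=s't+(s't)^2$, contradicting the Feit--Higman conditions.
\item \emph{All-special case.} Claim~1 gives $t\le s$, and $t<s$ must then be excluded. This is done by recognising $S$ as a $t$-cloud, which yields a projective plane of order $t$ and hence $s=t^2+t$. That is precisely where \cref{nonex} is needed, and your sketch does not reach this point.
\item \emph{All-opposite case.} Your numbers are off: a distance-$3$ trace $[L,M]_3$ has $s+1$ points (one for each point of $L$), not $t+1$, and nothing in your count yields $s\ge t$. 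The paper takes $z$ special to $p_0$ and $p_1$ but on no line close to both. Each of the $t+1$ lines through $z$ is then close to exactly one member of $S$, giving $t\le s$. The trace structure and its regularity come directly from Claim~3, applied to every line close to $p_0$ and $p_1$.
\end{itemize}
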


\begin{proof}
Suppose $S=\{p_0,p_1,\ldots,p_s\}$ is a set of $s+1$ points in $\Delta$ such that no point of $\Delta$ is opposite every point of $S$.
We proceed with proving some claims.

\textbf{Claim 1.} \emph{If $p_0\perp x\perp p_1$, with $p_0$ not collinear to $p_1$, and $x\notin S$, then every line through $x$ contains at least one member of $S$.}\\
Indeed, suppose the line $L$ through $x$ is disjoint from $S$.
Since $p_0$ and $p_1$ project onto the same point $x$ of $L$, there exists some point $y\in L$ not collinear to any member of $S$.
Consider a line $M\neq L$ through $y$.
No point of $S$ is contained in $M$ or is close to $M$ (since this would lead to a $4$-gon or $5$-gon containing $y$).
Hence they are all far from $M$.
Since $p_0$ and $p_1$ project onto the same point $y$ of $M$, there is a point $z\in M$ opposite each member of $S$, a contradiction. 

Claim~1 is proved. 

\textbf{Claim 2.} \emph{If $p_0$ and $p_1$ are collinear, then $S$ is a line of $\Delta$.}\\
Indeed, suppose first that there exists a point $x\in L:=p_0p_1$ that does not coincide with a projection of some member of $S$ onto $L$.
Consider a line $M\neq L$ containing $x$.
Then the only points of $S$ close to $M$ are on $L$.
Since $p_0$ and $p_1$ project onto the same point $x$ of $M$, there exists some point $y\in M$ which is not the projection of any member of $S$ onto $M$.
We deduce that every line $K\neq M$ through $y$ is far from every member of $S$.
Since $p_0$ and $p_1$ project onto the same point $y$ of such a line $K$, there exists a point opposite every member of $S$ on each such line $K$, a contradiction. 

Hence every point $q_i$ on $L$ is the projection of a unique point $p_i$ of $S$.
Suppose $S\neq L$.
Then we may assume that $p_2\notin L$ and so $p_2$ is either special to or opposite $p_0$.
If $p_2$ is special to $p_0$, then by Claim~1 each line through $q_2$ contains a point, say $p_3$, of $S$, implying that $q_2=q_3$, contradicting the uniqueness of $p_2$.
So $p_2$ is far from $L$.
Select a line $M'$ through $q_2$ distinct from $L$ and far from $p_2$ ($M'$ exists since $t\geq 2$).
Then, since $q_2\neq q_i$, the point $p_i$ is either on $L$ or opposite $q_2$, for each $i\in\{3,4,\ldots, s\}$.
It follows that, with $M'$ in the role of $M$ in the previous paragraph, we again reach the same contradiction.

Claim~2 is proved. 
From now on, 
we may assume that $S$ does not contain two collinear points. 

\textbf{Claim 3.} \emph{If $p_0$ and $p_1$ are opposite, and some line $L$ close to both contains no point of $S$, then $L$ is close to each point of $S$ and each point of $L$ is collinear to a unique point of $S$.}\\

Let $x_i$ be the nearest point to $p_i$ on $L$, $i\in\{0,1,\ldots,s\}$, and note that $x_i\neq p_i$ by assumption.
Suppose there exists a point $x\in L\setminus\{x_0,x_1,\ldots,x_s\}$.
Let $M\neq L$ be any line through $x$.
Then $M$ is far from each point of $S$.
Since $x$ is special to at least two points $p_0,p_1$ of $S$, there is some point of $M$ opposite each point of $S$, a contradiction.
Hence $L=\{x_0,x_1,\ldots,x_s\}$.
Suppose some point, say $p_2$, of $S$ is not collinear to its projection $x_2$ onto $L$.
Let $M_2$ be a line through $x_2$ not close to $p_2$ and distinct from $L$ (which exists as $t>1$). 
 
Then $M_2$ is far from each point of $S$, but $p_0$, $p_1$ and $p_2$ have the same projection $x_2$, yielding a point $y_2\in M_2$ opposite each point of $S$, a contradiction.
Hence $p_i\perp x_i$, for all $i\in\{0,1,\ldots,s\}$. 

\textbf{Claim 4.} \emph{If $S$ only contains pairwise opposite points, then $s\geq t$ and $S$ is a regular distance-$3$ trace.}\
The assumptions of Step~3 are satisfied for each line close to both $p_0$ and $p_1$.
Hence every point of $S$ is collinear to some point of each line that is close to both $p_0$ and $p_1$.
We conclude that $S$ is a regular distance-$3$ trace.
We now show that $s\geq t$.

Let $z$ be any point special to both $p_0$ and $p_1$, and not on a line close to $p_0$ and $p_1$.
Note that $z$ is not collinear to any point of $S$, as $z\perp p_2$ would imply, by interchanging the roles of $p_1$ and $p_2$, that the line through $z$ close to $p_0$ is also close to $p_1$, which is not the case by the assumptions on $z$.
Then, similarly as before, every line through $z$ is close to some point of $S$, but not to two such points, as this would mean that $z$ is already collinear to some point of $S$ (by the definition of distance-$3$ trace), contradicting our note above.
We conclude $t\leq s$.

\textbf{Claim 5.} \emph{If $S$ only contains pairwise special points, then $s=t$ and $S$ is a hyperbolic line.}\
Indeed, set $x=[p_0,p_1]$.
By Claim~1, every line through $x$ contains a point of $S$.
Hence $t\leq s$.
If $t=s$, then let $y$ be a point opposite $x$, but not opposite either $p_0$ or $p_1$.
We claim that $p_i$ is not opposite $y$, for every $i\in{2,3,\ldots,s}$.
Indeed, suppose $p_2$ is opposite $y$ and let $L_y$ be the unique line through $y$ not opposite $xp_2$.
No point $p_i$, $i\in\{0,1,\ldots,s\}$, is collinear to some point $q_i$ of $L_y$, as this would induce a $5$-gon containing $x,p_i,q_i$ and the lines $L_y$ and $xp_i$.
Hence all points $p_i$ have a unique point on $L_y$ to which they are not opposite.
But $p_0$ and $p_1$ are not opposite the same point, yielding a point on $L_y$ opposite all members of $S$, a contradiction.
The claim is proved.
This now implies that $S$ is a hyperbolic line.

Suppose now that $t<s$.
Claim~1 implies that $S$ is a $t$-cloud, in the terminology of \cite{Bro-Tha-Mal:02}.
By \cite[Lemma~1]{Bro-Tha-Mal:02} and the remark following Lemma 1 of \cite{Bro-Tha-Mal:02}, it follows that $S\cup S^*$, where $S^*$ is the set of points collinear to at least two points of $S$, is the point set of a subhexagon of order $(1,t)$, and as such $S$ and $S^*$ are the point and line set, respectively, of a projective plane of order $t$.
Hence $s=t^2+t$.
This contradicts \cref{nonex}.

There remains one case to take care of.

\textbf{Claim 6.} \emph{If $S$ contains opposite pairs, then it does not contain special pairs.
}\
Indeed, let $p_0\equiv p_1$.
Suppose, for a contradiction, that $S$ contains a special pair, too.
We first show that every line $L$ close to $p_0$ and $p_1$, respectively, contains a (unique) point of $S$.
Indeed, suppose not.
Then Claim~3 implies that each point of $L$ is collinear to a unique point of $S$, implying that each pair of points of $S$ is opposite, contradicting our assumption.
Hence $L$ contains some point $p_2$ of $S$, unique by Claim~2.
It also follows from Step~1 that each line through $[p_j,p_2]$, $j=0,1$, contains a unique point of $S$.

Now let $T$ be the set of points of $\Delta$ with the property that each line through them contains a point of $S$.
Let $\cL$ be the set of lines of $\Delta$ through such points and note that each member of $\cL$ contains at least one point of $T$ and exactly one point of $S$.
Then we prove that $\Gamma=(S\cup T,\cL)$ is a subhexagon.
Indeed, if $L,L'$ are two distinct lines containing points collinear to $p_0$ and $p_1$, respectively, then $p_0,p_1,L,L'$ are contained in an ordinary hexagon $H$, implying that the girth of the incidence graph of $\Gamma$ is equal to $12$.

In order to show that the diameter of the said graph is $6$, it suffices to prove that for every point $x\in S\cup T$ and every line $L\in\cL$, the unique minimal path joining $x$ and $L$ in $\Delta$ belongs to $\Gamma$.
If $x\in L$, then this is trivial.
Suppose now $x\in M\ni y\in L$, with $x\notin L$.
If $x\in S$, then $y\notin S$ and so $L$ contains a point of $S$ distinct from $y$.
It follows that $y\in T$ and consequently $M\in \cL$.
Suppose now $x\in T$.
Then there exists some point $x'\in S\cap M$.
It again follows that $y\in T$ and $M\in\cL$.
At last suppose $x\in M\ni y\in K\ni z\in L$, with $x\neq y\neq z$ and $M\neq K\neq L$.
If $x\in T$, then there exists $x'\in M\cap S$.
If $x'=y$, then the previous case proves the assertion; if $x'\neq y$, then we replace $x$ with $x'$ and hence we may assume $x\in S$.
If $z\in S$, then $y\in T$ and the assertion follows easily.
So suppose $z\notin S$.
Then some point $q\in L$ different from $z$ belongs to $S$.
By the first paragraph, the line $K$ contains a point of $S$.
It follows that $y,z\in T$ and the assertion follows.
Since it is easy to see that every point of $\Delta$ (and hence of $\Gamma$) is opposite at least one point of $H$, we see that all lines through any point of $S\cup T$ belong to $\cL$.
Hence, by \cite[Lemma~1.3.6]{Mal:98} in combination with \cite[Theorem~1.6.2]{Mal:98}, $\Gamma$ is a subhexagon of order $(s',t)$, $1\leq s'\leq s$.
Since, with the above notation, the line $L$ contains at least three points of $S\cup T$, we have $s'>1$.
Now, by the definition of $\cL$, every member of $\cL$ contains a unique member of $S$.
A standard count reveals that $|S|=1+s't+(s't)^2$ points.
It follows that $s=s't+(s't)^2$.
Now, both $st$ and $s't$ are perfect squares by \cite{Fei-Hig:64}.
It follows that $s/t=s'(1+s't)$ is a perfect square.
Since $s'$ and $1+s't$ are relatively prime, both $s'$ and $1+s't$ are perfect squares, which contradicts the fact that $s't$ is a perfect square.
This completes the proof of Claim~5.

This also completes the proof of the proposition.
\end{proof}

The following result classifies very explicitly all sets of size $s+1$ admitting no global opposite point in finite Moufang hexagons of order $(s,t)$.

\begin{coro}\label{BShex}
A set $S$ of $s+1$ points of a Moufang generalised hexagon of finite order $(s,t)$, $s,t>1$, admits no opposite point if, and only if, it is either
\begin{compactenum}[$(i)$] \item a line, or \item a hyperbolic line in the split Cayley hexagon $\mathsf{G_{2,2}}(s,s)$, or \item a distance-$3$ trace in the split Cayley hexagon $\mathsf{G_{2,2}}(s,s)$ with $s$ even, or in the twisted triality hexagon $\mathsf{G_{2,2}}(t,s)$, with $t^3=s$ even.
\end{compactenum}
\end{coro}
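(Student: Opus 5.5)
The plan is to combine \cref{hex} with the classification of finite Moufang hexagons and with known facts about which hyperbolic lines and distance-$3$ traces are regular there. By the Tits--Weiss classification, a finite Moufang hexagon of order $(s,t)$, $s,t>1$, is one of $\mathsf{G_{2,2}}(q,q)$ (split Cayley, order $(q,q)$), its dual, $\mathsf{G_{2,2}}(q,q^3)$ of order $(q^3,q)$ (triality), or its dual of order $(q,q^3)$. For the ``only if'' direction we apply \cref{hex}: $S$ is a line, a hyperbolic line with $s=t$, or a regular distance-$3$ trace with $s\geq t$. We then go through the four families and decide when each possibility can occur.

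For hyperbolic lines we need $s=t$, which leaves the split Cayley hexagon and its dual. We invoke the standard result (\cite[\S6.3]{Mal:98}, Ronan) that every point of $\mathsf{H}(q)$ is projective, i.e.\ all hyperbolic lines through it have the defining property, while points of the dual $\mathsf{H}(q)^D$ have this property only when $q=3^h$, in which case the dual is again isomorphic to $\mathsf{H}(q)$. This yields case $(ii)$. For regular distance-$3$ traces we need $s\geq t$, which excludes the dual triality hexagon of order $(q,q^3)$. We then quote the known classification, dual to the results on ideal and imaginary lines in \cite{Mal:98}: distance-$3$ traces in $\mathsf{H}(q)$ are regular iff $q$ is even, in $\mathsf{H}(q)^D$ iff $q=3^h$ (which again reduces to $\mathsf{H}(q)$; we would verify this compatibility), and in $\mathsf{T}(q^3,q)$ iff $q$ is even. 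This yields case $(iii)$.

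For the ``if'' direction, lines are geometric lines and trivially admit no opposite point. For a hyperbolic line $H$ through $p$, take a point $y$; if $y$ is opposite two members of $H$, then it is opposite $p$ and by the defining property it is non-opposite exactly one member, so no point is opposite all of $H$. Concretely, for $y\equiv p$ the set $y^{\Join}\cap p^\perp$ meets each line through $p$ in one point, and projectivity forces this set to equal a hyperbolic line. For a regular distance-$3$ trace $[L,M]_3$ with $y$ given, we use regularity together with the incidence counts used in Claim~4 of the proof of \cref{hex} (each line through a suitable point is close to exactly one member) to show that $y$ is non-opposite some member.

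The main obstacle is pinning down the regularity and projectivity statements in the correct duality for each family, above all confirming that distance-$3$ traces in the triality hexagon of order $(q^3,q)$ are regular exactly in characteristic $2$. Here I would first try coordinatization via the embedding in $Q^+(7,q)$ (traces correspond to conics or ``imaginary'' lines arising from the trialitarian form), reducing regularity to a characteristic-$2$ property of the underlying quadratic form, as in the split case where the hexagon embeds in a symplectic space precisely when $q$ is even.
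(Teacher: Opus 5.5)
\textbf{Gap: regularity is not the blocking property.} Your treatment of lines and hyperbolic lines is broadly sound, but case $(iii)$ has a genuine gap. You treat ``regular distance-$3$ trace'' as if it meant ``admits no opposite point'', and the regularity classification you plan to quote is false. \cref{hex} gives regularity only as a \emph{necessary} condition.

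Take $\mathsf{H}(q)=\mathsf{G_{2,2}}(q,q)$ in its embedding in $Q(6,q)$, where two hexagon points are non-opposite if and only if they are collinear on the quadric. A point of $[L,M]_3$ is hexagon-collinear to one point of $L$ and special to the others, and likewise for $M$. Hence $[L,M]_3\subseteq\langle L,M\rangle^\perp$. For $q$ odd, $\langle L,M\rangle$ is a non-degenerate solid, and $[L,M]_3$ is the conic $C=\pi\cap Q(6,q)$ with $\pi=\langle L,M\rangle^\perp$.

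For opposite points $x,y$, the $q+1$ lines close to both join the quadric-opposite lines $x^\perp\cap y^{\Join}$ and $y^\perp\cap x^{\Join}$. They therefore form a regulus, so any two of them span the same solid. It follows that two traces $[L,M]_3$ and $[N,M]_3$ sharing two points coincide. So every distance-$3$ trace of $\mathsf{H}(q)$ is regular in every characteristic. The remark after the corollary says the same for $\mathsf{T}(q^3,q)$ with $q$ odd, and the traces of $\mathsf{H}(q)^D$, which are reguli of $\mathsf{H}(q)$, are also always regular.

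Yet for $q$ odd these regular traces are not blocking. Let $\ell$ be an external line of $C$ in $\pi$. Counting ($(q+1)(q^2+1)>(q+1)^2$) gives a quadric point $z\in\ell^\perp\setminus\langle L,M\rangle$. Then $z^\perp\cap\pi=\ell$, so $z$ is opposite every point of $[L,M]_3$.

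Consequences:
\begin{compactenum}[$(a)$]
\item In your ``only if'' direction, regularity cannot be used to discard odd characteristic or $\mathsf{H}(q)^D$.
\item Your ``if'' argument (regularity plus the counting of Claim~4) cannot work, since it would apply verbatim to these conics.
\item Your list is internally inconsistent: for $q=3^h$ it says $\mathsf{H}(q)^D\cong\mathsf{H}(q)$ both has and lacks regular traces.
\end{compactenum}

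\textbf{What is needed instead.} You need a direct criterion for when a distance-$3$ trace admits no opposite point. For $\mathsf{H}(q)$ with $q$ even this is easy. In the symplectic embedding, all points of $\PG(5,q)$ are hexagon points and non-opposition is orthogonality. Then $[L,M]_3$ is the non-isotropic line $\langle L,M\rangle^\perp$, which meets every hyperplane $y^\perp$. For $\mathsf{T}(q^3,q)$ with $q$ even, and for ruling out odd characteristic and the duals, you need a genuinely characteristic-$2$ argument. The paper does not use regularity here at all: after \cref{hex} it cites Govaert's characterisation \cite[Theorem~1]{Gov:97} for $(iii)$, and \cite[Remark~6.3.5]{Mal:98} for $(ii)$.

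\textbf{Two smaller points on $(ii)$.} For the ``if'' direction you need that any two traces at $p$ meet (the projective-plane property of $p$); your sentence about $y\equiv p$ does not yet say this. For the ``only if'' direction you need transitivity of the stabiliser of $p$ on points opposite $p$, to pass from one regular trace to distance-$2$-regularity of $p$.
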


\begin{proof}
In view of \cref{hex}, $(ii)$ follows from \cite[Remark~6.3.5]{Mal:98} and $(iii)$ follows from \cite[Theorem~1]{Gov:97}.
\end{proof}

Note that the previous corollary implies that not every regular distance-$3$ trace is a set of $s+1$ points such that no point is opposite each point of that set.
Indeed, the split Cayley hexagons and the twisted triality hexagons in odd characteristic are counterexamples.

\begin{remark}
In Claim~5 of the proof of \cref{hex}, hexagons of order $(t^2+t,t)$ appear as possible counterexamples, but, as we assume thickness, they are killed by \cref{nonex}.
If we drop the thickness assumption, it is curious to note that the arguments of that step give rise to a rather exceptional example of a set $F$ of $s+1$ point-line flags in a projective plane of order $s$ such that no point-line flag is opposite all members of $S$.
Indeed, putting $t=1$, we obtain a hexagon of order $(2,1)$, which arises from $\PG(2,2)$, and $S$ consists of three flags $\{p_0,p_0p_1\},\{p_1,p_1p_2\},\{p_2,p_0p_2\}$ from a triangle $\{p_0,p_1,p_2\}$.
The points of these flags do not form a line, and the lines of these flags do not form a line pencil.
We conjecture that this is the only example of size $s+1$ in any projective plane of order $s$ with that property and such that no flag of the plane is opposite all of its members.
\end{remark}

\subsection{Geometric lines}

We now classify geometric lines in Moufang hexagons.
We first consider the general case and then specify further.
As usual, we deal with round-up triples. 

\begin{lem}\label{RUP1hex}
Let $\Gamma=(X,\cL)$ be a generalised hexagon and $\{x_1,x_2,x_3\}$ a round-up triple of points.
Suppose $x_1\perp x_2$.
Then $x_1,x_2,x_3$ are contained in a common line.
\end{lem}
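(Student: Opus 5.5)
Let $L$ be the line through $x_1$ and $x_2$. The plan is first to locate $x_3$ near $L$ and then to show that it lies on $L$. As in \cref{E6RuT0}, we may assume $x_1\neq x_2$: if $x_1=x_2\neq x_3$, then \cref{3.30b} gives a point opposite $x_3$ but not opposite $x_1$, which is impossible for a round-up triple.

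\emph{Step 1: $x_3$ is incident with or collinear to $L$.} The two points $x_1,x_2$ are joinable to the common vertex $L$ of the building (the line as a vertex of the other type). \cref{apartment} then says that $x_3$ is joinable to $L$, which means $x_3\in L$, and we are done. I take this as the main route. Should the vertex-level formulation of \cref{apartment} be unclear in rank $2$, I would argue directly instead, as follows.

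Suppose $x_3\notin L$. Choose a point $z$ on $L$ distinct from the projection of $x_3$ onto $L$; this is possible since $s\geq 2$. Take a line $M\neq L$ through $z$ with $M$ not on the path from $z$ to $x_3$. Both $x_1$ and $x_2$ are then at distance exactly $2$ in the point graph from every point of $M\setminus\{z\}$, so no such point is opposite $x_1$ or $x_2$. We want some point $w\in M\setminus\{z\}$ opposite $x_3$. The distance from $x_3$ to $M$ is at least $5$ in the incidence graph, and it equals $5$ once we choose $M$ so that the projection path misses it. Then all points of $M$ except one, the projection of $x_3$, lie at distance $6$ from $x_3$, that is, they are opposite $x_3$.

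\emph{Step 2: handling the degenerate choices.} The case analysis concerns the position of $x_3$ relative to $L$. If $x_3$ is collinear to a point $p\in L$ (distance $3$ from $L$), pick $z\in L\setminus\{p\}$ and a line $M\neq L$ through $z$. The point $x_3$ is then at distance $5$ from $M$, with projection $z$, so every point of $M\setminus\{z\}$ is opposite $x_3$. If $x_3$ is at distance $5$ from $L$, with projection $p$, the same choice works, since the path from $x_3$ to $M$ goes through $p$ and $z$. The path then has length $7>6$, so every point of $M\setminus\{z\}$ is opposite $x_3$ except those on the reduced path; those do not occur on $M$ because $M$ is not on it.

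\emph{Conclusion.} In either case we obtain a point $w$ opposite exactly one member of the triple, namely $x_3$, contradicting the round-up property. Hence $x_3\in L$. The main obstacle is the careful bookkeeping of hexagon distances in Step 2, but by the girth $12$ it is routine.
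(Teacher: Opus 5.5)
Your main route, applying \cref{apartment} with $v$ the line $L=x_1x_2$ so that $x_3$ must be joinable to (that is, incident with) $L$, is exactly the paper's one-line proof, and it is correct. Your fallback argument also works, with one slip: when $x_3$ is at distance $5$ from $L$, the point $z$ is itself opposite $x_3$. The projection of $x_3$ onto $M$ is then a point of $M\setminus\{z\}$ special to $x_3$, so not all of $M\setminus\{z\}$ is opposite $x_3$; still, since $s\geq 2$, there is a point $w\in M\setminus\{z\}$ opposite $x_3$ but not opposite $x_1$ or $x_2$.
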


\begin{proof}
This immediately follows from \cref{apartment}.
\end{proof}

\begin{lem}\label{RUP2hex}
Let $\Gamma=(X,\cL)$ be a generalised hexagon and $\{x_1,x_2,x_3\}$ a round-up triple of points.
Suppose $x_1\Join x_2$.
Then $x_1,x_2,x_3$ are contained in $[x_1,x_2]^\perp\cap y^{\Join}$, for every point $y\in[x_1,x_2]^\equiv\cap x_1^{\Join}\cap x_2^{\Join}$.
\end{lem}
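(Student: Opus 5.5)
Set $x:=[x_1,x_2]$ and fix a point $y$ with $y\equiv x$, $y\Join x_1$ and $y\Join x_2$. We must show $x_3\perp x$ and $x_3\Join y$; since $x_1,x_2$ clearly lie in $x^\perp\cap y^{\Join}$, this gives the statement. We work in an apartment-like configuration and use only \cref{apartment}, \cref{apartment3} and the definition of a round-up triple. The basic move is this: whenever a point is opposite $x_3$ but not opposite $x_1$, it must also fail to be opposite $x_2$, and symmetrically.

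\textbf{Step 1: $x_3\perp x$.} The lines $xx_1$ and $xx_2$ are distinct and both contain $x$, and the flag $\{x,xx_1\}$ is a simplex. We apply \cref{apartment3} with $w_1:=xx_1$ (a line, joinable to $x_1$) and $w_2:=x$ (a point, joinable to $x_2$ via the line $xx_2$). Strictly speaking, joinability of a point with a point means collinearity, so we use the variant \cref{apartment2} with $v:=x$, $w_1:=xx_1$, $w_2:=xx_2$. We must check the hypothesis that $x_1,x_2$ are not joinable to a common vertex of line type. This holds because $x_1$ and $x_2$ are not collinear, so no line contains both. The lemma then yields a line $w_3\ni x$ with $x_3\in w_3$. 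Hence $x_3\perp x$ or $x_3=x$.

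To exclude $x_3=x$, take a point opposite $x$ that is not opposite $x_1$; such a point exists by \cref{3.30b}, and we can choose it special to $x_1$ with projection on $xx_1$ different from $x$. By a gallery/distance count, this point is then opposite $x_2$ unless it is also close to $xx_2$; choosing it appropriately gives a point opposite $x_3=x$ and $x_2$ but not $x_1$, which contradicts the round-up property. So $x_3\in x^\perp\setminus\{x\}$. Also $x_3\notin xx_1$: otherwise $x_1\perp x_3$, and \cref{RUP1hex} would place $x_2$ on the line $x_1x_3=xx_1$, which is false. Likewise $x_3\notin xx_2$.

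\textbf{Step 2: $x_3\Join y$.} Since $x_3\perp x$ and $x\equiv y$, the distance from $x_3$ to $y$ is at least $2$ (special) and the point $x_3$ is either special to or opposite $y$. In a hexagon, a point on a line $L$ through $x$ is special to the opposite point $y$ precisely when $L=\proj_x^y(\cdot)$, i.e. $L$ is the unique line through $x$ at distance $3$ from $y$. Denote this line by $L_y$. Then $x_1\Join y$ forces $L_y=xx_1$, and $x_2\Join y$ forces $L_y=xx_2$. This is a contradiction, so the hypothesis set $[x_1,x_2]^\equiv\cap x_1^{\Join}\cap x_2^{\Join}$ is in fact empty and the statement holds vacuously.

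This reveals the main obstacle: the symbol $\Join$ for $y$ must be interpreted as in the paper's hexagon convention. There, a point is called special to $y$ when it lies at distance $4$, but the notion used for the projection onto lines is ``close''. If $\Join$ simply means distance $4$ (the ``special'' relation), then $x_i\Join y$ does not force $x_i\in L_y$. We therefore redo Step 2 with this reading. We assume $x_3\equiv y$ and seek a contradiction. Take the line $M$ through $y$ with $M\neq\proj_y(\text{paths to }x_1,x_2)$, chosen via thickness ($t\ge2$, extra line) so that no point of $M\setminus\{y\}$ is opposite $x_1$ or $x_2$. Projecting $x_3$ onto $M$, a point of $M$ not at distance less than $6$ from $x_3$ then violates the round-up property. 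Making this choice of $M$ precise, by tracking distances through the unique paths $x_i\perp z_i\perp y$, is where I expect the real work to lie.
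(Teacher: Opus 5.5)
Your use of \cref{apartment2} with $v=x$ and $w_i=xx_i$, giving a line through $x$ and $x_3$, is exactly the paper's first step. Two things go wrong around it. First, your ``basic move'' inverts the round-up property. A round-up triple forbids a point opposite \emph{exactly one} of $x_1,x_2,x_3$. So a point opposite $x_3$ but not $x_1$ must be \emph{opposite} $x_2$; equivalently, a point not opposite two of the $x_i$ is not opposite the third. Your exclusion of $x_3=x$ therefore collapses: a point opposite $x$ and $x_2$ but not $x_1$ is opposite two members and contradicts nothing. The fix is the argument you give one sentence later, which already covers this case because $x\in xx_1$. If $x_3=x$, then $x_3\perp x_1$, and \cref{RUP1hex} puts $x_1,x_2,x_3$ on a common line, contradicting $x_1\Join x_2$. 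This is how the paper rules out $x_3=[x_1,x_2]$.

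Second, Step~2 rests on a false claim. If $y\equiv x$, then in the incidence graph every line through $x$ is at distance $5$ from $y$, since distance $3$ would force $d(x,y)\le 4$. Hence \emph{every} line through $x$ carries exactly one point special to $y$, and all its other points are opposite $y$. So $x_1\Join y$ and $x_2\Join y$ are perfectly compatible, and the set $x^{\equiv}\cap x_1^{\Join}\cap x_2^{\Join}$ is nonempty. For instance, pick $b_1\perp x_1$ with $b_1\notin xx_1$ (so $b_1\equiv x_2$), a line $N\ni b_1$ different from $x_1b_1$, and let $y$ be the unique point of $N$ special to $x_2$; then $y\equiv x$ and $y\Join x_1$. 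The statement is not vacuous, and there is no ambiguity in the meaning of $\Join$. Under the correct reading your auxiliary-line plan is left unfinished, and it is unnecessary, because the test point is $y$ itself. Since $y$ is special to both $x_1$ and $x_2$, it is opposite neither, so by the round-up property $y\not\equiv x_3$. As $x_3\perp x\equiv y$ forces $x_3$ to be special to or opposite $y$, you get $x_3\Join y$. Together with $x_3\perp x$, this is the whole lemma, which is why the paper says the assertion ``follows directly from the definition of a round-up triple''.
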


\begin{proof}
By \cref{RUP1hex} we have $x_3\neq[x_1,x_2]$.
Now \cref{apartment2} yields $x_3\perp[x_1,x_2]$ and the assertion follows directly from the definition of a round-up triple.
\end{proof}

\begin{lem}\label{RUP3hex}
Let $\Gamma=(X,\cL)$ be a generalised hexagon and $\{x_1,x_2,x_3\}$ a round-up triple of points.
Suppose $x_1\equiv x_2$.
Then $x_1,x_2,x_3$ are contained in every distance-$3$ trace containing at least two of them.

\end{lem}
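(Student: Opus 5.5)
Suppose $x_1\equiv x_2$, and let $[L,M]_3$ be a distance-$3$ trace containing two of the three points. We must show it contains all three. The tool is \cref{apartment3}, applied to the flag formed by the two lines defining the trace.

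\textbf{Step 1: the trace contains $x_1$ and $x_2$.} Write the trace as $[L,M]_3$ with $L\equiv M$. Suppose first that $x_1,x_2\in[L,M]_3$. Let $a_i$ and $b_i$ be the projections of $x_i$ onto $L$ and $M$, for $i=1,2$. Then each $x_i$ is joinable (in the building sense) to the lines $a_ix_i$ and $x_ib_i$. We have no simplex directly joinable to both $x_1$ and $x_2$, but we have the chain $x_i\perp a_i\in L$.

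\textbf{Step 2: reduce to a single point.} Recall that $x_3\perp[x_1,x_2]$-type arguments are unavailable here, since $x_1\equiv x_2$. Instead, use \cref{apartment2} with $v=L$ (a vertex of the other type), $w_1=a_1x_1$ and $w_2=a_2x_2$. These are lines, and $\{L,w_i\}$ is not a simplex; the simplices $\{L,a_i\}$ are. So take $v=a_1$? That fails, because $v$ must be common to both.

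A better route is to proceed in two steps through the points of $L$. First apply \cref{apartment2} with $v=L$, $w_1=a_1$ and $w_2=a_2$. The simplex $\{L,a_i\}$ is a flag, and $a_i$ is joinable to $x_i$ only if $a_i=x_i$, which is false. Vertices of a hexagon are joinable only when they are incident. So the hypotheses of \cref{apartment2} must be met differently: take $v=a_1x_1$? Again this is not common to both.

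Hence the direct lemma does not apply, and I would argue with apartments directly, in the style of the proof of \cref{apartment}. Consider an apartment $\Sigma$ containing $x_3$ and the line $L$. Let $w$ be the vertex of $\Sigma$ opposite $x_3$; by the round-up property we may assume $w\equiv x_1$. Now take an apartment $\Sigma'$ containing $w$ and the path $x_1\perp a_1\in L$, which is possible since $w\equiv x_1$. Let $\varphi:\Sigma'\to\Sigma$ be the isomorphism fixing $w$ and $L$; it then fixes the convex hull, in particular the relevant flags. Since $\varphi(x_1)$ is the unique point of $\Sigma$ opposite $w$, we get $\varphi(x_1)=x_3$, and distance to $L$ is preserved. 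Hence $x_3$ is close to $L$. The symmetric argument with $M$ gives $x_3$ close to $M$, so $x_3\in[L,M]_3$.

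\textbf{Step 3: the cases $\{x_1,x_3\}$ or $\{x_2,x_3\}$ in the trace.} If instead the trace contains, say, $x_1$ and $x_3$, the same argument with the roles permuted applies, provided the analogous opposite-vertex choice works. The round-up triple definition is symmetric, so Step 2 only needs some $w$ opposite the third point and opposite one of the remaining two. That symmetry gives the general case.

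\textbf{Main obstacle.} The delicate point is the existence of $\Sigma'$ containing both $w$ and the non-simplex configuration $\{x_1,a_1,L\}$, with $\varphi$ fixing $L$. One needs the building axiom giving an isomorphism fixing $\Sigma\cap\Sigma'$ pointwise, and $\Sigma\cap\Sigma'$ must contain $L$ and $w$. I would handle this by choosing $\Sigma'$ through $w$ and the chamber $\{a_1,L\}$, noting that $x_1$ lies in the convex closure of $w$ and that chamber because $x_1\equiv w$ and $x_1\perp a_1$. If that convexity claim fails, then $w$ is not opposite $x_1$, and I would instead use the chamber $\{a_1,a_1x_1\}$ together with a second apartment exchange.
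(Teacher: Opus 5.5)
Your Step~2 has a genuine gap, exactly at the point you flag as the main obstacle, and the argument as written is circular. Distances below are in the incidence graph.

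In any apartment containing the antipodal pair $x_1,w$, every element lies on a geodesic from $x_1$ to $w$. So an apartment $\Sigma'$ through $w$ and the gallery $x_1,x_1a_1,a_1,L$ exists only if $d(w,L)=3$. But $w$ is the antipode of $x_3$ in $\Sigma\ni L$, so $d(w,L)=6-d(x_3,L)$. Hence $\Sigma'$ exists if and only if $x_3$ is already close to $L$, which is what you are trying to prove.

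The fact $w\equiv x_1$ only gives an apartment through $w$ and one chamber at $x_1$; it does not reach $L$. For the same reason your convexity claim is false. The fallback ``then $w$ is not opposite $x_1$'' is also wrong: when $d(w,L)=5$, $w$ can perfectly well be opposite $x_1$.

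A clear symptom is that your argument never uses $x_1\equiv x_2$, yet its conclusion fails without it. Let $x_1,x_2,x_3$ lie on a line $N$ (a round-up triple) and let $L\neq N$ be a line through $x_3$. Then $L$ is close to $x_1$ and $x_2$, but $L$ contains $x_3$, so $x_3$ is not close to $L$.

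The repair needs only $\Sigma$.
\begin{itemize}
\item If $x_3$ were far from $L$, then $d(w,L)=1$, so $w\in L$. Since $x_1,x_2$ are close to $L$, every point of $L$ is collinear or special to both. So $w$ would be opposite $x_3$ alone, contradicting the round-up property.
\item It remains to exclude $x_3\in L$, and this is where $x_1\equiv x_2$ enters. If $x_3\in L$, then either $x_3$ is collinear to $x_1$, and \cref{RUP1hex} puts $x_1,x_2$ on a common line; or $x_3$ is special to $x_1$ with $[x_1,x_3]=a_1$, and \cref{RUP2hex} gives $x_2\perp a_1$. Both contradict $x_1\equiv x_2$.
\end{itemize}
This is exactly the paper's proof, which simply takes a point of $L$ opposite $x_3$ instead of an antipode in an apartment. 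Applying it to $L$ and $M$ gives $x_3\in[L,M]_3$.

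For your Step~3 (a trace through $x_1,x_3$), the far case for $x_2$ is excluded in the same way. The case $x_2\in L$ is excluded directly by $x_1\equiv x_2$.
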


\begin{proof}
Let $L$ be an arbitrary line of $\Gamma$ containing a point $x_1'$ collinear to $x_1$ and also a point $x_2'$ collinear to $x_2$ (then $x_1'\neq x_2'$).
By \cref{RUP2hex}, $x_3\notin L$.
Assume $x_3$ is special to some point $y\in L$ with $[x_3,y]\notin L$.
Then any point of $L\setminus\{y\}$ is opposite $x_3$ and not opposite both $x_1,x_2$.
So $x_3$ is collinear to some point of $L$, distinct from both $x_1'$ and $x_2'$ (use \cref{RUP2hex} again).
Now it is clear that the assertion follows.
\end{proof}

\begin{prop}\label{geomlinesghex}
Let $\Gamma=(X,\cL)$ be a generalised hexagon and let $T$ be a geometric line of $\Gamma$.
Then $T$ is either a line, a hyperbolic line, or a regular distance-$3$ trace.

\end{prop}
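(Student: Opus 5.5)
The plan is to split according to the mutual position of two points of $T$, using the three round-up triple lemmas just proved. By \cref{3.30}, $T$ has at least three points. Every triple of points of $T$ containing two fixed points $x_1,x_2\in T$ is a round-up triple, since $T$ is a geometric line. The relation between $x_1$ and $x_2$ is collinear, special or opposite; I will first treat the case where all pairs of $T$ have the same relation, and afterwards exclude mixed relations.

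\emph{Collinear case.} If two points $x_1\perp x_2$ of $T$ exist, \cref{RUP1hex} puts every $x_3\in T$ on the line $L=x_1x_2$. Hence $T\subseteq L$, and the argument in the proof of \cref{RUPtoGL} gives $T=L$. Indeed, if $T\neq L$, take a point opposite $x_1$ but not opposite some point of $L\setminus T$; since every point not opposite two points of a line is close to it, this point is opposite all of $T$, a contradiction. From now on I may assume $T$ contains no collinear pairs.

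\emph{Special case.} If $x_1\Join x_2$, set $p=[x_1,x_2]$. \cref{RUP2hex} gives $T\subseteq p^\perp\cap y^{\Join}$ for every $y\in p^\equiv\cap x_1^{\Join}\cap x_2^{\Join}$. So all points of $T$ are collinear to $p$, and any two of them are special with $p$ as their common neighbour. I still have to verify the hyperbolic line property: for each $q\equiv p$ with $|q^{\Join}\cap T|\ge2$, one needs $q^{\Join}\cap p^\perp=T$. Applying \cref{RUP2hex} to the pair inside $q^{\Join}\cap T$ gives $T\subseteq q^{\Join}$. For the reverse inclusion, suppose $z\in q^{\Join}\cap p^\perp$ but $z\notin T$. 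I would then build a point opposite exactly one element of $T$ (or opposite all of $T$), using a \cref{3.30b}-type argument on the line $pz$, and derive a contradiction with maximality. Concretely, a point on a line through $z$ that is far from $p$ is not opposite $z$ but is opposite a controlled set of points on the lines $px_i$.

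\emph{Opposite case.} Here all pairs of $T$ are opposite, and \cref{RUP3hex} shows that $T$ is contained in every distance-$3$ trace $[L,M]_3$ containing two of its points. I need two things: that $T$ equals such a trace, and that the trace is regular. First, fix opposite $x_1,x_2\in T$ and a line $L$ close to both. The points of $T$ are close to $L$ and project to distinct points of $L$. If some point $u\in L$ were the projection of no member of $T$, I would find a point opposite all of $T$ on a line through $u$, as in Claim~3 of \cref{hex}. This uses the fact that $x_1$ and $x_2$ share a projection on every such line, which lets me choose a point avoiding all projections. So the points of $L$ are in bijection with $T$, and $T$ is the full trace. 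Regularity then follows because any trace $[N,M]_3$ sharing two points with $T$ also contains all of $T$ by \cref{RUP3hex}, and both traces have one point per point of $M$.

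\emph{Mixed relations.} This is the step I expect to be the main obstacle: excluding a $T$ that contains both a special pair and an opposite pair. By the special case, a special pair forces every point of $T$ to be collinear to a fixed point $p$. So an opposite pair $x_1,x_3$ in $T$ would both be collinear to $p$, which makes them special, not opposite, a contradiction. The same argument excludes mixtures with collinear pairs. This completes the trichotomy.
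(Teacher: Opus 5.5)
Your proposal is correct and takes the same route as the paper: Lemmas~\ref{RUP1hex}--\ref{RUP3hex} trap $T$ in a line, in a set $p^\perp\cap q^{\Join}$, or in a distance-$3$ trace, and equality follows by exhibiting a point opposite every member of $T$. You also check the hyperbolic-line and regularity conditions more explicitly than the paper's one-line proof does. Two steps can be tightened: in the special case, $p^\perp\cap q^{\Join}$ meets each line through $p$ in exactly one point, so $T\cap pz=\varnothing$ and any point collinear to $z$ but not on $pz$ is opposite all of $T$ (no appeal to \cref{3.30b} or to ``maximality'' is needed); in the opposite case, every point of $T$, not just $x_1,x_2$, projects onto $u$, so no counting argument is needed, which matters when $T$ may be large or lines infinite.
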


\begin{proof}
Since every triple of points of a geometric line is a round-up triple, the previous three lemmas imply that $T$ is contained in either a line, or a hyperbolic line, or a distance-$3$ trace.
But if $T$ were not equal to one of these objects, then, in each case, it is easy to find a point opposite every member of $T$, a contradiction.
\end{proof}

We can now prove \cref{B} for type $\mathsf{G_2}$.

\begin{prop}\label{geomlineshex}
Let $\Gamma=(X,\cL)$ be a Moufang generalised hexagon and let $T$ be a geometric line.
Then $T$ is either
\begin{compactenum}[$(1)$]
\item an ordinary line, or
\item a hyperbolic line in a split Cayley hexagon, or
\item a distance-$3$ trace in a split Cayley hexagon over a perfect field in characteristic~$2$.
\end{compactenum}
\end{prop}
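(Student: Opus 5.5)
By \cref{geomlinesghex}, $T$ is an ordinary line, a hyperbolic line, or a regular distance-$3$ trace. The plan is to treat the last two cases separately and identify, among the Moufang hexagons, exactly when such objects are geometric lines. We invoke the classification of Moufang hexagons as $\mathsf{G_{2,2}}(\K,\J)$ from \cite{Tits-Wei:02} together with the standard regularity facts from \cite{Mal:98}.

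\textbf{Hyperbolic lines.} If $T$ is a hyperbolic line $H$ with centre $p$, then by definition every point opposite $p$ that is non-opposite at least two points of $H$ is non-opposite all of $H$. We first show that this forces every point to be \emph{ideally regular}, i.e.\ every hyperbolic line through two points special to each other is ``large''. We then invoke \cite[Remark~6.3.5]{Mal:98} and the corresponding characterisation in \cite{Mal:98}: in a Moufang hexagon, hyperbolic lines behave this way (points are distance-$2$ regular) if, and only if, the hexagon is a split Cayley hexagon $\mathsf{G_{2,2}}(\K,\K)$. Conversely, in $\mathsf{G_{2,2}}(\K,\K)$ the hyperbolic lines are lines of the ambient $\PG(6,\K)$ in the quadric $Q(6,\K)$ through a point $p$ not in the hexagon line set. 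A point $y$ is non-opposite $x$ exactly when $x$ lies in the hyperplane $y^{\perp}$ of the quadric. Hence $y^{\not\equiv}$ meets such a projective line in one or all points, which is precisely the geometric-line property. This gives case (2).

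\textbf{Distance-$3$ traces.} Let $T=[L,M]_3$ be a regular trace that is a geometric line. Dualising, the trace corresponds to a set of lines, and regularity says that the lines $L,M$ are distance-$3$ regular. By \cite[Chapter~6]{Mal:98}, a Moufang hexagon in which some (hence every, by homogeneity) pair of opposite lines is distance-$3$ regular has regular lines. This forces it to be the dual of a split Cayley hexagon, or perhaps a triality hexagon. We then need the full geometric-line condition: every point $y$ is non-opposite $0$, $1$ or all points of $T$. Using the coordinatisation of $\mathsf{G_{2,2}}(\K,\K)$ in $Q(6,\K)$, a distance-$3$ trace is a conic, namely the intersection of the quadric with a plane. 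Points non-opposite $y$ form the tangent hyperplane section $y^\perp$, which meets the conic in $0$, $1$, $2$ or all points. To exclude the value $2$, we need every plane section of the conic by a hyperplane $y^\perp$ to be tangent. That happens exactly when the conic's plane contains the nucleus, i.e.\ in characteristic~$2$ (as in \cite{Gov:97}, where the trace becomes a line of the symplectic $\mathsf{W}(5,\K)$ after projecting from the nucleus). The further requirement that every point of the plane be realised needs $\K$ perfect, so that the projection $Q(6,\K)\to\mathsf{W}(5,\K)$ is bijective. We also exclude the triality hexagons and non-perfect fields by exhibiting a point $y$ with $|y^{\not\equiv}\cap T|=2$.

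\textbf{Main obstacle.} The hardest step is this last exclusion in the non-split or non-perfect cases. It requires explicit coordinates, and we would try to produce $y$ inside a suitable ideal subhexagon to reduce to the split case over a subfield.
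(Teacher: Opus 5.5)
There is a genuine gap in both non-trivial cases, and it has one cause. You never use the half of the geometric-line hypothesis that says every point is non-opposite \emph{at least one} member of $T$. Instead you rely on regularity conditions that do not single out $\mathsf{G_{2,2}}(\K,\K)$ among all Moufang hexagons. (Only necessity is asked, so your converse computations are not needed.)

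\textbf{Hyperbolic lines.} The property you start from is just the definition of a hyperbolic line. Together with Moufang transitivity it gives at most distance-$2$ regularity of all points. The characterisation you then invoke does not hold in that form. The mixed hexagons in characteristic~$3$ over non-perfect fields are Moufang subhexagons of a split Cayley hexagon. They therefore inherit distance-$2$ regular points, yet they are not split. What is needed is the paper's step:
\begin{itemize}
\item Let $c$ be the centre of $T$. Every point $y\equiv c$ must be non-opposite some point of $T$, so every trace $c^\perp\cap y^{\Join}$ meets $T$.
\item Every hyperbolic line in $c^\perp$ is such a trace. By transitivity on paths $x_1\perp x_2\perp x_3$ with $x_1\Join x_3$, it is also an image of $T$.
\item Hence any two hyperbolic lines in $c^\perp$ meet. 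It is this projective-plane property that characterises the split Cayley hexagon in \cite{Mal:98}.
\end{itemize}

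\textbf{Distance-$3$ traces.} Your reduction step is wrong. Regularity of the trace is no restriction: the paper itself notes regular distance-$3$ traces in split Cayley and triality hexagons of odd characteristic. So it does not force ``the dual of a split Cayley hexagon, or a triality hexagon''. Your $Q(6,\K)$ computation is in any case done in $\mathsf{G_{2,2}}(\K,\K)$, not in its dual.

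As a result, excluding every non-split Moufang hexagon is simply left open, as you acknowledge. The suggested detour via ideal split subhexagons does not obviously work. In the triality hexagon of order $(q^3,q)$ with $q$ even, such a subhexagon through $L,M$ meets $T$ in only $q+1$ points, and these form a geometric line of the subhexagon, so no contradiction can be found there.

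The missing idea is again to use the geometric-line property directly:
\begin{itemize}
\item Choose a line $L$ containing distinct points special to two points $x,y\in T$.
\item Then every point of $L$ is special to exactly one point of $T$, so $T$ is a long imaginary line in the sense of \cite[Definitions~6.5.5]{Mal:98}.
\item \cite[Theorem~6.5.6]{Mal:98} then gives a split Cayley hexagon over a perfect field of characteristic~$2$, uniformly and without coordinates.
\end{itemize}

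Even inside $\mathsf{G_{2,2}}(\K,\K)$, the non-perfect case should be excluded differently from how you do it. Take a point $y$ such that $y^\perp$ meets the plane of the conic in an external line through the nucleus. Such a $y$ is opposite every point of $T$, which rules the case out; bijectivity of a projection is not the relevant mechanism.
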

\begin{proof}
By \cref{geomlinesghex} there are three possibilities for $T$.
The first one is a line, which leads to $(1)$.
The second one is a hyperbolic line.
Let $T$ be collinear to the unique point $c$.
Since $T$ is a geometric line, every hyperbolic line in $c^\perp$ intersects $T$ in exactly one point.
By transitivity of the automorphism group on paths $x_1\perp x_2\perp x_3$, with $x_1\Join x_3$, which follows readily from the Moufang condition, we see that every pair of hyperbolic lines in $c^\perp$ intersects.
Then \cite[Corollary~5.14]{Mal:98} implies that $\Gamma$ is a split Cayley hexagon.

The third possibility is that $T$ is a regular distance-$3$ trace.
Let $x,y\in T$.
Then we obviously can write $T=\{x,y\}^{\not\equiv\not\equiv}$.
Let $L\in\cL$ be arbitrary but such that it contains unique points $x'$ and $y'$ special to $x$ and $y$, respectively, with $x'\neq y'$.
That at least one such line exists is easily seen.
Now every point of $L$ is special to precisely one point of $T$, since $T$ is a geometric line.
This means that, in the terminology of \cite[Definitions~6.5.5]{Mal:98}, the set $T$ is a long imaginary line, and \cite[Theorem~6.5.6]{Mal:98} now implies that $\Gamma$ is a split Cayley hexagon over a perfect field in characteristic~$2$.
\end{proof}

\begin{remark}\label{rem1}
For every natural number $n\geq 5$, there exists an (obvious) analogue of \cref{geomlinesghex} for the class of (thick) generalised $n$-gons. This requires defining a ``regular'' distance-$i$ trace, $2\leq i\leq \frac{n}{2}$, similarly to a hyperbolic line (which would be a regular distance-$2$ trace) and a regular distance $3$-trace for a generalised hexagon. Proofs are straightforward generalisations of the above proofs for hexagons. Restricting to Moufang octagons (the only class of exceptional Moufang buildings not yet considered in this paper), one obtains that, using the results in \cite{Bon-Cuy-Mal:96} (see also \cite[Section~6.5]{Mal:98}), the only geometric lines in Moufang octagons are the ordinary lines.  However, the classification of minimal blocking sets in finite Moufang octagons is still open; however, see also \cref{rem2}. 
\end{remark}

\begin{remark}\label{rem2}
S. Petit and G. Van de Voorde \cite[Theorem~6]{Pet-Voo:26} prove that, if $s\leq t$, then every blocking set of $s+1$ points in a finite generalised polygon of order $(s,t)$ is either a line or a regular distance-$i$ trace. Together with \cref{rem1}, this leads to a classification of blocking sets of size $s+1$ in the Moufang octagons of order $(s,s^2)$: only lines occur. The case of Moufang octagons of order $(s,\sqrt{s})$ is hence the only open case for finite Moufang polygons. Note that \cref{hex} extends  \cite[Theorem~6]{Pet-Voo:26} for generalised hexagons to arbitrary order. 
\end{remark}

\section{An application}\label{anappl}
\subsection{Surjective oppositon preserving maps.}
\begin{prop}\label{appl}
Let $\Delta$ and $\Delta'$ be two buildings of the same exceptional type $\mathsf{F_4, E_6, E_7}$ or $\mathsf{E_8}$.
Let, with Bourbaki labelling, $T_i$ and $T_i'$ be the set of vertices of type $i$ of $\Delta$ and $\Delta'$, respectively, where \begin{compactenum}[$\bullet$]
\item $i\in\{1,2,3,4\}$ is arbitrary if $\Delta$ has type $\mathsf{F_4}$;
\item $i\in\{1,2,3,4,5,6\}$ is arbitrary if $\Delta$ has type $\mathsf{E_6}$;
\item $i\in\{1,3,6,7\}$ if $\Delta$ has type $\mathsf{E_7}$;
\item $i\in\{7,8\}$ if $\Delta$ has type $\mathsf{E_8}$.
\end{compactenum}
Then any surjective map $\varphi \colon T_i\to T_i'$ preserving opposition and non-opposition is induced by an isomorphism of buildings. 
\end{prop}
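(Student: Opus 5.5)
The plan is to show that $\varphi$ maps lines of the Lie incidence geometry $\Gamma_i$ of type $\mathsf{X}_{n,i}$ (associated with $\Delta$) onto lines of the corresponding geometry $\Gamma_i'$ of $\Delta'$. Injectivity is immediate: if $\varphi(v)=\varphi(w)$ with $v\neq w$, then \cref{3.30b} gives a vertex $u$ opposite $v$ but not opposite $w$. Since $\varphi$ preserves opposition and non-opposition, $\varphi(u)$ would be both opposite and not opposite $\varphi(v)=\varphi(w)$, which is impossible. Hence $\varphi$ is a bijection, and both $\varphi$ and $\varphi^{-1}$ preserve opposition and non-opposition.

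The key observation is that the notion of a geometric line is defined purely in terms of opposition between vertices of type $i$. Here we should note that $\mathsf{F_4}$ and $\mathsf{E_7},\mathsf{E_8}$ have trivial opposition involution, so type $i$ is self-opposite. For $\mathsf{E_6}$ with $i\in\{1,3,5,6\}$, opposition maps $T_i$ to $T_{7-i}$. In that case I would reinterpret the statement via the pair $(T_i,T_{7-i})$, or equivalently with the opposition relation of the definition of geometric line taken between $T_i$ and $T_{i^{\mathrm{op}}}$. The main bookkeeping subtlety is that objects opposite a type-$i$ vertex live in the opposite type. I would first restrict to the self-opposite types and handle $\mathsf{E_{6,1}}$/$\mathsf{E_{6,3}}$ analogously, taking into account that the point-symp duality maps $T_1$ onto $T_6$.

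Because $\varphi$ is a bijection respecting opposition both ways, $\varphi$ maps geometric lines of $\Gamma_i$ bijectively onto geometric lines of $\Gamma_i'$. By \cref{B} (in the infinite case via the cited results: \cite[Corollary~5.6]{Kas-Mal:13}, \cref{GLE63}, \cref{GLE76}, \cref{kasmal} with \cref{geomlinesexc}, and \cref{geomlinegrass}), the geometric lines are the ordinary lines. The only extra ones are the hyperbolic lines in symplectic symps in split $\mathsf{F_{4,4}}$, $\mathsf{F_{4,3}}$ (types $3,4$). Thus, outside split $\mathsf{F_4}$ with $i\in\{3,4\}$, $\varphi$ is a collineation $\Gamma_i\to\Gamma_i'$ with inverse a collineation.

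The main obstacle is the split $\mathsf{F_4}$ case with $i\in\{3,4\}$. There, I would distinguish ordinary lines from hyperbolic lines using only the set of geometric lines. One approach is to look at two intersecting geometric lines and use the fact that an ordinary line through a point $x$ lies in many geometric-line "planes" (triangles of geometric lines), whereas hyperbolic lines have a different intersection pattern. A cleaner alternative is to note that for two points $x,y$ on an ordinary line, $x^{\not\equiv}\cap y^{\not\equiv}$ and related sets differ from those for a hyperbolic pair. That is, collinear versus symplectic is detected by opposition, using \cref{joinjoin} and \cref{pointpointhex}: $x\perp y$ iff no point $z$ with $z\equiv x$ satisfies a certain condition. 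I would try first to characterise collinearity by the following property: every point opposite $x$ is opposite all but at most one point of the geometric line through $x,y$, while being non-opposite to $y$ is forced in a stronger way. If that fails, I would characterise symps as maximal sets covered by the geometric lines through pairs.

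Once $\varphi$ is shown to be an isomorphism of the point-line geometries $\Gamma_i\to\Gamma_i'$, the result of \cite[\S3]{Coh:86}, quoted in the introduction, gives that it is induced by an isomorphism of buildings. In the case where $\Delta'$ is only assumed to be of the same type, this isomorphism of geometries also identifies the buildings.
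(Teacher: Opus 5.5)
Your overall plan is the paper's: injectivity via \cref{3.30b}, preservation of geometric lines, \cref{B}, and then \cite[\S3]{Coh:86} at the end. Your remark on $\mathsf{E_6}$ with $i\in\{1,3,5,6\}$ is also fair, and the paper passes over it: two vertices of such a type are never opposite, so the hypothesis only makes sense for a map defined on $T_i\cup T_{7-i}$.

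The genuine gap is split $\mathsf{F_4}$ with $i\in\{3,4\}$. This is the only part of the proposition not already settled by the classification of geometric lines, and there you give only a list of things you would try. The one concrete criterion you state is vacuous. That a point opposite $x$ is opposite all but at most one point of the geometric line through $x,y$ holds for \emph{every} geometric line, by definition, so it cannot separate ordinary lines from the extra ones.

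Counting ``triangles'' does not separate them either. Let $\xi$ be a symplectic symp of $\mathsf{F_{4,4}}(\K,\K)$ and $p\in\xi$. The lines of $\xi$ through $p$, with the planar pencils at $p$ and the cones with vertex $p$ over hyperbolic lines among them as lines, form a projective $3$-space (the ambient space of $\Res_\xi(p)\cong W(3,\K)$). So in the geometry $\Gamma$ whose lines are all geometric lines of $\mathsf{F_{4,3}}(\K,\K)$, both kinds of lines lie in many projective planes. Note also that for $i=3$ the extra geometric lines are these cones (\cref{geomlinegrass}), not hyperbolic lines of a symp of $\mathsf{F_{4,3}}$.

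The invariant the paper actually uses is \emph{maximality}. An ordinary line, that is, a pencil $(p,\pi)$, lies in the projective plane of all lines of $\pi$, which is a maximal singular subspace of $\Gamma$. A cone $Z$ with vertex $p$ over a hyperbolic line $h$ lies in no maximal singular subspace of $\Gamma$ isomorphic to a projective plane. Proving this takes real work, in two cases.
\begin{itemize}
\item If such a plane $\alpha\supseteq Z$ contains an ordinary line, it is forced into the $3$-space of lines through $p$ of the symp containing $h$, contradicting maximality.
\item If $\alpha$ contains no ordinary line, project onto a point $q$ opposite $p$. This gives a projective plane of pairwise symplectic points closed under hyperbolic lines. \cite[Lemma~5.21]{Sch-Sas-Mal:18} places it in an extended equator geometry, and \cite[Proposition~5.24]{Sch-Sas-Mal:18} then yields a projective $3$-space of lines through $p$ containing $\alpha$, again contradicting maximality.
\end{itemize}

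For $i=4$ the paper invokes \cite[Corollary~6.6]{Kas-Mal:13}, which you neither cite nor replace.

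Finally, because the criterion is intrinsic to $\Gamma$, it also handles the case where exactly one of $\Delta,\Delta'$ is split. Your claim that $\varphi$ is a collineation ``outside split $\mathsf{F_4}$ with $i\in\{3,4\}$'' tacitly needs both buildings to be non-split. Otherwise $\varphi$ could a priori send ordinary lines to non-ordinary geometric lines.
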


\begin{proof}
It suffices to show that $\varphi$ is a bijective collineation between the corresponding $i$-Grassmannian geometries.
We first show that $\varphi$ is bijective.
Suppose, for a contradiction, that two vertices $v,u\in T_i$ are mapped onto the same vertex.
\cref{3.30b} yields a vertex $w\in T_i$ opposite $v$ but not opposite $u$.
Then our assumptions imply $\varphi(v)\equiv\varphi(w)\not\equiv\varphi(u)=\varphi(v)$, a contradiction.
Hence $\varphi$ is a bijection.
Since opposition and non-opposition are preserved, one deduces that geometric lines are mapped onto geometric lines.
If the only geometric lines are the ordinary lines, then this concludes the proof of the proposition.

By \cite[Corollary 6.6]{Kas-Mal:13} and \cref{geomlinegrass}, we may assume $i=3$ and $\Delta$ has type $\mathsf{F_4}$.
It suffices to recognise the planar line pencils of $\mathsf{F_{4,4}}(\K,\K)$ among all geometric lines of $\mathsf{F_{4,3}}(\K,\K)$.
Let $\Gamma$ be the point-line geometry with point set the points of $\mathsf{F_{4,3}}(\K,\K)$ and line set the set of ordinary and geometric lines of $\mathsf{F_{4,3}}(\K,\K)$.
We claim that no geometric line different from an ordinary line of $\mathsf{F_{4,3}}(\K,\K)$ is contained in a maximal subspace of $\Gamma$  isomorphic to a projective plane.
Suppose, for a contradiction, that the geometric line $Z$ is contained in a maximal singular subspace $\alpha$ of $\Gamma$ isomorphic to a projective plane, and that $Z$ is not an ordinary line of $\mathsf{F_{4,3}}(\K,\K)$.
We argue in $\mathsf{F_{4,4}}(\K,\K)$, where $Z$ is a cone in a symp with vertex $p$ over a hyperbolic line $h$.
Suppose first that $\alpha$ does not contain any ordinary line of  $\mathsf{F_{4,3}}(\K,\K)$.
So, the point set of $\alpha$ corresponds to a set $\Pi$ of lines through $p$, and the cones over hyperbolic lines correspond to the lines of $\alpha$.
Consequently, any two points on distinct lines of $\Pi$ are symplectic, and the unique hyperbolic line through them is contained in the union of all lines of $\Pi$.
We select a point $q$ opposite $p$.
Then every line $L\in\Pi$ contains a unique point $p_L$ special to $q$.

We claim that the set $\beta=\{p_L\mid L\in\Pi\}$, endowed with the hyperbolic lines contained in it, is a projective plane.
Indeed, in view of the fact that $\Pi$ is the point set of a projective plane whose lines are geometric lines of $\mathsf{F_{4,3}}(\K,\K)$, it suffices to prove that $\beta$ is closed under taking hyperbolic lines through two arbitrary distinct points $y_1$ and $y_2$ of $\beta$.
Since  $p\in\xi(y_1,y_2)$, $q$ is far from $\xi(y_1,y_2)$.
Since $y_1$ and $y_2$ are collinear to the unique point $q'$ of $\xi(y_1,y_2)$ symplectic to $q$, all points of the hyperbolic line $h(y_1,y_2)$ defined by $y_1,y_2$ are collinear to $q'$, by the very definition of hyperbolic line.
The claim is proved.

Now \cite[Lemma~5.21]{Sch-Sas-Mal:18} implies that $\beta$ is contained in an extended equator geometry $\widehat{E}$.
Then \cite[Proposition 5.24]{Sch-Sas-Mal:18} implies that $p$ is collinear to a set $\gamma$ of points of $\widehat{E}$ that forms a $3$-dimensional projective space when endowed with the hyperbolic lines it contains.
Hence the line set  $\{px\mid x\in\gamma\}$ forms a projective $3$-space in $\Gamma$.
So, $\alpha$ is not maximal, a contradiction.

Consequently, we may suppose that $\alpha$ contains at least one ordinary line of $\mathsf{F_{4,3}}(\K,\K)$.
Then we have a plane $\pi$ of $\mathsf{F_{4,4}}(\K,\K)$ through $p$ intersecting $h$ in some point $x$.
Select $y\in h\setminus\{x\}$ and $z\in \pi\setminus px$.
Let $\xi$ be the symp containing $h$; we have $p\in \xi$.
Suppose, for a contradiction, that $z$ is not contained in $\xi$.
Since $z\perp px$ and $y\notin px$, we deduce $z\pperp y$.
Hence $y\perp px$, a contradiction.
Consequently $z\in\xi$ and so $\pi\subseteq\xi$.
Now the set of lines of $\xi$ through $p$ forms a $3$-dimensional projective space of $\Gamma$, contradicting the maximality of $\alpha$. 

Hence $Z$ is not contained in a maximal singular subspace of $\Gamma$ isomorphic to a projective plane.
Evidently, any line of $\mathsf{F_{4,4}}(\K,\K)$ is contained in an  ordinary projective plane of $\mathsf{F_{4,4}}(\K,\K)$, which gives rise to a maximal singular subspace of $\Gamma$ of dimension~$2$.
Hence we can recognise the ordinary lines of $\mathsf{F_{4,3}}(\K,\K)$, and the proof of the proposition is complete.
\end{proof}

The following is an immediate consequence.

\begin{coro}
Let $\Delta$ be a finite building of exceptional type $\mathsf{F_4, E_6, E_7}$ or $\mathsf{E_8}$.
Let, with Bourbaki labelling, $T_i$ be the set of vertices of type $i$ of $\Delta$, where \begin{compactenum}[$\bullet$]
\item $i\in\{1,2,3,4\}$ is arbitrary if $\Delta$ has type $\mathsf{F_4}$;
\item $i\in\{1,2,3,4,5,6\}$ is arbitrary if $\Delta$ has type $\mathsf{E_6}$;
\item $i\in\{1,3,6,7\}$ if $\Delta$ has type $\mathsf{E_7}$;
\item $i\in\{7,8\}$ if $\Delta$ has type $\mathsf{E_8}$.
\end{compactenum}
Then any map $\varphi:T_i\to T_i'$ preserving opposition and non-opposition is induced by an automorphism of $\Delta$. 
\end{coro}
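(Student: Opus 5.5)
The plan is to reduce the finite corollary to \cref{appl}, whose only extra hypothesis is surjectivity. Here $\Delta'=\Delta$ and $T_i'=T_i$, so $\varphi$ is a self-map of the finite set $T_i$. It therefore suffices to show that $\varphi$ is injective, since an injective self-map of a finite set is surjective.

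Injectivity follows from the argument already used at the start of the proof of \cref{appl}. Suppose $v\neq u$ in $T_i$ satisfy $\varphi(v)=\varphi(u)$. By \cref{3.30b} there is a vertex $w\in T_i$ opposite $v$ but not opposite $u$. Since $\varphi$ preserves both opposition and non-opposition, we get $\varphi(w)\equiv\varphi(v)$ and $\varphi(w)\not\equiv\varphi(u)=\varphi(v)$, which is a contradiction. Hence $\varphi$ is injective, and by finiteness of $T_i$ it is a bijection.

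With $\varphi$ now a surjective map $T_i\to T_i$ preserving opposition and non-opposition, \cref{appl} applies with $\Delta'=\Delta$. It yields an isomorphism $\Delta\to\Delta$ inducing $\varphi$, that is, an automorphism of $\Delta$.

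The only point needing care is that $T_i'$ in the statement must be read as $T_i$, the target lying in the same finite building. Finiteness is exactly what replaces the surjectivity assumption. I expect no real obstacle; the substantive work has already been done in \cref{appl}.
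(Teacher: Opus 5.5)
Your proposal is correct and follows the paper's proof exactly: injectivity via \cref{3.30b} as in the first paragraph of the proof of \cref{appl}, surjectivity from the finiteness of $T_i$, and then an application of \cref{appl}. Your reading of $T_i'$ as $T_i$ (so that the isomorphism obtained is an automorphism of $\Delta$) is the intended one.
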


\begin{proof}
We only have to establish the surjectivity of $\varphi$ in order to be able to apply \cref{appl}.
Therefore, we note that the injectivity of $\varphi$ is proved in a completely similar way as in the first paragraph of the proof of \cref{appl}.
 Now the assertion follows from the trivial fact that an injective transformation of a finite set is always surjective. 
\end{proof}

\subsection{Further outlook} We mention a possible further application of the fact that opposition of points in long root subgroup geometries determines collinearity. The notion of \emph{imaginary lines} mentioned in \cref{secgenhex} can be extended to all long root subgroup geometries. In group-theoretic terms, imaginary lines are the orbits entirely consisting of mutually opposite points of so-called \emph{fundamental $\SL_2$s}. Taking the imaginary lines of some long root subgroup geometry $\Delta$ as set of vertices of a graph $\Gamma$, where two vertices are adjacent if the corresponding $\SL_2$s commute, one obtains what we could call the \emph{imaginary graph of $\Delta$}. These graphs were considered in \cite{Alt-Gra:10,Gra:04} (and we thank a referee to point us to these papers) for some finite buildings of classical type $\mathsf{C}_n$ (associated to symplectic and unitary groups). In particular, the imaginary graphs for these groups were characterised by their local graphs (which are also imaginary graphs), and this lead to a group theoretic characterisation of the corresponding groups. A large part of the proof in \cite{Alt-Gra:10} consisted in building the long root subgroup geometry, that is, defining the points and the lines of it. If one liked to do a similar job for the exceptional cases, then, once the points are defined in terms of the vertices of the graph, the lines would follow since we know opposition: two points are opposite if they are contained in a common imaginary line, hence somehow related to a common vertex of the graph. Perhaps this also yields a simplification of the proofs in \cite{Gra:04,Alt-Gra:10}, but this should be first looked at in some more detail. We defer this to the future.  

\textbf{Acknowledgment.} The authors are grateful to two referees whose remarks and suggestions made the paper and some arguments more clear.

\textbf{Statement}: On behalf of all authors, the corresponding author states that there is no conflict of interest.

\textbf{Data}: No data were used or generated for this article.

\end{document}